\documentclass[11pt,a4paper,reqno]{amsart}
\usepackage{amsmath}
\usepackage{amsfonts}
\usepackage{amssymb}
\usepackage{amscd}
\usepackage{url}
\usepackage{enumerate}
\usepackage[pdftex,bookmarks=true]{hyperref}

\newcommand\Vol{{\operatorname{Vol}}}

\newcommand\R{{\mathbf{R}}}
\newcommand\C{{\mathbf{C}}}

\renewcommand\P{{\mathbf{P}}}
\newcommand\E{{\mathbf{E}}}
\newcommand\bv{{\mathbf{v}}}

\newcommand\dist{{\operatorname{dist}}}
\newcommand\Z{{\mathbf{Z}}}

\newcommand\F{{\mathbf{F}}}
\newcommand\I{{\mathbf{I}}}

\newcommand\row{{\mathbf{r}}}

% Probability

\newcommand\eps{\varepsilon}

\newcommand\Ba{{\mathbf a}}

\newcommand\Be{{\mathbf e}}

\newcommand\Bu{{\mathbf u}}
\newcommand\Bv{{\mathbf v}}

\newcommand\Bx{{\mathbf x}}

\newcommand\BB{{\mathbf B}}

\newcommand\BN{{\mathbf N}}

%cal letter

%number theory

% tilde

\newcommand\Ber{\it{Ber}}
\newcommand\Gau{\it {Gau}}
\newcommand\sym{\it {sym}}

% special

\newcommand\ep{\epsilon}
\newcommand\LCD{\mathbf{LCD}}
\newcommand\Cor{\mathbf{Cor}}
\newcommand\Gr{\mathbf {Gr}}
\newcommand\x{\xi}

% \swapnumbers
% \pagestyle{headings}
\parindent = 0 pt
\parskip = 12 pt

\textwidth=6in
\oddsidemargin=0in
\evensidemargin=0in

\theoremstyle{plain}
  \newtheorem{theorem}[subsection]{Theorem}
  \newtheorem{conjecture}[subsection]{Conjecture}
  \newtheorem{problem}[subsection]{Problem}

  \newtheorem{proposition}[subsection]{Proposition}
  \newtheorem{fact}[subsection]{Fact}
  \newtheorem{lemma}[subsection]{Lemma}
  \newtheorem{corollary}[subsection]{Corollary}
  \newtheorem{question}[subsection]{Question}
  \newtheorem{example}[subsection]{Example}
  
  \newtheorem{remark}[subsection]{Remark}

\theoremstyle{definition}
  \newtheorem{definition}[subsection]{Definition}

\begin{document}

\title[Small ball probability, Inverse  theorems, and applications]{Small probability, Inverse  theorems,  and applications}

%\maketitle

\author{Hoi H. Nguyen}
\email{hoi.nguyen@yale.edu}

\author{Van H. Vu}
\email{van.vu@yale.edu}

\address{Department of Mathematics, Yale University, 10 Hillhouse Ave., New Haven, CT 06511}

\thanks{The first author is supported by research grant DMS-1256802}
\thanks{The second author is supported by research grants DMS-0901216 and AFOSAR-FA-9550-12-1-0083}

%\subjclass[2000]{15A52, 15A63, 11B25}

\maketitle
\begin{abstract}
Let $\xi$ be a real random variable with mean zero and variance one 
 and  $A=\{a_1,\dots,a_n\}$ be a multi-set in $\R^d$. The random sum 

$$S_A := a_1 \xi_1 + \dots + a_n \xi_n $$ where $\xi_i$ are iid copies of $\xi$ is of fundamental importance  in probability and its applications. 

We discuss the {\it small ball} problem,  the aim of which is to  estimate the maximum probability that $S_A$ belongs to a ball with given small radius, following 
the discovery made by Littlewood-Offord and Erd\H{o}s almost 70 years ago. We will mainly focus on 
recent developments that characterize the structure of those sets $A$ where the small ball probability is relatively large.  Applications of these results 
include full solutions or significant progresses of many open problems in different areas. 

\end{abstract}

\tableofcontents 

\section{Littlewood-Offord and Erd\H os estimates }\label{section:introduction}

Let $\xi$ be a real random variable with mean zero and variance one 
 and  $A=\{a_1,\dots,a_n\}$ be a multi-set in $\R$ (here $n \rightarrow \infty$). The random sum 

$$S_A := a_1 \xi_1 + \dots + a_n \xi_n $$ where $\xi_i$ are iid copies of $\xi$ plays an essential  role in probability.   The Central Limit Theorem, arguably the most important theorem in the field, asserts that if the $a_i$'s are the same, then 

$$ \frac{S_A} {\sqrt {  \sum_{i=1}^n |a_i| ^2 }  } \longrightarrow \BN(0,1) . $$ 

Furthermore, Berry-Ess\'een theorem shows that if $\xi$ has bounded third moment,  then the rate of convergence is $O(n^{-1/2} )$. 
This, in particular, implies that for any small open interval $I$ 

$$\P ( S_A \in I ) = O(|I| / n^{1/2} ). $$

The assumption that the $a_i$'s are the same are, of course, not essential. Typically, it suffices to assume that none of 
the $a_i$'s is dominating; see \cite{Fellerbook} for more discussion. 

The probability $\P ( S_A \in I)$ (and its high dimensional generalization) will be referred to as {\it small ball } probability throughout the paper. In 1943, Littlewood and Offord, in connection with their studies of 
random polynomials \cite{LO}, raised the problem of estimating the small probability for {\it arbitrary} coefficients $a_i$. Notice that when we do not assume anything about the $a_i$'s, even the Central Limit Theorem may fail, so
 Berry-Ess\'een type bounds no longer apply. Quite remarkably, Littlewood and Offord managed to show

\begin{theorem} \label{theorem:LO}  If $\xi$ is  Bernoulli (taking values $\pm 1$ with probability $1/2$)  and 
$a_i$ have absolute value at least 1, then for any open  interval $I$ of length 2,

$$\P (S_A  \in I) = O( \frac{\log n}{n^{1/2}} ). $$ \end{theorem}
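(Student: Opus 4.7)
The plan is to prove the theorem via Sperner's theorem on antichains in the Boolean lattice, which in fact yields the stronger bound $O(n^{-1/2})$ without the logarithmic factor (this is Erd\H{o}s's refinement of the original Littlewood-Offord argument).

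First, by the symmetry of the Bernoulli distribution I replace each pair $(a_i,\xi_i)$ by $(-a_i,-\xi_i)$ whenever $a_i<0$; this does not alter the law of $S_A$, so I may assume $a_i\ge 1$ for all $i$. I then encode a sign pattern $\epsilon\in\{-1,+1\}^n$ by the subset $B(\epsilon):=\{i:\epsilon_i=+1\}$, so that $S_A$ takes each value $S_A(B)=\sum_{i\in B}a_i-\sum_{i\notin B}a_i$ with probability $2^{-n}$. Set $\CF:=\{B\subseteq\{1,\dots,n\}:S_A(B)\in I\}$; then $\P(S_A\in I)=2^{-n}|\CF|$, so the problem reduces to bounding $|\CF|$.

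The key observation is that $\CF$ is an antichain under inclusion: if $B\subsetneq B'$ both lay in $\CF$, then
$$S_A(B')-S_A(B)=2\sum_{i\in B'\setminus B}a_i\ge 2,$$
contradicting that both values lie in an open interval of length $2$. Sperner's theorem then gives $|\CF|\le \binom{n}{\lfloor n/2\rfloor}$, and Stirling's formula turns this into $O(2^n/\sqrt{n})$, whence $\P(S_A\in I)=O(n^{-1/2})$, which implies the stated bound.

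There is no genuine obstacle once the antichain structure is spotted; the entire argument is forced by the lower bound $|a_i|\ge 1$, which converts the geometric constraint ``$S_A(B)$ lies in an interval of length $2$'' into the combinatorial constraint ``$B,B'$ are incomparable.'' If one wished to reproduce the weaker $\log n$ factor in the statement (as in the original analytic proof of Littlewood and Offord), one could instead decompose the Boolean lattice into symmetric chains via Dilworth's theorem and apply a crude analytic estimate $\P(S_A\in J)\ll |J|/\sqrt{n}$ on each chain, summing over $O(\log n)$ scales; but the Sperner route makes this unnecessary.
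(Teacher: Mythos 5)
Your proposal is correct and is exactly the argument the paper gives: the paper does not prove Theorem \ref{theorem:LO} directly, but instead proves the stronger Theorem \ref{theorem:Erdos} via Sperner's lemma (reduce to $a_i\ge 1$, observe the family of favorable subsets is an antichain, invoke Sperner), and you have reproduced that proof, correctly noting it yields $O(n^{-1/2})$ which implies the stated $O(\log n/n^{1/2})$.
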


Shortly after  Littlewood-Offord result, Erd\H{o}s \cite{E} gave a beautiful combinatorial proof of the following refinement, which turned out to be sharp.

\begin{theorem}\label{theorem:Erdos} Under the assumption of Theorem \ref{theorem:LO} 

\begin{equation}\label{eqn:Erdos}
\P(S_A \in I) \le \frac{ \binom{n}{\lfloor n/2\rfloor}}{2^n } = O(\frac{1}{n^{1/2} }). 
\end{equation}
\end{theorem}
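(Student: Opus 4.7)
The plan is to follow Erd\H{o}s's combinatorial approach via Sperner's antichain theorem. First I would reduce to the case $a_i \geq 1$ for all $i$: since the random variables $\xi_i$ are symmetric Bernoulli, flipping the sign of any $a_i$ does not change the distribution of $S_A$, so we may assume each $a_i$ is positive and at least $1$.

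Next, I would identify each sign pattern $\epsilon=(\epsilon_1,\dots,\epsilon_n)\in\{-1,+1\}^n$ with the subset $B_\epsilon:=\{i:\epsilon_i=+1\}\subseteq[n]$. Under this bijection the value of $S_A$ is $\sum_{i\in B_\epsilon}a_i-\sum_{i\notin B_\epsilon}a_i$, and the probability $\P(S_A\in I)$ equals $2^{-n}$ times the number of subsets $B$ for which the corresponding value lies in the open interval $I$ of length $2$. Call this collection of subsets $\CF$.

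The key structural step is to show that $\CF$ is an \emph{antichain} in the Boolean lattice $2^{[n]}$. Indeed, suppose $B\subsetneq B'$ both lie in $\CF$, and let $\Delta=B'\setminus B$. Then the two corresponding values of $S_A$ differ by exactly $2\sum_{i\in\Delta}a_i\geq 2|\Delta|\geq 2$, using $|a_i|\geq 1$. But two points in an open interval of length $2$ must differ by strictly less than $2$, contradiction. Hence no element of $\CF$ is properly contained in another.

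Finally, Sperner's theorem bounds $|\CF|\leq\binom{n}{\lfloor n/2\rfloor}$, so $\P(S_A\in I)\leq\binom{n}{\lfloor n/2\rfloor}/2^n$, and the $O(n^{-1/2})$ asymptotic follows from Stirling's formula. I do not expect any serious obstacle: the only nontrivial input is Sperner's lemma, and the main conceptual move is the translation from the analytic statement about $S_A$ to the combinatorial statement that sign patterns producing sums in a short interval correspond to an antichain. The length-$2$ hypothesis on $I$ is precisely what is needed to make a single-element chain jump violate the interval constraint.
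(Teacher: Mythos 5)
Your proposal is correct and follows exactly the paper's approach: Erd\H{o}s's argument via Sperner's theorem, with the same sign-flipping reduction and the same identification of sign patterns with subsets forming an antichain. The only difference is that you spell out the antichain verification (the $2\sum_{i\in\Delta}a_i \geq 2$ computation), which the paper leaves to the reader as ``one can easily verify.''
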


\begin{proof} (of Theorem \ref{theorem:Erdos}) Erd\H os' 
proof made an ingenious use of Sperner's lemma, which asserts that if $\mathcal {F} $ is an anti-chain on a set of $n$ elements, then $\mathcal{F}$ has at most 
${ n \choose {\lfloor n/2 \rfloor}} $ elements (an anti-chain is a family of subsets none of which contains the other). Let $x$ be a fixed number. By reversing the sign of $a_i$ if necessary, one can assume that $a_i\ge 1$ for all $i$. Now let $\mathcal{F}$ be the set of all subsets $X$ of $[n] :=\{1,2 \dots, n\} $ such that 

$$\sum_{i\in X} a_i - \sum_{j \in \bar{X}}a_j \in (x-1,x+1).$$ 

One can easily verify that $\mathcal{F}$ is an anti-chain. Hence, by Sperner's lemma, 

$$|{\mathcal{F}}| \le \frac{\binom{n}{n/2}}{2^n},$$ completing the proof. \end{proof} 

%To see that the bound is sharp, consider $n$ even, then $\P (S=0) = \frac{\binom{n}{n/2}}{2^n}$. If $n$ is odd, consider  $\P (S=1)$. 

The problem was also studied in probability by Kolmogorov, Rogozin, and others;  we refer the reader to \cite{Kol1,Kol2} and \cite{Rog}. Erd\H{o}s' result is popular in 
the combinatorics community and has became the starting point for a whole theory that we now start to discuss.

%{\bf To anh Van: two references by Kolmogorov and Rogozin added. }

\vskip2mm 

{\it Notation.} We use the asymptotic notation such as $O, o, \Theta$ under the assumption that $n \rightarrow \infty$; $O_\alpha (1)$ means the constant in big $O$ depends on $\alpha$. All logarithms have natural base, if not 
specified otherwise.

\section{High dimensional extenstions}  \label{section:HD}

Let $\xi$ be a real random variable and $A=\{a_1, \dots, a_n\}$ a multi-set in $\R^d$, where $d$ is fixed. 
 For a given radius $R >0$, we define 

$$\rho_{d, R,\xi}(A):=\sup_{x \in \R^d} \P\big(a_1\xi_1+\dots+a_n \xi_n \in \BB(x, R) \big),$$

where  $\xi_i$ are iid copies of $\xi$, and $\BB(x,R)$ denotes the open disk of radius $R$ centered at $x$ in $\R^d$.  
Furthermore, let 

$$p (d,  R, \xi, n) := \sup_{A} \rho_{d,R, \xi} (A) $$ where $A$ runs over all multi-sets of size $n$ in $\R^d$ consisting of vectors with norm at least 1. 
Erd\H{o}s' theorem can be reformulated as 

$$p(1, 1, \Ber, n)  =  \frac{\binom{n}{\lfloor n/2 \rfloor}}{2^n} = O(n^{-1/2}) . $$

In the case $d=1$,  Erd\H{o}s  obtained the optimal  bound for any fixed  $R$. In what follows we define $s:=\lfloor R \rfloor +1$.

\begin{theorem} \label{Erd1} Let $S(n,m)$ denote the sum of the largest $m$ binomial coefficients
$\binom{n}{i}, 0 \le i \le n$. Then

\begin{equation} \label{ERD1} p (1, R, \Ber, n)   = 2^{-n} S(n, s). \end{equation} 
\end{theorem}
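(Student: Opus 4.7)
The plan is to establish matching upper and lower bounds by extending Erd\H{o}s' symmetric-chain argument from Theorem~\ref{theorem:Erdos}. Since $\xi_i \overset{d}{=} -\xi_i$, we may assume $a_i \ge 1$ for all $i$ without loss of generality. Identifying a sign pattern $(\xi_1, \ldots, \xi_n) \in \{\pm 1\}^n$ with the subset $X = \{i : \xi_i = +1\}$ and writing $v_X := \sum_{i \in X} a_i - \sum_{j \notin X} a_j$, one has $\P(S_A \in \BB(x, R)) = 2^{-n} |\mathcal{F}_x|$, where
\[
\mathcal{F}_x := \{X \subseteq [n] : v_X \in \BB(x, R)\},
\]
so the task reduces to proving $|\mathcal{F}_x| \le S(n, s)$ for every $x$ and every admissible $A$, and to exhibiting a pair $(A,x)$ for which equality holds.

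The key observation for the upper bound is that whenever $X \subsetneq Y$, we have $v_Y - v_X = 2 \sum_{i \in Y \setminus X} a_i \ge 2$. Hence along any chain $X_1 \subsetneq \cdots \subsetneq X_k$ of sets in $\mathcal{F}_x$, the values $v_{X_1} < \cdots < v_{X_k}$ are spaced at least $2$ apart, so the span satisfies $v_{X_k} - v_{X_1} \ge 2(k-1)$. Since all of these values lie in an interval of length $2R$, we obtain $k \le \lfloor R \rfloor + 1 = s$. Thus $\mathcal{F}_x$ contains no chain of length $s + 1$ in the Boolean lattice $2^{[n]}$.

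The second ingredient is Erd\H{o}s' 1945 generalization of Sperner's lemma: any family $\mathcal{F} \subseteq 2^{[n]}$ that avoids an $(s+1)$-chain has $|\mathcal{F}| \le S(n, s)$. This rests on the de~Bruijn--Tengbergen--Kruyswijk symmetric chain decomposition, which partitions $2^{[n]}$ into $\binom{n}{\lfloor n/2 \rfloor}$ symmetric chains; $\mathcal{F}$ can contribute at most $\min(\ell, s)$ members to any chain of length $\ell$, and summing these caps across the symmetric chains yields precisely the sum of the $s$ largest binomial coefficients, i.e.\ $S(n, s)$. Combining this with the previous paragraph gives the upper bound $p(1, R, \Ber, n) \le 2^{-n} S(n, s)$.

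For the matching lower bound, take $A = (1, 1, \ldots, 1)$, so that $v_X = 2|X| - n$ and $\P(v_X = 2k - n) = \binom{n}{k}/2^n$. Choosing $x$ so that $\BB(x, R)$ captures the $s$ values $2k - n$ with $k$ ranging over the $s$ integers closest to $n/2$ (for which $\binom{n}{k}$ attains its $s$ largest values) gives $\P(S_A \in \BB(x, R)) = 2^{-n} S(n, s)$. I expect the main obstacle to be invoking the combinatorial lemma on $(s+1)$-chain-free families cleanly; everything else is routine bookkeeping, modulo a harmless boundary subtlety at integer $R$ where, depending on whether one reads the ball as open or closed, the extremal value may need to be approached via an arbitrarily small perturbation of $x$ or of $A$.
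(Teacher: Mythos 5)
Your proof is correct and is the natural extension of the Sperner argument the paper gives for the $R=1$ case (Theorem \ref{theorem:Erdos}): replace ``antichain'' by ``$(s+1)$-chain-free family,'' use the $\ge 2$ spacing of the values $v_X$ along a chain to cap the chain length at $s$, and invoke the Erd\H{o}s--Mirsky bound $|\mathcal{F}|\le S(n,s)$ via the symmetric chain decomposition; the lower bound from $A=(1,\dots,1)$ is exactly the extremal configuration. The paper states Theorem \ref{Erd1} without proof, citing Erd\H{o}s, and this is precisely Erd\H{o}s' original argument.

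One clarification on the boundary case you flag: it is not merely a perturbation issue. With $\BB(x,R)$ strictly open (the paper's stated convention) and $s=\lfloor R\rfloor+1$, the equality in the theorem actually fails when $R$ is an integer: your chain estimate $2(k-1)<2R$ then forces $k\le R=s-1$, and the construction $A=(1,\dots,1)$ likewise captures only $s-1$ of the values $2k-n$ in any open interval of length $2R$, so the supremum is $2^{-n}S(n,s-1)$, not $2^{-n}S(n,s)$. This is consistent with the paper's own earlier reformulation $p(1,1,\Ber,n)=2^{-n}\binom{n}{\lfloor n/2\rfloor}=2^{-n}S(n,1)$, which disagrees with $2^{-n}S(n,s)$ at $R=1$ (where $s=2$). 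The statement should therefore be read with $s=\lceil R\rceil$, or with closed balls; under either reading your argument goes through verbatim, with no perturbation of $x$ or $A$ required.
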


The case $d \ge 2$ is much more complicated and has been studied by many researchers. In particular, Katona \cite{Kat} and Kleitman \cite{Kle1} showed that $p(2,1,\Ber,n)= 2^{-n}\binom{n}{\lfloor n/2 \rfloor}$. This result was extended by Kleitman \cite{Kle2} to arbitrary dimension $d$,  

\begin{equation}\label{eqn:Kle:1}
p(d,1,\Ber,n)= \frac{\binom{n}{\lfloor n/2 \rfloor}}{2^n}.
\end{equation}

The estimate for general radius $R$ is much harder. In \cite{Kle3}, Kleitman showed that $2^np(2,R,\Ber,n)$ is bounded from above by the sum of the $2 \lfloor R/\sqrt{2} \rfloor$ largest binomial coefficients in $n$. For general $d$, Griggs \cite{Griggs1} proved that 

$$p(d,R,\Ber,n)\le 2^{2^{d-1}-2} \lceil R \sqrt{d} \rceil \frac{\binom{n}{\lfloor n/2\rfloor }}{2^n}.$$

This result was  then improved by Sali \cite{S1,S2} to     

$$p(d,R,\Ber,n)\le 2^{d} \lceil R \sqrt{d} \rceil \frac{\binom{n}{\lfloor n/2\rfloor }}{2^n}.$$

A major improvement is due to Frankl and F\"uredi \cite{FF}, who proved

\begin{theorem}\label{FF1} For any fixed $d$ and $R$
\begin{equation}\label{panda}
p(d, R, \Ber, n) = (1+o(1)) 2^{-n} S(n,s). 
\end{equation} \end{theorem}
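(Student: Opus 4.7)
The lower bound in \eqref{panda} is attained exactly (with no $(1+o(1))$ loss) by the degenerate configuration $a_1=\cdots=a_n=e_1$: then $S_A$ lies on the $e_1$-axis, the optimal ball is centered on that axis, and the one-dimensional theorem (Theorem \ref{Erd1}) gives $\rho_{d,R,\Ber}(A)=2^{-n}S(n,s)$. So the entire content of the statement is the matching upper bound, which is what I would concentrate on.

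My plan is to imitate the Erd\H os / Sperner chain argument from Theorem \ref{theorem:Erdos} in $\R^d$. Fix an optimal center $x_0$ and let
$$\mathcal{F}=\Bigl\{\epsilon\in\{-1,+1\}^n:\sum_{i=1}^n\epsilon_i a_i\in\BB(x_0,R)\Bigr\}.$$
Decompose the Boolean cube into a symmetric chain family; there are $\binom{n}{\lfloor n/2\rfloor}$ chains, and it suffices to show $|\mathcal{F}\cap C|\le s(1+o(1))$ on each chain $C$, because $S(n,s)=s\binom{n}{\lfloor n/2\rfloor}(1+o(1))$ for fixed $s$. Along a chain whose $j$-th step flips coordinate $i_j$, the partial sums $S_j$ form a walk in $\R^d$ with step vectors $\pm 2a_{i_j}$ of norm $\ge 2$. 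In one dimension this walk is monotone, and so at most $s=\lfloor R\rfloor+1$ of its steps lie in the length-$2R$ interval. This monotonicity is precisely what fails when $d\ge 2$: a walk with step norms $\ge 2$ in $\R^d$ can stay inside $\BB(x_0,R)$ for arbitrarily many steps by wiggling in directions orthogonal to any fixed axis, and this is the main obstacle to a chain-by-chain bound.

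To recover monotonicity, I would select, per configuration $A$, a direction $v\in S^{d-1}$ and a ``bad'' set $B\subset[n]$ with $|B|=o(n)$ such that $|\langle a_i,v\rangle|\ge 1-\eta$ for all $i\notin B$, where $\eta=\eta(n)\to 0$ slowly. Such a pair $(v,B)$ exists by an averaging argument on $S^{d-1}$: for unit vectors $a_i$, the mean of $|\langle a_i,v\rangle|^2$ over $v$ is a dimensional constant, and a pigeonhole over a fine $\eta$-net of the sphere (of size $O_d(\eta^{-(d-1)})$) produces a $v$ that is ``nearly parallel'' to $1-o(1)$ of the $a_i$. For a chain whose flip-sequence avoids $B$, projecting onto $v$ turns the walk into a monotone 1D walk with step sizes $\ge 2(1-\eta)$ inside an interval of length $2R$, hence at most $\lfloor R/(1-\eta)\rfloor+1=s$ (for $\eta$ small and fractional part of $R$ bounded away from $0$; boundary cases need a small extra argument) points of the chain can lie in $\mathcal{F}$. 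Chains with flips in $B$ are bounded crudely by $|B|$, contributing $o(1)\cdot\binom{n}{\lfloor n/2\rfloor}$ to the total. Summing yields $|\mathcal{F}|\le (1+o(1))\,S(n,s)$, as desired.

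The hard parts are the two quantitative claims: (i) that a direction $v$ with bad set of size $o(n)$ exists uniformly in $A$ (not merely for generic $A$), and (ii) that the chain-bound is genuinely $s(1+o(1))$ and not merely $O(s)$, which is required to keep the constant sharp. For (i), I would first attempt the spherical-averaging route above and, as a fallback, an induction on $d$ that peels off one coordinate at a time using the one-dimensional Erd\H os estimate on each slice. For (ii), the trick is to use the exact identity of Theorem \ref{Erd1} on the sub-cube indexed by $[n]\setminus B$ rather than a loose per-chain count—i.e., conditioning on $\{\xi_i\}_{i\in B}$ and applying the one-dimensional sharp bound to $\sum_{i\notin B}\xi_i\langle a_i,v\rangle$ directly; the residual ball has radius at most $R+|B|=R+o(n)$, but since we are projecting onto a single axis the relevant $s$ remains $\lfloor R\rfloor+1$ and the prefactor degrades only by $(1+o(1))$. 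I expect step (ii) to be the subtlest, because any crude bookkeeping in the bad-set contribution can swamp the sharp constant.
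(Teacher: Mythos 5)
Your lower bound via the degenerate configuration is correct, but the upper bound cannot be carried out along the chain/Sperner route because the geometric claim in your step (i) is false in the generality you assert. The averaging argument does not produce a direction $v$ with bad set of size $o(n)$, and indeed no such direction need exist: take $a_1,\dots,a_n$ to be an approximately equispread set on $S^{d-1}$. For \emph{every} unit $v$, the number of $a_i$ with $|\langle a_i,v\rangle|\ge 1-\eta$ is proportional to the measure of two spherical caps, roughly $\eta^{(d-1)/2}n$, so the bad set has size $(1-O(\eta^{(d-1)/2}))n$, nearly all of $n$. Knowing $\E_v|\langle a_i,v\rangle|^2 = 1/d$ gives no control on near-parallelism, and pigeonholing over an $\eta$-net of size $O_d(\eta^{-(d-1)})$ with $n$ vectors only yields a $v$ near-parallel to $\Omega(\eta^{d-1}n)$ of them, far from $n-o(n)$. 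The claim can only hold \emph{conditionally}, under a hypothesis that the small ball probability is already near-maximal --- in other words, you implicitly need an inverse Littlewood--Offord statement, which is the real content.

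That inverse statement is precisely Theorem \ref{TV1}, proved in \cite{TV:FF} via Ess\'een's bound (Lemma \ref{lemma:Esseen}) plus a geometric argument, and crucially it concludes flatness with respect to a \emph{hyperplane} (codimension one), not a line. The paper then inducts on $d$: a pigeonhole shrinks the target ball to radius $1/\log n$, Theorem \ref{TV1} with $k=n^{2/3}$ gives a hyperplane $H$ within $1/\log n$ of all but $k$ of the $a_i$, and projecting onto $H$, conditioning on the $k$ exceptional signs, and rescaling by $1/(1-1/\log n)$ contradicts the $(d-1)$-dimensional bound (using that $2^{-n}S(n,s)$ is decreasing in $n$ and $k=o(n)$). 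Your plan tries to jump from $\R^d$ directly to one dimension, which would require a line version of Theorem \ref{TV1}; such a statement is simply not true. Moreover, even granted a direction $v$, the chain bookkeeping is more delicate than you suggest: for $|B|$ of the size inverse theorems actually deliver (around $n^{2/3}$), almost \emph{every} chain in a symmetric chain decomposition flips a coordinate in $B$ --- a typical chain has length $\Theta(\sqrt{n})$, so avoids a random $n^{2/3}$-subset with probability $\exp(-\Theta(n^{1/6}))$ --- so the ``good'' chains do not account for $(1-o(1))\binom{n}{\lfloor n/2\rfloor}$. The sounder version of your fallback is exactly the paper's one-dimension-at-a-time induction, but the ingredient enabling each reduction is the inverse theorem, not the one-dimensional Erd\H{o}s estimate applied slicewise.
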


This result is asymptotically sharp. In view of \eqref{ERD1} and \eqref{eqn:Kle:1},  it is  natural to ask if 
 the exact estimate
 \begin{equation} \label{panda-2} p(d, R, \Ber, n) = 2^{-n} S(n,s), \end{equation}  
holds for all fixed dimension $d$.
  However, this has turned out to be false.  The authors of  \cite{Kle2, FF}  observed that \eqref{panda-2}  fails if $s \geq 2$ and
\begin{equation}\label{kappas}
 R > \sqrt{(s-1)^2+1} .
\end{equation}

\begin{example}
Take $v_1= \dots =v_{n-1} =\Be_1$ and $v_n=\Be_2$, where $\Be_1, \Be_2$ are two orthogonal unit vectors. For this system, let $B$ be the ball of radius $R$ centered at $v=(v_1+\dots+v_{n})/2$. Assume that $n$ has the same parity with $s$, then by definition we have 

$$\P(S_V\in \BB(v,R))= 2 \sum_{(n-s)/2 \le i \le (n+s)/2}\binom{n-1}{i}/2^n > 2^{-n} S(n,s).$$
\end{example}

Frankl and F\"uredi raised the following problem. 

\begin{conjecture} \cite[Conjecture 5.2]{FF} \label{conj:FF}
Let $R,d$ be fixed. If  $s-1 \le  R  < \sqrt {(s-1)^2 +1}$ and 
$n$ is sufficiently large, then 

$$p(d, R, \Ber, n) = 2^{-n} S(n,s). $$
\end{conjecture}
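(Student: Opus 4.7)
The strategy is to combine Theorem~\ref{FF1} with a sharp inverse Littlewood-Offord theorem to pin down the structure of any near-extremal $A$ as essentially one-dimensional with integer coefficients, and then use the strict inequality $R < \sqrt{(s-1)^2+1}$ to rule out the remaining exceptions.

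Suppose, toward a contradiction, that some $A = \{a_1, \ldots, a_n\}$ with $\|a_i\| \ge 1$ achieves
$$ \P(S_A \in \BB(x,R)) > 2^{-n} S(n,s). $$
Theorem~\ref{FF1} forces this probability to equal $(1+o(1)) 2^{-n} S(n,s) = \Theta(n^{-1/2})$, the sharp rate; in this regime the strongest inverse Littlewood-Offord theorems (developed later in this survey) apply. Generalized arithmetic progression structures of rank at least $2$ are excluded by the $n^{-1/2}$ rate, so after a rotation one obtains a unit vector $u \in \R^d$ and an exceptional set $I \subseteq [n]$ of size $|I| = O_R(1)$ such that $a_i = b_i u$ with $b_i \in \Z \setminus \{0\}$ for every $i \notin I$, while each exceptional $a_i$, $i \in I$, has orthogonal component $\gamma_i := a_i - \langle a_i, u \rangle u$ of norm at least a threshold $\delta_0 > 0$ that we are free to tune.

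For each $\sigma \in \{-1,+1\}^I$, set $y_\sigma := \sum_{i \in I} \sigma_i \gamma_i$ and condition on $\epsilon|_I = \sigma$. Any valid extension must satisfy the orthogonal constraint $\|y_\sigma - x^\perp\| \le R$, after which the parallel sum $\sum_{i \notin I} \epsilon_i b_i$ is restricted to an interval of half-length $r_\sigma := \sqrt{R^2 - \|y_\sigma - x^\perp\|^2}$. Applying Erd\H{o}s' Theorem~\ref{Erd1} to the nonzero integer system $\{b_i\}_{i \notin I}$ bounds the number of such extensions by $S(n-|I|, \lfloor r_\sigma \rfloor + 1)$, whence
$$ 2^n \P(S_A \in \BB(x,R)) \le \sum_{\sigma \in \{-1,+1\}^I} S(n - |I|, \lfloor r_\sigma \rfloor + 1). $$

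Set $\delta := \sqrt{R^2 - (s-1)^2} \in [0,1)$, which is well defined by hypothesis; then $\|y_\sigma - x^\perp\| > \delta$ implies $\lfloor r_\sigma \rfloor + 1 \le s-1$. With $\delta_0 > \delta$, pair each $\sigma$ with its flip $\sigma'$ at a fixed index $i \in I$: since $\|\gamma_i\| > \delta$, the triangle inequality forces at least one of $\|y_\sigma - x^\perp\|, \|y_{\sigma'} - x^\perp\|$ to exceed $\delta$, so at least half of the $\sigma$'s fall into the reduced range $\lfloor r_\sigma \rfloor + 1 \le s-1$. Iterating over $I$ and invoking a routine Pascal-identity estimate on binomial sums then yields $\sum_\sigma S(n-|I|, \lfloor r_\sigma \rfloor + 1) \le S(n,s)$, with equality only at $|I| = 0$, contradicting the assumption. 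The hard part is the invocation of the inverse theorem with threshold $\delta_0 > \delta$: as $R \uparrow \sqrt{(s-1)^2+1}$ one has $\delta \uparrow 1$, and it is not clear that the presently available inverse Littlewood-Offord theorems deliver structural control that is sharp enough uniformly up to the critical value --- precisely the boundary at which the Frankl-F\"uredi counterexample appears --- so closing this last sliver of slack is where the real difficulty of the conjecture lies.
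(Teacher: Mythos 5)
The paper proves Conjecture~\ref{conj:FF} (stated as Theorem~\ref{TV-FF}) in Section~\ref{section:FF} via a quite different mechanism, and your proposal has several genuine gaps.

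The core difficulty is your claimed structural conclusion: ``an exceptional set $I$ of size $|I| = O_R(1)$ such that $a_i = b_i u$ with $b_i \in \Z \setminus \{0\}$ for every $i \notin I$.'' No inverse Littlewood--Offord theorem in this survey (or elsewhere) delivers either part of this. Theorem~\ref{TV1}, which is what the paper actually uses, only produces a hyperplane $H$ with $\dist(a_i, H) \ge 1$ for at most $k$ indices, and in the application one takes $k = n^{2/3}$ --- polynomially many exceptions, not $O(1)$. Theorems~\ref{theorem:ILO:optimal} and~\ref{theorem:ILO:continuous} give exceptional sets of size $\eps n$ or $n^{\eps}$. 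With $|I| = \Theta(n^{2/3})$, your union over $\sigma \in \{-1,+1\}^I$ has $2^{\Theta(n^{2/3})}$ terms, which destroys the estimate. The integrality claim $b_i \in \Z$ is also unjustified: the inverse theorems deliver GAP or near-one-dimensional structure over $\R$, never literal integers, and even the rank-one reduction leaves the scalars real. Finally, your bucketing into ``$a_i = b_i u$, $b_i \in \Z$'' vs.\ ``$\|\gamma_i\| > \delta_0$'' is not exhaustive: a vector nearly parallel to $u$ with a non-integer projection falls into neither class. You candidly flag at the end that the inverse theorems may not be ``sharp enough uniformly up to the critical value''; that is indeed the gap, and it is not a small one --- the argument as written cannot be closed without a structure theorem that does not exist.

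The paper's actual proof sidesteps all of this. It first establishes the asymptotic Theorem~\ref{FF1} by induction on $d$ using Theorem~\ref{TV1} (an Esseen-bound-plus-geometry inverse result saying $A$ is almost flat, with $O(n^{2/3})$ far-from-hyperplane exceptions). For the exact Conjecture~\ref{conj:FF} with $s \ge 3$, it iterates to find a line $L$ with nearly all $a_i$ close to $L$, projects by $\pi$ onto $L$, and splits into two cases. If $|\pi(a_i)| > R/s$ for all $i$, a direct rescaling of Theorem~\ref{Erd1} gives the bound. Otherwise it isolates a single vector $a_n$ with $|\pi(a_n)| \le R/s$ (hence large orthogonal component, since $\|a_n\| \ge 1$), conditions on the remaining exceptional signs, and invokes the parallelogram law: when $|S_{\pi(A')} - \pi(x_{B'})| > \sqrt{R^2-1}$, at most one of $\xi_n = \pm 1$ can land in the ball, so that regime contributes only with weight $1/2$. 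This yields the bound $\tfrac{1}{2}\bigl(2^{-(n-k)}S(n-k, s-1) + 2^{-(n-k)}S(n-k, s)\bigr)$, which by Stirling is $\sqrt{2/\pi}\,(s - \tfrac{1}{2} + o(1))/\sqrt{n}$, strictly below $2^{-n}S(n,s) = \sqrt{2/\pi}\,(s + o(1))/\sqrt{n}$. The condition $R < \sqrt{(s-1)^2 + 1}$ enters precisely as $\sqrt{R^2-1} < s-1$, which drops the inner radius by one unit; no threshold $\delta_0$ needs to be tuned, no integer structure is invoked, and only one vector is handled via the sign-flip pairing rather than all of $I$. That single-vector parallelogram-law gain is the missing idea in your approach.
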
 

The conjecture has been confirmed for $s=1$ by Kleitman (see \eqref{eqn:Kle:1}) and for $s=2,3$ by Frankl and F\"uredi \cite{FF} (see \cite[Theorem 1.2]{FF}).  Furthermore, Frankl and F\"uredi  showed that 
\eqref{panda-2} holds under a stronger assumption that 
$s-1 \le  R \le  (s-1) + \frac{1}{10s^2}$. A few years ago, Tao and the second author  proved Conjecture \ref{conj:FF} for  $s \ge 3$.  This, combined with the above mentioned earlier results, established 
the conjecture in full generality \cite{TV:FF}. 

\begin{theorem} \label{TV-FF} Let $R,d$ be fixed. Then there exists a positive number $n_0=n_0(R,d)$ such that the following holds for all $n\ge n_0$ and $s-1 \le  R  < \sqrt {(s-1)^2 +1}$ 

$$p(d, R, \Ber, n) = 2^{-n} S(n,s). $$
\end{theorem}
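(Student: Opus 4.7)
The plan is to reduce Theorem \ref{TV-FF} to Erd\H{o}s' one-dimensional Theorem \ref{Erd1} by exploiting the strict gap $R<\sqrt{(s-1)^2+1}$. Let $V=\{v_1,\dots,v_n\}\subset\R^d$ with $|v_i|\ge 1$, and let $\BB(x,R)$ be chosen to (nearly) attain $p(d,R,\Ber,n)$. I would identify a unit vector $e\in\R^d$ along which the $v_i$ concentrate, write $v_i=\alpha_i e+w_i$ with $w_i\perp e$, and condition on the perpendicular sum $\sum_i\xi_i w_i$.

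After conditioning, the event $\{S_V\in\BB(x,R)\}$ becomes a one-dimensional event $\{\sum_i\xi_i\alpha_i\in I(\xi)\}$, where
\begin{equation*}
I(\xi)=\left\{t\in\R:(t-x\cdot e)^2+\Bigl\|\sum_i\xi_i w_i-x^\perp\Bigr\|^2<R^2\right\}
\end{equation*}
is an interval of length $2\sqrt{R^2-\|\sum_i\xi_i w_i-x^\perp\|^2}$ when non-empty. The crucial geometric observation is that when $\|\sum_i\xi_i w_i-x^\perp\|\ge 1$, this length is strictly less than $2(s-1)$, because $R^2<(s-1)^2+1$. The one-dimensional bound \eqref{ERD1} then shows that such sign patterns contribute at most $2^{-n}S(n,s-1)$, safely below the target $2^{-n}S(n,s)$.

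The delicate residual regime is when $\|\sum_i\xi_i w_i-x^\perp\|<1$ for most sign patterns, so that the perpendicular fluctuations are small. If every $|\alpha_i|\ge 1$, then Erd\H{o}s' Theorem \ref{Erd1} applied directly to $\{\alpha_i\}$ yields the exact bound $2^{-n}S(n,s)$ and we are finished. Otherwise $|\alpha_i|<1$ for some $i$, forced by $|v_i|\ge 1$ together with a nontrivial $w_i$. Aggregating these contributions, a second-moment estimate for $\sum_i\xi_i w_i$ shows its norm concentrates at scale $\sqrt{\sum_i\|w_i\|^2}$; if this scale exceeds $1$ we revert to the previous case, and otherwise a compactness/iteration argument (refining the choice of $e$) forces an extremizer to be genuinely 1-dimensional.

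The main obstacle lies in the final exact-constant step. Frankl--F\"uredi's Theorem \ref{FF1} already supplies the $(1+o(1))$ bound; what remains is stability, namely ruling out near-1-dimensional configurations that could in principle match the one-dimensional extremum. The hypothesis $R<\sqrt{(s-1)^2+1}$ provides a uniform geometric margin (making $I(\xi)$ strictly short whenever perpendicular sums are of order $1$), while the hypothesis $n\ge n_0(R,d)$ ensures perpendicular fluctuations grow with $n$ and so cannot be absorbed inside the slack of the Sperner argument. An inverse Littlewood--Offord theorem (the main theme of this paper) is the natural tool here: it forces near-extremizers onto rigid arithmetic-geometric structure, and in our setting one would use it to conclude that the $v_i$ must in fact lie on a single line, completing the reduction.
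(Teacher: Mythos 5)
Your proposal captures the right geometric intuition --- the significance of the strict gap $R<\sqrt{(s-1)^2+1}$ (so that an orthogonal offset of size $\ge 1$ shrinks the effective one-dimensional interval below half-length $s-1$), and the overall plan of reducing to Erd\H{o}s' one-dimensional Theorem \ref{Erd1} after a projection onto a dominant line. This is indeed the skeleton of the paper's argument. However, there are two genuine gaps.

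First, the conditioning as you set it up does not reduce to a clean one-dimensional Littlewood--Offord problem. You write $v_i=\alpha_i e+w_i$ and propose to condition on $\sum_i\xi_i w_i$, after which you want to apply Theorem \ref{Erd1} to $\sum_i\xi_i\alpha_i$. But the \emph{same} signs $\xi_i$ drive both the parallel and perpendicular sums, so after conditioning on the perpendicular sum, $\sum_i\xi_i\alpha_i$ is no longer a free Bernoulli sum over the $\alpha_i$, and the one-dimensional bound is not applicable. The paper avoids this by first using the geometric inverse theorem (Theorem \ref{TV1}), iterated down in dimension, to find a line $L$ within distance $O(1/\log n)$ of all but $O(n^{2/3})$ of the $v_i$; it then either \emph{projects} the event onto $L$ (a deterministic monotonicity step, not a conditioning) or conditions on only those $O(n^{2/3})$ exceptional coordinates. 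This keeps the remaining sum a genuine free Bernoulli sum to which Theorem \ref{Erd1} applies.

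Second, and more centrally, the ``delicate residual regime'' is exactly the crux, and your proposal does not actually close it. You invoke a second-moment estimate, compactness/iteration, and an appeal to inverse Littlewood--Offord to force near-extremizers onto a single line, but none of this yields the exact bound $2^{-n}S(n,s)$ with the right constant --- inverse theorems of the type in this paper have $O(1)$ losses that would swallow the fine distinction between $S(n,s)$ and $(1+o(1))S(n,s)$. The paper's actual mechanism is elementary and quite different: after reducing to near-1D, it splits on whether all $|\pi(v_i)|>R/s$ (in which case rescaling and Theorem \ref{Erd1} applied to the projections gives a contradiction), or some $|\pi(v_n)|\le R/s$ (forcing $v_n$ to have a perpendicular component of size $\ge\sqrt{1-R^2/s^2}$). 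In the latter case, conditioning on the exceptional signs and then on $\xi_n$, a \emph{parallelogram-law} argument shows that whenever the projected sum is farther than $\sqrt{R^2-1}$ from the projected center, at most one of $S_{V'}\pm a_n$ can lie in $B'$, hence a factor $1/2$. Averaging the Erd\H{o}s bounds at radii $\sqrt{R^2-1}<s-1$ and $R+R/s<s$ then gives, via Stirling, $\sqrt{2/\pi}\,\frac{s-1/2+o(1)}{\sqrt n}$, strictly below $2^{-n}S(n,s)=\sqrt{2/\pi}\,\frac{s+o(1)}{\sqrt n}$. This $1/2$ gain is the indispensable quantitative step your outline leaves open.
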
 

We will present a short proof of Theorems \ref{FF1} and \ref{TV-FF} in Section \ref{section:FF}.

%{\bf To anh Van: the work of Kleitman on Hilbert/normed spaces is already conveyed in \eqref{eqn:Kle:1}, so I omitted}.

\section{Refinements by restrictions on $A$ }  \label{section:restriction}

A totally different  direction  of research started with the observation that  the upper bound in 
\eqref{eqn:Erdos} improves   significantly if we make some extra assumption on the additive structure of  $A$. 
In this section, it is more natural to present the results in discrete form. In the discrete setting, one  considers the probability that $S_A$ takes  a single value (for instance, $\P (S_A=0)$).

 Erd\H os's result in the first section implies

\begin{theorem}\label{theorem:Erdos1}
Let $a_i$ be non-zero real numbers, then

$$\sup_{x \in \R} \P ( S_A =x)\le \frac{\binom{n}{\lfloor n/2 \rfloor}}{2^n} = O(n^{-1/2}) . $$
\end{theorem}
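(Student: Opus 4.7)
The statement is essentially a direct corollary of Theorem \ref{theorem:Erdos}, with only two small adjustments needed: we must drop the normalization $|a_i|\ge 1$ down to $a_i\neq 0$, and we must replace the "interval of length $2$" event by the "single point" event.

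My plan is first to reduce to the normalized case by rescaling. Fix $x\in\R$ and set $c:=\min_i |a_i|>0$, which is strictly positive by hypothesis. Let $b_i:=a_i/c$, so that $|b_i|\ge 1$ for every $i$, and observe
\[
\P(S_A=x) \;=\; \P\!\left(\sum_{i=1}^n b_i\xi_i = x/c\right).
\]
Thus it suffices to prove the bound under the additional assumption $|a_i|\ge 1$, which puts us squarely in the setting of Theorem \ref{theorem:Erdos}.

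Next, I would pass from a point event to an interval event. Since $\{x\}\subset (x-1,x+1)$, monotonicity of probability gives
\[
\P(S_A = x) \;\le\; \P\bigl(S_A \in (x-1,x+1)\bigr),
\]
and the right-hand side is bounded by $\binom{n}{\lfloor n/2\rfloor}/2^n$ by Theorem \ref{theorem:Erdos} applied to the open interval $I=(x-1,x+1)$ of length $2$. Taking the supremum over $x\in\R$ and combining with the standard estimate $\binom{n}{\lfloor n/2\rfloor}/2^n = O(n^{-1/2})$ (Stirling) yields the claim.

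There is no real obstacle here: the proof is essentially a one-line reduction. The only point worth flagging is that the hypothesis $a_i\neq 0$ is used precisely to guarantee $c>0$, so that the rescaling step is legitimate; if some $a_i$ were allowed to vanish, the corresponding $\xi_i$ would contribute nothing to $S_A$ and one would effectively be working with fewer summands, but after discarding the zero coefficients the same argument applies (with a weaker bound in $n$ rather than in the original dimension).
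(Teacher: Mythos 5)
Your proof is correct, and it makes explicit the derivation the paper has in mind in Section~\ref{section:restriction} when it writes that ``Erd\H os's result in the first section implies'' the statement: rescale by $c=\min_i|a_i|>0$ so that $|b_i|\ge 1$, then bound the point event by the length-$2$ interval event and invoke Theorem~\ref{theorem:Erdos}. Note, however, that the paper also gives a second, genuinely different proof in Section~\ref{section:ItoF}: there the bound is obtained by contradiction from the inverse Theorem~\ref{theorem:ILO:optimal} (setting $\eps=.1$, $C=1/2$), showing that if $\rho(A)\ge c_1 n^{-1/2}$ for $c_1$ large then the GAP that must contain $.9n$ of the $a_i$ is forced to have size less than $1$, hence to be empty. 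Your route is shorter and recovers the sharp constant $\binom{n}{\lfloor n/2\rfloor}/2^n$, while the inverse-theorem route only yields $O(n^{-1/2})$; the point of Section~\ref{section:ItoF} is not optimality but to demonstrate that the inverse machinery reproduces all the classical forward bounds uniformly, which is why the paper presents that argument even though it is weaker here.
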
 

 Erd\H{o}s and Moser  \cite{EM} showed that under the condition that the $a_i$ are different, the bound improved significantly. 

\begin{theorem}\label{theorem:SSz1}  Let $a_i$ be distinct real numbers, then 

$$\sup_{x \in \R} \P ( S_A =x) =  O(n^{-3/2} \log n) . $$
\end{theorem}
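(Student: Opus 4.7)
By reversing signs of the $\xi_i$ we may assume $0<a_1<a_2<\cdots<a_n$, so that
\[
\P(S_A=x) \;=\; 2^{-n}\,|\mathcal{F}_x|,
\qquad
\mathcal{F}_x \;:=\; \Bigl\{X\subseteq [n] : \sum_{i\in X}a_i - \sum_{i\notin X}a_i = x\Bigr\},
\]
and the goal becomes $|\mathcal{F}_x|=O(2^n n^{-3/2}\log n)$ uniformly in $x$.

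As in Erd\H os' proof of Theorem \ref{theorem:Erdos}, positivity of the $a_i$ forces $\mathcal{F}_x$ to be an antichain in $2^{[n]}$. The distinctness hypothesis contributes a new restriction: two members of $\mathcal{F}_x$ lying at Hamming distance $2$ on the same level would differ by a swap $i\leftrightarrow j$, forcing $a_i=a_j$ and contradicting distinctness. Hence any two distinct elements of $\mathcal{F}_x$ satisfy $|X\triangle Y|\ge 3$. A shadow-counting argument then exploits this swap-free property on each level: the lower shadow map $X\mapsto \{X\setminus\{i\}:i\in X\}$ sends distinct members of $\mathcal{F}_x^k := \mathcal{F}_x\cap\binom{[n]}{k}$ to disjoint $k$-element collections in $\binom{[n]}{k-1}$, since two members sharing a $(k-1)$-subset would be at Hamming distance $2$. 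This yields the per-level bound
\[
|\mathcal{F}_x^k|\;\le\; \binom{n}{k-1}/k,
\]
which is $O(\binom{n}{k}/n)$ for $k\sim n/2$.

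To extract the target $n^{-3/2}$ rate I would combine this per-level bound with the antichain structure via a symmetric chain decomposition of $2^{[n]}$. Each chain meets $\mathcal{F}_x$ in at most one element (Sperner), while the per-level density bound limits how many chains can actually hit $\mathcal{F}_x$ at a given level. Partitioning chains dyadically by the distance of their center from the median level $n/2$, applying the per-level bound on each dyadic slab, and summing the resulting $O(\log n)$ geometric-like terms should give
\[
|\mathcal{F}_x|\;=\;O\!\Bigl(\frac{2^n\log n}{n^{3/2}}\Bigr).
\]

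The main obstacle is that the per-level bound, combined only with the LYM inequality, gives merely $O(2^n/n)$, so the extra factor of $n^{-1/2}$ must come from genuinely using the chain structure of $2^{[n]}$ (equivalently, from localizing Sperner's lemma to shorter sub-chains). I expect this step to be the delicate one. The $\log n$ is the natural loss from the dyadic summation and is the reason this combinatorial approach falls just short of the sharper $O(n^{-3/2})$ of S\'ark\"ozy--Szemer\'edi, whose removal would require a genuinely Fourier-analytic (Hal\'asz-type) tool rather than a pure Sperner-type argument.
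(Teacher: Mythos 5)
Your reduction and the two combinatorial properties you extract are a sensible starting point, and the per-level shadow bound $|\mathcal{F}_x^k|\le\binom{n}{k-1}/k=\binom{n+1}{k}/(n+1)$ is correct. But there are two gaps, the second of which I believe is fatal to the plan as sketched.

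First, a smaller point: reversing signs to reach $0<a_1<\cdots<a_n$ does \emph{not} preserve distinctness. The hypothesis only says the $a_i$ are distinct reals, so pairs $a_i=-a_j$ are allowed; after the sign flip they collide, and the ``swap-free'' argument fails exactly on those pairs (a same-level swap $i\leftrightarrow j$ then leaves the sum unchanged). This is repairable (condition on the $\pm$ pairs), but as written the reduction is wrong, and even Erd\H os' proof only uses the sign flip for the antichain step, where distinctness is irrelevant.

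Second, and more seriously, the final combination step is not merely ``delicate''--- I do not see how it can close the gap from $O(1/n)$ to $O(n^{-3/2}\log n)$ using only the two properties you have extracted. Note that $\binom{n}{k-1}/k=\binom{n+1}{k}/(n+1)$, so summing your per-level bound over all $k$ gives \emph{exactly} $2^{n+1}/(n+1)$, i.e.\ $O(2^n/n)$; you observe this yourself. The trouble is that the symmetric-chain / Sperner argument is, in this setting, equivalent to the LYM constraint and adds nothing: ``at most $\binom{n}{k-1}/k$ chains are hit at level $k$, and each chain is hit at most once'' again yields $\sum_k\binom{n}{k-1}/k=O(2^n/n)$. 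More concretely, the linear program consisting of LYM ($\sum_k f_k/\binom{n}{k}\le1$), the per-level shadow bound, and even the strictly stronger disjoint-closed-neighborhood constraints $f_k+(k+1)f_{k+1}+(n-k+1)f_{k-1}\le\binom{n}{k}$ admits the feasible solution $f_k=\tfrac{n}{n+1}\binom{n}{k}/n$, whose total is $2^n/(n+1)$. So no dyadic/chain/counting argument that only uses ``antichain'' and ``pairwise Hamming distance $\ge3$'' can beat $O(2^n/n)$; the extra $n^{-1/2}$ must come from the full linear constraint that $\sum_{i\in X}a_i$ is \emph{constant} on $\mathcal{F}_x$, not just from its two combinatorial shadows. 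By contrast, the paper does not argue this way at all: it deduces the stronger, log-free bound $O(n^{-3/2})$ directly from the inverse Littlewood--Offord Theorem \ref{theorem:ILO:optimal} --- if $\rho(A)\ge c_1 n^{-3/2}$ then $0.9n$ of the $a_i$ lie in a GAP of rank $r\ge1$ and size $O(c_1^{-1}n^{3/2-r/2})$, which for $c_1$ large is below $0.9n$, contradicting distinctness.
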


They conjectured that the $\log n$ term is not necessary. 
S\'ark\H{o}zy and Szemer\'edi's  \cite{SSz} confirmed this conjecture 

\begin{theorem}\label{theorem:SSz1}  Let $a_i$ be distinct real numbers, then 

$$\rho_A:= \sup_{x \in \R} \P ( S_A =x) =  O(n^{-3/2}) . $$
\end{theorem}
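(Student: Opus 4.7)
After sign-flipping and reindexing, assume $0 < a_1 < a_2 < \cdots < a_n$; fix $y \in \R$ and set $\CF_y := \{X \subseteq [n] : \sum_{i \in X} a_i = y\}$. Since $a_i > 0$, the map $X \mapsto \sum_{i \in X} a_i$ is strictly monotone along inclusion chains in $2^{[n]}$, so $\CF_y$ is an antichain and Sperner's lemma recovers Erd\H{o}s's bound $|\CF_y| \le \binom{n}{\lfloor n/2 \rfloor} = O(2^n n^{-1/2})$; the task is to save an additional factor of $n^{-1}$ using distinctness.

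I would pursue the Fourier-analytic route later systematised by Hal\'asz. Rescaling and Diophantine approximation let us assume $a_i \in \Z$, at which point distinctness forces $\sum_j a_j^2 \geq c n^3$ (sorted distinct positive integers satisfy $a_j \geq j$ after flipping signs). Fourier inversion on $\R/\Z$ then gives
\[
\P(S_A = y) \;\le\; \int_0^1 \prod_{j=1}^n |\cos(2\pi t a_j)|\, dt.
\]
Near $t = 0$ (and, by symmetry, near $t = 1$), one has $\|t a_j\| = |t a_j|$ for each $j$, so the integrand behaves like a Gaussian of width $\asymp (\sum_j a_j^2)^{-1/2}$; this region contributes $O((\sum_j a_j^2)^{-1/2}) = O(n^{-3/2})$. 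In the bulk, the elementary bound $|\cos(2\pi u)| \le \exp(-c\|u\|^2)$ (with $\|\cdot\|$ the distance to $\Z$) reduces the estimate to $\int \exp(-c f(t))\, dt$ where $f(t) := \sum_j \|t a_j\|^2$. A second-moment computation for $f$ controls the sub-level sets $\{t : f(t) \le k\}$ via the additive-energy bound
\[
\#\{(i_1,i_2,i_3,i_4) : a_{i_1} + a_{i_2} = a_{i_3} + a_{i_4}\} \;=\; O(n^3),
\]
which holds by pigeonhole since for any fixed $s$ the equation $a_i + a_j = s$ has at most $n$ solutions ($i$ determines $j$). This makes the bulk contribution also $O(n^{-3/2})$.

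The principal obstacle is the sharp sub-level-set estimate at scale $k \asymp n$. A crude Chebyshev bound from $\E f = n/12$ alone saves only $n^{-1/2}$ and recovers Erd\H{o}s's bound; the Erd\H{o}s--Moser pairwise-difference trick---applying the base Erd\H{o}s theorem to the $\binom{n}{2}$ distinct differences $a_j - a_i$---costs a logarithmic factor. Removing this logarithm requires the additive-energy input above together with a careful interpolation between the Gaussian regime near the origin and the bulk. This dichotomy---small additive energy for ``unstructured'' sequences, large only for ``structured'' ones---is precisely the structural phenomenon that the inverse Littlewood--Offord theorems of the subsequent sections will refine and quantify.
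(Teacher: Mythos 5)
Your route is the Hal\'asz Fourier-analytic one, whereas the paper's own proof of this theorem (Section~\ref{section:ItoF}) is a short contradiction argument built on the Optimal Inverse Littlewood--Offord Theorem~\ref{theorem:ILO:optimal}: if $\rho(A) \ge c_1 n^{-3/2}$ with $c_1$ large, then at least $0.9n$ elements of $A$ lie in a GAP $Q$ of rank $r\ge 1$ and size $O(c_1^{-1} n^{3/2-r/2}) \le O(c_1^{-1}n)$, which can be forced below $0.9n$; since the $a_i$ are distinct, $|Q|\ge 0.9n$, contradiction. Your route is also correct in outline --- the paper records it as Corollary~\ref{cor:Hal} with $l=1$ and $R_1=n$, which is exactly $O(n^{-5/2}\cdot n)=O(n^{-3/2})$ --- and it is more self-contained, whereas the paper's proof outsources all the difficulty to a heavy black box.

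That said, there is a genuine gap, located exactly where you flag it, and one of your heuristics points the wrong way. The second moment of $f(t):=\sum_j\|ta_j\|_{\R/\Z}^2$ is \emph{not} governed by additive energy: expanding $\|u\|^2=\tfrac1{12}+\sum_{k\ne0}c_k e^{2\pi i k u}$ with $c_k\asymp k^{-2}$, the off-diagonal covariance
\begin{equation*}
\int_0^1\Bigl(\|ta_i\|^2-\tfrac1{12}\Bigr)\Bigl(\|ta_j\|^2-\tfrac1{12}\Bigr)\,dt=\sum_{k a_i+k'a_j=0}c_k c_{k'}
\end{equation*}
is controlled by $\gcd(a_i,a_j)$, not by how often $a_i+a_j=a_k+a_l$. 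The quantities $R_l$ in Hal\'asz's corollary enter via Parseval applied to $L^{2l}$ norms of the exponential sum $\sum_j e^{2\pi i t a_j}$, not via moments of $f$; and since $l=1$ with $R_1=n$ already yields $n^{-3/2}$, the pigeonhole detour through $R_2=O(n^3)$ buys nothing. More to the point, the sharpened sub-level-set estimate --- improving $\mu\{f\le m\}\lesssim\sqrt{m/n}$, which only recovers Erd\H{o}s, to a bound strong enough to give $n^{-3/2}$ --- is the entire content of the theorem. It requires a sumset-dilation argument exploiting the seminorm structure of $\sqrt{f}$ together with a decoupling step that actually uses distinctness; neither a Chebyshev bound on $f$ (which you correctly note only saves $n^{-1/2}$) nor the additive-energy count supplies it. What you have written is an accurate plan of attack, with the decisive lemma asserted rather than proved.
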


In \cite{Stan}, Stanley found a different (algebraic) proof for a more precise result, using the hard-Lepschetz theorem from algebraic geometry. 

\begin{theorem}[Stanley's theorem] \label{theorem:Stan}
Let $n$ be odd and $A_0 :=\big\{- \frac{n-1}{2}, \dots, \frac{n-1}{2}  \big \}$. Let $A$ be any set of $n$ distinct real numbers, then

 $$\rho(A):= \sup_{x\in \R} \P(S_A=x) \le \sup_{x\in \R} \P(S_{A_0}=x).$$
\end{theorem}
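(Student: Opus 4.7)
The plan is to follow Stanley's algebraic approach via the hard Lefschetz theorem. By scaling and a continuity argument (distinctness being an open condition), I may first assume that $A = \{a_1 < \cdots < a_n\}$ consists of distinct positive integers. Setting $N_A(k) := \#\{S \subseteq [n] : \sum_{i \in S} a_i = k\}$, one has $2^n \rho(A) = \max_k N_A(k)$, with generating function
$$\sum_k N_A(k)\, x^k \;=\; \prod_{i=1}^n (1 + x^{a_i}).$$
The task is thus to compare the peak of this polynomial with the peak of the analogous polynomial for $A_0$.

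Next I would interpret $\prod_i(1+x^{a_i})$ as the Hilbert series of a graded ring that arises as the cohomology algebra of a smooth projective variety --- concretely $X = (\mathbf{P}^1)^n$, with the hyperplane class of the $i$-th factor assigned degree $a_i$. Hard Lefschetz, applied to an ample class on $X$, supplies isomorphisms between graded pieces symmetric about $\tfrac{1}{2}\sum_i a_i$, which immediately yields the symmetric unimodality of the sequence $(N_A(k))_k$. Equivalently, one constructs an $\mathfrak{sl}_2(\mathbf{C})$-action on $\mathbf{C}^{2^n}$ whose weight decomposition realises $N_A$; the unimodality is then a consequence of complete reducibility together with the well-known structure of finite-dimensional irreducibles of $\mathfrak{sl}_2$.

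The last and most delicate step is the comparison between the central coefficient of $\prod_i(1+x^{a_i})$ and that of the corresponding product for $A_0$. Here more than unimodality is needed, and Stanley achieves the bound by a representation-theoretic argument --- relating different choices of $A$ to different $\mathfrak{sl}_2$-gradings on the same ambient ring and exploiting that $A_0$, being the tightest packing of $n$ distinct integers, forces the greatest number of subset-sum collisions at the symmetric point. This comparison is the main obstacle of the proof: the Lefschetz/$\mathfrak{sl}_2$ machinery supplies unimodality almost for free, but extracting the sharp inequality $\rho(A) \le \rho(A_0)$ uniformly over all admissible $A$ genuinely needs the algebro-geometric input. Purely combinatorial techniques of Sperner type recover only the $O(n^{-3/2})$ order of magnitude from Theorem \ref{theorem:SSz1} and fall short of the exact extremal identification asserted here.
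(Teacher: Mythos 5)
The paper does not prove this theorem; it is quoted from Stanley \cite{Stan} and Proctor \cite{Proctor}. Your attempt has a concrete error in Step 3. The sequence $N_A(k) := \#\{S \subseteq [n] : \sum_{i\in S}a_i = k\}$ is symmetric (by complementation), but for general distinct integers it is \emph{not} unimodal: already $A = \{1,2,100\}$ gives $N_A = (1,1,1,1,0,\dots,0,1,1,1,1)$, which drops to zero between two plateaus. The hard Lefschetz appeal fails accordingly: after regrading the cohomology of $(\mathbf{P}^1)^n$ so that the $i$-th hyperplane class has degree $a_i$, an ample class $\omega = \sum_i c_i x_i$ (all $c_i>0$) is no longer homogeneous, so cup product with $\omega^k$ does not carry one graded piece to another, and you extract no unimodality statement about the weighted Hilbert series $\prod_i(1+q^{a_i})$. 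Hard Lefschetz constrains the standard cohomological grading, not an $A$-dependent regrading of the same ring. (A secondary issue: the opening reduction needs more than ``scaling and continuity'', since $\rho$ is scale-invariant but not translation-invariant and is not continuous on the locus of distinct tuples --- e.g.\ $\rho(\{-1,\eps,1\}) = 1/4$ for all small $\eps\ne 0$ while $\rho(\{-1,0,1\}) = 1/2$. A Freiman-isomorphism-type argument, as in Remark \ref{remark:Freiman}, is the right device.)

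The algebraic machinery does appear in the actual proofs, but in a different place. Roughly, one fixes a single ranked poset modelled on the extremal configuration $A_0$ (with rank essentially $\sum_{i\in S}i$, so that the level sizes reproduce the distribution for $A_0$), and hard Lefschetz (Stanley) or an explicit $\mathfrak{sl}_2$-representation (Proctor) is used once, to prove that this fixed poset has the strong Sperner property. The dependence on the arbitrary distinct $a_i$ is then handled on the elementary side, by showing that the fibres of $S\mapsto\sum_{i\in S}a_i$ form antichains of that poset and hence are no larger than its largest level. So the comparison step you describe as ``the main obstacle'' is not a further round of representation theory comparing different $\mathfrak{sl}_2$-gradings on a common ambient ring; it is the short antichain argument, and your write-up omits it entirely. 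As it stands, the proposal asserts a false intermediate claim in Step 3 and replaces the key comparison of Step 4 with an appeal to the literature.
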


A similar result holds for the case $n$ is even, see \cite{Stan}. Later, Proctor \cite{Proctor} found a simpler proof for Stanley's theorem. 
 His proof is also algebraic, using tools from Lie algebra. It is interesting to see whether 
 algebraic approaches can be used to obtain {\it continuous}  results.  (For the continuous version of Theorem \ref{theorem:SSz1}, see Section \ref{section:Halasz}.) 

{\it A hierarchy of bounds.} 
We have seen that the Erd\H{o}s' bound of $O(n^{-1/2})$ is sharp, if we allow the $a_i$ to be the same. If we forbid this, then the next bound is 
$O(n^{-3/2})$, which can be attained if the $a_i$ form an arithmetic progression. Naturally, one would ask what happen if we forbid 
the $a_i$ to form an arithmetic progression and so forth.  Hal\'asz' result, discussed in  Section \ref{section:Halasz} , gives 
a satisfying  answer to this question.

\begin{remark}\label{remark:Freiman} 
To conclude this section, let us mention that while  discrete theorems such as Theorem \ref{theorem:Stan} are formalized for 
real numbers, it holds for any infinite abelian groups, thanks to a general trick called Freiman isomorphism (see \cite{TVbook} and also Appendix \ref{section:discrete:proof}). 
In particular, this trick allows us to assume that the $a_i$'s are integers in the proofs.  Freiman isomorphism, however, is not always applicable in 
continuous settings.  \end{remark}

%{\bf To anh Van: Originally, Erd\H{o}s-Moser and S\'ark\"ozy-Szemer\'edi results were on discrete setting. Should we delete the continuous versions and just mentioning discrete ones in this section? }

\section{Littlewood-Offord type bounds for higher degree polynomials}  \label{section:quadratic1} 

For simplicity, we present  all  results in this section  in discrete form. The extension to continuous setting is rather straightforward, and thus omitted.

One can view the sum  $S=a_1 \x_1+\dots+a_n \x_n$ as a linear function of the random variables $\x_1,\dots, \x_n$. It is natural to study 
general polynomials of higher degree $k$. Let us first consider the case $k=2$. Following \cite{CTV}, we refer to it as  the Quadratic Littlewood-Offord problem. 

Let $\x_i$ be iid Bernoulli random variables, let $A=(a_{ij})$ be an $n\times n$ symmetric matrix of real entries. We define the {\it quadratic concentration probability} of $A$ by

$$\rho_q(A):= \sup_{a \in \R}\P(\sum_{i,j} a_{ij}\x_i\x_j=a).$$

Similar to the problem considered by Erd\H{o}s and Littlewood-Offord, we may ask what upper bound one can prove for $\rho_q(A)$ provided that 
 the entries $a_{ij}$ are non-zero? This question was first addressed by Costello, Tao and the second author in \cite{CTV}, motivated by their study of 
Weiss' problem  concerning the singularity of a random symmetric matrix (see Section \ref{section:singularity1}).

\begin{theorem}\label{theorem:LO:CTV} Suppose that $a_{ij}\neq 0$ for all $1\le i, j\le n$. Then

$$\rho_q(A)= O(n^{-1/8}).$$ 
\end{theorem}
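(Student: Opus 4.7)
The plan is to reduce the quadratic problem to a bilinear one by decoupling, and then to a linear one by conditioning, after which the Erd\H{o}s bound of Theorem \ref{theorem:Erdos} can be applied twice.

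First, I would split $[n]$ into two halves $I$ and $J$ of size $n/2$ each and use the decomposition
$$Q(\xi) = Q_I(\xi_I) + Q_J(\xi_J) + B(\xi_I,\xi_J),\qquad B(\xi_I,\xi_J)=2\sum_{i\in I,\, j\in J} a_{ij}\xi_i\xi_j.$$
Introducing independent copies $\xi_I'$ and $\xi_J'$, two applications of Cauchy--Schwarz (once in $\xi_J$ and once in $\xi_I$, each squaring the concentration probability) give the standard decoupling inequality
$$\rho_q(A)^4 \le \P\bigl(B(\xi_I-\xi_I',\, \xi_J-\xi_J')=0\bigr),$$
because the pieces $Q_I(\xi_I)-Q_I(\xi_I')$ and $Q_J(\xi_J)-Q_J(\xi_J')$ cancel in the resulting double difference. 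Setting $\eta_i := (\xi_i-\xi_i')/2$ for $i\in I$ and $\zeta_j := (\xi_j-\xi_j')/2$ for $j\in J$, the right-hand side becomes
$$\P\Bigl(\sum_{i\in I,\,j\in J} a_{ij}\eta_i\zeta_j = 0\Bigr),$$
where the $\eta_i,\zeta_j$ are mutually independent, equal to $\pm 1$ with probability $1/4$ each and to $0$ with probability $1/2$.

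Second, I would condition on $\zeta_J$, turning the sum into the linear form $\sum_{i\in I} c_i\eta_i$ with $c_i := \sum_{j\in J} a_{ij}\zeta_j$. For each fixed $i$, the conditional distribution of $c_i$ given the support of $\zeta_J$ is a Bernoulli sum with nonzero coefficients $a_{ij}$, so Theorem \ref{theorem:Erdos} gives $\P(c_i=0) = O(n^{-1/2})$ (after using a Chernoff bound to guarantee that the support of $\zeta_J$ has size $\Theta(n)$ with overwhelming probability). Summing over $i\in I$ and applying Markov's inequality, the event $\CE := \{\#\{i\in I: c_i\neq 0\} \ge n/8\}$ holds with probability at least $1-O(n^{-1/2})$.

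Third, on $\CE$ I would further condition on the random support $S:=\{i\in I:\eta_i\neq 0\}$ of $\eta_I$; by Chernoff, with probability $1-o(1)$ the intersection $S\cap\{i:c_i\neq 0\}$ has size $\Omega(n)$, and on that event the remaining signs are iid $\pm 1$, so Theorem \ref{theorem:Erdos} yields $\P(\sum_{i\in I} c_i\eta_i=0\mid \zeta_J,S) = O(n^{-1/2})$. Combining,
$$\rho_q(A)^4 \le \P(\CE^c) + O(n^{-1/2}) = O(n^{-1/2}),$$
which gives $\rho_q(A) = O(n^{-1/8})$. The main technical hurdle will be the decoupling step: verifying that two carefully chosen applications of Cauchy--Schwarz kill all genuinely quadratic contributions and leave only the bilinear form in the symmetrized variables. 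Once that is in place, the rest is a two-level invocation of the classical Erd\H{o}s estimate, with the $n^{-1/8}$ exponent arising from the fourth power lost in the decoupling and the $n^{-1/2}$ saved at each Littlewood--Offord step.
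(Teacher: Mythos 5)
Your proposal is correct and follows essentially the same route as the paper: a Cauchy--Schwarz decoupling that reduces $\rho_q(A)^4$ to the concentration probability of the bilinear form $B(\xi_I-\xi_I',\xi_J-\xi_J')$ in symmetrized variables, followed by two applications of the Erd\H{o}s bound (Theorem~\ref{theorem:Erdos1}) — first to show most $c_i$ are nonzero (your Markov/Chernoff step plays the role of the ``combinatorial argument'' the paper alludes to), then to the resulting linear form — producing the exponent $\frac{1}{4}\cdot\frac{1}{2}=\frac{1}{8}$. You have filled in the details the paper only sketches, and they check out.
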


 The key to the proof of Theorem \ref{theorem:LO:CTV} is  a decoupling lemma, 
which can be proved  using Cauchy-Schwarz inequality. The reader may consider this lemma an exercise, or
consult \cite{CTV} for details.

\begin{lemma}[Decoupling lemma]\label{lemma:decoupling} Let $Y$ and $Z$ be
 random variables and $E=E(Y,Z)$ be an event depending on $Y$ and
$Z$. Then

$$\P( E(Y,Z)) \le\P( E(Y,Z) \wedge E(Y',Z) \wedge E(Y,Z')\wedge
E(Y',Z'))^{1/4} $$

\noindent where $Y'$ and $Z'$ are independent copies of $Y$ and $Z$,
respectively.  Here we use $A \wedge B$ to denote the event that $A$ and $B$ both hold.
\end{lemma}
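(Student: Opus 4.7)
The plan is to apply Cauchy--Schwarz (equivalently, Jensen's inequality in the form $(\E f)^2 \le \E(f^2)$ for non-negative $f$) twice in succession: once to introduce the independent copy $Y'$, and then again to introduce $Z'$. The only tool beyond elementary manipulation of conditional expectation is the fact that independent copies produce conditional independence given the ``other'' variable, which is what allows two conditional probabilities to collapse into one joint probability.

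First I would condition on $Z$. Writing $\P(E(Y,Z)) = \E_Z\bigl[\P(E(Y,Z)\mid Z)\bigr]$ and applying $(\E f)^2 \le \E(f^2)$ to the non-negative function $f(Z) := \P(E(Y,Z)\mid Z)$ gives
$$\P(E(Y,Z))^2 \le \E_Z\bigl[\P(E(Y,Z)\mid Z)^2\bigr].$$
Since $Y'$ is an independent copy of $Y$, conditionally on $Z$ the events $E(Y,Z)$ and $E(Y',Z)$ are independent, so the integrand equals $\P(E(Y,Z)\wedge E(Y',Z)\mid Z)$. Averaging over $Z$ then yields
$$\P(E(Y,Z))^2 \le \P\bigl(E(Y,Z)\wedge E(Y',Z)\bigr).$$

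Next I would repeat exactly the same step, now conditioning on the pair $(Y,Y')$ and introducing $Z'$, an independent copy of $Z$ (and independent of $Y,Y'$). By the same Cauchy--Schwarz manipulation, together with the conditional independence of $E(Y,Z)\wedge E(Y',Z)$ and $E(Y,Z')\wedge E(Y',Z')$ given $(Y,Y')$,
$$\P\bigl(E(Y,Z)\wedge E(Y',Z)\bigr)^2 \le \P\bigl(E(Y,Z)\wedge E(Y',Z)\wedge E(Y,Z')\wedge E(Y',Z')\bigr).$$
Combining the two displays produces $\P(E(Y,Z))^4 \le \P\bigl(E(Y,Z)\wedge E(Y',Z)\wedge E(Y,Z')\wedge E(Y',Z')\bigr)$, and taking fourth roots gives the claim. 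There is no substantial obstacle; the one subtlety to verify is that $Y,Y',Z,Z'$ form a mutually independent family, so that both conditional-independence invocations are legitimate and the conditional probabilities indeed multiply as claimed.
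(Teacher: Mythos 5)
Your proof is correct and follows exactly the route the paper indicates: the paper states the lemma ``can be proved using Cauchy--Schwarz inequality'' and leaves the details as an exercise, and your double application of Cauchy--Schwarz (first conditioning on $Z$ to decouple $Y$ from $Y'$, then conditioning on $(Y,Y')$ to decouple $Z$ from $Z'$) is the standard argument. You are also right to flag the mutual independence of $Y,Y',Z,Z'$ as the point requiring care; this in particular presupposes $Y\perp Z$, which holds in the paper's application since $Y=(\xi_i)_{i\in U_1}$ and $Z=(\xi_i)_{i\in U_2}$ with $U_1,U_2$ disjoint, and without it the stated inequality can fail (e.g.\ $Y=Z$ Bernoulli and $E=\{Y=Z\}$).
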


 Consider the quadratic form $Q(x):=\sum_{ij} a_{ij}\x_i\x_j$, and fix a non-trivial partition $\{1, \dots, n\} =
U_1 \cup U_2$ and a non-empty subset $S$ of $ U_1$. For instance one can take $U_1$ to be the first half of the indices 
and $U_2$ to be the second half. Define $Y :=(\x_i)_{i \in U_1}$ and $Z
:=(\x_i)_{i \in U_2}$. We can write $Q(x)=Q(Y,Z)$. Let
$\x_i'$ be an independent copy of $\x_i$ and set $Y' :=(\x'_i)_{i \in U_1}$
 and $Z' := (\x'_i)_{i \in U_2})$.  By  Lemma \ref{lemma:decoupling}, for any number $x$
 
$$ \P ( Q(Y,Z)= x) \le \P( Q(Y,Z)=Q(Y,Z')=Q(Y',Z)= Q(Y',Z')=x )^{1/4} .$$

On the other hand, if $Q(Y,Z)=Q(Y,Z')=Q(Y',Z)= Q(Y',Z')=x$ then 
regardless the value of $x$ $$R := Q(Y,Z) - Q(Y',Z)- Q(Y,Z') + Q(Y',Z')=0. $$  Furthermore, we can write $R$ as 

$$ R = \sum_{i \in U_1} \sum_{j \in U_2}
 a_{ij} (\x_i-\x_i') (\x_j-\x_j') = \sum_{i \in U_1} R_i w_i, $$ 
 
\noindent where $w_i$ is the random variable $w_i := \x_i - \x'_i$, and $R_i$ is the random variable $\sum_{j \in U_2} a_{ij} w_j$.

We now  can conclude the proof by applying  Theorem \ref{theorem:Erdos1} twice. First, combining this theorem with a combinatorial argument, one can show that (with high probability), many $R_i$ are non-zero. Next, one can condition on the non-zero $R_i$ and apply Theorem \ref{theorem:Erdos1} for the linear form $\sum_{i \in U_1} R_i w_i$ to obtain a bound on $\P(R=0)$. 

The upper bound $n^{-1/8}$ in Theorem \ref{theorem:LO:CTV} can be easily improved to $n^{-1/4} $. The optimal 
bound was obtained by  Costello \cite{C} using, among others, the inverse theorems 
from Section \ref{section:discrete}.

\begin{theorem}[Quadratic Littlewood-Offord inequality]\label{theorem:LO:C} Suppose that  $a_{ij}\neq 0$, $1\le i, j \le n$. Then

$$\rho_q(A)\le n^{-1/2+o(1)}.$$ 
\end{theorem}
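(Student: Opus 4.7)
The plan is to refine the decoupling argument used for Theorem \ref{theorem:LO:CTV} by feeding it into the inverse Littlewood--Offord theorems developed in Section \ref{section:discrete}. Starting from the decoupling inequality
$$\rho_q(A)^4 \le \P(R = 0), \qquad R = \sum_{i \in U_1} \sum_{j \in U_2} a_{ij} w_i w_j,$$
where $w_i = \xi_i - \xi_i'$ and $\{U_1,U_2\}$ is a balanced partition of $\{1,\dots,n\}$, the task is to sharpen the estimate $\P(R=0) = O(n^{-1/2})$ (which only yields the $n^{-1/8}$ bound) to $\P(R=0) \le n^{-2+o(1)}$, since taking the fourth root then gives the target $n^{-1/2+o(1)}$.

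First I would condition on $Y=(w_i)_{i\in U_1}$, so that $R = \sum_{j\in U_2} S_j w_j$ becomes a linear form in $(w_j)_{j\in U_2}$ whose coefficients
$$S_j := \sum_{i\in U_1} a_{ij}w_i$$
are themselves Littlewood--Offord sums with nonzero coefficients $a_{ij}$. Theorem \ref{theorem:Erdos1} gives $\P(S_j=0)=O(n^{-1/2})$ for each $j$, and a standard second-moment/concentration argument shows that, outside an exceptional set of $Y$'s of probability $\exp(-\Omega(n))$, at least $\Omega(n)$ of the $S_j$'s are nonzero.

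Next, partition the remaining $Y$'s according to the conditional small-ball probability $\rho_Y:=\P(R=0\mid Y)$. On the \emph{good} event $\{\rho_Y\le n^{-2+\eps}\}$ the contribution to $\P(R=0)$ is trivially at most $n^{-2+\eps}$. On the \emph{bad} event $\{\rho_Y\ge n^{-2+\eps}\}$, the inverse Littlewood--Offord theorem forces the nonzero $S_j$'s to concentrate inside a generalized arithmetic progression $P$ of rank $O_\eps(1)$ and volume $V = n^{3/2+O(\eps)}$. It therefore suffices to bound, over the randomness of $Y$, the probability that the random vector $(S_j)_{j\in U_2}$ lands in such a structured set.

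The main obstacle is precisely this last step: the $S_j$'s all depend on the same source of randomness $Y$, so they are heavily correlated and a naive product bound fails. The resolution exploits the fact that every $a_{ij}$ is nonzero: one combines the linear Littlewood--Offord inequality applied coordinate-wise in $j$ with a union bound over the (at most $n^{O(\eps)}$) equivalence classes of candidate progressions $P$, and a covering argument that upgrades ``$(S_j)_{j\in U_2}$ has a positive fraction of entries in $P$'' into the statement that $Y$ itself lies in a GAP of volume $V^{O(1)}/\sqrt{n}$. Choosing $\eps\to 0$ sufficiently slowly then balances the contributions from the good and bad regimes to yield $\P(R=0)\le n^{-2+o(1)}$, which completes the proof.
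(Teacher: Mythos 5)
Your plan hinges on establishing $\P(R=0)\le n^{-2+o(1)}$, and this is false. Take the extremal example $a_{ij}=1$ for all $i,j$, which has every entry nonzero and attains $\rho_q(A)=\Theta(n^{-1/2})$. Then $R=\big(\sum_{i\in U_1}w_i\big)\big(\sum_{j\in U_2}w_j\big)$ factors, so $\P(R=0)\ge\P\big(\sum_{i\in U_1}w_i=0\big)=\Theta(n^{-1/2})$, a full $n^{3/2}$ away from your target. The decoupling inequality $\rho_q(A)^4\le\P(R=0)$ is genuinely lossy by the fourth power here: $\rho_q(A)^4\approx n^{-2}$ while $\P(R=0)\approx n^{-1/2}$, because the rank-one structure of $(a_{ij})$ collapses every $S_j$ to the single value $\sum_{i\in U_1}w_i$, making $\{R=0\}$ enormously richer than the event of matching the four copies of $Q$. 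The same example defeats your dichotomy on $\rho_Y$: for all but a $\Theta(n^{-1/2})$-probability set of $Y$'s one has $\rho_Y=\P\big(\sum_{j\in U_2}w_j=0\big)=\Theta(n^{-1/2})$, so the ``bad'' set has probability $1-O(n^{-1/2})$ and no GAP-containment counting over $Y$ can show it negligible.

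The loss is built into the Cauchy--Schwarz step of Lemma \ref{lemma:decoupling}, not into any slackness in estimating $\P(R=0)$; consequently no amount of inverse-theorem refinement of the event $\{R=0\}$ alone can lift a decouple-once argument above the $n^{-1/8}$ barrier. Costello's actual proof in \cite{C} (to which the paper defers without detail) must therefore not pass through a single full decoupling. Roughly, one conditions on a fraction of the $\xi_i$, applies the linear inverse theorem to the induced linear form, and feeds the structural description of its coefficients --- which are themselves random linear forms in the remaining variables --- back into the quadratic form, so that the losses accumulate additively in the exponent rather than being raised to the fourth power at the outset.
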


The exponent $1/2+o(1)$ is best possible (up to the $o(1)$ term) as demonstrated by the  quadratic form $\sum_{i,j}\x_i\x_j = (\sum_{i=1}^n \xi_i)^2$. 
Both Theorems \ref{theorem:LO:CTV} and 
\ref{theorem:LO:C} hold in a  general setting where the $\xi_i$ are not necessary Bernoulli and only a fraction of the $a_{ij}$'s are non-zero. 

One can extend the argument above  to give bounds of the form $n^{-c_k}$ for a general polynomial of degree $k$. However, due to the repeated use of the decoupling lemma,  $c_k$ decreases 
very fast with $k$. 

\begin{theorem}
Leet $f$ be  a multilinear polynomial of real coefficients in $n$ variables $\xi_1, \dots, \xi_n$  with $m \times n^{k-1}$ monomials of maximum degree $k$. If $\xi_i$ are iid 
Bernoulli random variables, then for any value $x$ 

$$\P(f=x) = O\big( m^{- \frac{1}{2^{(k^2+k) /2} }}\big). $$ 

\end{theorem}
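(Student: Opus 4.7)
The plan is to induct on the degree $k$. Set $p_k := 1/2^{(k^2+k)/2}$, so the claim is $\P(f = x) = O(m^{-p_k})$. The base case $k=1$ is Theorem \ref{theorem:Erdos1}: a linear form with $m$ nonzero coefficients satisfies $\P(f = x) = O(m^{-1/2}) = O(m^{-p_1})$. Assume the bound holds through degree $k-1$, and let $f = \sum_S c_S \prod_{i \in S} \x_i$ be a multilinear polynomial of degree $k$ with at least $m n^{k-1}$ top-degree monomials. First partition $[n] = U_1 \cup U_2$ evenly; a pigeonhole on the $k-1$ mixed split types $(|S\cap U_1|,|S\cap U_2|)=(j,k-j)$ with $1 \le j \le k-1$ lets us choose the partition so that at least $\Omega_k(m n^{k-1})$ of the degree-$k$ monomials of $f$ have split type $(k-1,1)$.

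With $Y := (\x_i)_{i \in U_1}$ and $Z := (\x_j)_{j \in U_2}$, Lemma \ref{lemma:decoupling} applied to $\{f = x\}$ yields
\[
\P(f = x) \le \P(R = 0)^{1/4},\qquad R := f(Y,Z) - f(Y',Z) - f(Y,Z') + f(Y',Z').
\]
A direct expansion shows $R$ is supported on those monomials of $f$ that meet both $U_1$ and $U_2$. Consequently, after conditioning on $(\x_j, \x'_j)_{j \in U_2}$, $R$ is a multilinear polynomial of degree at most $k-1$ in the $2|U_1|=n$ iid Bernoulli variables $(\x_i, \x'_i)_{i \in U_1}$, whose degree-$(k-1)$ part is exactly
\[
\sum_{\substack{S_1 \subset U_1 \\ |S_1|=k-1}} g_{S_1} \Bigl(\prod_{i \in S_1} \x_i - \prod_{i \in S_1} \x'_i\Bigr),\qquad g_{S_1} := \sum_{j \in U_2} c_{S_1 \cup \{j\}} (\x_j - \x'_j).
\]

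Next I would bound the number of $S_1$ with $g_{S_1} \neq 0$. A pigeonhole on the $(k-1,1)$-type monomials gives that at least $\Omega(n^{k-1})$ subsets $S_1 \subset U_1$ satisfy $N_{S_1} := |\{j : c_{S_1 \cup \{j\}} \neq 0\}| \gtrsim m$, and for each such $S_1$ Theorem \ref{theorem:Erdos1} yields $\P(g_{S_1} = 0) = O(m^{-1/2})$. A Markov bound on the number of exceptional $S_1$ then shows that with probability at least $1 - O(m^{-1/2})$ over the outer randomness $Z, Z'$, at least $\Omega(m n^{k-2})$ of the $g_{S_1}$ are nonzero. On this good event, $R$ is a multilinear polynomial in $n$ iid Bernoulli variables of degree $k-1$ with $\gtrsim m n^{k-2}$ top-degree monomials, so the inductive hypothesis gives $\P(R = 0 \mid \text{good event}) = O(m^{-p_{k-1}})$. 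Combining, $\P(R = 0) = O(m^{-p_{k-1}})$ (the $m^{-1/2}$ contribution from the bad event is absorbed since $p_{k-1} \le 1/2$), and hence $\P(f = x) = O(m^{-p_{k-1}/4})$. A direct check that $(k-1)k/2 + 2 \le k(k+1)/2$ for $k \ge 2$ shows $p_{k-1}/4 \ge p_k$, closing the induction.

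The main obstacle will be the concentration step: securing $\Omega(m n^{k-2})$ nonzero $g_{S_1}$'s on an event of probability at least $1 - O(m^{-1/2})$. This requires first pigeonholing on the $N_{S_1}$ to extract $\Omega(n^{k-1})$ subsets with $N_{S_1} \gtrsim m$, and then a careful Markov bound on the number of failures; without this, the inductive hypothesis would only be applicable with a density parameter too small to recover the target exponent. The partition choice yielding abundant $(k-1,1)$-type monomials and the elementary inequality $p_{k-1}/4 \ge p_k$ are routine once the main structure is in place.
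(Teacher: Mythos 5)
Your inductive plan is viable and, once the bookkeeping is corrected, gives a correct proof of the stated theorem; indeed, it matches what the paper gestures at (``extend the argument'' of the quadratic case) but implements the degree reduction more cleanly. Two comments.

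First, a local slip: the pigeonhole on the $(k-1,1)$-type monomials does \emph{not} give $\Omega(n^{k-1})$ subsets $S_1$ with $N_{S_1}\gtrsim m$. Since $N_{S_1}\le |U_2|=O(n)$ and $\sum_{|S_1|=k-1}N_{S_1}=\Omega_k(mn^{k-1})$ over $O_k(n^{k-1})$ sets, the correct conclusion is that $\Omega(mn^{k-2})$ sets $S_1$ satisfy $N_{S_1}\gtrsim m$; for instance, a few $S_1$'s could each absorb $\Theta(n)$ of these monomials when $m\ll n$. Fortunately this is exactly enough: with $|\mathcal G|=\Omega(mn^{k-2})$ good subsets, $\E[\#\{S_1\in\mathcal G: g_{S_1}=0\}]\le O(m^{-1/2})|\mathcal G|$, and Markov still yields that $\Omega(mn^{k-2})$ of the $g_{S_1}$ are nonzero with probability $\ge 1-O(m^{-1/2})$, which is what you actually use downstream. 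Also worth saying explicitly: the random-partition averaging is what guarantees a choice of $U_1,U_2$ with $\Omega_k(mn^{k-1})$ type-$(k-1,1)$ monomials (a random even partition puts a degree-$k$ monomial in type $(k-1,1)$ with probability $\sim k/2^k$); ``pigeonhole over split types'' for a fixed partition could hand you a type $(j,k-j)$ with $j\ne k-1$, and then the degree-$(k-1)$ coefficients you need to be nonzero would not be linear forms.

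Second, and more interestingly, your recursion is genuinely sharper than the stated exponent. You prove $\P(f=x)=O(m^{-p_{k-1}/4})$, so iterating $q_k=q_{k-1}/4$ with $q_1=1/2$ yields $q_k=2^{-(2k-1)}$, which strictly dominates $p_k=2^{-k(k+1)/2}$ for $k\ge 3$ (they agree for $k=1,2$). Thus your plan establishes the theorem with room to spare; the $2^{-k(k+1)/2}$ exponent reported in the paper is not what this decouple-once-then-induct argument yields. This is consistent with the paper's own remark immediately after the theorem that the exponent is far from optimal and that Razborov--Viola obtain $\sim 1/(2k2^k)$. It would be worth stating your stronger conclusion explicitly rather than weakening it back to $p_k$ at the end.

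Minor technical points that you implicitly rely on and should record: the decoupling lemma requires $Y$ and $Z$ independent, which holds since $U_1,U_2$ are disjoint blocks of iid coordinates; after conditioning on $Z,Z'$, $R$ is multilinear in the $2|U_1|\approx n$ genuine Bernoulli variables $(\xi_i,\xi'_i)_{i\in U_1}$ with no mixed $\xi\xi'$-monomials, and for $|S_1|=k-1$ the coefficient of $\prod_{i\in S_1}\xi_i$ is exactly $g_{S_1}$ (since $|S_2|\ge1$ forces $|S_2|=1$); and $\P(g_{S_1}=0)=O(N_{S_1}^{-1/2})$ requires a one-line reduction, conditioning on $\{j:\xi_j\ne\xi'_j\}$ and using that a $\operatorname{Binomial}(N_{S_1},1/2)$ number of survivors is $\ge N_{S_1}/4$ up to an exponentially small exception, before invoking Erd\H os.
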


 By a more refined analysis,  Razborov and Viola \cite{RaVi} recently  obtained a better exponent of order  roughly $\frac{1}{2^k}$ (see Section \ref{section:boolean}). 
On the other hand, it might be the case that the bound $n^{-1/2+o(1)}$ holds 
for all degrees $k \ge 2$, under some reasonable assumption on the coefficients of the polynomial. 

Quadratic (and higher degree) Littlewood-Offord bounds play important roles in the study of random symmetric matrices and 
Boolean circuits.  We will discuss these applications in Sections \ref{section:singularity1} and \ref{section:boolean}, respectively.

\section{Application: Singularity of random Bernoulli matrices}  \label{section:singularity1}

Let $M_n$ be a random matrix of size $n$ whose entries are iid Bernoulli random variables. A notorious open problem in 
probabilistic combinatorics is to estimate $p_n$, the probability that $M_n$ is singular  (see \cite{KKSz, TVsing} for more details). 

\begin{conjecture}  \label{notorious} $p_n =(1/2+o(1)) ^n $. \end{conjecture}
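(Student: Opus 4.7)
The lower bound $p_n \ge (1/2+o(1))^n$ is elementary: the probability that the first two rows of $M_n$ coincide is exactly $2^{-n}$. The substantive content is the matching upper bound, and the plan is to attack it with the inverse Littlewood--Offord machinery that is the subject of this survey.

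The starting point is the standard reduction to small-ball estimates for random normal vectors. Conditioning on the first $n-1$ rows $R_1,\dots,R_{n-1}$ being linearly independent (the complementary event is controlled inductively on $n$) and letting $v = v(R_1,\dots,R_{n-1})$ be a unit normal to their span, we obtain
$$p_n \le p_{n-1} + \E_v\bigl[\rho(v)\bigr], \qquad \rho(v) := \P(R_n \cdot v = 0),$$
so the task reduces to showing $\E_v[\rho(v)] \le (1/2+o(1))^n$ for the \emph{random} hyperplane normal $v$.

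For a typical ``unstructured'' $v$ the classical bound of Theorem~\ref{theorem:Erdos1} yields only $\rho(v)=O(n^{-1/2})$, which is far too weak. The content of the inverse theorems (to be developed in later sections) is that $\rho(v) \ge 2^{-(1-\eps)n}$ forces $v$ to have strong additive structure, for example a small least common denominator at the appropriate scale. This motivates a dichotomy: partition the unit sphere according to a structural parameter $D = \LCD(v)$. On the unstructured range ($D$ very large) the inverse theorem in its contrapositive form gives $\rho(v) \ll 2^{-n}$ immediately. On the structured range one runs an $\eps$-net argument for each level set $\{D \le D_0\}$: the metric entropy of the set is bounded by an inverse-theorem count of structured vectors, while the probability that a fixed net element is actually the normal to $n-1$ random Bernoulli rows is at most $\rho(v)^{n-1}$, which beats the net cardinality provided the two estimates are balanced correctly at every scale.

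The main obstacle is the middle regime of vectors with moderate LCD, where neither the small-ball bound $\rho(v) \lesssim 1/D$ alone nor the count of structured vectors alone is sharp enough. This is precisely why the earlier successive improvements by Koml\'os, by Kahn--Koml\'os--Szemer\'edi, by Tao and the second author $((3/4+o(1))^n)$, and by Bourgain--Vu--Wood $((1/\sqrt{2}+o(1))^n)$ each fell short of the constant $1/2$. To nail down the optimal constant one needs a multi-scale iteration in which, at every scale, the structural count from the inverse theorem is matched against the anticoncentration gain, together with a more geometric step (as in Tikhomirov's approach) that separately peels off the ``almost constant'' $v$ responsible for the extremal equality $\P(R_1=R_2)=2^{-n}$. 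Any weakening in the matching of these two counts costs the sharp exponent and yields only $p_n \le (c+o(1))^n$ for some $c>1/2$.
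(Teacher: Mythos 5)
There is no proof here to compare against: the paper states $p_n=(1/2+o(1))^n$ as an open \emph{conjecture} (Conjecture~\ref{notorious}), and the strongest partial result it records is the Bourgain--Vu--Wood bound $(1/\sqrt{2}+o(1))^n$, so the paper itself does not contain an argument for the statement you were asked to prove. Your proposal is likewise an outline rather than a proof, and you say so yourself in the final paragraph: after correctly giving the easy lower bound $p_n\ge 2^{-n}$ and sketching the LCD/$\eps$-net dichotomy, you describe the middle regime as the obstacle and appeal to ``a multi-scale iteration'' and ``a more geometric step (as in Tikhomirov's approach)'' without specifying either. That is exactly the missing ingredient, and invoking it by name does not supply it; in fact, the Tikhomirov argument you gesture at postdates this survey and is precisely the content that would have to be written out in full for this to count as a proof.

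There are also technical problems with the reduction you do write down. The recursion $p_n\le p_{n-1}+\E_v[\rho(v)]$ does not close: even granting $\E_v[\rho(v)]\le(1/2+o(1))^n$, the right-hand side is dominated by $p_{n-1}$ and you gain nothing at each step. Moreover the event ``the first $n-1$ rows are linearly dependent'' concerns $n-1$ vectors in $\{-1,1\}^n$, which is not the event whose probability is $p_{n-1}$, so the induction is not even well-posed as stated. The approach actually used in the papers this survey describes (Kahn--Koml\'os--Szemer\'edi, Tao--Vu, Bourgain--Vu--Wood) is not an induction on $n$ but a decomposition of the singularity event over all non-trivial hyperplanes $V$, writing $p_n=\sum_V\P(A_V)$ and then splitting $V$ by combinatorial dimension; see Section~\ref{section:singularity2} and in particular the estimates around \eqref{pn-spanned} and Proposition~\ref{medium-dim}. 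Your LCD dichotomy is closer in spirit to the Rudelson--Vershynin / least-singular-value framework, which is a legitimate alternative organization, but it still leaves the sharp-constant question exactly where the survey leaves it: open, with $(1/\sqrt{2}+o(1))^n$ the best bound proved by the methods in this paper.
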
 

To give the reader a feeling about how  the Littlewood-Offord problem 
can be useful in estimating $p_n$, let us consider the following process.
We expose the rows of $M_n$ one by one from the top. Assume that the first $n-1$ rows are 
linearly independent and form a hyperplane with normal vector $\bv=(a_1, \dots, a_n)$. 
Conditioned on these rows, the probability that 
$M_n$ is singular  is 
$$\P ( X \cdot \bv =0) = \P (a_1 \xi_1 + \dots + a_n \xi_ n =0) , $$ where 
$X=(\xi_1, \dots, \xi_n)$ is the last row. 

As an illustration, let us give a short proof for the classical bound $p_n =o(1)$ (first showed by Koml\'os in \cite{Kom1} using a different argument). 

\begin{theorem} \label{theorem:singular}  $p_n =o(1) $. \end{theorem}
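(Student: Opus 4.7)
The plan is to carry out the strategy sketched in the excerpt: expose the rows $R_1,\dots,R_n$ of $M_n$ and view $\det M_n$, expanded along the last row, as a linear form with random coefficients. Writing $\det M_n = R_n\cdot\bv = a_1\xi_1+\dots+a_n\xi_n$, where $\bv = (a_1,\dots,a_n)$ has coordinates equal (up to sign) to the $(n-1)\times(n-1)$ minors $\det M^{(i)}$ obtained from the first $n-1$ rows by deleting column $i$, and $\xi_1,\dots,\xi_n$ are the iid Bernoulli entries of $R_n$: the vector $\bv$ is the normal to the span of $R_1,\dots,R_{n-1}$ whenever these rows are linearly independent, and $\bv = 0$ otherwise. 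Applying Erd\H os's Theorem~\ref{theorem:Erdos1} to the non-zero coefficients of this linear form gives, conditional on $\bv$,
\[
\P(\det M_n = 0\mid\bv) \le \frac{C}{\sqrt{N(\bv)}},\qquad N(\bv) := \#\{i:a_i\ne 0\}\ge 1,
\]
while trivially $\P(\det M_n = 0\mid\bv=0) = 1$. Consequently
\[
p_n \le \P(\bv=0) + \P\bigl(1\le N(\bv)\le \log n\bigr) + C/\sqrt{\log n}.
\]

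For the middle term, the event $\{N(\bv)=s\}$ with $s\ge 1$ means $\bv$ is supported on some set $S\subseteq [n]$ of size $s$, which forces the projections of $R_1,\dots,R_{n-1}$ to the coordinates in $S$ to fail to span $\R^S$. Partitioning the $n-1$ row indices into $\lfloor (n-1)/s\rfloor$ disjoint blocks of size $s$, each block (restricted to the columns in $S$) is an independent random $s\times s$ Bernoulli matrix which must be singular on this event. Hence for each fixed $S$ of size $s$,
\[
\P\bigl(\bv\text{ supported on }S\bigr) \le p_s^{\lfloor (n-1)/s\rfloor}.
\]
Since $p_1=0$, and one has a universal bound $p_s\le \alpha$ for some $\alpha<1$ and all $s\ge 2$ (which follows from $\P(M_s\text{ non-singular})\ge \prod_{j=1}^{\infty}(1-2^{-j})>0$, obtained by exposing rows one by one and bounding the number of $\pm 1$ vectors in a $k$-dimensional subspace of $\R^s$ by $2^k$), summing $\binom{n}{s}\alpha^{\lfloor (n-1)/s\rfloor}$ over $2\le s\le \log n$ is easily $o(1)$.

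The remaining term is $\P(\bv=0) = q_{n-1}$, the probability that the $(n-1)\times n$ random Bernoulli matrix has rank at most $n-2$. Showing $q_{n-1}=o(1)$ is the main obstacle, since the naive bound $q_{n-1}\le p_{n-1}$ (from restricting to an $(n-1)\times(n-1)$ submatrix) only closes the loop via an induction on $n$ that is not entirely straightforward. The natural route is to apply the same disjoint-blocks idea to the transpose: $q_{n-1}$ equals the probability that the $n$ columns of $M_n$ (viewed as random $\pm 1$ vectors in $\R^{n-1}$) fail to span $\R^{n-1}$, and since there are more columns ($n$) than the ambient dimension ($n-1$), one can run the disjoint-blocks-plus-Erd\H os-on-a-normal argument to obtain $q_{n-1}=o(1)$ directly. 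Once this is in hand, combining with the above yields $p_n=o(1)$ as desired.
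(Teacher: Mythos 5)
Your proposal is correct through the decomposition $p_n \le \P(\bv=0) + \P(1\le N(\bv)\le \log n) + C/\sqrt{\log n}$, and both the disjoint-blocks estimate for the middle term and the uniform bound $p_s\le\alpha<1$ (from $\prod_{j\ge 1}(1-2^{-j})>0$) are sound. The genuine gap is exactly where you flag it: the term $\P(\bv=0)=q_{n-1}$. The ``transpose'' sketch does not close the argument. If you condition on the first $n-1$ columns of the $(n-1)\times n$ matrix and use a normal $\Bw'$ to their span, the Erd\H{o}s bound on $\P(c_n\perp\Bw')$ only handles the case where those $n-1$ columns have rank exactly $n-2$ (so that $\Bw'$ is determined up to scale and orthogonality of $c_n$ to it actually characterizes rank deficiency). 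If instead the first $n-1$ columns already have rank $\le n-3$, the event $q_{n-1}$ occurs regardless of $c_n$, and neither the Erd\H{o}s step nor the disjoint-blocks step (which fails for large-support normals, since $\binom{n-1}{t}\alpha^{n/t}$ is no longer small once $t$ is of order $\sqrt{n}$) gives a bound. Controlling that residual rank-$\le n-3$ event leads to a recursion of the same type, so the ``extra column'' does not by itself break the circularity.

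By contrast, the paper's proof never produces a degenerate normal vector. It union-bounds $p_n\le\sum_{i=1}^{n-1}\P(\Bx_{i+1}\in H_i)$ over the row-exposure step $i$. Because $H_i$ is spanned by only $i<n$ vectors, it is always a proper subspace of $\R^n$, so a nonzero normal always exists; for $i\le n-\log\log n$ the trivial bound $2^{i-n}$ already sums to $o(1)$, and for the last $\log\log n$ indices the $k$-universality lemma guarantees (with probability $1-1/n$) that every normal to $H_i$ has more than $k=\log n/10$ nonzero coordinates, after which Erd\H{o}s gives $\P(\Bx_{i+1}\in H_i)=O(k^{-1/2})$. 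Your cofactor-based conditioning forces $i=n-1$ exactly and makes $\bv$ the cofactor vector, which vanishes precisely on the dependent event — the one case the paper's layered union bound handles for free. To repair your argument you would essentially re-derive the paper's lemma to bound $q_{n-1}$; as written, the step ``one can run the disjoint-blocks-plus-Erd\H{o}s-on-a-normal argument to obtain $q_{n-1}=o(1)$ directly'' is not justified.
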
 

We with a simple observation  \cite{KKSz}. 

\begin{fact} Let $H$ be a subspace of dimension $1 \le d\le n$. Then $H$ contains at most $2^{d}$ Bernoulli vectors. \end{fact}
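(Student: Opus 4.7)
The plan is to show that a $d$-dimensional subspace $H \subseteq \R^n$ is determined, on any Bernoulli vector, by the values of just $d$ well-chosen coordinates, after which the bound $2^d$ follows from counting the possible sign patterns on those coordinates.

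First, I would invoke linear algebra: any $d$-dimensional subspace $H \subseteq \R^n$ admits a set of $d$ coordinate indices $I = \{i_1, \dots, i_d\} \subseteq [n]$ such that the coordinate projection $\pi_I: \R^n \to \R^d$ (forgetting all coordinates outside $I$) restricts to a linear isomorphism $\pi_I|_H: H \to \R^d$. This is standard: pick any basis $v_1, \dots, v_d$ of $H$, form the $d\times n$ matrix with these rows, and extract $d$ linearly independent columns, which exist because the matrix has rank $d$. Those columns give the desired index set $I$.

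Next, I would exploit injectivity: if $u, u' \in H$ are two Bernoulli vectors with $\pi_I(u) = \pi_I(u')$, then $u - u' \in H \cap \ker \pi_I = \{0\}$, so $u = u'$. Therefore the restriction map $u \mapsto (u_{i_1}, \dots, u_{i_d})$ is injective on $H \cap \{-1,+1\}^n$. Since each coordinate of a Bernoulli vector takes only the two values $\pm 1$, the image lies in $\{-1,+1\}^d$, which has cardinality $2^d$, so $|H \cap \{-1,+1\}^n| \le 2^d$.

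There is no serious obstacle here; the only point to be careful about is the selection of the coordinate set $I$, which is just the observation that a rank-$d$ matrix has $d$ linearly independent columns. The argument then compresses to two lines, and gives exactly the bound claimed.
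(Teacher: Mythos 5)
Your proof is correct and takes exactly the same approach as the paper's one-sentence sketch, which says ``in a subspace of dimension $d$, there is a set of $d$ coordinates which determine the others.'' You have simply supplied the standard linear-algebra justification---choosing $d$ independent columns of a basis matrix so that the coordinate projection $\pi_I|_H$ is injective---and then counted the $2^d$ possible sign patterns on $I$.
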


To see this, notice that in a subspace of dimension $d$, there is a set of $d$ coordinates which determine the others.  This fact implies

$$p_n  \le \sum_{i=1}^{n-1} \P(\Bx_{i+1} \in H_{i} ) \le \sum_{i=1}^{n-1}  2^{i-n} \le  1 -\frac{2}{2^{n}},$$

where $H_i$ is the subspace generated by the the first $i$ rows $\Bx_1,\dots,\Bx_i$ of $M_n$.
 
This bound is quite  the opposite of what we want to prove. However,  we notice that the  loss comes at the end. Thus, to obtain the desired upper bound $o(1)$, it suffices to show that the sum of 
the last 
(say) $\log \log n$ terms is at most  (say) $ \frac{1}{\log^{{1/3}} n}$. To do this, we will exploit the fact that the $H_{i}$ are spanned by random vectors.  The following lemma
(which is a more effective version of the above fact)  implies the theorem via the union bound. 

\begin{lemma} \label{lemma1} Let $H$ be the subspace spanned by $d$ random vectors, where $d \ge n-\log \log n$. Then with probability at least $1- \frac{1}{n}$, $H$ contains at most  $\frac{2^{n}}{\log^{{1/3}} n }$ Bernoulli  vectors. \end{lemma}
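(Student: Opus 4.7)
The plan is to exhibit, with high probability, a single vector $w \in H^\perp$ whose support has size at least $s := \lceil(\log n)^{2/3}\rceil$, and then apply Erd\H{o}s' theorem (Theorem~\ref{theorem:Erdos}) to the linear constraint $v \cdot w = 0$ that every Bernoulli vector lying in $H$ must satisfy. (We tacitly assume $d \le n-1$, the range in which the lemma is actually applied.)

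The starting point is a dual viewpoint. Write $M$ for the $d \times n$ random Bernoulli matrix whose rows are $\Bx_1, \dots, \Bx_d$; since the entries of $M$ are iid, its \emph{columns} are themselves iid random Bernoulli vectors, this time living in $\R^d$. Because $H^\perp$ is precisely the null space of $M$, the minimum support of a nonzero vector of $H^\perp$ equals the minimum size of a linearly dependent subset of the columns of $M$. Hence the bad event ``every nonzero $w \in H^\perp$ has support smaller than $s$'' is equivalent to the event ``some $s' < s$ columns of $M$ are linearly dependent''.

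For a fixed $S \subset [n]$ of size $s'$, the probability that the corresponding $s'$ iid Bernoulli columns in $\R^d$ are linearly dependent is at most $\sum_{i=1}^{s'} 2^{i-1}/2^d \le 2^{s'-d}$, using the Fact that any $(i-1)$-dimensional subspace of $\R^d$ contains at most $2^{i-1}$ Bernoulli vectors. Union-bounding over $s' < s$ and over all such $S$ yields
\[
\P(\text{bad}) \;\le\; \sum_{s'=1}^{s-1} \binom{n}{s'}\, 2^{s'-d} \;\le\; s \cdot n^{s} \cdot 2^{s-d}.
\]
With $d \ge n - \log\log n$ and $s = \lceil(\log n)^{2/3}\rceil$, the dominant factor $2^{s-d}$ is at most $2^{-(1-o(1))n}$, while $s \cdot n^{s}$ is only of size $\exp(O((\log n)^{5/3}))$; so the right-hand side is $e^{-\Omega(n)}$, well below $1/n$.

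Therefore, with probability at least $1 - 1/n$, every nonzero vector of $H^\perp$ has support of size at least $s$; such a $w$ exists because $\dim H^\perp \ge n - d \ge 1$. Applying Theorem~\ref{theorem:Erdos} to $v \cdot w = 0$ (with $2^{n-s}$ free choices for $v$ on the coordinates outside $\mathrm{supp}(w)$, and at most $\binom{s}{\lfloor s/2 \rfloor}$ choices for the Bernoulli sign pattern inside, by Erd\H{o}s' estimate applied to the $s$ nonzero coefficients of $w$), we obtain
\[
|H \cap \{-1,1\}^n| \;\le\; 2^{n-s}\binom{s}{\lfloor s/2 \rfloor} \;=\; O\!\left(\frac{2^n}{\sqrt{s}}\right) \;\le\; \frac{2^n}{(\log n)^{1/3}},
\]
after a suitable choice of the constant implicit in $s$. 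The conceptual heart is the dual translation ``$H^\perp$ contains a spread-out vector'' $\Longleftrightarrow$ ``the columns of $M$ are not too dependent''; the hardest point is balancing $s$ so that the union bound over the $\binom{n}{<s}$ candidate subsets is absorbed by the factor $2^{s-d}$, but this works out with considerable room to spare because of the gap between $s \sim (\log n)^{2/3}$ and $d \approx n$.
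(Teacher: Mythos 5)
Your proof is correct, and it reaches the same intermediate target as the paper (with high probability every nonzero vector of $H^\perp$ has support of size $\gg (\log n)^{2/3}$, after which Erd\H os gives $|H\cap\{-1,1\}^n|\le 2^{n-s}\binom{s}{\lfloor s/2\rfloor}=O(2^n/\sqrt{s})$), but it gets there by a genuinely different mechanism. The paper introduces the notion of a \emph{$k$-universal} family of rows: a set of vectors such that every choice of $k$ coordinates and $k$ target signs is realized by some vector in the family. A union bound over coordinate sets and sign patterns shows a collection of $d\ge n/2$ Bernoulli vectors is $(\log n/10)$-universal with probability $1-O(1/n)$, and then $k$-universality forbids any $\Bv\in H^\perp$ with $\le k$ nonzeros (a matching row $V$ would have $V\cdot\Bv>0$). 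You instead pass to the transpose: since the entries of the $d\times n$ matrix are iid, the \emph{columns} are iid Bernoulli vectors in $\R^d$, and a nonzero $w\in H^\perp=\ker M$ supported on $s'$ coordinates is exactly a linear dependence among $s'$ columns. You then bound the probability of any such small dependent set by the same ``at most $2^j$ Bernoulli points in a $j$-dimensional subspace'' fact the paper already uses in the same section, followed by a union bound over the $\binom{n}{<s}$ candidate supports; the slack between $s\sim(\log n)^{2/3}$ and $d\approx n$ makes the bound exponentially small. Your route is more economical in that it recycles the paper's Fact rather than introducing a new combinatorial concept, and it arguably makes the duality $H^\perp\leftrightarrow$ columns of $M$ more transparent; the paper's $k$-universality, on the other hand, pushes the support lower bound a bit further (to $\Theta(\log n)$, hence a final bound of $O(2^n/\sqrt{\log n})$) and packages a reusable combinatorial primitive. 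One small citation point: the step counting Bernoulli sign patterns satisfying $v\cdot w=0$ should invoke Theorem~\ref{theorem:Erdos1} (nonzero real coefficients, point concentration), not Theorem~\ref{theorem:Erdos} (coefficients of magnitude $\ge1$, interval of length $2$), though of course the two are interchangeable here after rescaling $w$. You also correctly flag that one must restrict to $d\le n-1$, which is the regime in which the paper actually applies the lemma.
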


 We say that  a set $S$ of $d$ vectors is {\it $k$-universal}  if for any   set of $k$ different indices $1 \le i_{1}, \dots, i_{k} \le n$ and any set of signs $\ep_{1}, \dots, \ep_{n}$ ($\ep_{i}= \pm 1$), there is a vector $V$ in $S$ such that the sign of the $i_{j}$-th  coordinate of $V$ matches $\ep_{j}$, for all $1\le j \le k$.
 
 \begin{fact} If $d \ge n/2$, then with probability at least $1-\frac{1}{n}$, a set  of 
 $d$ random vectors is $k$-universal, for $k=  \log n/10$. \end{fact}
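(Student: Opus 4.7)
The plan is a direct first-moment (union bound) argument, with no Littlewood-Offord type input needed. Call a \emph{pattern} a pair $((i_1,\dots,i_k),(\ep_1,\dots,\ep_k))$ where $i_1<\dots<i_k$ are coordinates in $[n]$ and each $\ep_j \in \{\pm 1\}$, and say the pattern is \emph{missed} by a set $S$ of Bernoulli vectors if no $V\in S$ satisfies $V_{i_j}=\ep_j$ for all $j$. By definition $S$ fails to be $k$-universal iff at least one pattern is missed, so it suffices to bound the expected number of missed patterns and appeal to Markov/union bound.

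For a uniformly random vector $V\in\{\pm 1\}^n$, the probability that $V$ matches a fixed pattern on its $k$ specified coordinates is exactly $2^{-k}$, because those coordinates are independent fair coin flips. Since the $d$ vectors in $S$ are independent,
\[
\P[\text{a given pattern is missed}] \;=\; (1-2^{-k})^d \;\le\; \exp(-d\cdot 2^{-k}).
\]
The total number of patterns is $\binom{n}{k}\,2^k \le (2n)^k$.

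With $k=\log n/10$ (natural log) and $d\ge n/2$ we have
\[
2^{-k} = n^{-(\ln 2)/10}, \qquad d\cdot 2^{-k} \;\ge\; \tfrac12\,n^{\,1-(\ln 2)/10} \;=\; n^{\Omega(1)},
\]
while $(2n)^k = \exp\!\bigl(\tfrac{\log n}{10}\log(2n)\bigr) = \exp\!\bigl(O((\log n)^2)\bigr)$ is only quasi-polynomial. Hence the union bound yields
\[
\P[\,S \text{ is not $k$-universal}\,] \;\le\; (2n)^k\,\exp(-d\cdot 2^{-k}) \;\le\; \exp\!\Bigl(O((\log n)^2) - \tfrac12 n^{\,1-(\ln 2)/10}\Bigr),
\]
which is in fact super-polynomially small, and certainly $\le 1/n$ for $n$ large.

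The whole calculation is routine; the only thing to notice is that the constant $1/10$ in $k=\log n/10$ is deliberately generous. It is small enough that $d\cdot 2^{-k}$ is still a positive power of $n$ (so the per-pattern failure bound is super-polynomially small), and simultaneously small enough that $(2n)^k$ is only $\exp(O((\log n)^2))$, comfortably dominated by the negative exponential from the previous factor. There is no genuine obstacle; this fact is a standard coupon-collector style estimate used only as auxiliary input to Lemma \ref{lemma1}.
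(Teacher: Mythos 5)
Your proof is correct and takes essentially the same approach as the paper: a union bound over all choices of $k$ indices and signs, bounding the per-pattern failure probability by $(1-2^{-k})^d$ and noting that the number of patterns grows only quasi-polynomially while the per-pattern bound decays super-polynomially when $k=\log n/10$. The only cosmetic difference is that you explicitly carry the $2^k$ factor for the sign choices, whereas the paper's displayed bound $\binom{n}{k}(1-2^{-k})^d \le n^k(1-2^{-k})^{n/2}$ absorbs it, which is harmless since $\binom{n}{k}2^k \le n^{2k}$ still gives the same asymptotic conclusion.
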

 
 To prove this, notice that the failure probability is, by the union bound, at most 
 
 $${n \choose k} (1 -\frac{1}{2^{k} })^{d} \le n^{k}(1 -\frac{1}{2^{k}} )^{n/2}  \le n^{{-1}}. $$

 If $S$ is $k$-universal, then 
  any non-zero vector $\Bv$ in the orthogonal complement of the subspace spanned by $S$ should have more than $k$ non-zero vectors (otherwise, there would be a vector in $S$ having positive inner product with $\Bv$). If we fix such $\Bv$, and let $\Bx$ be a random Bernoulli vector, then by Theorem \ref{theorem:Erdos1} 
  
  $$\P (\Bx \in \hbox{ span} (S)) \le \P(\Bx \cdot \Bv=0) = O(\frac{1}{k^{1/2}}) =o(  \frac{1}{\log ^{{1/3}} n}),$$
  
  \noindent proving Lemma \ref{lemma1} and Theorem \ref{theorem:singular}. 
  
  \vskip2mm 
  
  The symmetric version of Theorem \ref{theorem:singular} is much harder and has been open for quite sometime (the problem was raised by  Weiss  the 1980s). Let $p^{sym}_n$ be the singular probability of a random symmetric matrix whose upper diagonal entries are iid Bernoulli variables. Weiss conjectured that 
  $p^{sym}_n =o(1)$. This was proved by Costello, Tao, and the second author \cite{CTV}. Somewhat interestingly, this proof made use of 
  the argument of Koml\'os in \cite{Kom1} which he applied for non-symmetric matrices. Instead of exposing the matrix row by row, one needs to 
  expose the principal  minors one by one, starting with the 
  top left entry. At step $i$, one has a symmetric matrix $M_i$ of size $i$ and the next matrix $M_{i+1}$  is obtained by adding a row and its transpose. Following Koml\'os,  one  defines 
   $X_i$ as the co-rank of the matrix at step $i$ and shows that the sequence $X_i$ behaves as a bias  random walk with a positive drift.  Carrying out the calculation carefully, 
   one obtains that $X_n=0$ with high probability. 
  
  The key technical step of this argument is to show that if $M_i$ has full rank than so does $M_{i+1}$, with very high probability. Here the 
   quadratic Littlewood-Offord bound is essential. Notice that if  we condition on
    $M_i$, then $\det(M_{i+1})$ is a quadratic form of the entries in the additional  ($(i+1)$-th) row, with coefficients 
   being the co-factors of $M_i$.  By looking at these 
   co-factors closely and using Theorem \ref{theorem:LO:CTV} (to be more precise, a variant of it where only a fraction of coefficients are required to be non-zero), 
   one can establish Weiss' conjecture.

  \begin{theorem} \label{theorem:psym} 
  $$p_n^{sym} = o(1). $$ 
   \end{theorem}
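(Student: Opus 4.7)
The plan is to follow the inductive exposure strategy outlined just before the statement: reveal the principal minors $M_1, M_2, \ldots, M_n$ one at a time (each $M_{i+1}$ is obtained from $M_i$ by adjoining a random row and its transpose), and track the co-rank $X_i := i - \rank(M_i)$. The goal is to show that $X_n = 0$ with probability $1-o(1)$, which is equivalent to $p_n^{sym} = o(1)$. The sequence $X_i$ should behave like a biased random walk on the nonnegative integers with a strong drift toward $0$, so carrying out the walk for $n$ steps, one finds $X_n = 0$ with high probability.

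The two transition estimates I would establish are: \textbf{(a)} if $X_i = 0$, then $\P(X_{i+1} \ge 1 \mid M_i) = o(1)$ at a quantitative rate; and \textbf{(b)} if $X_i \ge 1$, then $\P(X_{i+1} = X_i - 1 \mid M_i) = \Omega(1)$ while $\P(X_{i+1} = X_i + 1 \mid M_i) = o(1)$. Estimate (b) is the easier one: when $\ker(M_i)$ has positive dimension, fix a nonzero vector $\Bv$ in it; the event $X_{i+1} = X_i + 1$ forces the new row $X = (\xi_1, \ldots, \xi_i)$ to satisfy $X \cdot \Bv = 0$, and Erd\H{o}s' linear bound (Theorem \ref{theorem:Erdos1}) controls this probability once one shows $\Bv$ has many nonzero coordinates (which holds with high probability over $M_i$, since $\Bv$ itself is determined by $M_i$ and may be taken to be random enough by standard arguments). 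Conversely, a similar linear Littlewood--Offord argument, applied to the augmented system, gives a lower bound on the probability that the rank actually increases, yielding the drift toward $0$.

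The hard part, and the main obstacle, is estimate (a). Condition on $M_i$ being of full rank, and write
$$\det(M_{i+1}) = d_{i+1}\det(M_i) + \sum_{1 \le j,k \le i} C_{jk}\, \xi_j \xi_k,$$
where $d_{i+1}$ is the new diagonal Bernoulli entry and the $C_{jk}$ are (signed) cofactors of $M_i$. This is a quadratic form in the $\xi_j$, so by the quadratic Littlewood--Offord inequality of Costello--Tao--Vu (Theorem \ref{theorem:LO:CTV}) — in the variant allowing only a positive fraction of nonzero coefficients — it suffices to show that, with high probability over $M_i$, a positive fraction of the cofactors $C_{jk}$ are nonzero. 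This is where the argument is delicate: one has to prove a structural statement about random symmetric matrices of full rank, roughly saying that most $2 \times 2$ minors of the adjugate are nondegenerate. I would establish this by removing two rows/columns from $M_i$ at a time, conditioning on the remaining matrix, and reducing nonvanishing of the corresponding cofactor to another Littlewood--Offord-type event for linear forms in the deleted entries.

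Once (a) and (b) are in place, a straightforward random-walk computation finishes the proof: starting from $X_1 \in \{0,1\}$, the drift ensures that the walk spends most of its time at $0$ and returns to $0$ quickly whenever it leaves, so $\P(X_n = 0) = 1 - o(1)$ and hence $p_n^{sym} = o(1)$.
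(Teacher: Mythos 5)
Your proposal reproduces essentially the same argument the paper sketches (following Costello--Tao--Vu): expose principal minors one at a time, track the co-rank $X_i$, view it as a biased random walk drifting to $0$, and handle the key transition $X_i = 0 \Rightarrow X_{i+1} = 0$ by writing $\det(M_{i+1})$ as a quadratic form in the new row with cofactor coefficients and invoking the quadratic Littlewood--Offord bound (Theorem \ref{theorem:LO:CTV}, the variant requiring only a positive fraction of nonzero coefficients) after checking that enough cofactors are nonzero. This matches the paper's route in both structure and key lemmas, so there is nothing genuinely different to compare.
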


Getting  strong quantitative bounds for $p_n$ and $p_n^{sym}$ is more challenging, and we  will continue 
this topic in Sections \ref{section:singularity2} and \ref{section:singularity3}, after the introduction of inverse theorems.

\section{Hal\'asz' results}  \label{section:Halasz} 

In \cite{H} (see also in \cite{TVbook}),  Hal\'asz
proved the following very general theorem. 

\begin{theorem} \label{theorem:Hal} Suppose that there exists a constant $\delta>0$ such that the following holds

\begin{itemize}
\item (General position) for any unit vector $\Be$ in $\R^d$ one can select at least $\delta n$ vectors $a_k$ with $|\langle a_k,\Be \rangle|\ge 1$;
\vskip .1in
\item (Separation) among the $n^d$ vectors $b$ of the form $\pm a_{k_1}\pm \dots \pm a_{k_d}$ one can select at least $\delta n^d$ with pairwise distance at least 1.
\end{itemize} 

Then 

$$\rho_{d, 1,\Ber}(A)=O_{\delta,d}(n^{-3d/2}).$$
\end{theorem}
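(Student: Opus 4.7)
The plan is to use Fourier-analytic (characteristic-function) methods in the spirit of Esseen and Hal\'asz. Write
$$\phi(t) := \E e^{i\langle t, S_A\rangle} = \prod_{k=1}^n \cos(\langle a_k, t\rangle), \qquad t \in \R^d.$$
A standard $d$-dimensional Esseen concentration inequality bounds $\rho_{d,1,\Ber}(A)$ by $\lesssim_d \int_{B(0, r_d)} |\phi(t)|\, dt$ for some constant $r_d$ depending only on $d$, so it suffices to control this integral by $n^{-3d/2}$.

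My first tool is the elementary pointwise inequality $|\cos x|^2 \le \exp(-c\, \|x/\pi\|_{\R/\Z}^2)$, giving
$$|\phi(t)|^2 \le \exp\bigl(-c\, T(t)\bigr), \qquad T(t) := \sum_{k=1}^n \bigl\|\langle a_k, t\rangle/\pi\bigr\|_{\R/\Z}^2.$$
The general position hypothesis is already enough for $T(t) \gtrsim \delta n \|t\|^2$ when $\|t\|$ is small (since $\ge \delta n$ of the $a_k$ have inner product of size $\gtrsim 1$ with $t/\|t\|$), and Gaussian integration in $\R^d$ then yields the CLT-type bound $O(n^{-d/2})$; to upgrade by an extra factor $n^{-d}$ one must invoke the separation hypothesis.

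The separation enters through a symmetrization/averaging trick. Let $b_1, \ldots, b_N$ with $N \ge \delta n^d$ be the pairwise $1$-separated subfamily of the vectors $\sum_{j=1}^d \varepsilon_j a_{k_j}$ ($\varepsilon_j = \pm 1$, $k_1, \ldots, k_d \in [n]$). For any such $b = \sum_j \varepsilon_j a_{k_j}$, the triangle inequality on $\R/\pi\Z$ combined with Cauchy--Schwarz gives
$$\bigl\|\langle b, t\rangle/\pi\bigr\|_{\R/\Z}^2 \;\le\; d \sum_{j=1}^d \bigl\|\langle a_{k_j}, t\rangle/\pi\bigr\|_{\R/\Z}^2,$$
and summing over all $2^d n^d$ signed $d$-tuples (each $a_k$ occurs in $d\cdot 2^d n^{d-1}$ of them) before restricting to the separated $b_j$'s yields
$$T(t) \;\gtrsim_{\delta, d}\; n^{1-d} \sum_{j=1}^N \bigl\|\langle b_j, t\rangle/\pi\bigr\|_{\R/\Z}^2.$$
Combining everything, the target reduces to
$$\int_{B(0, r_d)} \exp\!\Big(-c\, n^{1-d} \sum_{j=1}^N \bigl\|\langle b_j, t\rangle/\pi\bigr\|_{\R/\Z}^2 \Big)\, dt \;\lesssim_{\delta, d}\; n^{-3d/2}.$$

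This last integral estimate is the main obstacle and is where Hal\'asz's original insight really lives. The idea is that the $1$-separation of the $b_j$ in $\R^d$ forces the $\R/\pi\Z$-values $\langle b_j, t\rangle$ to spread out for typical $t$: via a volume-packing argument, the number of $j$ with $\|\langle b_j, t\rangle/\pi\|_{\R/\Z} \le \alpha$ is $O_d((\alpha/\|t\|)^d + 1)$ when $\|t\| \lesssim r_d$. A dyadic decomposition of the annuli $\|t\| \sim 2^{-\ell}$ together with this count converts the effective Gaussian variance in the exponent from order $n$ to order $n^3$, producing the additional $n^{-d}$ savings and the required $n^{-3d/2}$ bound. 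The preceding three steps are essentially bookkeeping (Esseen, cosine inequality, Cauchy--Schwarz averaging); the delicate Fourier-geometric packing estimate at the end is the heart of the matter.
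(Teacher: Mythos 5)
The paper does not actually prove Theorem~\ref{theorem:Hal}: it states the result with a reference to Hal\'asz's paper \cite{H} and to \cite{TVbook}, proves only the Ess\'een inequality (Lemma~\ref{lemma:Esseen}), and then illustrates the Fourier method with the weaker $O(n^{-1/2})$ bound via H\"older. So your proposal must be judged on its own merits. Your first three moves are correct and in the standard Hal\'asz spirit: Ess\'een reduces the problem to $\int_{B(0,r_d)}|\phi(t)|\,dt$; the pointwise bound $|\cos x|\le\exp(-2\|x/\pi\|_{\R/\Z}^2)$ gives $|\phi(t)|\le\exp(-cT(t))$; and the triangle inequality on $\R/\pi\Z$ together with Cauchy--Schwarz, averaged over all $2^d n^d$ signed $d$-tuples (each index $k$ occurring $d\,2^d n^{d-1}$ times), yields $T(t)\gtrsim_d n^{1-d}\sum_{j=1}^N\|\langle b_j,t\rangle/\pi\|_{\R/\Z}^2$.

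The final step, however, contains a genuine gap. Your ``volume-packing'' claim --- that the number of $j$ with $\|\langle b_j,t\rangle/\pi\|_{\R/\Z}\le\alpha$ is $O_d\big((\alpha/\|t\|)^d+1\big)$ --- is false. Already in $d=1$ with $a_k=k$ (so $b_j=j$, $N=n$, trivially $1$-separated) and $t=1$: the numbers $j/\pi\bmod 1$ equidistribute, so the count is $\sim 2\alpha n$, not $O(\alpha+1)$. Structurally, nothing in the hypotheses bounds $\max_k\|a_k\|$, so the $b_j$ need not lie in a ball of controlled radius, and no pointwise ball-packing estimate can give a count independent of the geometry of $A$. (A secondary issue: the assertion that general position alone gives $T(t)\gtrsim\delta n\|t\|^2$ requires $|\langle a_k,t\rangle|<\pi/2$, i.e.\ $\|t\|\lesssim 1/\max_k\|a_k\|$, so a straight Gaussian integration over all of $B(0,r_d)$ does not yield $O(n^{-d/2})$ either.) The actual Hal\'asz argument avoids a pointwise packing claim entirely: it analyzes the sub-level sets $S_m=\{t\in B(0,r_d):T(t)\le m\}$, uses the doubling property $kS_m\subseteq S_{k^2 m}$ (which follows from exactly the triangle-plus-Cauchy--Schwarz inequality you already wrote down), combines it with the two hypotheses to prove $\mu(S_m)\lesssim_{d,\delta}\mathrm{poly}(m)\,n^{-3d/2}$, and then integrates $\int e^{-cT}\lesssim\sum_m e^{-cm}\mu(S_m)$. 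That level-set mechanism --- not a dyadic decomposition against a packing count --- is the heart of the matter that you correctly flag but do not supply.
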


Hal\'asz'  method is Fourier analytic, which uses the following powerful  Ess\'een-type concentration inequality as the starting point (see \cite{H},\cite{Esseen}).

\begin{lemma} \label{lemma:Esseen} There exists an absolute positive constant $C=C(d)$ such that for any random variable $X$ and any unit ball $\BB \subset \R^d$

\begin{equation}\label{eqn:Esseen} 
\P( X \in \BB ) \leq C \int_{\|t\|_2 \leq 1} |\E( \exp({i \langle t,X\rangle }))|\ dt.
\end{equation}
  \end{lemma}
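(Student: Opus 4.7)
The plan is the standard Fourier-analytic argument: dominate the indicator of a unit ball by a nonnegative kernel whose Fourier transform is supported in $\{\|t\|_2 \le 1\}$, then integrate against the law of $X$ and pull absolute values inside.

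First I would construct an auxiliary kernel $K\colon\R^d\to\R_{\ge 0}$, supported in $\{\|t\|_2 \le 1\}$, whose Fourier transform $\hat K$ is nonnegative everywhere and satisfies $\hat K(x)\ge c_d>0$ on the whole unit ball. The natural choice is $K := \chi \ast \tilde\chi$ with $\chi = \oindicator{\BB(0,1/4)}$ and $\tilde\chi(t) = \chi(-t) = \chi(t)$: then $\hat K = |\hat\chi|^2 \ge 0$ everywhere, and for $\|x\|_2 \le 1$ the inner product $\langle t,x\rangle$ stays bounded by $1/4$ on the support of $\chi$, so the real part of $\hat\chi(x) = \int_{\BB(0,1/4)} e^{-i\langle t,x\rangle}\,dt$ is at least $\cos(1/4)\,\Vol(\BB(0,1/4))>0$. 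Squaring gives a dimensional constant $c_d$ with $\hat K(x)\ge c_d$ for every $\|x\|_2\le 1$, while $K$ is bounded by $\|K\|_\infty \le \Vol(\BB(0,1/4))$.

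Next I would reduce to a ball centered at the origin by translation. For $\BB = \BB(x_0,1)$, nonnegativity of $\hat K$ together with the lower bound above yields the pointwise inequality $\oindicator{\|x-x_0\|_2\le 1} \le c_d^{-1}\hat K(x-x_0)$. Taking expectation and applying Fubini,
\[
\P(X\in\BB) \;\le\; c_d^{-1}\,\E\bigl[\hat K(X-x_0)\bigr] \;=\; c_d^{-1}\int_{\R^d} K(t)\,e^{i\langle t,x_0\rangle}\,\E\bigl[e^{-i\langle t,X\rangle}\bigr]\,dt .
\]
Bringing absolute values inside, using $|K(t)|\le \|K\|_\infty$, that $K$ is supported in $\{\|t\|_2 \le 1\}$, and that $|\E\exp(-i\langle t,X\rangle)| = |\E\exp(i\langle t,X\rangle)|$, one obtains
\[
\P(X\in\BB) \;\le\; c_d^{-1}\|K\|_\infty \int_{\|t\|_2\le 1}\bigl|\E\exp(i\langle t,X\rangle)\bigr|\,dt ,
\]
which is the stated inequality with $C=C(d) := c_d^{-1}\|K\|_\infty$.

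The only nontrivial point is the construction of $K$; once it is in hand, the argument is a mechanical application of Fourier inversion and the triangle inequality. The mild subtlety is that $\hat K$ must stay positive on the entire unit ball in $x$-space, not merely near the origin, which is why the inner radius $1/4$ in the convolution is chosen so that the phases $\langle t,x\rangle$ remain in a range on which $\cos$ is bounded away from zero; any sufficiently small radius would work and only changes the numerical value of $C(d)$.
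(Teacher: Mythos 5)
Your argument is correct and follows the same Fourier-analytic strategy as the paper: convolve the indicator of a small ball with itself to get a kernel supported in the unit ball of frequency space whose Fourier transform is nonnegative, translate by a phase, and pull absolute values through. The one small refinement you make is choosing radius $1/4$ so that $\hat{K}$ is bounded below on the entire unit ball at once, which avoids the paper's final step of covering the unit ball by $O_d(1)$ balls of some small radius $c_2$; the two arguments are otherwise the same.
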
 

\begin{proof}(of Lemma \ref{lemma:Esseen}) With the function $k(t)$ to be defined later, let $K(x)$ be its Fourier's transform

$$K(x)= \int_{\R^d} \exp(i \langle x,t \rangle ) k(t)dt.$$

Let $H(x)$ be the distribution function and $h(x)$ be the characteristic function of $X$ respectively. By Parseval's indentity we have 

\begin{equation}\label{eqn:Parseval}
\int_{\R^d} K(x) dH(x)= \int_{\R^d} k(t)h(t)dt.
\end{equation}

If we choose $k(t)$ so that 

\[
\begin{cases}
k(t)=0 \mbox{ for } \|t\|_2\ge 1,\\ 
|k(t)|\le c_1 \mbox{ for } \|t\|_2\le 1,
\end{cases}
\]

then the RHS of \eqref{eqn:Parseval} is bounded by that of \eqref{eqn:Esseen} modulo a constant factor. 

Also, if 

\[
\begin{cases}
K(x)\ge 1, \|x\|_2\le c_2, \mbox{ for some constant } c_2, \\
K(x)\ge 0 \mbox{ for } \|x\|_2\ge c_2,$$
\end{cases}
\]

then the LHS of \eqref{eqn:Parseval} is at least $\int_{\|x\|_2\le c_2}dH(x)$. 

Similarly, by translating $K(x)$ (i.e. by multiplying $k(x)$ with a phase of $\exp(i\langle t_0,x\rangle$), we obtain the same upper bound for  $\int_{\|x-t_0\|_2\le c_2}dH(x)$. Thus, by covering the unit ball $\BB$ with balls of radius $c_2$, we arrive at \eqref{eqn:Esseen} for some constant $C$ depending on $d$.

To construct $k(t)$ with the properties above, one may take it to have the convolution form 

$$k(x):=\int_{x\in \R^d} k_1(x)k_1(t-x)\\dx,$$ 

where $k_1(x)=1$ if $\|x\|_2\le 1/2$ and $k_1(x)=0$ otherwise.

 \end{proof}

To illustrate Hal\'asz' method, let us give a quick proof of Erd\H os bound $O(n^{-1/2})$ for 
the small ball probability $\rho_{1,1,\Ber} (A)$ with $A$ being  a multi-set of $n$ real numbers of absolute value at least 1. 
In view of Lemma \ref{lemma:Esseen}, it suffices to show that 

$$ \int_{|t| \leq 1} |\E( \exp( it \sum_{j=1}^n a_j \xi_j )| )\ dt = O( 1 / \sqrt{n} ).$$

By the independence of the $\xi_j$, we have

$$ |\E( \exp( it \sum_{j=1}^n a_j \xi_j ) )|  =
\prod_{j=1}^n  |\E( \exp(i t a_j \xi_j)| = |\prod_{j=1}^n \cos(t
a_j)|. $$ 

By H\"older's inequality
$$ \int_{|t| \leq 1}| \E( \exp( it \sum_{j=1}^n a_j \xi_j ))|\ dt
\leq \prod_{j=1}^n (\int_{|t| \leq 1} |\cos(t a_j)|^n \
dt)^{1/n}.$$ But since each $a_j$ has magnitude at least 1, it is
easy to check that $\int_{|t| \leq 1} |\cos(t a_j)|^n \ dt = O( 1 /
\sqrt{n} )$, and the claim follows.

Using Hal\'asz technique, it is possible to deduce 

\begin{corollary}\cite[Corollary 7.16]{TVbook} \label{cor:Hal} Let $A$ be a multi-set in $\R$. 
Let $l$ be a fixed integer and $R_l$ be the number of solutions of the equation
$a_{i_1}+\dots+a_{i_l}=a_{j_1}+\dots+a_{j_l}$. Then

$$\rho_A:= \sup_{x} \P(S_A=x) = O(n^{-2l - \frac{1}{2}}R_l).$$

\end{corollary}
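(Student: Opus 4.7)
I would follow the Hal\'asz Fourier-analytic approach illustrated above for the Erd\H{o}s bound. The starting point is Esseen's inequality (Lemma \ref{lemma:Esseen}) in dimension one,
\[
\rho_A \;\le\; C\int_{|t|\le 1}|\phi_A(t)|\,dt,\qquad \phi_A(t)=\prod_{j=1}^n\cos(ta_j).
\]
Using $|\cos y|\le \exp(-\sin^2(y)/2)$ together with $\sum_j \sin^2(ta_j)=\tfrac{n}{2}(1-\Re\,\nu(2t))$, where $\nu(s):=\tfrac{1}{n}\sum_j e^{isa_j}$ is the characteristic function of the uniform probability on $A$, one obtains the pointwise envelope
\[
|\phi_A(t)|\;\le\;\exp\!\Bigl(-\tfrac{n}{4}\bigl(1-\Re\,\nu(2t)\bigr)\Bigr).
\]

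The quantity $R_l$ enters through the $2l$-th moment of $\nu$. Expanding
\[
|\nu(s)|^{2l} \;=\; \frac{1}{n^{2l}}\sum_{(i_1,\dots,j_l)\in [n]^{2l}}\exp\!\Bigl(is\bigl(\textstyle\sum_k a_{i_k} - \sum_k a_{j_k}\bigr)\Bigr),
\]
and integrating against a smooth Fej\'er-type bump $w$ supported near $[-1,1]$ with $\hat w\ge 0$, the ``diagonal'' tuples with $\sum_k a_{i_k}=\sum_k a_{j_k}$ contribute $\asymp R_l/n^{2l}$, while the off-diagonal oscillations are absorbed into an admissible error through the rapid decay of $\hat w$. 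This yields
\[
\int_{|t|\le 1}|\nu(2t)|^{2l}\,dt \;\ll\; R_l/n^{2l},
\]
and hence by Markov
\[
\bigl|\{t\in[-1,1]:\,|\nu(2t)|\ge 1-\sigma\}\bigr| \;\ll\; R_l/n^{2l}\qquad \text{for all }\sigma\in (0,1/2].
\]

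I would conclude via a three-scale split $[-1,1]=I_0\cup I_1\cup I_2$ with $I_0=[-T,T]$, $T\sim n^{-1/2}$; $I_1=\{t\notin I_0:\,1-\Re\,\nu(2t)\le\sigma\}$; $I_2$ the rest; and $\sigma\sim (\log n)/n$. On $I_2$, the envelope gives $|\phi_A|\le e^{-cn\sigma}$, which is super-polynomially small. On $I_1$, combining the measure bound $|I_1|\ll R_l/n^{2l}$ with the trivial estimate $|\phi_A|\le 1$ contributes $O(R_l/n^{2l})$. On $I_0$, Taylor expansion and the (implicit) normalization $|a_j|\ge 1$ give $|\phi_A(t)|\le \exp(-cnt^2)$, worth $O(n^{-1/2})$ on its own; to upgrade to $O(R_l/n^{2l+1/2})$ one inserts a $|\nu(2t)|^{2l}$ weight via H\"older and invokes the moment estimate above, so that the central Gaussian peak has its mass reduced by a factor $R_l/n^{2l}$. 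Balancing the three contributions then yields the claimed $\rho_A\ll R_l/n^{2l+1/2}$.

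The principal obstacle lies precisely in this last coupling on $I_0$: one must turn the $L^{2l}$ moment bound on $\nu$ into a genuine pointwise refinement of the Erd\H{o}s Gaussian envelope near $t=0$, simultaneously capturing both the $n^{-1/2}$ width of the central peak and the $R_l/n^{2l}$ additive-combinatorial saving. A secondary technical point is the treatment of the off-diagonal tuples in the moment identity, which for real (non-integer) $a_j$ do not cancel exactly as they would for integer $a_j$ on $[-\pi,\pi]$; this is resolved by the Fej\'er weight $w$, whose Fourier transform decays rapidly enough to dominate the off-diagonal contributions by the diagonal $R_l$ term.
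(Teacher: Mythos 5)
The Hal\'asz/Fourier route you sketch is indeed the framework behind the cited \cite[Corollary 7.16]{TVbook}, and the Esseen step and the envelope $|\phi_A(t)|\le\exp\bigl(-\tfrac n4(1-\Re\nu(2t))\bigr)$ are correct. But as written the plan is off by a full factor of $n^{1/2}$, and the gap is not confined to the $I_0$ coupling you flag: the $I_1$ contribution is already only $O(R_l/n^{2l})$, so even a perfect treatment of $I_0$ cannot make the three pieces ``balance'' to $O(R_l/n^{2l+1/2})$. The missing ingredient is Hal\'asz' triangle-inequality (sumset) trick. Pass to integers via Freiman isomorphism (Remark~\ref{remark:Freiman}) and set $F(t):=\sum_j\|ta_j\|^2$, with $\|\cdot\|$ the distance to the nearest integer, so that $\rho_A\le\int_0^1 e^{-2F(t)}\,dt$ and the moment identity $\int_0^1|\nu(t)|^{2l}\,dt=R_l/n^{2l}$ holds exactly. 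Since $\|\cdot\|$ obeys the triangle inequality on $\R/\Z$, Cauchy--Schwarz gives $F(t_1+\cdots+t_m)\le m\sum_k F(t_k)$, hence $mT_\tau\subset T_{m^2\tau}$ for the level sets $T_\tau:=\{t\in\R/\Z:F(t)\le\tau\}$. Combining this with Macbeath's inequality $|mT|\ge\min(1,m|T|)$ on $\R/\Z$ and with the moment bound $|T_{m^2\tau}|\ll_l R_l/n^{2l}$ (valid once $m^2\tau\lesssim n/l$, so that $|\nu|\ge 1-O(1/l)$ on $T_{m^2\tau}$), the choice $m\asymp\sqrt{n/(l\tau)}$ yields the genuinely decaying estimate $|T_\tau|\ll_l\sqrt{\tau/n}\cdot R_l/n^{2l}$ (the residual case $|T_{m^2\tau}|=1$ forces $R_l\gg n^{2l}$, where Erd\H os' $O(n^{-1/2})$ already gives the claim). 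Inserting this into $\int_0^1 e^{-2F}\,dt=\int_0^\infty 2e^{-2\tau}|T_\tau|\,d\tau$ produces exactly the extra $n^{-1/2}$, with no need for the three-scale split or the H\"older device on $I_0$.

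Two smaller points. First, the Fej\'er weight with $\hat w\ge 0$ does not, as claimed, ``resolve'' the real-versus-integer issue in the moment bound: nonnegativity of $\hat w$ makes every term of the expansion nonnegative, so the identity gives only a \emph{lower} bound on $\int w|\nu|^{2l}$, and for real $a_j$ there can be off-diagonal tuples with $\sum a_{i_k}-\sum a_{j_k}$ tiny but nonzero which contribute nearly as much as the diagonal ones. The correct fix is precisely the Freiman-isomorphism reduction to $\Z$ (or $\F_p$), after which $\int_0^1|\nu|^{2l}\,dt=R_l/n^{2l}$ is an exact identity. Second, as the text explicitly invites, there is a short non-Fourier proof in the spirit of the proofs of Theorems~\ref{theorem:Erdos1} and~\ref{theorem:SSz1}: assume $\rho(A)\ge c_1 R_l n^{-2l-1/2}$, apply Theorem~\ref{theorem:ILO:optimal} to place $(1-\eps)n$ of the $a_i$ in a proper symmetric GAP $Q$ of rank $r\ge 1$ and size $O(\rho(A)^{-1}n^{-r/2})$, note that the $l$-fold sums of these elements land in $lQ$ with $|lQ|\le l^r|Q|$, and use Cauchy--Schwarz to get $R_l\ge((1-\eps)n)^{2l}/|lQ|$; substituting the bound on $|Q|$ gives $c_1\le O_{l,\eps}(1)\,n^{(1-r)/2}$, a contradiction once $c_1$ is chosen large.
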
 

This result provides the  hierarchy of bounds mentioned in the previous section,  given that we forbid more and more additive structures on $A$. Let us consider the first few steps of the hierarchy.

\begin{itemize} 

\item If the $a_i$'s are distinct, then we can set $l=1$ and $R_1=n$ (the only solutions are the trivial ones $a_i=a_i$). Thus, we obtain S\'ark\"ozy-Szemer\'edi's  bound 
$O(n^{-3/2})$.  

\vskip .1in

\item If we forbid the $a_i$'s to satisfy equations $a_i +a_j = a_l +a_k$, for any $\{i,j\} \neq \{k,l\}$ (in particular this prohibits $A$ to be an arithmetic progression), then one can fix $l=2$ and $R_2 = n^2$
and obtain $\rho_A = O (n^{-5/2}). $

\vskip .1in

\item If we continue to forbid equations of the form $a_h+ a_i +a_j =a_k +a_l +a_m $, $\{h,i,j\}\neq \{k,l,m\}$, then one obtains $\rho_A=O(n^{-7/2})$ and so on. 

\end{itemize}

Hal\'asz' method is very powerful and has a strong influence on the recent developments discussed  in 
the coming sections.

\section{Inverse theorems: Discrete case}  \label{section:discrete}

A few years ago,  Tao and the second author \cite{TVinverse} brought a new view to the small ball  problem. Instead of working out a hierarchy of bounds by 
 imposing new assumptions as done in Corollary \ref{cor:Hal}, they tried  to find  the underlying reason as to why the small ball probability is large (say, polynomial in $n$). 

\vskip .1in

It is easier and more natural to work with the discrete problem first. Let $A$ be a  multi-set of integers and $\xi$ be the Bernoulli random variable. 

\begin{question}[Inverse problem, \cite{TVinverse}] 
Let $n \rightarrow \infty$. Assume that for some constant $C$ 

$$\rho_A = \sup_x \P (S_A  =x ) \ge n^{-C}.$$ 

What can we say about the elements  $a_1,\dots,a_n$  of $A$ ? 
\end{question}

Denote by $M$ the sum of all elements of $A$ and rewrite  $\sum_{i} a_i \xi_i$  as $M -2 \sum_{i; \xi_i =-1} a_i $. As  $A$ has $2^n$ subsets, the bound $\rho_A  \ge n^{-C}$ implies that at least $2^n /n^C$ among the subset sums are exactly  $(M-x)/2$. This overwhelming collision suggests that $A$ must have some strong additive structure. 
Tao and the second author proposed 

{\bf Inverse Principle}:
\begin{equation} \label{IP}
\hbox { \it A set  with large small ball probability must have strong additive structure.}  
\end{equation}

The issue is, of course, to quantify the statement. 
Before attacking this question, let us recall the famous Freiman's inverse theorem from  Additive Combinatorics. 
As the readers will see, this theorem strongly motivates our study.

In the 1970s,  Freiman considered the collection of pairwise sums $A+A := \{ a+a' |a, a'  \in A \} $ \cite{Freimanbook}. Normally, one expects 
this collection to have $\Theta (|A|^2) $ elements. 
Freiman proved a deep and powerful theorem showing that if 
$A+A$  has only $O(|A|)$ elements (i.e, a huge number of collision occurs) then $A$ must look like an arithmetic progression. (Notice that if $A$ is an arithmetic progression then $|A+A| \approx 2|A|$.) 

To make Freiman's statement more precise, we need the definition of  \emph{generalized arithmetic progressions} (GAPs). 

\vskip .1in

\begin{definition}
A set $Q$ is a \emph{GAP of rank $r$} if it can be expressed  in the form
$$Q= \{g_0+ m_1g_1 + \dots +m_r g_r| M_i \le m_i \le M_i', m_i\in \Z \hbox{ for all } 1 \leq i \leq r\}$$ for some $g_0,\ldots,g_r,M_1,\ldots,M_r,M'_1,\ldots,M'_r$. \end{definition} 

It is convenient to think of $Q$ as the image of an integer box $B:= \{(m_1, \dots, m_r) \in \Z^r| M_i \le m_i \le M_i' \} $ under the linear map
$$\Phi: (m_1,\dots, m_r) \mapsto g_0+ m_1g_1 + \dots + m_r g_r. $$
The numbers $g_i$ are the \emph{generators } of $P$, the numbers $M_i',M_i$ are the \emph{dimensions} of $P$, and $\Vol(Q) := |B|$ is the \emph{volume} of $B$. We say that $Q$ is \emph{proper} if this map is one to one, or equivalently if $|Q| = \Vol(Q)$.  For non-proper GAPs, we of course have $|Q| < \Vol(Q)$.
If $-M_i=M_i'$ for all $i\ge 1$ and $g_0=0$, we say that $Q$ is {\it symmetric}.

If $Q$ is symmetric and $t >0$, the dilate $tQ$ is the set

$$  \{ m_1g_1 + \dots +m_r g_r| - tM_i' \le m_i \le  tM'_i \hbox{ for all } 1 \leq i \leq r\}.$$ 

It is easy to see that if $Q$ is a proper map of rank $r$, then $|Q+Q| \le 2^r |Q|$. This implies that if $A$ is a subset of density $\delta$ in a proper GAP Q of rank $r$, then as far as $\delta =\Theta (1)$,

$$|A +A| \le |Q+Q| \le 2^r |Q| \le \frac{2^r} {\delta} |A| =O(|A| ). $$   

Thus, dense subsets of a proper GAP of constant rank satisfies the assumption $|A+A| =O(|A|)$.  Freiman's remarkable  inverse theorem showed that this example 
is the only one. 

\begin{theorem}[Freiman's inverse theorem in $\Z$]\label{theorem:Freiman} Let
  $\gamma$ be a given positive number. Let $X$ be a set in $\Z$ such that $|X+X| \le
\gamma |X|$. Then  there exists a proper
GAP of rank $O_\gamma(1)$ and cardinality
$O_{\gamma}(|X|)$ that contains  $X$.
\end{theorem}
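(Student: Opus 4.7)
The plan is to follow the Ruzsa--Plünnecke--Bogolyubov blueprint. The first step is to upgrade the hypothesis $|X+X| \le \gamma |X|$ to control on all iterated sumsets. By the Plünnecke--Ruzsa inequality, for every $k,l \ge 0$ one obtains $|kX - lX| \le \gamma^{k+l} |X|$, where $kX - lX$ denotes the $(k+l)$-fold signed sumset. In particular, $|2X - 2X| \le \gamma^4 |X|$, and the doubling constant is stable under taking sums and differences up to a bounded power. This is the fundamental flexibility that will let us pass back and forth between $X$ and its various sum/difference sets without losing track of cardinalities.

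Next I would use Ruzsa's modeling lemma to reduce from $\Z$ to a finite cyclic group. Pick an integer $N$ of size $\Theta_\gamma(|X|)$ (say a prime slightly larger than $|8X-8X|$) and find a Freiman $8$-isomorphism from a subset $X' \subset X$ with $|X'| \ge |X|/2$ onto a subset $X'' \subset \Z/N\Z$ of density $\Theta_\gamma(1)$. The modeling step is essentially a pigeonhole-plus-dilation argument: one chooses a random generic dilate of $X$ mod $N$ and shows that on a large piece the iterated sums do not wrap around, so additive equations are preserved.

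The heart of the proof is then Bogolyubov's lemma (and Chang's quantitative refinement) applied inside $\Z/N\Z$. Expanding $\mathbf{1}_{X''} \ast \mathbf{1}_{-X''} \ast \mathbf{1}_{X''} \ast \mathbf{1}_{-X''}$ via the Fourier inversion formula on $\Z/N\Z$ and isolating the large Fourier coefficients $\Gamma \subset \widehat{\Z/N\Z}$ with $|\widehat{\mathbf{1}_{X''}}(\xi)|^2 \ge \eta |X''|^2$, one shows that the Bohr set
$$B(\Gamma, 1/4) := \{ n \in \Z/N\Z : \|\xi n / N\| \le 1/4 \text{ for all } \xi \in \Gamma \}$$
is contained in $2X'' - 2X''$. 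Chang's covering lemma bounds $|\Gamma| = O_\gamma(1)$, independently of $N$. A Minkowski-type argument in the geometry of numbers then produces a proper symmetric GAP $Q'' \subset B(\Gamma, 1/4)$ of rank $r \le |\Gamma| = O_\gamma(1)$ and volume $|Q''| \ge (c/r)^r N \gg_\gamma |X|$. Pulling $Q''$ back through the Freiman isomorphism gives a proper GAP $Q \subset 2X - 2X$ in $\Z$ of the same rank and size $\gg_\gamma |X|$.

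Finally, one invokes Ruzsa's covering lemma: since $|X + Q| \le |3X - 2X| \le \gamma^5 |X| = O_\gamma(|Q|)$, the set $X$ is covered by $O_\gamma(1)$ translates of $Q - Q$. Absorbing these translates as extra generators enlarges the rank by $O_\gamma(1)$ while keeping the volume $O_\gamma(|X|)$, and a final properness-cleanup (e.g.\ Bilu's lemma, replacing the GAP by a proper one of the same rank and comparable volume) yields the conclusion. The main obstacle is the Bogolyubov/Chang step: getting the rank of $Q$ to be bounded by a function of $\gamma$ alone rather than growing like $\log |X|$ requires the non-trivial dissociativity argument controlling $|\Gamma|$, and this is where the sharpest known quantitative dependence on $\gamma$ is concentrated.
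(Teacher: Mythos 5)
Your sketch follows the standard Ruzsa proof of Freiman's theorem (Pl\"unnecke--Ruzsa iterated sumset control, Ruzsa modeling into $\Z/N\Z$, Bogolyubov--Chang to place a large Bohr set inside $2X''-2X''$, geometry of numbers to extract a proper GAP, and Ruzsa's covering lemma to absorb $X$), which is precisely the argument the paper points to via its citation of Chapter~5 of Tao--Vu's \emph{Additive Combinatorics}. The paper itself gives no proof of Theorem~\ref{theorem:Freiman}, so this is exactly the canonical route it references.
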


For further discussions, including a beautiful proof by Ruzsa, see \cite[Chapter 5]{TVbook}; see also  \cite{BGT} for 
recent and deep developments concerning non-cummutative settings (when $A$ is a subset of a non-abelian group).

In our case, we want to find examples for $A$ such that $\rho(A) : =\sup_{x} \P(S_A =x)$ is large. 
Again, dense subsets of a proper GAP come in as natural candidates. 

\begin{example} \label{example:linear:1} Let $Q$ be a proper symmetric GAP of rank $r$ and volume $N$. Let $a_1, \dots, a_n$ be (not necessarily distinct) elements of $Q$. By the Central Limit Theorem, with probability at least $2/3$, 
the random sum 
$S_A =\sum_{i=1}^n a_ix_i$ takes value in the dilate  $10 n^{1/2} Q$. Since $|tQ| \le t^r N$, by the pigeon hole principle, we can conclude that  there is a point $x$ where 

$$\P(S_A=x )= \Omega (\frac{1}{n^{r/2} N}).$$ 

Thus if  $|Q|= N=O(n^{C-r/2})$ for some constant $C\ge r/2$, then 

$$\rho(A) \ge \P(S_A =x) = \Omega (\frac{1}{n^{C}}).$$

\end{example}

This  example  shows that  if the elements of $A$ are  elements of a symmetric proper GAP with a small rank and small cardinality, then
$\rho(A)$ is large. Inspired by Freiman's theorem,  Tao and the second author  \cite{TVstrong,TVinverse} showed  that the converse is also true.

\begin{theorem} \label{theorem:TV1} For any constant $C, \epsilon$ there  are constants $r, B$ such that the following holds. 
 Let $A$ be a multi-set of $n$  real numbers such that 
$\rho(A) \ge n^{-C}$, then there is a GAP $Q$ of  rank $r$   and volume $n^B$ such that all but $n^{\epsilon}$ elements of 
$A$ belong to $Q$. 
\end{theorem}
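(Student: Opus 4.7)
The plan is to combine the Hal\'asz--Esseen Fourier bound (Lemma \ref{lemma:Esseen}) with Freiman's inverse theorem (Theorem \ref{theorem:Freiman}) via a duality argument, in the spirit of the approach of Section \ref{section:Halasz}. Throughout I will assume, via a Freiman isomorphism, that the $a_i$ are integers (cf.\ Remark \ref{remark:Freiman}).

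First I would apply the Esseen inequality and independence to bound
$$\rho(A) \le C \int_{-1/2}^{1/2} \prod_{j=1}^n |\cos(\pi t a_j)|\, dt \le C \int_{-1/2}^{1/2} \exp\Big(-c \sum_{j=1}^n \|t a_j\|_{\R/\Z}^2\Big)\, dt,$$
using $|\cos(\pi x)| \le \exp(-c\|x\|_{\R/\Z}^2)$. The hypothesis $\rho(A)\ge n^{-C}$ combined with a layer-cake decomposition of the last integral then forces the ``Bohr-type'' sublevel set
$$S_K := \Big\{ t \in [-1/2,1/2] : \sum_{j=1}^n \|t a_j\|_{\R/\Z}^2 \le K \Big\}$$
to have measure at least $n^{-C'}$ for some $K = O_C(1)$ and $C' = C'(C)$.

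Second, I would extract additive structure of $S_K$. By the triangle inequality for $\|\cdot\|_{\R/\Z}$ and Cauchy--Schwarz one checks $kS_K - kS_K \subseteq S_{(2k)^2 K}$ for every fixed $k$, while a simple covering argument bounds $|S_{mK}|/|S_K|$ polynomially in $m$. Hence $S_K$ has polynomial doubling. Applying Freiman's theorem (in a torus/continuous version, obtained by discretising $S_K$ at scale $1/n^{O(1)}$ and passing back) produces a proper symmetric GAP $P^* \subset \R/\Z$ of rank $r^* = O_C(1)$ and $|P^*| \le n^{O(1)} |S_K|$ with $S_K \subseteq P^*$.

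The key and most delicate step is to dualise this structural information back onto the $a_j$. The idea is: if $a_j$ satisfies $\|t a_j\|_{\R/\Z}$ small for most $t \in S_K$, then $a_j$ must lie close to the lattice dual to the generators of $P^*$. A careful pigeonhole/counting argument over the $r^*$ generators of $P^*$ then produces a single GAP $Q$ of rank $r = O_{C,\epsilon}(1)$ and volume at most $n^B$ (with $B = B(C,\epsilon)$) such that the number of exceptional $a_j \notin Q$ is at most $n^\epsilon$; elements outside $Q$ are those whose individual contributions to $\sum_j \|t a_j\|_{\R/\Z}^2$ would be too large to be consistent with the size of $S_K$. Example \ref{example:linear:1} serves as a sanity check that volume $n^B$ is the right order of magnitude.

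I expect the duality step (Step 3) to be the main obstacle. A naive duality produces a GAP whose rank and volume blow up polynomially in $\rho(A)^{-1}$, which would leave both $r$ and $B$ growing with $n$. The technical heart of the argument is to iterate or refine Freiman's theorem so that the parameters $r$ and $B$ depend only on $C$ and $\epsilon$, by trading dimensions of $P^*$ against the size of the allowed exceptional set $n^\epsilon$.
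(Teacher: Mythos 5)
Your proposal has the right opening moves --- apply Esseen/Hal\'asz, extract a large sublevel set $S_K$ of $\sum_j\|ta_j\|_{\R/\Z}^2$ --- but it then diverges from the paper's argument in a way that creates real gaps.

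You propose to prove that $S_K$ has polynomial doubling and then run Freiman's theorem \emph{on the level set $S_K$ itself}, obtaining a GAP $P^*$ containing $S_K$, and finally to ``dualise'' this structure onto the $a_j$'s. Two problems. First, the claimed polynomial doubling of $S_K$ is not established: the inclusion $kS_K - kS_K \subseteq S_{(2k)^2K}$ is correct, but the assertion that a ``simple covering argument'' bounds $|S_{mK}|/|S_K|$ polynomially in $m$ is not justified and is far from obvious --- any natural attempt (e.g.\ Ruzsa covering) leads you back to a bound on $|S_{m'K}|/|S_K|$ for larger $m'$, which is circular. Second, and more seriously, the dualisation step is not an incidental technicality to be worked out later; it is exactly where the theorem lives, and the paper's proof does not solve it the way you envision. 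Applying Freiman to $S_K$ gives structure on the \emph{frequency set}, and converting that back into a single GAP of bounded rank containing all but $n^\eps$ of the $a_j$ (with volume $n^{O(1)}$, independent of $n$ in the exponent) is a genuine inverse problem in its own right --- there is no off-the-shelf duality that does this, as you yourself suspect.

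The paper (Appendix \ref{section:discrete:proof}) sidesteps both obstacles by never running Freiman on $S_m$. Instead, after the double-counting step isolates a large subset $A'\subset A$ with $\sum_{t\in S_m}\|a_it/p\|^2 \le \frac{C_0 m}{n}|S_m|$ for every $a_i\in A'$, the triangle inequality immediately pushes the same bound (with an extra $k^2$) onto the iterated sumset $kA'$. One then introduces the explicit dual set $S_m^{\ast} := \{a : \sum_{t\in S_m}\|at/p\|^2 \le \frac{1}{200}|S_m|\}$, and the crucial estimate $|S_m^{\ast}| \le 8p/|S_m|$ is proved directly by a short second-moment/Plancherel computation on $T_a = \sum_{t\in S_m}\cos(2\pi at/p)$, not by any covering or doubling argument. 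Since $kA'' \subset S_m^{\ast}\cup\{0\}$ (with $A''=A'\cup\{0\}$), this yields $|kA''| = O(\rho^{-1}\exp(-m+2))$. Finally --- and this is the ingredient you are missing --- one cannot finish with Freiman's theorem as stated (Theorem \ref{theorem:Freiman}), because the bound is on $|kA''|$ with $k\sim\sqrt{n/m}$ growing with $n$, not on $|A''+A''|$. The paper invokes the long-range inverse theorem (Theorem \ref{theorem:longrange}), a nontrivial strengthening of Freiman that accepts a bound $|kX|\le k^\gamma|X|$ for $k$ depending on $|X|$ and outputs a GAP of rank $O(\gamma)$ and cardinality $O_\gamma(k^{-r}|kX|)$; this is precisely what makes the exponent $B$ independent of $n$. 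So your proof as written has a gap at the duality step, and the fix requires both the direct second-moment bound on $|S_m^\ast|$ and the long-range Freiman variant.
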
 

The dependence of $B$ on $C, \epsilon$ is not explicit in \cite{TVinverse}. In \cite{TVstrong}, Tao and the second author obtained an almost sharp dependence. 
The best dependence, which mirrors Example \ref{example:linear:1} was proved in a more recent paper  \cite{NgV} of the current authors. This proof is 
different from those in earlier proofs and made a direct use of Freiman's theorem (see Appendix  \ref{section:discrete:proof}).

\begin{theorem}[Optimal inverse Littlewood-Offord theorem, discrete case]\cite{NgV}\label{theorem:ILO:optimal}
Let $\eps<1$ and $C$ be positive constants. Assume that
$$\rho (A)  \ge  n^{-C}. $$ Then there exists a proper symmetric GAP $Q$ of rank $r= O_{C, \eps} (1)$ which contains all but at most $\eps n$
 elements of $A$ (counting multiplicities), where
 $$|Q| = O_{C, \eps} ( \rho (A)^{-1} n^{- \frac{r}{2}}). $$
\end{theorem}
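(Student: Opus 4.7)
The plan is to deduce Theorem \ref{theorem:ILO:optimal} directly from Freiman's theorem (Theorem \ref{theorem:Freiman}), essentially by inverting the construction in Example \ref{example:linear:1}. The argument naturally splits into a structural step that produces a GAP of bounded rank containing most of $A$, followed by a volume-tightening step that extracts the sharp exponent $-r/2$.

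Step 1 (from concentration to small doubling). Starting from $\rho(A) \ge n^{-C}$, Cauchy-Schwarz gives $\P\!\left(\sum_i (\xi_i - \xi_i') a_i = 0\right) \ge n^{-2C}$, producing at least $4^n n^{-2C}$ signed tuples $\eta \in \{-1, 0, 1\}^n$ satisfying $\sum_i a_i \eta_i = 0$. This is an enormous amount of additive collision: by a Plünnecke-Ruzsa / Balog-Szemer\'edi-Gowers style argument, it forces some subset $A' \subseteq A$ of size at least $(1 - \eps) n$ to have small iterated sumset, $|k A' - k A'| = O_{C, \eps}(|A'|)$ for a suitable $k = k(C, \eps)$.

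Step 2 (Freiman). Applying Theorem \ref{theorem:Freiman} to $k A' - k A'$ places $A'$ inside a proper symmetric GAP $Q_0$ of rank $r = O_{C, \eps}(1)$ and cardinality $O_{C, \eps}(|A'|) = O_{C, \eps}(n)$. This produces the correct rank, but the volume bound $|Q_0| = O(n)$ is still generically too weak to match Example \ref{example:linear:1}.

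Step 3 (sharpening the volume). This is the heart of the proof and the main obstacle. Parametrize $a_i = \Phi(m^{(i)})$, with $m^{(i)}$ in the defining integer box of $Q_0$, so that $S_A = \Phi\!\left(\sum_i \xi_i m^{(i)}\right)$ is $\Phi$ applied to a random walk on $\Z^r$. By the central limit theorem, with probability $1 - o(1)$ this walk lands in a box $B \subset \Z^r$ of volume $O(n^{r/2})$ relative to the step covariance; pigeonholing on the popular value gives $\rho(A) \gtrsim |\Phi(B)|^{-1}$. To upgrade this to the required bound $|Q| = O(\rho(A)^{-1} n^{-r/2})$, I would refine $Q_0$ to a sub-GAP $Q$ of the same rank $r$ by trimming or rescaling each generator so that its contribution exactly matches the CLT spread in that direction, and then apply a standard lattice reduction (e.g.\ Minkowski) to restore properness. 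This delicate refinement is what converts Freiman's qualitative output into a bound that is sharp up to constants.

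Step 4 (outliers). The at most $\eps n$ elements of $A \setminus A'$ are absorbed via a standard Ruzsa covering argument: each enlarges $Q$ by a constant factor depending only on $\eps$, preserving both the rank and the volume bound up to a $O_{C, \eps}(1)$ factor.

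The main obstacle is clearly Step 3. Freiman's theorem is qualitative and only produces a GAP of volume $O(n)$; bridging the gap between that and the sharp bound $O(\rho(A)^{-1} n^{-r/2})$ requires carefully matching the generator scales of $Q_0$ to the CLT-driven spread of the random walk in each direction, while simultaneously keeping the resulting sub-GAP proper so that $|Q|$ and $\Vol(Q)$ agree.
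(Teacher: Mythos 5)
Your proposal correctly identifies Freiman's theorem as the structural engine, and the paper's proof also routes through a Freiman-type result. But the architecture you've chosen — ordinary Freiman applied with bounded $k$, followed by a volume-tightening trim — does not close, and the weak point is exactly where you flag it (Step 3), though the fix you propose will not work.

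The CLT/pigeonhole calculation in Step 3 runs the wrong direction. Suppose the steps $m^{(i)} \in \Z^r$ have per-coordinate spread $\sigma_j$. The walk $\sum_i \xi_i m^{(i)}$ then concentrates in a box of volume $\approx n^{r/2} \prod_j \sigma_j$, and pigeonhole gives $\rho(A) \gtrsim \big(n^{r/2} \prod_j \sigma_j\big)^{-1}$. Rearranged, this says $\prod_j \sigma_j \gtrsim \rho(A)^{-1} n^{-r/2}$ — a \emph{lower} bound on the effective volume, which is the forward direction from Example \ref{example:linear:1}, not the inverse direction you need. Nothing in this calculation forces $A'$ into a sub-GAP of size $O(\rho^{-1} n^{-r/2})$; the $m^{(i)}$'s could have large spread (e.g.\ be bimodal, clustering near $\pm M_j$) while $\rho$ is still large, and then trimming $Q_0$ to generator scales $\sigma_j$ would not contain $A'$ at all. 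Moreover the Freiman GAP $Q_0$ is determined only up to Freiman-equivalence, so its generators and their scales bear no canonical relation to the optimal GAP; "rescaling the generators to match the CLT spread" presupposes a coordinate system that Step 2 does not provide. Step 1 is also under-specified: Balog--Szemer\'edi--Gowers and Pl\"unnecke--Ruzsa give a subset of positive proportional size with controlled doubling from an energy hypothesis of the form $E(A) \gg |A|^3$; here the hypothesis is a concentration bound on a length-$n$ random walk over $A$, which is a much higher-order collision statement, and extracting a $(1-\eps)n$-element subset (not merely a constant fraction) with $|kA'-kA'| = O(|A'|)$ for constant $k$ is not a standard consequence.

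The actual proof in the paper avoids both problems by working at a single scale $k \sim \sqrt{n/m}$ that grows with $n$. It first transfers to $\F_p$, uses Ess\'een/Hal\'asz-type Fourier analysis to produce a large level set $S_m = \{t : \sum_i \|a_i t / p\|^2 \le m\}$, shows by averaging that all but $\eps n$ of the $a_i$ lie in the dual set $S_m^\ast$, proves $|S_m^\ast| \le 8p/|S_m|$ by a direct character-sum computation, and deduces $|kA''| = O(\rho^{-1} e^{-m+2})$ for $k = c_1\sqrt{n/m}$. It then invokes a \emph{long-range} inverse theorem (Theorem \ref{theorem:longrange}): from $|kX| \le k^\gamma |X|$ with $k$ growing, one gets a proper symmetric GAP of rank $r = O(\gamma)$ and size $O(k^{-r}|kX|)$. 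The factor $k^{-r} \approx n^{-r/2}$ is precisely what produces the sharp volume bound — it comes for free from the long-range formulation, not from any trimming. This long-range Freiman variant, plus the Fourier-dual bound on $|kA''|$ at scale $k \sim \sqrt{n}$, is the missing idea your proposal would need.
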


The existence of the exceptional set cannot be avoided completely. For more discussions, see \cite{TVinverse, NgV}.  There is also a trade-off between the size of the exceptional set and the bound on $|Q|$. In many combinatorial applications (see, for instance, the next section),  an exceptional set of size $\epsilon n $ 
does not create any trouble. 

Let us also point out that the above inverse theorems hold in a  very general setting  where the random variables $\xi_i$ are not necessarily 
Bernoulli and independent  (see \cite{TVinverse, TVstrong, NgV,Ng-zerosum} for more details).

%{\bf To anh Van: I added here \cite{Ng-zerosum} as an example when $x_i$ can be 'dependent'. Not sure if this is the one you wanted.}

\section{Application: From Inverse to Forward}  \label{section:ItoF}

 One can use  the "inverse" Theorem \ref{theorem:ILO:optimal} to quickly prove  several "forward" theorems presented in earlier sections. 
 As an example, let us derive Theorems \ref{theorem:Erdos1} and \ref{theorem:SSz1}. 

\begin{proof}(of Theorem \ref{theorem:Erdos1})
Assume, for contradiction, that there is a set $A$ of $n$ non-zero numbers such that $\rho(A) \ge c_1 n^{-1/2}$ for some large constant $c_1$ to be chosen. Set $\eps=.1, C=1/2$.
By Theorem \ref{theorem:ILO:optimal}, there is a GAP $Q$ of rank $r$ and size $O (\frac{1}{c_1} n^{C-\frac{r}{2}} )$ that contains
at least $.9n$ elements from $A$.  However, by setting $c_1$ to be sufficiently large (compared to the constant in big O)  and using the fact that $C=1/2$ and $r \ge 1$, 
we can force  $O (\frac{1}{c_1} n^{C-\frac{r}{2}} ) < 1$. Thus, $Q$ has   to be empty, a contradiction.
\end{proof}

\begin{proof}(of Theorem \ref{theorem:SSz1}) Similarly, assume that there is a set $A$ of $n$ distinct numbers such that $\rho(A) \ge c_1 n^{-3/2}$ for some large constant $c_1$ to be chosen. Set $\eps=.1, C=3/2$.
By Theorem \ref{theorem:ILO:optimal}, there is a GAP $Q$ of rank $r$ and size $O(\frac{1}{c_1} n^{C-\frac{r}{2}} )$ that contains
at least $.9n$ elements from $A$. This implies $|Q| \ge .9 n$. By setting $c_1$ to be sufficiently large and using the fact that   $C=3/2$ and $r \ge 1$, we can guarantee that $|Q|  \le .8n $, a contradiction.
\end{proof}

The readers are invited to work out the proof of Corollary \ref{cor:Hal}. 

Let us now  consider another application of Theorem \ref{theorem:ILO:optimal}, which enables us to make very precise counting arguments. Assume that we would like to count the  number of multi-sets $A$ of integers with $\max |a_{i}| \le M=n^{O(1)}$ such that $\rho(A) \ge n^{-C}$.

Fix $d \ge 1$, fix \footnote{A more detailed version of Theorem \ref{theorem:ILO:optimal} tells us that there are
 not too many ways to choose the
 generators of $Q$. In particular, if $|a_i|\le M=n^{O(1)}$, the number of ways to fix these is negligible compared to the main term.} a  GAP $Q$  with rank $r$ and
 volume $|Q|\le c \rho(A)^{-1}n^{-\frac{r}{2}}$ for some constant $c$ depending on $C$ and $\ep$. The dominating term in the calculation will be the number of multi-sets which intersect with $Q$
 in subsets of size at least $(1-\ep)n$. This number is bounded by

\begin{align}\label{discretcounting}  
\sum_{k\le \ep n}|Q|^{n-k} (2M)^{k}&\le  \sum_{k\le \ep n}(c\rho(A)^{-1}n^{-\frac{r}{2}})^{n-k} (2M)^k\\\nonumber 
&\le (O_{C,\ep}(1))^n n^{O_\ep(1)n} \rho(A)^{-n} n^{-\frac{n}{2} }.
\end{align}

We thus obtain the following useful result.

\begin{theorem} [Counting theorem: Discrete case] \label{counting} 
The number $N$ of multi-sets $A$ of integers with $\max |a_{i}| \le n^{C_1}$ and $\rho(A) \ge n^{-C_2}$ is bounded by 

 $$N =\big(O_{C_1,C_2,\ep}(1)\big)^n n^{O_\ep(1)n} \big(\rho(A)^{-1} n^{-1/2}\big)^n,$$

where $\ep$ is an arbitrary constant between $0$ and $1$.
\end{theorem}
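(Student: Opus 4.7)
The plan is to follow the counting sketch given in the paragraph preceding the statement, turning it into a formal argument based on Theorem \ref{theorem:ILO:optimal}. Fix $\eps \in (0,1)$ and write $\rho := \rho(A)$. By the inverse theorem, every multi-set $A$ contributing to $N$ is captured by some proper symmetric GAP $Q = Q(A)$ of rank $r = O_{C_2,\eps}(1)$ and volume $|Q| \le c\, \rho^{-1} n^{-r/2}$ for a constant $c = c(C_2,\eps)$, in the sense that at least $(1-\eps)n$ elements of $A$ lie in $Q$. So the plan is: (i) count the possible $Q$, (ii) for each $Q$, count the multi-sets $A$ whose intersection with $Q$ has size at least $(1-\eps)n$, and (iii) sum over $Q$.

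For step (i), since $|a_i| \le n^{C_1}$, we may restrict attention to GAPs whose generators $g_0,g_1,\ldots,g_r$ lie in $[-n^{C_1},n^{C_1}]$ (otherwise $Q$ is disjoint from the range where the $a_i$ live, except possibly for a few large elements, which can be absorbed into the exceptional $\eps n$ set). The dimensions $M_i, M_i'$ are bounded by $|Q| \le c\rho^{-1} n^{-r/2} \le n^{O_{C_1,C_2}(1)}$. Thus the total number of admissible GAPs is $n^{O_{C_1,C_2,\eps}(1)}$, which is a sub-exponential prefactor that can be absorbed into $(O_{C_1,C_2,\eps}(1))^n$.

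For step (ii), fix $Q$. For each $k \le \eps n$, choose which $k$ of the $n$ coordinate positions correspond to elements outside $Q$ (at most $\binom{n}{k} \le 2^n$ ways), choose those $k$ elements from $[-n^{C_1}, n^{C_1}]\cap\Z$ (at most $(2n^{C_1}+1)^k$ ways), and choose the remaining $n-k$ elements from $Q$ (at most $|Q|^{n-k}$ ways). This gives
\begin{equation*}
\sum_{k \le \eps n} 2^n (2n^{C_1}+1)^k |Q|^{n-k}
\le 2^n (\eps n+1) \max_{k\le \eps n} (2n^{C_1}+1)^k |Q|^{n-k}.
\end{equation*}
Whichever endpoint maximizes, plugging $|Q| \le c\rho^{-1} n^{-r/2}$ with $r \ge 1$ and $(2n^{C_1}+1)^{\eps n} = n^{O_{C_1,\eps}(1)\,n}$, we obtain a bound of the form $(O_{C_1,C_2,\eps}(1))^n\, n^{O_\eps(1)\,n}\, (\rho^{-1} n^{-1/2})^n$ for each fixed $Q$.

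Multiplying by the $n^{O_{C_1,C_2,\eps}(1)}$ count of GAPs from step (i) leaves the same bound, finishing the proof. The main technical point (not a genuine obstacle, but the place where one must be careful) is verifying that the bound on $|Q|$ from Theorem \ref{theorem:ILO:optimal} really dominates the combinatorial overhead: both the $\binom{n}{k}$ factor and the $(2n^{C_1}+1)^{\eps n}$ factor must be swept into the $(O_{C_1,C_2,\eps}(1))^n n^{O_\eps(1)n}$ prefactor, which is possible precisely because $r \ge 1$ makes $|Q|^n$ carry the decisive $n^{-n/2}$ saving.
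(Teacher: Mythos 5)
Your argument follows the same route as the paper: fix a GAP $Q$ as supplied by Theorem~\ref{theorem:ILO:optimal}, count multi-sets meeting $Q$ in at least $(1-\eps)n$ elements via the sum $\sum_{k\le \eps n}|Q|^{n-k}(2M)^k$, and then sum over the (polynomially many) admissible $Q$. Your step (ii) matches display \eqref{discretcounting} almost verbatim (the extra $\binom{n}{k}\le 2^n$ factor you include is harmless and gets absorbed into $(O(1))^n$), and your step (iii) is exactly what the paper delegates to its footnote. The one place where you are looser than you should be is the parenthetical justification in step (i): it is \emph{not} true that a symmetric proper GAP with generators outside $[-n^{C_1},n^{C_1}]$ must be disjoint from $[-n^{C_1},n^{C_1}]$ — e.g.\ $g_1$ and $g_2$ can both be enormous while $g_1+g_2$ is tiny and the GAP stays proper. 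What one actually needs is either (a) to observe that in a \emph{symmetric proper} GAP with $M_i\ge 1$ each generator $g_i$ is itself an element of $Q$, so after discarding useless directions ($M_i=0$) the generators that matter are bounded once the relevant elements are, or (b) to invoke the sharper form of the inverse theorem (this is exactly what the paper's footnote does, and what Theorem~\ref{corollary:continuous:NgV} makes explicit in the continuous setting) which directly bounds the generators. Either fix is short; as written your sentence asserts a false implication even though the conclusion it is used for is correct. Aside from that, the proof is sound and essentially identical to the paper's.
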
 

Due to their asymptotic nature, our  inverse theorems  do not directly imply Stanley's precise result (Theorem \ref{theorem:Stan}).
 However, by refining the proofs, one can actually get very close and with some bonus, namely,  additional strong  {\it rigidity} information. For instance, in  \cite{Ng-Stanley} the first author showed that if the elements of $A$ are distinct, then 

$$\P (S_A =x)  \le (\sqrt {\frac{24}{\pi}}+o(1)) n^{-3/2},$$

\noindent where the constant on the RHS is obtained when  $A$ is 
 the symmetric arithmetic progression $A_0$ from Theorem \ref{theorem:Stan}. 
It was showed that if $\rho(A)$ is close to this value, then $A$ needs to be very close to a symmetric arithmetic progression. 

\begin{theorem}\cite{Ng-Stanley}\label{theorem:stable} There exists a positive constant $\epsilon_0$ such that for any $0 < \epsilon \le \epsilon_0$, there exists a positive number $\epsilon' = \epsilon'(\epsilon)$ such that $\ep'\rightarrow 0$ as $\ep \rightarrow 0$ and the following holds: if $A$ is a set of $n$ distinct integers and

    $$\rho(A)\ge \Big(\sqrt {\frac{24}{\pi}} -\ep\Big) n^{-\frac{3}{2}},$$
    then there exists an integer $l$ which divides all $a\in A$ and $$\sum_{a\in A}\Big(\frac{a}{l}\Big)^2 \le (1+\ep')\sum_{a\in A_0}a^2.$$
\end{theorem}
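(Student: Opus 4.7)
To produce a stability theorem around Stanley's extremal set $A_0$, the plan is to combine the sharp inverse Littlewood--Offord theorem (Theorem \ref{theorem:ILO:optimal}) with a local central limit theorem for random $\pm 1$ sums, along the lines of the derivations of Theorems \ref{theorem:Erdos1} and \ref{theorem:SSz1} given in Section \ref{section:ItoF}. First, I would apply Theorem \ref{theorem:ILO:optimal} with $C=3/2$ and a small auxiliary parameter $\epsilon_1=\epsilon_1(\epsilon)$ to be tuned later, producing a proper symmetric GAP $Q$ of rank $r$ and volume
\[
|Q| \le K\, \rho(A)^{-1} n^{-r/2} = O_{\epsilon_1}\!\big(n^{3/2 - r/2}\big)
\]
containing all but at most $\epsilon_1 n$ elements of $A$. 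Because $A$ consists of distinct integers, at least $(1-\epsilon_1)n$ of them are distinct members of $Q$, so $|Q| \ge (1-\epsilon_1)n$; comparing the two bounds on $|Q|$ forces $r=1$, hence $Q = \{-Mg,\ldots,Mg\}$ for some positive integer $g$ and some $M$ with $(1-\epsilon_1)n \le 2M+1 \le K_1 n$.

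Next, the divisibility step: I would upgrade ``most of $A$ is divisible by $g$'' to ``every element of $A$ is divisible by $g$'', and then set $l:=g$. This does not follow directly from the inverse theorem; to extract it I would invoke the Esséen inequality (Lemma \ref{lemma:Esseen}),
\[
\rho(A) \le C \int_{-\pi}^{\pi} \prod_{a \in A} |\cos(a t)|\, dt,
\]
and exploit that this integral concentrates near the peaks $t = \pi k / g$, at which all factors indexed by $a \in g\Z$ equal $1$. If some $a_0 \in A$ were not a multiple of $g$, then $|\cos(a_0 t)|$ is uniformly bounded away from $1$ on a neighbourhood of every such peak, and a careful estimate would show that this shaves a multiplicative factor $1-c<1$ off the integral, contradicting the near-extremal hypothesis on $\rho(A)$ once $\epsilon$ is small enough.

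With $l$ in hand, let $B := \{a/l : a \in A\}$, a set of $n$ distinct integers contained in $\{-M,\ldots,M\}$ with $M = O(n)$; then $\rho(B)=\rho(A)$. This mild regularity suffices for a sharp local central limit theorem for $S_B = \sum_{b \in B} b\, \xi_b$, yielding
\[
\rho(B) \le \frac{2}{\sqrt{2\pi V}}\,(1 + o(1)), \qquad V := \sum_{b \in B} b^2,
\]
the factor $2$ reflecting the fixed parity of $S_B$. Since $\sqrt{24/\pi}\, n^{-3/2} = 2/\sqrt{2\pi \cdot n^3/12}$, combining with the assumed lower bound on $\rho(A)$ gives $V \le (1 + \epsilon')\,n^3/12 = (1 + \epsilon')\sum_{a \in A_0} a^2$ for some $\epsilon' = \epsilon'(\epsilon)$ with $\epsilon' \to 0$ as $\epsilon \to 0$, as required.

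The main obstacle is the divisibility step: the inverse theorem only pins down the vast majority of $A$, and extracting divisibility of \emph{every} element from the near-extremal hypothesis on $\rho(A)$ requires a quantitative Fourier-analytic argument tracking precisely how a single non-multiple of $g$ reduces the Fourier-peak contribution by a definite proportion. Keeping the functional dependence $\epsilon'(\epsilon) \to 0$ clean in the last step also demands uniform control over error terms in the local CLT as $B$ varies.
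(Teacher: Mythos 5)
Your plan black-boxes Theorem \ref{theorem:ILO:optimal} and then patches with a local CLT, but the paper itself warns, in the sentence immediately preceding the theorem, that ``our inverse theorems do not directly imply Stanley's precise result,'' and indeed two steps of your sketch are genuinely missing. The first is the containment step: Theorem \ref{theorem:ILO:optimal} only places $(1-\epsilon_1)n$ elements of $A$ inside the rank-one GAP $Q$, while the remaining $\epsilon_1 n$ exceptional elements are completely unconstrained, so $B := A/l$ need not lie in $\{-M,\ldots,M\}$. Your Fourier-peak sketch addresses divisibility by $g$ but not membership in $Q$; an exceptional element of size, say, $n^{1.4}$ and divisible by $g$ would survive that argument and then break the subsequent local CLT by driving up $V$ while contributing nothing to containment. (Note also that the divisibility step you devote most effort to is essentially a non-issue: the theorem only asserts existence of \emph{some} $l$, and $l := \gcd(A)$ trivially divides every $a \in A$. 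The whole content is the second-moment bound; the real reason one wants $g \mid \gcd(A)$ is to make the rescaled set $O(n)$-bounded, and your argument does not deliver that either.)

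The second and larger gap is the local CLT bound $\rho(B) \le \frac{2}{\sqrt{2\pi V}}\bigl(1+o(1)\bigr)$, which you invoke as following from ``mild regularity.'' For $V > \sum_{a\in A_0}a^2$ this is strictly stronger than both Erd\H os's and Stanley's upper bounds, and for a weighted Bernoulli sum with arbitrary distinct integer weights it is false without a Lindeberg-type condition $\max_b b^2 = o(V)$ --- precisely the condition your containment step was supposed to, but does not, provide. Establishing this estimate with an $o(1)$ that is uniform over the relevant family of sets requires a Hal\'asz-type Fourier argument carried out with explicit constants, i.e.\ a refinement of the analysis in Appendix \ref{section:discrete:proof} rather than a citation of the inverse theorem as a black box; that refinement is essentially the content of \cite{Ng-Stanley}. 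As written, your proposal assumes the sharp estimate it would actually need to prove.
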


We remark  that a slightly weaker stability can be shown even when we have a much weaker assumption $\rho(A)\ge \ep n^{3/2}$. 

As the reader will see, in many applications in the following sections, we do not use the inverse theorems directly, but rather  their counting corollaries, such as  
Theorem \ref{counting}. Such counting results can be used to bound the probability of a bad event through the union bound 
(they count the number of terms in the union).  This method was first used in studies of random matrices \cite{TVsing, TVinverse, RV}, but it is simpler  to 
illustrate the idea  by the following  more recent result of Conlon, Fox, and Sudakov \cite{CFS}. 

A Hilbert cube  is a set of the form $x_0 + \Sigma (\{x_1, \dots, x_d \}) $
where $\Sigma (X) =\{ \sum_{x \in Y }  x | Y \subset X\} $, and $0 \le x_0, 0< x_1 < \dots < x_d$ are integers. 
 Following the literature, we  refer to the index $d$ as the dimension. 
One of the earliest results in Ramsey theory is a theorem of Hilbert \cite{Hil} stating that  for any fixed $r$ and $d$ and  $n$  sufficiently 
large, any coloring of the set $[n] := \{1, \dots, n \}$ 
with $r$  colors must contain a monochromatic 
Hilbert cube of dimension $d$. Let $ h(d, r)$ be the smallest such $n$.  The best known upper bound 
for this function is  \cite{Hil, GRS} 

$$ h(d, r)  \le (2r)^{2^{d-1}} . $$

The density version of \cite{Szem}  states that for any natural number $d$ and $\delta >0$  there exists an $n_0$ such that if 
$n \ge n_0$ then  any subset of $n$ of density $\delta$ contains a Hilbert cube of dimension $d$.  
One can show that 

$$d \ge c \log \log n $$ where $c$ is a positive constant depending only on $\delta$.  

On the other hand, 
Hegyv\'ari shows an upper bound of the form $O( \sqrt {\log n \log \log n })$ by considering a random subset of density $\delta$. 
Using the discrete inverse theorems (Section \ref{section:discrete}), Conlon, Fox, and Sudakov \cite{CFS} removed the $\log \log n$ term, obtaining 
$O(\sqrt {\log n})$, which is sharp up to the constant in big $O$, thanks to another result of Hegyv\'ari. 

Conlon et. al. started with the following corollary of Theorem \ref{theorem:TV1}.

\begin{lemma}  For every $C > 0,  1 > \epsilon >0$  there exist positive constants $r$  and $C'$  such that if  $X$
is a multiset with $d$  elements and $|\Sigma (X)| \le d^C$, then 
 there is a GAP $Q $ of dimension $ r $ and volume at 
most  $d^{C'} $ such that all  but at most $d^{1-\epsilon}$ elements of $X$  are contained in $Q$. 
\end{lemma}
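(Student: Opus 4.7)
The plan is to reduce the statement directly to the discrete inverse Littlewood--Offord theorem (Theorem \ref{theorem:TV1}) via the standard identification of subset sums with a Bernoulli random sum. Write $X = \{x_1,\dots,x_d\}$ and let $\eta_1,\dots,\eta_d$ be iid $\pm 1$ Bernoulli variables. Setting $M := x_1 + \cdots + x_d$ and $Y := \{i : \eta_i = -1\}$, we have the identity
$$x_1\eta_1 + \cdots + x_d\eta_d = M - 2\sum_{i \in Y} x_i,$$
so the values taken by $S_X := \sum_i x_i \eta_i$ are in affine bijection with the subset sums $\Sigma(X)$. In particular, $S_X$ takes at most $|\Sigma(X)|$ distinct values.

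By hypothesis $|\Sigma(X)| \le d^C$, so pigeonholing the $2^d$ sign patterns over at most $d^C$ possible values gives some $v$ with $\Pr(S_X = v) \ge 2^d / (2^d \cdot d^C) = d^{-C}$. Therefore the small ball probability satisfies $\rho(X) \ge d^{-C}$, which is exactly the hypothesis needed to invoke the discrete inverse theorem.

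Next I would apply Theorem \ref{theorem:TV1} to the multiset $X$ (of size $d$, playing the role of $n$) with the same constant $C$ and with the exceptional-set exponent $\epsilon'$ chosen so that $d^{\epsilon'} \le d^{1-\epsilon}$ for all $d$ large; e.g. $\epsilon' := 1-\epsilon$ (which lies in $(0,1)$ since $\epsilon \in (0,1)$) works. This produces a GAP $Q$ of rank $r = r(C,\epsilon')$ and volume $d^{B}$ with $B = B(C,\epsilon')$, containing all but at most $d^{1-\epsilon}$ elements of $X$. Taking $C' := B(C,1-\epsilon)$ gives the conclusion.

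There is essentially no obstacle once the identification is made: the only slightly delicate point is arranging the error terms so that the asymptotic inverse theorem (which is vacuous for $d$ bounded) yields the required statement uniformly in $d$. This is handled by enlarging $Q$ (or the multiplicative constant in $C'$) for the finitely many small values of $d$, which can be absorbed into $C'$ since the rank $r$ and the exponent $C'$ are allowed to depend on $C$ and $\epsilon$. All of the real work is packaged inside Theorem \ref{theorem:TV1}; the content of the lemma is simply that small doubling of the subset-sum set forces large additive collision, which is exactly what the inverse Littlewood--Offord machinery converts into approximate containment in a low-rank GAP.
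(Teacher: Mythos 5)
Your derivation is correct and is exactly the route the paper intends: the paper states this lemma as a corollary of Theorem \ref{theorem:TV1} without writing out the reduction, and you supply precisely that reduction (the affine bijection between subset sums and Bernoulli signed sums, the pigeonhole bound $\rho(X)\ge d^{-C}$, and then Theorem \ref{theorem:TV1} applied with exceptional-set exponent $1-\epsilon$). The remark about absorbing small $d$ into the constants $r,C'$ is a sensible touch but not a substantive departure.
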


From this, one can easily prove the following counting lemma. 

\begin{lemma}  For  $s \le \log d$, the number of $d$-sets $X \subset  [n]$  with $\Sigma (X ) \le 2^s d^2 $  is at most $n^{O(s)} d^{O(d)}$ . 
\end{lemma}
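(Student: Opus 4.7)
The plan is to use the preceding structural lemma, converting the subset-sum bound into a small-ball probability estimate, and then count by enumerating the containing GAP.

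First I would observe that since the $2^d$ Bernoulli subset sums of $X$ take at most $|\Sigma(X)| \le 2^s d^2$ distinct values, pigeonhole yields a sum achieved by at least $2^{d-s}/d^2$ sign patterns; equivalently the Littlewood--Offord concentration function $\rho(X)$ is at least $2^{-s}/d^2 \ge d^{-3}$ (using $s \le \log d$). By the preceding lemma with $C = 3$ and some small fixed $\epsilon > 0$, and sharpening the volume via Theorem~\ref{theorem:ILO:optimal}, there is then a symmetric proper GAP $Q = Q(X)$ of rank $r = O(1) \ge 2$ and volume $|Q| = O(\rho(X)^{-1} d^{-r/2}) = O(2^s d^{2 - r/2})$ that contains all but at most $d^{1-\epsilon}$ elements of $X$.

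The counting step is then a union bound indexed by $Q$. The number of such GAPs (with integer generators in $[-n,n]$ and dimensions consistent with $|Q|$) is at most $n^{O(1)} \cdot 2^{O(s)}$. For each $Q$, the number of ways to choose $X \cap Q$ as a subset of $Q$ of size $d - k$ with $k \le d^{1-\epsilon}$ is at most $\binom{|Q|}{d} \le (e|Q|/d)^d$; for $r \ge 2$ this is $(O(2^s))^d$, and using $2^s \le d$ this is $d^{O(d)}$. The remaining $k$ exceptional elements of $X$ are enumerated in $[n]$ at a cost of at most $n^{d^{1-\epsilon}}$.

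The main obstacle is that the exceptional factor $n^{d^{1-\epsilon}}$ is potentially much larger than the target $n^{O(s)}$. I would overcome this by iterating the structural lemma on the residual set $X \setminus Q$, which still satisfies $|\Sigma(X \setminus Q)| \le |\Sigma(X)| \le 2^s d^2$. Each iteration is applied with a mildly enlarged effective constant $C_j \sim (1-\epsilon)^{-j}$; after $O(\log \log d)$ rounds the residual has constant size, and the union of the GAPs produced still has total volume $2^{O(s)} d^{O(1)}$ and contains all of $X$. Plugging this refined containment into the union bound collapses the exceptional factor and gives the claimed bound $n^{O(s)} d^{O(d)}$. The delicate point is controlling the blow-up of the rank and volume constants as $C_j$ grows, which forces $\epsilon$ to be chosen carefully (for instance, shrinking slightly at each stage) so that the cumulative rank and the total GAP volume remain bounded by $O(1)$ and $2^{O(s)} d^{O(1)}$ respectively.
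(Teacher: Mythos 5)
Your opening moves are right and match the intended route: the pigeonhole bound $\rho(X)\ge 1/|\Sigma(X)|\ge d^{-3}$ (valid since $2^s\le d^{\ln 2}<d$), the appeal to the preceding structural lemma to place all but $d^{1-\epsilon}$ elements of $X$ in a GAP $Q$ of rank $O(1)$ and volume $d^{O(1)}$, and the estimate $\binom{|Q|}{d-k}\le (e|Q|/d)^d\le d^{O(d)}$ for choosing $X\cap Q$. You also correctly diagnose that the $n^{d^{1-\epsilon}}$ cost of the exceptional elements is the obstruction. The gap is in how you remove it.

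The proposed iteration does not close the gap. Applying the inverse theorem to the residual $X_j$ of size $d_j$ requires $\rho(X_j)\ge d_j^{-C_j}$ for a \emph{constant} $C_j$; but all you know is $\rho(X_j)\ge 1/|\Sigma(X_j)|\ge d^{-3}$, so the exponent you are forced to use is $C_j=3\log d/\log d_j$, which grows without bound as $d_j$ shrinks (to reach a residual of size $O(1)$ or even $O(s)$ you need $d_j=d^{o(1)}$, hence $C_j\to\infty$). The rank and volume constants in the inverse theorem degrade as $C$ grows, and the total GAP volume $2^{O(s)}d^{O(1)}$ you assert is an unjustified claim once $C_j$ is unbounded. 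Shrinking $\epsilon$ does not help: it slows the shrinkage of the residual, so you need even more rounds, and $C_j=3/\prod_{i<j}(1-\epsilon_i)$ still diverges whenever $\prod(1-\epsilon_i)\to 0$. Moreover, even if the constants could be controlled, the $\Theta(\log\log d)$ rounds each force a fresh choice of GAP generators in $[-n,n]$, contributing a factor $n^{\Omega(\log\log d)}$, which is \emph{not} $n^{O(s)}$ for small $s$; and a quick inspection of the downstream calculation in the Hilbert cube application shows that $n^{O(\log\log d)}$ would degrade the final bound $O(\sqrt{\log n})$, so this is not a cosmetic loss.

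The missing idea is that a single application of the inverse theorem suffices, provided the exceptional set is then handled by a doubling argument that exploits a lower bound on $|\Sigma(X\cap Q)|$. Write $Y=X\cap Q$ and $Z=X\setminus Q$ with $|Z|\le d^{\eta}$, $\eta<1$. Since $Y$ consists of at least $d/2$ \emph{distinct} positive integers, one has $|\Sigma(Y)|\ge\binom{|Y|+1}{2}+1=\Omega(d^2)$ (order $y_1<\cdots<y_m$ and note that for each $j$ the sums $\sigma_j-y_1>\cdots>\sigma_j-y_j=\sigma_{j-1}$ give $j$ distinct subset sums in $[\sigma_{j-1},\sigma_j)$, where $\sigma_j=y_1+\cdots+y_j$). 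Now add the elements of $Z$ one at a time; at each step either the running sumset at least doubles, or the new element lies in the difference set $\Sigma-\Sigma$, which has size at most $|\Sigma(X)|^2\le 2^{2s}d^4=d^{O(1)}$. The number of doubling steps is at most $\log_2\bigl(|\Sigma(X)|/|\Sigma(Y)|\bigr)\le\log_2\bigl(2^sd^2/\Omega(d^2)\bigr)=O(s)$, so only $O(s)$ of the exceptional elements are enumerated freely in $[n]$, at cost $n^{O(s)}$, while the remaining $\le d^{\eta}$ of them each lie in a set of size $d^{O(1)}$, at total cost $d^{O(d^{\eta})}=d^{o(d)}$. Combining with $n^{O(1)}$ choices of $Q$, $\binom{|Q|}{\le d}\le d^{O(d)}$ choices of $Y$, and $\binom{d}{O(s)}\le d^{O(s)}=n^{o(1)}$ placements of the doubling steps yields $n^{O(s)}d^{O(d)}$. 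This is the step your write-up replaces with the iteration, and it is where the $n^{O(s)}$ (as opposed to $n^{O(\log d)}$ or $n^{O(\log\log d)}$) actually comes from.
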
 

Let $A$ be a random set of $[n]$ obtained by choosing each number with probability $\delta $ independently. Let $E$ be the event that 
$A$ contains a Hilbert cube of dimension $c \sqrt {\log n }$. We aim to show that 

\begin{equation} \P(E) =o(1) , \end{equation}  given $c$ sufficiently large.

Trivially $\P (E) \le n \sum_{X \subset [n]} \delta ^{|\Sigma (X)| }, $ where the factor $n$ corresponds to the number of ways to choose $x_0$. Let $m_t$ be the number of $X$ such that 
$|\Sigma(X)| =t$.  The RHS can be bounded from above by $n \sum _t m_t \delta ^t $. 

If $t$ is large, say $t  \ge d^3$, we just crudely bound $\sum_{t \ge d^3} m_t $ by $n^d$ (which is the total number of ways to choose $x_1, \dots, x_d$). 
The contribution in probability in this case is at most $n \times n^d \times \delta ^{d^3} =o(1) $, if $c$ is sufficiently large. In the case $t  < d^3$, we make use of the counting lemma above to bound $m_t$ and a routine calculation finishes the job.

\section{Inverse Theorems: Continuous case I.}  \label{section:cont1} 

In this section and the next, we consider sets with large small probability. 

We say that a vector $v \in   \R^d$ is {\it $\delta$-close} to a set $Q\subset \R^d$ if there exists a vector $q\in Q$ such that $\|v-q\|_2 \le \delta$.
A set $X$ is  $\delta$-close to a set $Q$ if every element of $X$ is $\delta$-close to $Q$.
The continuous analogue of Example \ref{example:linear:1} is the following.

\begin{example} \label{example:linear:2}
Let $Q$ be a proper  symmetric GAP of rank $r$ and volume $N$ in $\R^d$.
Let $a_1, \dots, a_n$ be (not necessarily distinct) vectors  which are
$\frac{1}{100} \beta  n^{-1/2}$-close to $Q$. Again by the Central Limit Theorem, 
with probability at least $2/3$, $S_A$ is $\beta$-close to $10 n^{1/2} Q$. Thus, by the pigeon hole principle, there is a 
point $x$ in $100 n^{1/2} Q$ such that 

$$\P (S_A \in B(x, \beta)) \ge |10 n^{1/2} Q|^{-1} \ge \Omega (n^{-r/2} |Q|^{-1}  ). $$

 It follows that if  $Q$ has cardinality $n^{C-\frac{r}{2}}$ for some constant $C\ge r/2$, then

\begin{equation} \label{bound3} \rho_{d, \beta, \Ber} (A)   = \Omega (\frac{1}{n^{C}}).
\end{equation}
\end{example}

Thus,  in view of  the Inverse Principle \eqref{IP} and Theorem \ref{theorem:ILO:optimal} , we 
 would expect that if $\rho_{d,\beta, \Ber} (A) $ is large, then most of the $a_i$ is close to a GAP with small volume. 
 This statement turned out to hold for very general random variable $\xi$ (not only for Bernoulli).  In practice, we  can consider any 
 real random variable  $\xi$, which satisfies the following condition: there are positive constants $C_1,C_2,C_3$ such that

\begin{equation}\label{eqn:2ndmoment}
\P(C_1\le |\xi_1-\xi_2| \le C_2)\ge C_3,
\end{equation}

\noindent where $\xi_1,\xi_2$ are iid copies of $\xi$. 

\begin{theorem}\cite{NgV} \label{theorem:ILO:continuous} Let $\xi$ be a real  random variable satisfying \eqref{eqn:2ndmoment}. 
Let $0 <\ep < 1; 0 < C$ be constants and  $ \beta >0$ be a parameter that may depend on $n$. Suppose that $A=\{a_1,\dots,a_n\}$ is a (multi-)subset of $\R^d$ such that $\sum_{i=1}^n\|a_i\|_2^2=1$ and that $A$ has large small ball probability
$$\rho:= \rho_{d, \beta,\xi}(A)\ge n^{-C}. $$ Then there exists a symmetric proper GAP $Q$ of constant rank $r \ge d$ and of size 
$|Q| =O(\rho^{-1} n^{(-r+d)/2})$ such that all but $\epsilon n$ elements of $A$ are   are $O(\frac{\beta \log n }{n^{1/2} })$-close to $Q$.

\end{theorem}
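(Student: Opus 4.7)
The plan is to combine the Fourier-analytic approach of Halász (Lemma \ref{lemma:Esseen}) with a direct reduction to Freiman's theorem \ref{theorem:Freiman}, mirroring the strategy used in the discrete case in \cite{NgV}. First I would apply the Esseen-type inequality on a ball of radius $\beta$ to $S_A$, obtaining after a symmetrization $\xi \mapsto \xi - \xi'$ the bound
\[
\rho \lesssim \beta^d \int_{\|t\|_2 \le C/\beta} \prod_{j=1}^n |f(\langle t, a_j \rangle)|^2\, dt,
\]
where $f$ is the characteristic function of $\xi-\xi'$. Condition \eqref{eqn:2ndmoment} supplies a pointwise estimate of the form $|f(s)|^2 \le \exp(-c\, \psi(s)^2)$, with $\psi(s)$ the distance of $s$ to the nearest resonant frequency (of order $1/C_2$). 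Introduce the level set
\[
T := \Bigl\{ t \in \R^d : \|t\|_2 \le C/\beta,\ \prod_{j=1}^n |f(\langle t,a_j\rangle)|^2 \ge \rho/2 \Bigr\},
\]
so that the hypothesis $\rho \ge n^{-C}$ together with the Esseen bound forces $\Vol(T) \gtrsim \rho\, \beta^{-d}$.

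Next I would establish that $T$ has bounded additive doubling. Using the submultiplicative estimate $\psi(s+s')^2 \le 2\psi(s)^2 + 2\psi(s')^2$, one sees that $T+T$ is contained in a slightly enlarged level set $T'$ with $\Vol(T') \le K\,\Vol(T)$ for some $K = K(C)$. Passing to a maximal $\delta$-separated subset of $T$ with $\delta$ a small negative power of $n$ times $\beta$ produces a finite set $\tilde T \subset \R^d$ with doubling constant $O_C(1)$. Freiman's theorem in Euclidean space, in the form of \cite[Ch.~5]{TVbook}, then yields a symmetric proper GAP $P \subset \R^d$ of rank $r = O_C(1)$ containing $\tilde T$ with $|P| \le K'|\tilde T|$; since $T$ has non-empty interior in the linear span of $A$, one can arrange $r \ge d$.

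Finally, I would dualize $P$ to extract the GAP $Q$ for the $a_j$. For every $t \in T$, the level-set estimate combined with the bound on $|f|$ gives
\[
\sum_{j=1}^n \psi(\langle t, a_j\rangle)^2 = O(\log(1/\rho)) = O(\log n).
\]
Averaging this over $t$ ranging across the generators of $P$ (and their low-order combinations), and swapping the order of summation, forces all but at most $\eps n$ of the $a_j$ to satisfy $\psi(\langle g_k, a_j\rangle) = O(\log n / n^{1/2})$ simultaneously for each generator $g_k$ of $P$. This means precisely that such $a_j$ lies within $O(\beta \log n / n^{1/2})$ of the dual GAP
\[
Q := \bigl\{\, a \in \R^d : \langle g_k, a\rangle \in (2\pi/C_2)\Z \text{ for all generators } g_k \text{ of } P\,\bigr\} \cap B(0,2),
\]
which is a proper symmetric GAP of rank $r \ge d$. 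A volume computation for this dual GAP, starting from the lower bound $|P| \gtrsim \rho\,\beta^{-d}\, n^{(r-d)/2}$ (obtained from $\Vol(T)\gtrsim \rho\beta^{-d}$ after accounting for the $n^{1/2}$ discretization scale), yields the claimed $|Q| = O(\rho^{-1}\, n^{(d-r)/2})$.

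The hardest step is the additive-structure argument in the second paragraph: one must prove that the continuous level set $T$ has doubling constant depending only on $C$ (and not on $\rho$, $n$, or $\beta$) and then carry out Freiman in $\R^d$ with explicit control on both the rank $r$ and the volume of $P$. A secondary obstacle is the moment/averaging step in the third paragraph: the polynomial losses from discretization and pigeonhole must remain bounded by a constant so that the exceptional set stays of size $\eps n$ and the approximation radius is only $O(\beta \log n / n^{1/2})$. This is precisely the continuous analogue of the optimization carried out in the discrete case in \cite{NgV}.
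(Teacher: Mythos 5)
Your proposal takes a genuinely different route from the paper. After the Esseen-type bound, the paper (following the appendix and \cite{NgV}) works with a carefully chosen level set $S_m$ in frequency space, double-counts to isolate a subset $A'$ of the coefficients, uses the triangle inequality to place the iterated sumsets $kA'$ inside a \emph{dual} set $S_m^{\ast} := \{ a : \sum_{t\in S_m} \|at/p\|^2 \le |S_m|/200\}$, and establishes a Parseval-type cardinality bound $|S_m^{\ast}| \le 8p/|S_m|$. This yields control on $|kA''|$ for $A''=A'\cup\{0\}$, which is exactly the input needed for the long-range inverse theorem (Theorem~\ref{theorem:longrange}), i.e.\ Freiman is applied to the coefficient set in position space. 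You instead propose to apply Freiman to the level set $T$ in frequency space and then dualize to recover a GAP for the $a_j$.

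The difficulty is that this swap introduces a gap that I do not see how to close. In the second paragraph you assert that $T+T \subset T'$ for an enlarged level set $T'$ (which is fine, via subadditivity of $\psi^2$ after expanding the threshold by a factor of $4$), and then that $\Vol(T') \le K\Vol(T)$ for some $K=K(C)$. That last inequality is unjustified. The sets $T$ and $T'$ are sublevel sets of $t\mapsto \sum_j \psi(\langle t,a_j\rangle)^2$ at thresholds differing by a constant factor, and there is in general no control on how the volume of sublevel sets grows as the threshold increases; the ratio can be as large as a power of $n$. The whole point of the paper's dual-set machinery is to avoid ever needing such a doubling statement for the level set: the Parseval bound $|S_m^{\ast}|\le 8p/|S_m|$ holds unconditionally, and it is the \emph{dual} set, not the level set, whose size gets controlled. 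A secondary gap is in your third paragraph: the moment estimate $\sum_j \psi(\langle t,a_j\rangle)^2 = O(\log n)$ is guaranteed only for $t\in T$, not for the generators $g_k$ of the GAP $P\supset T$ produced by Freiman, so averaging over the $g_k$ is not available; and the set $Q$ you write down is a Bohr set intersected with a ball rather than a GAP, so extracting a genuine GAP of controlled rank and volume from it is an additional nontrivial step (essentially a Bogolyubov--Ruzsa-type argument) that the proposal does not supply.
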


The next result gives more information about  $Q$, but with a weaker approximation.

\begin{theorem}\label{corollary:continuous:NgV} Under the assumption of the above theorem, the following holds.
 For  any number $n'$ between $n^\ep$ and $n$, there exists a proper symmetric GAP $Q=\{\sum_{i=1}^r x_ig_i : |x_i|\le L_i \}$ such that

\begin{itemize}

\item At least $n-n'$ elements of $A$  are $\beta$-close to $Q$.

\vskip .1in

\item $Q$ has small rank, $r=O (1)$, and small cardinality

$$|Q| \le \max \left(O(\frac{\rho^{-1}}{\sqrt{n'}}),1\right).$$

\vskip .1in

\item There is a non-zero integer $p=O(\sqrt{n'})$ such that  all
 steps $g_i$ of $Q$ have the form  $g_i=(g_{i1},\dots,g_{id})$, where $g_{ij}=\beta \frac{p_{ij}} {p} $ with  $p_{ij} \in \Z$ and $p_{ij}=O(\beta^{-1}\sqrt{n'}).$

\end{itemize}
\end{theorem}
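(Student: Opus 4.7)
The plan is to upgrade the conclusion of Theorem \ref{theorem:ILO:continuous} to one with a quantized GAP by combining a scaling/rounding argument with the discrete Theorem \ref{theorem:ILO:optimal}. The trade-off between the exceptional count $n'$ and the GAP volume will arise naturally from the rounding tolerance, which is controlled by $p$.

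First, I rescale by setting $\tilde a_i := \beta^{-1} a_i$, so that the target form $g_{ij} = \beta p_{ij}/p$ becomes the statement that the rescaled generators lie on the lattice $p^{-1}\mathbb{Z}^d$. I then choose a positive integer $p$ of size $\Theta(\sqrt{n'})$ and round each $\tilde a_i$ to its nearest neighbor $\bar a_i$ on $p^{-1}\mathbb{Z}^d$. The coordinate-wise rounding error is at most $1/(2p)$, so the $\ell_2$-magnitude of the accumulated rounding error across any subset of size $n'$ is $O(\sqrt{n'}/p) = O(1)$ (by Cauchy-Schwarz and $\sum \|a_i\|_2^2 = 1$ after restoring the scale). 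This error is absorbed by a ball of radius $O(1)$, so the rescaled-and-rounded random sum retains small-ball probability $\Omega(\rho)$ in every ball of radius $O(1)$, using the spread condition \eqref{eqn:2ndmoment} on $\xi$.

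Next, I apply Theorem \ref{theorem:ILO:optimal} to the integer set $\{p\bar a_i\}\subset\mathbb{Z}^d$, which yields a proper symmetric GAP $Q_0$ in $\mathbb{Z}^d$ of rank $r = O_{C,\epsilon}(1)$ and volume $O(\rho^{-1} n^{-r/2})$ that contains all but an $\epsilon$-fraction of the $p\bar a_i$. Scaling back via $g \mapsto \beta g/p$ converts the integer generators of $Q_0$ into the claimed rational form $g_{ij} = \beta p_{ij}/p$ with $p_{ij} \in \mathbb{Z}$; the bound $p_{ij} = O(\beta^{-1}\sqrt{n'})$ then follows from a standard coordinate bound on elements of a GAP of the prescribed volume, combined with the normalization of $A$.

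The main obstacle is reconciling the exception count: the route above yields an exceptional set of size $\epsilon n$, which is too crude when $n' \ll n$. To sharpen it to $n'$, I would apply the scheme not to all of $A$ but on a carefully chosen subset of size $n - n'$, using a decoupling/conditioning argument in the spirit of the Hal\'asz-type proof of Theorem \ref{theorem:ILO:continuous} to verify that the restricted sum still has small-ball probability $\Omega(\rho)$. Parameter-wise, the rounding scale must be tuned to $n'$ rather than $n$: it is precisely the $\ell_2$ estimate on the rounding error across the $n'$ excluded terms that forces $p = \Theta(\sqrt{n'})$, which is how $n'$ simultaneously controls both $p$ and the $p_{ij}$.
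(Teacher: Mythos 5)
Your approach --- rescale, round all the $\tilde a_i$ to the lattice $p^{-1}\Z^d$ with $p=\Theta(\sqrt{n'})$, and then invoke the discrete inverse theorem --- is genuinely different from the route the paper takes (a direct Fourier-analytic argument in the continuous setting, mirroring the sketch in Appendix~\ref{section:discrete:proof}, in which the parameter $k \approx \sqrt{n'}$ appears as the dilate in the dual-set step and the rational structure of the generators falls out of the Freiman-type long-range inverse theorem, not from any lattice rounding). Unfortunately, your version has a quantitative gap that is fatal precisely in the regime $n' \ll n$ that the theorem is designed to cover.

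The problem is the rounding-error estimate. When you round $\tilde a_i$ to $\bar a_i$, \emph{every} vector you keep incurs an error of order $1/p$ per coordinate, so the random sum picks up an error
$$
\sum_i \xi_i(\tilde a_i - \bar a_i),
$$
whose typical $\ell_2$ size (by independence and variance) is on the order of $\sqrt{N}/p$, where $N$ is the number of rounded terms --- that is, $N = n$ if you round everything, or $N = n-n'$ if you only round the retained set. In either case $N = \Theta(n)$ when $n' = o(n)$, so with $p = \Theta(\sqrt{n'})$ the accumulated error is $\Theta(\sqrt{n/n'})$, which is $\omega(1)$. The sentence claiming the error is $O(\sqrt{n'}/p)=O(1)$ ``across any subset of size $n'$'' misattributes the size of the rounding error to the size of the \emph{excluded} set; the excluded $a_i$ are not rounded at all and contribute nothing to this error. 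The Cauchy--Schwarz/$\sum\|a_i\|_2^2=1$ observation does not rescue this either, because the per-coordinate rounding error $1/(2p)$ is additive and uniform, not proportional to $\|a_i\|$. Consequently the rounded sum only concentrates in balls of radius $\Theta(\sqrt{n/n'})$, so the induced \emph{discrete} concentration for $\{p\bar a_i\}$ degrades by a factor of roughly $(n/n')^{d/2}$, and feeding that into Theorem~\ref{theorem:ILO:optimal} costs you exactly the factor $\sqrt{n'}$ you were hoping to gain in $|Q|$. You could restore an $O(1)$ error by taking $p = \Theta(\sqrt{n})$, but then the denominator $p$ and the integers $p_{ij}$ are too large by a factor $\sqrt{n/n'}$, violating the stated conclusion. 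The final paragraph of your proposal, which proposes to ``apply the scheme to a subset of size $n-n'$,'' does not touch this issue, since the error scales with the retained set, not the discarded one; so the plan as written does not close and would need a fundamentally different mechanism (e.g.\ the level-set/dual-set analysis the paper actually runs) to produce the $\sqrt{n'}$-tuned denominator.
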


Theorem \ref{corollary:continuous:NgV} immediately implies the following result  which can be seen as a continuous  analogue of  Theorem \ref{counting}. This result was first proved by Tao and the second author for the purpose of verifying the Circular Law in random matrix theory \cite{TVcir,TVinverse} using  a more complicated argument.

Let $n$ be a positive integer and $\beta,\rho$ be positive numbers that may depend on $n$. Let $\mathcal{S}_{n,\beta, \rho}$ be the collection of all multisets $A=\{a_1,\dots,a_n\} , a_i \in \R^2 $ such that  $\sum_{i=1}^n \| a_i\|_2 ^2=1$ and $\rho_{d,\beta,\Ber}(A)\ge \rho$.

\vskip .2in

\begin{theorem}[Counting theorem, continuous case] \cite{TVcir,TVinverse}\label{theorem:betanet}
Let  $0 <\ep \le 1/3$ and $C>0$ be constants. Then, for all sufficiently large $n$  and $\beta \ge \exp(-n^{\ep})$
and $\rho \ge n^{-C}$ there is a set $\mathcal{S}\subset (\R^2)^n$ of size at most

$$\rho^{-n}n^{-n(\frac{1}{2}-\epsilon) }
  + \exp(o(n))$$

\noindent such that for any $A=\{a_1,\dots,a_n\} \in \mathcal{S}_{n,\beta,\rho}$ there is some $A'=(a_1',\dots, a_n')\in \mathcal{S}$ such that $\|a_i-a_i'\|_{2}  \le \beta$ for all $i$.
\end{theorem}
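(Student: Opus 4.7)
The plan is to use Theorem~\ref{corollary:continuous:NgV} as a structural classifier: each $A\in\CS_{n,\beta,\rho}$ is $\beta$-approximated by a triple (GAP, location of in-GAP entries, in-GAP values), with a small exceptional set that we approximate crudely by an $\eps$-net of the unit ball. I will then bound $|\CS|$ by the total number of such tuples. Fix a small parameter $\eps'>0$ (to be chosen slightly smaller than $\eps$) and set $n':=\lfloor n^{1-2\eps'}\rfloor$. Applying Theorem~\ref{corollary:continuous:NgV} to each $A\in\CS_{n,\beta,\rho}$ with this $n'$ produces a proper symmetric GAP $Q=Q(A)\subset\R^2$ of rank $r=O(1)$ with
$$|Q|\le \max\Bigl(C_0\rho^{-1}(n')^{-1/2},\,1\Bigr),$$
generators of the form $g_i=\beta p_i/p$ with $p=O(\sqrt{n'})$ and $p_{ij}=O(\beta^{-1}\sqrt{n'})$, such that at least $n-n'$ of the $a_i$ are $\beta$-close to $Q$.

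Next I will count the possible output tuples. The number of admissible $Q$ is controlled by the number of choices of $(p,\{p_{ij}\},\{L_i\})$; using $\beta\ge\exp(-n^{\eps'})$ and $r,d=O(1)$, this is at most $(\beta^{-2}n')^{O(1)}\cdot n^{O(1)}=\exp(O(n^{\eps'}\log n))=\exp(o(n))$. For a fixed $Q$, choosing which $n-n'$ indices are the ``in-GAP'' ones costs $\binom{n}{n'}\le\exp(O(n'\log n))=\exp(o(n))$; assigning each of those indices a value in $Q$ costs $|Q|^{n-n'}$; and approximating the remaining $n'$ entries by an arbitrary element of a $\beta$-net of the unit ball of $\R^2$ (justified since $\sum\|a_i\|_2^2=1$) costs $O(\beta^{-2})^{n'}=\exp(O(n^{1-2\eps'}\cdot n^{\eps'}))=\exp(o(n))$. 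Defining $\CS$ as the (finite) collection of tuples produced this way, we have $|\CS|\le \exp(o(n))\cdot(\max|Q|)^{n-n'}$.

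It remains to turn this into the stated bound. When $|Q|\le 1$ the term $(\max|Q|)^{n-n'}\le 1$, and everything collapses into $\exp(o(n))$. Otherwise,
$$(\max|Q|)^{n-n'}\le C_0^{\,n}\rho^{-(n-n')}(n')^{-(n-n')/2}\le \rho^{-n}\,n^{-n(1/2-\eps')+o(n)},$$
using $\rho\le 1$ and $n-n'=n-o(n)$; the constant base $C_0^{\,n}$ is absorbed into $n^{o(n)}$, and so is the $\exp(o(n))$ from the enumeration, after choosing $\eps'<\eps$ so that $n^{o(n)}\cdot n^{-n(1/2-\eps')}\le n^{-n(1/2-\eps)}$ for large $n$. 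Combining the two cases gives the claimed bound $\rho^{-n}n^{-n(1/2-\eps)}+\exp(o(n))$.

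The main obstacle I anticipate is bookkeeping: one must verify that all of the ``loss factors'' appearing in the counting (the number of GAPs, the index choice $\binom{n}{n'}$, the unit-ball net of size $\beta^{-2n'}$, the constant $C_0^{\,n}$) genuinely fit into either $\exp(o(n))$ or $n^{o(n)}$, which in turn forces the lossy choice $n'=n^{1-2\eps'}$. The delicate point is that $\beta$ is allowed to be as small as $\exp(-n^{\eps})$, so $n'$ must be strictly below $n^{1-\eps}$ to keep $\beta^{-2n'}$ subexponential; the hypothesis $\eps\le 1/3$ gives enough room to choose $\eps'<\eps$ and still close everything up.
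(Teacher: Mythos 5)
Your proof follows essentially the same route as the paper: apply Theorem~\ref{corollary:continuous:NgV} with $n'$ of the form $n^{1-c\eps}$ (the paper picks $n'=n^{1-3\eps/2}$, which corresponds to $\eps'=3\eps/4$ in your parametrization; any $\eps'\in(\eps/2,\eps)$ works), enumerate the possible GAPs and their dimensions to get an $\exp(o(n))$ factor, assign in-GAP values contributing $|Q|^{n-n'}$, approximate the $n'$ exceptional coordinates by a $\beta$-net of the unit ball contributing another $\exp(o(n))$, and absorb all subexponential and $C_0^n$ losses into $n^{o(n)}$ using the strict inequality $\eps'<\eps$. One small slip: you write ``using $\beta\ge\exp(-n^{\eps'})$'', but the hypothesis only gives $\beta\ge\exp(-n^{\eps})$, which with $\eps'<\eps$ is the \emph{weaker} bound; this leaves the net cost $O(\beta^{-2})^{n'}\le\exp(O(n^{1-2\eps'+\eps}))$, which is $\exp(o(n))$ precisely because you also impose $\eps'>\eps/2$ (as you correctly note at the end when observing $n'$ must be below $n^{1-\eps}$). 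So the displayed exponent $n^{1-2\eps'}\cdot n^{\eps'}$ in your middle paragraph should read $n^{1-2\eps'}\cdot n^{\eps}$, but the final constraint you identify is the right one and the argument goes through.
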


\begin{proof}(of Theorem \ref{theorem:betanet}) Set $n':=n^{1-\frac{3\ep}{2}}$ (which is $\gg n^{\ep}$ as $\ep\le 1/3$). Let $\mathcal{S'}$ be the collection of
 all subsets of size at least $n-n'$ of GAPs whose parameters satisfy the conclusion of Theorem \ref{corollary:continuous:NgV}.

Since each GAP is determined by its generators and dimensions, the number of such GAPs is bounded by $((\beta^{-1}\sqrt{n'})\sqrt{n'})^{O(1)}
(\frac{\rho^{-1}}{\sqrt{n'}})^{O(1)}=\exp(o(n))$.  (The term $(\frac{\rho^{-1}}{\sqrt{n'}})^{O(1)}$
bounds the number of choices of the dimensions $M_i$.)  Thus 

$$|\mathcal{S'}|= \left(O((\frac{\rho^{-1}} {\sqrt{n'}})^{n})+1\right) \exp(o(n)).$$

We approximate each of the exceptional elements by a lattice point in $\beta\cdot (\Z/d)^d$. Thus if we let $\mathcal{S''}$ to be the set of these approximated tuples then $|\mathcal{S''}|\le \sum_{i\le n'} (O(\beta^{-1}))^i = \exp(o(n))$ (here we used the assumption $\beta \ge \exp(-n^{\ep})$).

Set $\mathcal{S}:=\mathcal{S'}\times \mathcal{S''}$. It is easy to see that
 $|\mathcal{S}|\le O(n^{-1/2+\ep}\rho^{-1})^n+\exp(o(n))$. Furthermore, if $\rho(A)\ge n^{-O(1)}$ then $A$ is $\beta$-close to an element of $\mathcal{S}$, concluding the proof.
\end{proof}

\section{Inverse theorems: Continuous case  II.}\label{section:cont2}

Another realization of the Inverse Principle \eqref{IP} was given by Rudelson and Vershynin  in \cite{RV,Versur} (see also Friedland and Sodin \cite{FS}).   
 Let $a_1,\dots,a_n$ be real numbers. Rudelson and Vershynin defined the 
  essential {\em least common denominator} ($\LCD$) of $\Ba=(a_1,\dots,a_n)$  as follows. Fix parameters $\alpha$  and $\gamma$, where $\gamma \in (0,1)$, and define
$$
\LCD_{\alpha,\gamma}(\Ba)
:= \inf \Big\{ \theta > 0: \dist(\theta \Ba, \Z^n) < \min (\gamma \| \theta \Ba \|_2,\alpha) \Big\}.
$$

The requirement that the distance is smaller than $\gamma\|\theta \Ba\|_2$ forces us to consider only non-trivial integer points as approximations of $\theta \Ba$. One typically assume $\gamma$ to be a small constant, and $\alpha = c \sqrt{n}$ with a small constant $c>0$. The inequality $\dist(\theta \Ba, \Z^n) < \alpha$ then yields that most coordinates of $\theta \Ba$ are within a small distance from 
non-zero integers.

\begin{theorem}[Diophatine approximation]\cite{RV,RV-rec}\label{theorem:RV}
  Consider a sequence $A=\{a_1,\ldots,a_n\}$ of real numbers which satisfies
$\sum_{i=1}^n a_i^2 \ge 1$. Let $\xi$ be a random variable such that
$\sup_{a}\P(\xi \in B(a,1)) \le 1-b$ for some $b > 0$, and $x_1, \ldots, x_n$ be iid copies of $\xi$. Then, for every $\alpha > 0$ and $\gamma \in (0,1)$, and for
  $$
  \beta \ge \frac{1}{\LCD_{\alpha,\gamma}(\Ba)},
  $$
  we have
  $$
  \rho_{1,\beta,\xi}(A) \le \frac{C\beta}{\gamma \sqrt{b}} + Ce^{-2b\alpha^2}.
  $$
\end{theorem}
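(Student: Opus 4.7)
The plan is a Fourier-analytic argument following the Halasz strategy of Section \ref{section:Halasz} together with the LCD-based refinements of Rudelson and Vershynin. The first move is to apply the Esseen-type concentration inequality (Lemma \ref{lemma:Esseen}), rescaled to balls of radius $\beta$: this gives
$$\rho_{1,\beta,\xi}(A) \;\le\; C\beta \int_{|t|\le 1/\beta} |\phi(t)|\,dt,$$
where $\phi(t) = \E\exp(itS_A) = \prod_j\E\exp(ita_j\xi_j)$ is the characteristic function of $S_A$. The entire problem thus reduces to estimating this Fourier integral.

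Next I would symmetrize: writing $\xi_j'$ for an independent copy of $\xi_j$ and $w_j = \xi_j - \xi_j'$,
$$|\phi(t)|^2 \;=\; \prod_j \E\cos(ta_jw_j) \;\le\; \exp\Bigl(-\sum_j \E(1-\cos(ta_jw_j))\Bigr),$$
using $1-x\le e^{-x}$ on each factor. The only input from $\xi$ that I will need is that the hypothesis $\sup_a\P(\xi\in B(a,1))\le 1-b$ yields $\P(|w_j|\ge 1)\ge b$.

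The core technical step is to convert the exponent above into a lower bound involving the lattice distance of $(t/(2\pi))\Ba$ from $\Z^n$. Combining the elementary inequality $1-\cos(2\pi y)\ge 8\,\dist(y,\Z)^2$ with a careful averaging over the symmetric law of $w_j$ (exploiting its continuous spread on $\{|w_j|\ge 1\}$), one obtains the aggregate estimate
$$\sum_j \E\bigl(1-\cos(ta_jw_j)\bigr) \;\gtrsim\; b\cdot \dist^2\!\Bigl(\tfrac{t}{2\pi}\Ba,\,\Z^n\Bigr).$$
Now the LCD enters: since by hypothesis $|t|\le 1/\beta\le \LCD_{\alpha,\gamma}(\Ba)$ (up to the factor $2\pi$ absorbed into constants), the very definition of LCD gives
$$\dist\!\Bigl(\tfrac{t}{2\pi}\Ba,\,\Z^n\Bigr) \;\ge\; \min\!\Bigl(\gamma\,\bigl\|\tfrac{t}{2\pi}\Ba\bigr\|_2,\;\alpha\Bigr) \;\gtrsim\; \min(\gamma|t|,\alpha),$$
using $\sum a_i^2\ge 1$ in the last step. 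Plugging back, $|\phi(t)|\le \exp(-cb\gamma^2 t^2)+\exp(-cb\alpha^2)$, and the integral over $|t|\le 1/\beta$ splits into a Gaussian piece of size $C\beta/(\gamma\sqrt b)$ and a uniform tail of size $C\exp(-2b\alpha^2)$, giving the announced bound.

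The main obstacle is the passage to the aggregate lattice-distance inequality. A naive coordinatewise attempt---showing $\E\,\dist(\tfrac{t}{2\pi}a_jw_j,\Z)^2 \gtrsim b\,\dist(\tfrac{t}{2\pi}a_j,\Z)^2$ for each $j$---fails, since for particular discrete values of $w_j$ the random scaling can map $ta_j$ onto $2\pi\Z$ (e.g.\ $w_j=\pm 2$ with $ta_j=\pi$ kills the distance entirely). What rescues the argument is that $w_j$ inherits enough continuous spread from the symmetrization $\xi-\xi'$ that, after taking expectations, these arithmetic degeneracies wash out globally even when they occur coordinatewise. This smoothing, together with the exact way $\LCD_{\alpha,\gamma}(\Ba)$ is engineered to compare $\dist(\theta\Ba,\Z^n)$ against both $\gamma\|\theta\Ba\|_2$ (producing the Gaussian regime) and the threshold $\alpha$ (producing the exponential tail), is the technical heart of Rudelson and Vershynin's original argument.
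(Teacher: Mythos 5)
Your opening moves match the paper's proof of Theorem \ref{SBP} in Appendix \ref{section:proofcont2}: Esse\'en's lemma scaled to radius $\beta$, symmetrization with $\bar\xi=\xi-\xi'$, and the observation that the anti-concentration hypothesis on $\xi$ forces $\P(|\bar\xi|\ge1)\ge b$. You also correctly identify that the technical heart is converting the exponent into a lattice-distance quantity that the LCD controls. The gap is in how you carry out that conversion.

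The aggregate inequality
$$\sum_j\E\bigl(1-\cos(ta_jw_j)\bigr)\;\gtrsim\;b\cdot\dist^2\!\Bigl(\tfrac{t}{2\pi}\Ba,\,\Z^n\Bigr)$$
is false, and the ``continuous spread washes it out'' rescue you invoke is not available. The hypothesis on $\xi$ permits purely discrete laws; then $w_j=\xi_j-\xi_j'$ is also discrete and has no smoothing effect. Concretely, take $\xi$ uniform on $\{0,M\}$ for a large integer $M$ (so $b=1/2$), $a_j=1$ for all $j$, and $t=2\pi/M$. Then $w_j\in\{0,\pm M\}$ and $\E(1-\cos(ta_jw_j))=\tfrac12(1-\cos 2\pi)=0$ for every $j$, so the left side is exactly $0$; yet $\dist\bigl(\tfrac{t}{2\pi}\Ba,\Z^n\bigr)=\sqrt{n}/M>0$, and $|t|=2\pi/M$ is well inside the range $|t|\le1/\beta\le\LCD_{\alpha,\gamma}(\Ba)=\Theta(1)$. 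The degeneracy is not coordinatewise but global, and averaging over $w_j$ does not remove it. Fundamentally, the random scaling $w_j$ sits \emph{inside} the cosine, so the quantity you actually control is $\dist(\tfrac{t\bar\xi}{2\pi}\Ba,\Z^n)$, not $\dist(\tfrac{t}{2\pi}\Ba,\Z^n)$; these differ by exactly the rescaling that the LCD of $\Ba$ alone cannot see.

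The paper's route around this is structurally different and worth internalizing. After bounding $1-\E\cos(2\pi t\bar\xi)\gtrsim b\,\E\bigl(\dist^2(t\bar\xi,\Z)\mid|\bar\xi|\ge1\bigr)$, it keeps $\bar\xi$ inside the lattice distance, applies Jensen to swap expectation and the $\theta$-integral, and reduces to bounding $\sup_{z\ge1}\int_{|\theta|\le1}\exp(-8b\,f_z^2(\theta))\,d\theta$ where $f_z(\theta)=\dist(\tfrac{z\theta}{\beta}\Ba,\Z^n)$. The key is the recurrence-set estimate (Lemma \ref{lemma:size recurrence}): for $t<\alpha/2$, the set $I(t)=\{\theta:f_z(\theta)\le t\}$ has measure $O(t\beta/\gamma)$ \emph{uniformly in $z\ge1$}. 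The LCD is applied not to the integration variable but to the \emph{difference} $\tau=\tfrac{z}{\beta}(\theta'-\theta'')$ of two recurrent points: by the triangle inequality $\dist(\tau\Ba,\Z^n)\le 2t<\alpha$, so either $|\tau|\ge\LCD\ge1/\beta$ (giving separation $|\theta'-\theta''|\ge1/z$) or $2t\ge\gamma\|\tau\Ba\|_2\ge\gamma|\tau|$ (giving $|\theta'-\theta''|\le2t\beta/(\gamma z)$). Thus $I(t)$ is covered by $O(z)$ intervals of width $O(t\beta/(\gamma z))$; the $z$'s cancel. This is exactly what handles the example above: as $z=M$ grows, $f_z$ acquires more zeros but each recurrence interval narrows proportionally. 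Your proposal collapses all of them to the single zero at $\theta=0$ and has no mechanism to recover the correct count. To fix your write-up you would need to abandon the direct comparison to $\dist(\tfrac{t}{2\pi}\Ba,\Z^n)$ and instead condition on $\bar\xi=z$, push the Jensen step through, and prove a recurrence-set/covering lemma of the type above.
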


One can use Theorem \ref{theorem:RV} to prove a special case of  the forward result of Erd\H{o}s and Littlewood-Offord when  most of the $a_i$ have the same order of magnitude (see \cite[p. 6]{RV}). 
\footnote{One can also handle this case by conditioning on the abnormal $a_i$ and use  Berry-Esseen for the remaining sum.}   Indeed, assume that $K_1\le |a_i| \le K_2$ for all $i$, where $K_2=c K_1$ with $c=O(1)$. Set $a_i':=a_i/\sqrt{\sum_j a_j^2}$ and  $\Ba':=(a_1',\dots,a_n')$. Choose $\gamma=c_1, \alpha=c_2 \sqrt{n}$ with sufficiently small positive constants $c_1,c_2$ (depending on $c$), the condition $\dist(\theta \Ba', \Z^n) < \min (\gamma \| \theta \Ba' \|_2,\alpha)$ implies that $|\theta a_i'-n_i|\le 1/3$ with $n_i\in \Z,n_i\neq 0$ for at least $c_3n$ indices $i$, where $c_3$ is a positive constant depending on $c_1,c_2$ . It then follows that for these indices, $\theta^2{a_i'}^2  \ge 4n_i^2/9$. Summing over $i$, we obtain $\theta^2=\Omega(n)$ and so $\LCD_{\alpha,\gamma}(\Ba')=\Omega(\sqrt{n})$. Applying Theorem \ref{theorem:RV} to the vector $\Ba'$ with $\beta=1/\LCD_{\alpha,\gamma}(\Ba')$, we obtain the desired upper bound $O(1/\sqrt{n})$ for the concentration probability.

%It is not straightforward whether Theorem \ref{theorem:RV} immediately implies the result of Erd\H{o}s-Moser or S\'ark\"ozy-Szemer\'edi. However, using this theorem one can easily show that, say, the set $A=\{n,\dots, 2n-1\}$ has small concentration, $\rho_{1,1,\xi}(A)=O(n^{-3/2})$ (see \cite{RV}).

Theorems \ref{theorem:RV} is not exactly {\it inverse} in the Freiman sense. On the other hand, it is convenient to use and in most applications provides a  sufficient 
amount of structural information that allows one derive a counting theorem.  An extra advantage  here is that this theorem enables one 
to consider sets $A$ with  small ball probability as small as $(1-\epsilon)^n$, rather than just $n^{-C}$ as in Theorem \ref{theorem:ILO:continuous}.

The definition of the essential least common denominator above
can be extended naturally to higher dimensions. To this end, we define the product of such multi-vector $\Ba$ and a vector $\theta \in \R^d$ as

$$
\theta \cdot \Ba = (\langle \theta, a_1\rangle , \cdots, \langle \theta, a_n\rangle ) \in \R^n.
$$

Then we define, for $\alpha > 0$ and $\gamma \in (0,1)$,
$$
\LCD_{\alpha,\gamma}(\Ba)
:= \inf \Big\{ \|\theta\|_2: \; \theta \in \R^d,
     \dist(\theta \cdot \Ba, \Z^N) < \min(\gamma\|\theta \cdot \Ba\|_2,\alpha) \Big\}.
$$

The following generalization of Theorem \ref{theorem:RV} gives a bound on the small ball probability for the random sum $\sum_{i=1}^n a_i x_i$ in terms of the additive structure of the coefficient sequence $\Ba$.

\begin{theorem}[Diophatine approximation, multi-dimensional case]\cite{RV-rec,FS}\label{SBP}
  Consider a sequence $A=\{a_1,\ldots,a_n\}$ of vectors $a_i \in \R^d$,
  which satisfies
  \begin{equation}                                  \label{super-isotropy}
    \sum_{i=1}^n \langle a_i,x\rangle ^2 \ge \|x\|_2^2
    \qquad \text{for every $x \in \R^d$.}
  \end{equation}
  Let  $\xi$ be a random variable such that
  $\sup_{a}\P(\xi \in B(a,1)) \le 1-b$ for some $b > 0$ and $x_1, \ldots, x_n$ be iid copies of $\xi$.

  Then, for every $\alpha > 0$ and $\gamma \in (0,1)$, and for
  $$
  \beta \ge \frac{\sqrt{d}}{\LCD_{\alpha,\gamma}(\Ba)},
  $$
  we have
  $$
  \rho_{d,\beta \sqrt{d},\xi}(A) \le \Big( \frac{C\beta}{\gamma \sqrt{b}} \Big)^d + C^d e^{-2b\alpha^2}.
  $$
\end{theorem}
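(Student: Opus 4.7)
My plan is to follow Hal\'asz's Fourier-analytic approach, lifted to $d$ dimensions in the form used by Rudelson--Vershynin and Friedland--Sodin. The starting point is Esseen's inequality (Lemma \ref{lemma:Esseen}), and the role of the LCD hypothesis is to turn the Diophantine statement into a pointwise decay bound for the characteristic function $\phi(t) = \E \exp(i\langle t, S_A\rangle)$ of $S_A := \sum_{j=1}^n a_j x_j$.

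I would first apply a rescaled version of Esseen's inequality to $\ell^2$-balls of radius $\beta\sqrt{d}$, producing
$$\rho_{d,\beta\sqrt{d},\xi}(A) \;\le\; C^d \beta^d \int_{\|t\|_2 \le 1/(\beta\sqrt{d})} |\phi(t)|\, dt,$$
where the $d$-dependent factors from the $\ell^2$-ball volume are absorbed into $C^d$, and where $\phi(t) = \prod_{j=1}^n \E\exp(i\langle t, a_j\rangle x_j)$ factors by independence. Next I would derive the single-variable decay
$$|\E e^{is\xi}| \;\le\; \exp\!\Bigl(-cb \min\bigl(\dist(s/(2\pi),\Z)^2,\, 1\bigr)\Bigr),$$
using the symmetrization $|\E e^{is\xi}|^2 = \E \cos(s(\xi-\xi'))$ together with $1-\cos(2\pi u) \ge c(\dist(u,\Z)^2 \wedge 1)$ and the hypothesis $\sup_a \P(\xi\in B(a,1)) \le 1-b$. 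After an inessential $2\pi$-rescaling of the Fourier variable, multiplying over $j$ gives
$$|\phi(t)| \;\le\; \exp\!\bigl(-cb\, \dist(t\cdot\Ba,\, \Z^n)^2\bigr).$$
The LCD hypothesis enters here: since $\beta \ge \sqrt{d}/\LCD_{\alpha,\gamma}(\Ba)$, every $t$ in the integration region satisfies $\|t\|_2 < \LCD_{\alpha,\gamma}(\Ba)$, so by the definition of LCD together with the isotropy $\|t\cdot\Ba\|_2^2 = \sum_j \langle t, a_j\rangle^2 \ge \|t\|_2^2$,
$$\dist(t\cdot\Ba,\,\Z^n) \;\ge\; \min(\gamma\|t\|_2,\, \alpha).$$

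Finally, I would split the integration at $\|t\|_2 = \alpha/\gamma$. On the inner region $|\phi(t)| \le \exp(-cb\gamma^2\|t\|_2^2)$, and extending the integral to all of $\R^d$ gives a $d$-dimensional Gaussian equal to $(C/(\gamma\sqrt{b}))^d$; multiplying by the Esseen prefactor yields the first term $(C\beta/(\gamma\sqrt{b}))^d$. On the outer region $|\phi(t)| \le \exp(-cb\alpha^2)$, and bounding crudely by this exponential times the volume of the integration ball and then multiplying by the Esseen prefactor yields $C^d e^{-2b\alpha^2}$. The main technical obstacle I expect is the single-variable step: extracting an explicit quadratic decay of $|\E e^{is\xi}|$ from the mild anti-concentration hypothesis with constants sharp enough to give precisely the $2b\alpha^2$ in the exponent of the tail term. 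Once that estimate is in hand, the Esseen rescaling and the $d$-dimensional Gaussian integration are essentially routine, though one must track the $d$-dependent constants carefully so they fit into the form $C^d$ claimed by the theorem.
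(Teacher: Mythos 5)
Your plan follows the right high-level template (Esseen, symmetrization, LCD $\Rightarrow$ Fourier decay, split at $\|t\|_2=\alpha/\gamma$), but there is a genuine gap at the single-variable step, and it is not merely a matter of tracking constants. The claimed pointwise bound
$$|\E e^{is\xi}| \le \exp\bigl(-cb\min(\dist(s/(2\pi),\Z)^2,1)\bigr)$$
is false under the hypothesis $\sup_a\P(\xi\in B(a,1))\le 1-b$ alone. Symmetrization gives $1-|\E e^{is\xi}|^2 = \E\bigl(1-\cos(s\bar\xi)\bigr)$ with $\bar\xi=\xi-\xi'$, and after conditioning on $|\bar\xi|\ge 1$ one obtains decay controlled by $\E\bigl[\dist(s\bar\xi/(2\pi),\Z)^2\wedge 1\mid |\bar\xi|\ge 1\bigr]$. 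The random factor $\bar\xi$ cannot be dropped: for instance, with $\xi$ uniform on $\{0,N\}$ one has $b\ge 1/2$ yet $|\E e^{is\xi}|=1$ at $s=2\pi/N$, where $\dist(s/(2\pi),\Z)=1/N\ne 0$. Consequently the ``inessential $2\pi$-rescaling'' is not inessential; the rescaling parameter is the (unbounded, unknown-distribution) random variable $\bar\xi$, and your downstream Gaussian integration over the region $\|t\|_2\le\alpha/\gamma$ has no valid pointwise input to feed on.

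The paper's proof copes with exactly this. It keeps $\bar\xi$ inside the Esseen integral, uses Jensen's inequality to interchange the $\theta$-integration with the conditional expectation over $\bar\xi$, and then takes a supremum over $z=|\bar\xi|\ge 1$. The decisive step is Lemma~\ref{lemma:size recurrence}: a \emph{measure} bound on the recurrence set $I(t)=\{\theta\in B(0,\sqrt d): \min_{p\in\Z^n}\|\tfrac{z}{\beta}\,\theta\cdot a - p\|_2\le t\}$ for $t<\alpha/2$, obtained by showing via the LCD hypothesis and \eqref{super-isotropy} that any two points of $I(t)$ are either $R$-separated with $R=\sqrt d/z$ or $r$-close with $r=2t\beta/(\gamma z)$; the ratio $r/R=2t\beta/(\gamma\sqrt d)$ is independent of $z$, so the covering bound $\mu(I(t))\le (Ct\beta/(\gamma\sqrt d))^d$ holds uniformly over all $z\ge 1$. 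The final integration is a layer-cake integral against $\mu(I(t))$ plus a crude tail for $t\ge\alpha/2$; this, rather than a direct Gaussian integral of a pointwise bound, is what produces the two terms $(C\beta/(\gamma\sqrt b))^d$ and $C^d e^{-2b\alpha^2}$. You correctly flagged the characteristic-function decay as ``the main technical obstacle,'' but the obstacle is not a constant: it is the need for a recurrence-set covering argument that is uniform in the symmetrized scale.
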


We will sketch the proof of Theorem \ref{theorem:RV} in Appendix \ref{section:proofcont2}.

\section{Inverse quadratic Littlewood-Offord}\label{section:quadratic2}

In this section, we revisit the quadratic Littlewood-Offord bound in Section \ref{section:quadratic1} and
consider its inverse.   We first consider a few examples of $A$ where (the quadratic small probability) $\rho_q(A)$ is large.

\begin{example}[Additive structure implies large small ball  probability]\label{example:quadratic:1} Let $Q$ be a proper symmetric GAP of rank $r=O(1)$ and of size $n^{O(1)}$. Assume that $a_{ij} \in Q$, then for any $\x_i\in \{\pm 1\}$

$$\sum_{i,j}a_{ij}\x_i\x_j\in n^2Q.$$ 

Thus, by the pegion-hole principle, 

$$\rho_q(A)\ge n^{-2r}|Q|^{-1} =n^{-O(1)}.$$ 
  
\end{example}

But unlike the linear case,  additive structure is not the only source for large small ball probability. Our next example shows that algebra also plays a role. 

\begin{example}[Algebraic structure implies large small ball probability]\label{example:quadratic:2}
Assume that 

$$a_{ij}= k_ib_j+k_jb_i$$ 

where $k_i\in \Z, |k_i|=n^{O(1)}$ and such that $\P(\sum_i k_i\x_i= 0)=n^{-O(1)}$. 

Then we have 

$$\P(\sum_{i,j}a_{ij}\x_i\x_j =0) =\P(\sum_{i}k_i\x_i  \sum_jb_j\x_j =0)=n^{-O(1)}.$$

\end{example}

Combining the above  two examples, we have the following general one. 

\begin{example}[Structure implies large small ball probability]\label{example:quadratic:3}
Assume that $a_{ij}=a_{ij}' +a_{ij}''$, where $a_{ij}'\in Q$, a proper symmetric GAP of rank $O(1)$ and size $n^{O(1)}$, and 

$$a_{ij}''= k_{i1}b_{1j}+k_{j1}b_{1i}+\dots+k_{ir}b_{rj}+k_{jr}b_{ri},$$ 

where $b_{1i},\dots, b_{ri}$ are arbitrary and $k_{i1},\dots,k_{ir}$ are integers bounded by $n^{O(1)}$, and $r=O(1)$ such that

$$\P\left(\sum_i k_{i1}\xi_i=0,\dots,\sum_i k_{ir}\xi_i=0\right)=n^{-O(1)}.$$ 

Then we have 

$$\sum_{i,j} a_{ij}\x_i\x_j = \sum_{i,j}a_{ij}'\x_i\x_j + 2(\sum_{i} k_{i1}\x_i)(\sum_{j} b_{1j}\x_j)+\dots + 2(\sum_{i} k_{ir}\x_i)(\sum_{j} b_{rj}\x_j).$$

Thus,

$$\P(\sum_{i,j}a_{ij}\x_i\x_j \in n^2Q)=n^{-O(1)}.$$

It then follows, by the pigeon-hole principle, that $\rho_q(A)=n^{-O(1)}$.
\end{example}

We have demonstrated the fact that as long as most of the $a_{ij}$ can be decomposed as 
 $a_{ij}=a_{ij}'+a_{ij}''$, where $a_{ij}'$ belongs to a GAP of rank $O(1)$ and size $n^{O(1)}$ and the symmetric matrix $(a_{ij}'')$ has rank $O(1)$,
  then $A=(a_{ij})$ has large quadratic small ball probability. The first author in \cite{Ng} showed that sort of the converse is also true.

\begin{theorem}[Inverse Littlewood-Offord theorem for quadratic forms]\label{theorem:ILO:quadratic} Let $\ep<1, C$ be positive constants. Assume that

$$\rho_q(A)\ge n^{-C}.$$ 

Then there exist index sets $I_0, I$ of size $O_{C,\ep}(1)$ and $n-O_C(n^\ep)$ respectively, with $I\cap I_0 = \emptyset$, and there exist integers 
$k_{ii_0}$ (for any pair $i_0\in I_0,i\in I$) of size bounded by $n^{O_{C,\ep}(1)}$,  and  a structured  set $Q$  of the form 
$$Q= \Big\{\sum_{h=1}^{O_C(1)} \frac{p_h}{q_h}   g_h|  p_h \in \Z, |p_h|, |q_h| =n^{O_{C,\ep}(1)}\Big\},$$
such that for all $i\in I$ the followings holds:

\begin{itemize}
 
\item (low rank decomposition)  for any $j\in I$,

$$a_{ij} = a_{ij}' - (\sum_{i_0\in I_0} k_{ii_0} a_{i_0j}+ \sum_{i_0\in I_0}k_{ji_0} a_{i_0i});$$

\item (common additive structure of small size) all but $O_C(n^\ep)$ entries $a_{ij}'$ belong to $Q$.
\end{itemize}
\end{theorem}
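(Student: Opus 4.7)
The strategy is to reduce the quadratic problem to the optimal linear inverse theorem (Theorem \ref{theorem:ILO:optimal}) via decoupling, pigeonhole to extract a common GAP that works uniformly in the conditioning variables, and finally convert the resulting averaged structure on the rows of $A$ into the claimed pointwise low-rank decomposition.

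First, partition $[n]=U_1\sqcup U_2$ into two halves, introduce independent copies $\xi_i'$ of $\xi_i$, and set $w_i:=\xi_i-\xi_i'$. Lemma \ref{lemma:decoupling} promotes the hypothesis $\rho_q(A)\ge n^{-C}$ to
$$\P\bigl(R(w,w')=0\bigr)\ \ge\ n^{-4C},\qquad R(w,w')\ :=\ \sum_{i\in U_1}\sum_{j\in U_2} a_{ij}\,w_i w_j'.$$
Conditioning on $w'=(w_j')_{j\in U_2}$ turns $R$ into a linear form in $(w_i)_{i\in U_1}$ with coefficients $R_i(w')=\sum_{j\in U_2}a_{ij}w_j'$. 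By averaging there is an event $\Omega$ of $w'$'s, of probability $\ge n^{-5C}$, on which the conditional small ball probability of $R$ is at least $n^{-5C}$. For each $w'\in\Omega$, Theorem \ref{theorem:ILO:optimal} applied to $\{R_i(w')\}_{i\in U_1}$ produces a symmetric proper GAP $Q_{w'}$ of rank $O_{C,\eps}(1)$ and volume $n^{O_{C,\eps}(1)}$ that captures all but $\eps n$ of the $R_i(w')$.

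Second, the generators and sidelengths of the GAPs $Q_{w'}$ have heights polynomial in $n$ and can be discretised into a subpolynomial family; pigeonhole over this family produces a \emph{single} symmetric proper GAP $Q$ of the shape described in the theorem, and a subset $\Omega'\subseteq\Omega$ of probability $\ge n^{-O_{C,\eps}(1)}$, such that $R_i(w')\in Q$ for every $w'\in\Omega'$ and every $i$ in a set $I(w')\subseteq U_1$ of size at least $|U_1|-\eps n$. For $w',w''\in\Omega'$, setting $v:=w'-w''$, one gets $\sum_{j\in U_2}a_{ij}v_j\in Q-Q$ for $i$ in a common set of size $\ge |U_1|-2\eps n$. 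Letting $v$ range over $\Omega'-\Omega'$ produces a set $V$ of test vectors of density $\ge n^{-O_{C,\eps}(1)}$ such that for each $v\in V$, most rows $a_{i\cdot}$ ($i\in U_1$) satisfy $\langle a_{i\cdot},v\rangle\in Q-Q$.

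The last and hardest step converts this averaged information into a pointwise decomposition. Greedily choose a maximal subset $I_0\subseteq U_1$ of rows that are \emph{genuinely independent modulo $Q$}, meaning that no nontrivial short integer combination of them yields a $Q$-valued vector on a positive-density subset of $V$. The volume bound on $Q$ and the density of $V$ force $|I_0|=O_{C,\eps}(1)$. By maximality, for each $i\in U_1\setminus I_0$ there exist integers $k_{ii_0}$ of size $n^{O_{C,\eps}(1)}$ (bounded by the sidelengths of $Q$) so that the entries $a_{ij}+\sum_{i_0\in I_0}k_{ii_0}a_{i_0j}$ lie in $Q$ for all but $O(n^\eps)$ values of $j$; otherwise one could enlarge $I_0$, contradicting maximality. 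Running the same argument after transposing the roles of $U_1$ and $U_2$, and using $a_{ij}=a_{ji}$ to symmetrise $I_0$, yields the extra term $\sum_{i_0}k_{ji_0}a_{i_0i}$ present in the statement. The main obstacle is this final step: a naive entrywise Freiman argument on individual rows loses the joint structure needed to produce the \emph{same} integer coefficients $k_{ii_0}$ across all $j$, and it is crucial that Theorem \ref{theorem:ILO:optimal} controls both the rank and the volume of $Q$ sharply enough that the generically independent rows form a set of bounded size.
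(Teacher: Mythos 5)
Your outline — decouple, condition to get random linear forms, apply the linear inverse theorem for each conditioning value, pigeonhole to a common GAP, then extract a low-rank decomposition — is the right paradigm and is essentially the strategy used in \cite{Ng}. But as written, two of the intermediate steps have genuine gaps.

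First, the pigeonhole over the GAPs $Q_{w'}$ is not justified by Theorem~\ref{theorem:ILO:optimal} alone. That theorem produces a GAP whose generators are unconstrained real numbers determined by the (possibly huge or irrational) coefficients $R_i(w')=\sum_j a_{ij}w_j'$; there is no finitely-discretizable family to pigeonhole over. To make this step work one needs a refined inverse statement controlling the \emph{arithmetic} of the generators — for instance, the rational-commensurability conclusion of Theorem~\ref{structure} (or a discrete analogue of Theorem~\ref{corollary:continuous:NgV}), which forces the generators to lie in a set of the form $\{(p/q)g_1 : |p|,|q|\le n^{O(1)}\}$ and hence gives a polynomial-size family of candidate $Q$'s. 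Without invoking that refinement, the claim that the $Q_{w'}$ "can be discretised into a subpolynomial family" is an assertion, not an argument, and it is precisely where the rational structured set in the theorem's conclusion (rather than an honest GAP) comes from. Relatedly, Theorem~\ref{theorem:ILO:optimal} yields an exceptional set of size $\eps n$, but the statement you are proving needs exceptional sets of polynomial size $O(n^\eps)$; you would need to exploit the trade-off between exceptional-set size and $|Q|$ (as in Theorem~\ref{corollary:continuous:NgV}) rather than use the optimal theorem as stated.

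Second, the rank-reduction step is under-specified. "Genuinely independent modulo $Q$" is not a precise condition, and the passage from "a maximal such $I_0$ is bounded" to "for every $i\notin I_0$ there are \emph{integers} $k_{ii_0}$ of polynomial height making $a_{ij}+\sum k_{ii_0}a_{i_0j}\in Q$ for most $j$" is the crux of the proof and needs an actual mechanism: one must show that the test-vector information $\langle a_{i\cdot},v\rangle\in Q-Q$ over a dense family of $v$ propagates to a pointwise integer relation, and that solving for the dependence coefficients does not blow up the heights. Your sketch asserts this but does not produce it, and the subsequent symmetrisation of $I_0$ (needed because the $k_{ji_0}a_{i_0i}$ cross-terms in the statement must match a single $I_0$ and a single family of integers) is also nontrivial and glossed over. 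These are not presentational issues; filling them in is where most of the work in \cite{Ng} lies.
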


We remark that the common structure $Q$ is not yet a GAP, as the coefficients are rational, instead of being integers. It is desirable to have an analogue of Theorem \ref{theorem:ILO:optimal} with common structure as a genuine GAP with optimal parameters (see for instance \cite[Conjecture 1]{C} for a precise conjecture for bilinear forms.) For counting purposes, this inverse theorem is sufficiently strong.

\section{Application:  The least singular value of a random matrix} 

For a matrix $A$, let $\sigma_n(A)$ denote its smallest singular value. It is well known that $\sigma_n(A) \ge 0$ and the bound is strict if and only if $A$ 
is non-singular. 
An important problem with many practical applications is to bound the least singular value of a non-singular  matrix
 (see \cite{GN, ST, ST1, TVsmooth, Versur, Edelman} for discussions).  The problem  of estimating the least singular value of a random matrix was first raised by Goldstine and von Neumann \cite{GN}  in the 1940s, 
with connection to their investigation of the complexity 
of inverting a matrix.

To answer Goldstine and von Neumman's question, Edelman \cite{Edelman} 
computed the distribution of the LSV of  the random matrix $M_n^{\Gau} $ of size $n$ with iid standard gaussian entries, and showed
that for all fixed $t >0$

$$ \P( \sigma_n( M_n^{\Gau}  \leq t n^{-1/2}  ) = \int_0^{t }  \frac{1+\sqrt{x}}{2\sqrt{x}} e^{-(x/2 + \sqrt{x})}\ dx + o(1) =  t - \frac{1}{3} t^{3 }  +O(t^4) +o(1) . $$

He conjectured that this distribution is universal (i.e., it must hold for other models of random matrices, such as Bernoulli).

More recently, in their study of smoothed analysis of the simplex 
method,  Spielman and 
Teng \cite{ST, ST1} showed  that for any $ t>0$ ( $t$ can go to $0$ with $n$) 

\begin{equation}
\P (\sigma_n (M_n^{\Gau} )  \le t n^{-1/2} )  \le t.
\end{equation} 

\noindent They conjectured that a slightly adjusted bound holds in the Bernoulli case  \cite{ST}
\begin{equation}
\P (\sigma_n (M_n^{\Ber} )  \le t )  \le t n^{1/2} + c^{n},
\end{equation} 

\noindent where $0 < c < 1$ is a constant. The term $c^n$ is needed as $M_n^{\Ber}$ can be singular with exponentially small probability.

Edelman's conjecture has been proved by Tao and the second author 
in \cite{TVhard}. This work also confirms Spielman and Teng's conjecture for the case $t$ is fairly large; $t \ge n^{-\delta}$ for some small constant 
$\delta >0$. For $t \ge n^{-3/2} $, Rudelson in \cite{Rudannal}, making use of Hal\'asz' machinery from \cite{H}, obtained a strong bound with an extra (multiplicative) constant factor.
 In many applications,  it is important to be able to treat  even smaller $t$.  
As a matter of fact,  in applications what one usually needs is  the  probability bound to be  very small, but this requires one to set  $t$ very small automatically.

   In the last few years, thanks to the development of inverse theorems, one can 
now prove very strong bound for almost all range of $t$. 

Consider a matrix $M$ with row vectors $X_i$ and singular values $\sigma_1 \ge \dots \ge \sigma_n$. Let $d_i$ be the distance from $X_i$ to the hyperplane formed by the other $n-1$ rows.  There are several ways to exhibit 
a direct relation between the $d_i$ and $\sigma_i$. For instance,  Tao and the second showed \cite{TVcir} 

\begin{equation} 
d_1 ^{-2} +\dots + d_n^{-2}  = \sigma_1^{-2} + \dots + \sigma_n^{-2}. 
\end{equation} 

A technical relation, but in certain applications more effective,  is \cite[Lemma 3.5]{RV}. 

From this, it is clear that if one can bound the $d_i$ from below with high probability, then one can do the same for $\sigma_n$. Let $v =(a_1, \dots, a_n) $ be the normal vector of the hyperplane formed
by $X_2, \dots, X_n$ and $\xi_1, \dots, \xi_n$ be the coordinates of $X_1$,  then 

$$ d_1 = | a_1 \xi_1 + \dots a_n \xi_n | . $$

Thus, the probability that $d_1$ is small is exactly the small probability for the multi-set $A= \{a_1, \dots, a_n \}$.  If this probability is large,  then the inverse theorems tell us that  
the set $A$ must have strong additive structure. However, 
$A$ comes as the normal vector of a random hyperplane, so the probability that it has any special structure is very small (to quantify this we can use the counting theorems such as 
Theorem \ref{theorem:betanet}). Thus, we obtain, with high probability, a lower bound on all  $d_i$. In principle, one can use this to deduce a lower bound for $\sigma_n$. 

Carrying out  the above plan requires certain extra ideas and some careful analysis. In \cite{TVinverse}, Tao and the second author managed to prove 

\begin{theorem} \label{theorem:TVsingbound} 
For any constant $A>0 $, there is a constant $B>0$  such that 

$$ \P( \sigma_n (M_n^{\Ber} ) \le n^{-B}) \le n^{-A } . $$ 
\end{theorem}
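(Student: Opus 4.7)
The plan is to pass from the least singular value to row-distances, then translate distance estimates into small-ball statements for the normal vector of a random hyperplane, and finally use the continuous counting theorem (Theorem \ref{theorem:betanet}) to show that the normal vector of such a hyperplane almost never has strong additive structure.

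First I would use the standard distance identity for a matrix $M_n$ with rows $X_1,\dots,X_n$ and singular values $\sigma_1\ge\dots\ge\sigma_n$, namely
$$\sigma_1^{-2}+\dots+\sigma_n^{-2}=d_1^{-2}+\dots+d_n^{-2},$$
where $d_i=\dist(X_i,H_i)$ with $H_i=\text{span}(X_j)_{j\ne i}$. This yields $\sigma_n\ge n^{-1/2}\min_i d_i$, so by a union bound it suffices to prove $\P(d_1\le n^{-B'})\le n^{-A-2}$ for a suitable $B'$. Conditioning on $X_2,\dots,X_n$ (and discarding the event that they fail to span a hyperplane, which has probability $o(1)$ by Theorem~\ref{theorem:singular} and in fact can be handled more carefully by Kahn--Komlós--Szemerédi-type singularity bounds), we let $\mathbf{v}=(a_1,\dots,a_n)$ be the unit normal to $H_1$; then $d_1=|\sum_i a_i\xi_i|$, so $\P(d_1\le t\mid X_2,\dots,X_n)=\rho_{1,t,\Ber}(A)$ for the multiset $A=\{a_1,\dots,a_n\}$, which satisfies $\sum a_i^2=1$.

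Next I would set $t=n^{-B'}$ and fix a target threshold $\rho_0=n^{-C}$ with $C$ a large constant to be chosen (so that a failure at this scale suffices). I would split the analysis according to whether $\rho_{1,t,\Ber}(A)\le \rho_0$ or not. In the good case the conditional probability is already tiny. In the bad case the inverse/counting machinery kicks in: applying Theorem~\ref{theorem:betanet} (in dimension $d=1$, with approximation scale $\beta=t$, which lies in the admissible range once $B'$ is chosen so that $t\ge e^{-n^{\eps}}$), the normal vector $\mathbf{v}$ must be $t$-close to some element of an exceptional set $\mathcal{S}\subset\mathbb{R}^n$ of size at most
$$|\mathcal{S}|\le \rho_0^{-n}\,n^{-n(1/2-\eps)}+\exp(o(n)).$$

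The remaining and most delicate step is to bound, for a fixed $\mathbf{v}_0\in\mathcal{S}$, the probability that the random unit normal $\mathbf{v}$ of $H_1$ is $t$-close to $\mathbf{v}_0$. The point is that if $\mathbf{v}$ is $t$-close to $\mathbf{v}_0$, then for each $i\ge 2$ the Bernoulli vector $X_i$ lies within distance roughly $t\sqrt{n}$ of the hyperplane $\mathbf{v}_0^\perp$; the probability of this for a single row is the small-ball probability $\rho_{1,t\sqrt{n},\Ber}(\mathbf{v}_0)$, which is $O(n^{-1/2}+t\sqrt{n})$ by Theorem~\ref{theorem:Erdos} applied to $\mathbf{v}_0$ (after discarding an exceptional event where too many coordinates of $\mathbf{v}_0$ are abnormally small, handled by the extra structural information in Theorem~\ref{corollary:continuous:NgV}). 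Raising this to the $(n-1)$-st power and multiplying by $|\mathcal{S}|$ gives a bound of the form
$$\rho_0^{-n}\,n^{-n(1/2-\eps)}\cdot (O(n^{-1/2}+t\sqrt{n}))^{\,n-1}+\exp(-\Omega(n)),$$
which is $\le n^{-A-2}$ provided $C$ is large and $B'=B'(A,C,\eps)$ is chosen large enough. Setting $B=B'+1$ completes the argument.

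The main obstacle is this last balancing act: the counting theorem produces an exceptional set whose cardinality grows as $\rho_0^{-n}$, so one must show that each fixed bad $\mathbf{v}_0$ is avoided with probability at least $\rho_0\cdot n^{-1/2+\eps}$ per row, and moreover confirm that $\mathbf{v}_0$ itself has enough ``non-sparse'' coordinates for Erdős's bound to apply. Handling the coordinates of $\mathbf{v}_0$ that are close to zero (so that the effective length of the multiset for the Littlewood--Offord bound shrinks) is the true technical heart, and here one uses the finer structure of $\mathcal{S}$ from Theorem~\ref{corollary:continuous:NgV} (the rationality and denominator bounds on the generators) together with a separate treatment of the negligible range $\|\mathbf{v}_0\|$ near zero.
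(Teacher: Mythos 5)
Your strategy matches the high-level plan the paper sketches (distance identity, conditioning on $X_2,\dots,X_n$, passing to the unit normal $\mathbf{v}$, then invoking the continuous counting theorem), but the final union-bound computation does not close, and the place it fails is precisely the spot the paper flags as requiring ``certain extra ideas and some careful analysis.'' Taking $\rho_0=n^{-C}$, the counting theorem gives $|\mathcal{S}|\le n^{(C-1/2+\eps)n}+\exp(o(n))$. With a per-element bound of order $n^{-1/2}$ per row (the best Erd\H{o}s can give, since the relevant coordinates of $\mathbf{v}_0$ have magnitude $\sim n^{-1/2}$ and the radius $nt$ is small), the union bound is
$$ n^{(C-1/2+\eps)n}\cdot \bigl(O(n^{-1/2})\bigr)^{n-1}\;=\;n^{(C-1+\eps)n+O(1)},$$
which \emph{diverges} once $C>1-\eps$. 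But the ``good'' half of your split requires the conditional probability $\rho_0=n^{-C}$ to be at most $n^{-A-2}$, i.e.\ $C\ge A+2>1$. So the two halves place contradictory demands on $C$: your phrase ``provided $C$ is large'' is exactly backwards, since enlarging $C$ inflates $|\mathcal{S}|$ as $\rho_0^{-n}$ while the per-row probability stays stuck near $n^{-1/2}$. (A secondary issue: since the net is coordinate-wise $t$-close, the per-row radius should be $nt$, not $t\sqrt{n}$, though this is cosmetic compared to the main gap.)

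What is missing is the mechanism that makes the per-element probability scale with $\rho_0$ rather than plateau at $n^{-1/2}$. The elements $\mathbf{v}_0\in\mathcal{S}$ are precisely the structured (GAP-close) vectors, so they are the ones for which the Littlewood--Offord bound is least favorable, not most favorable; a single-threshold application of Theorem~\ref{theorem:betanet} cannot extract the needed gain. The actual argument in \cite{TVinverse} stratifies the bad event by the value of the small-ball probability (analogous to the combinatorial-dimension stratification in Section~\ref{section:singularity2}) and, on each dyadic level $\rho$, balances the level-$\rho$ count $\rho^{-n}n^{-n(1/2-\eps)}$ against a per-element probability that is shown to be $O(\rho)^{n-1}$, so the level-$\rho$ contribution is $O(\rho^{-1}\,n^{-n(1/2-\eps)})$ and sums acceptably over $\rho\ge\rho_0$. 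Without that stratified comparison, or a structural argument showing $\rho_{1,nt,\Ber}(\mathbf{v}_0)\lesssim\rho_0$ for the net elements, the proof does not go through. Your appeal to the rationality/denominator bounds in Theorem~\ref{corollary:continuous:NgV} is gesturing in the right direction, but it is not spelled out how those bounds would deliver a per-row probability far below $n^{-1/2}$.
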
 

The first inverse theorem, Theorem \ref{theorem:TV1}, was first proved in this paper,  as a step in the proof of Theorem \ref{theorem:TVsingbound}. 
In a consequent paper, Rudelson and Vershynin developed Theorem \ref{theorem:RV}, and used it,  in combination  with \cite[Lemma 3.5]{RV}  and many other  ideas to show 

\begin{theorem} \label{theorem:RVsingbound} 
There is a constant $C >0$ and $0 < c <1$ such that for any $t>0$, 

$$ \P( \sigma_n (M_n^{\Ber} ) \le t n^{-1/2})  \le  t n^{1/2} + c^{n}  . $$ 
\end{theorem}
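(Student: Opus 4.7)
The plan is to follow the Rudelson--Vershynin strategy based on a compressible/incompressible decomposition of the sphere, combined with the small-ball inequality of Theorem \ref{theorem:RV}. Fix small absolute constants $\delta, \rho >0$ to be chosen later, and call a unit vector $x \in S^{n-1}$ \emph{sparse} if $|\{i : x_i \neq 0\}| \le \delta n$, \emph{compressible} if $x$ lies within Euclidean distance $\rho$ of some sparse vector, and \emph{incompressible} otherwise. Since
$$\sigma_n(M_n^{\Ber}) = \inf_{x \in S^{n-1}} \|M_n^{\Ber} x\|_2,$$
it suffices to lower bound the infimum separately on the compressible part and the incompressible part of the sphere.

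For the compressible part I would use an $\epsilon$-net argument. Sparse unit vectors with prescribed support of size $\delta n$ admit a standard $\epsilon$-net of cardinality $(C/\epsilon)^{\delta n}$, so the whole compressible set is covered by a net of size at most $\exp(C'\delta n \log(1/\delta))$. For any fixed $x \in S^{n-1}$, the coordinates of $M_n^{\Ber} x$ are iid sums $\sum_i x_i \xi_i$ with variance $1$, and a standard sub-Gaussian tensorization gives $\P(\|M_n^{\Ber} x\|_2 \le c_0 \sqrt{n}) \le e^{-c_1 n}$. Choosing $\delta$ and $\rho$ small enough that the net entropy is dominated by $c_1 n$, the union bound yields
$$\P\!\left(\inf_{x \text{ compressible}} \|M_n^{\Ber} x\|_2 \le c_0 \sqrt{n}\right) \le c_2^{\,n}.$$

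For the incompressible part the key structural fact is that $\|M_n^{\Ber} x\|_2$ is comparable, up to $\sqrt{n}$ factors, to the distances from the rows to the spans of the others: any incompressible $x$ has at least $\rho n$ coordinates with $|x_k| \ge \rho/\sqrt{n}$, and for each such $k$ one has $\|M_n^{\Ber} x\|_2 \ge |x_k|\cdot \dist(X_k, H_k)$, where $X_k$ is the $k$-th row and $H_k$ the span of the remaining rows. Averaging over $k$ and comparing thresholds gives
$$\P\!\left(\inf_{x \text{ incomp.}} \|M_n^{\Ber} x\|_2 \le \frac{t}{\sqrt{n}}\right) \le \frac{C}{n} \sum_{k=1}^{n} \P\!\left(\dist(X_k, H_k) \le C' t\right).$$
Writing $v = (a_1, \ldots, a_n) \in S^{n-1}$ for a unit normal to $H_k$, we have $\dist(X_k, H_k) = |\sum_i a_i \xi_i|$, whose small-ball probability is governed by Theorem \ref{theorem:RV} in terms of $\LCD_{\alpha,\gamma}(v)$.

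The main remaining step, and the principal obstacle, is showing that with probability at least $1 - c^{n}$ the random normal $v$ to $H_k$ satisfies $\LCD_{\alpha,\gamma}(v) \ge c\sqrt{n}$, with $\alpha = c_3 \sqrt{n}$ and $\gamma$ a small absolute constant. Conditioning on this LCD lower bound and applying Theorem \ref{theorem:RV} with $\beta = C' t$ gives
$$\P\!\left(\dist(X_k, H_k) \le C' t \,\big|\, H_k\right) \le C'' t + C'' e^{-c_4 n},$$
so substituting into the incompressible bound yields $t\sqrt{n} + c^n$, matching the claim. To establish the LCD lower bound, one argues by contradiction: if $\LCD_{\alpha,\gamma}(v) < D := c\sqrt{n}$, then there exist $\theta < D$ and $p \in \Z^n$ with $\|\theta v - p\|_2 \le \min(\gamma\|\theta v\|_2, \alpha)$, forcing $v$ to lie in a structured discrete net $\CN$ of unit vectors. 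One then bounds $|\CN| \le \exp(o(n))$ by counting scaled integer approximants, and for each candidate $v \in \CN$ one bounds $\P(v \perp X_1, \ldots, v \perp X_{n-1}) \le (1 - c_5)^{n-1}$ using a single-vector small-ball estimate (since $v$ is incompressible by construction). A final union bound over $\CN$ produces the required $c^{n}$ exceptional probability. Carrying out the net construction quantitatively, so that the size of $\CN$ is genuinely overwhelmed by the per-candidate probabilistic gain, is the technical heart of the proof.
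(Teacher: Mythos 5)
Your high-level plan is the right one and matches the Rudelson--Vershynin strategy that the paper points to: compressible/incompressible decomposition of the sphere, an $\epsilon$-net with sub-Gaussian concentration on the compressible part, reduction to $\dist(X_k,H_k)$ on the incompressible part via the ``invertibility via distance'' lemma (Lemma~3.5 of \cite{RV}), and control of $\dist(X_k,H_k)$ through the $\LCD$ small-ball bound of Theorem~\ref{theorem:RV}. But the crucial quantitative claim you isolate as ``the main remaining step'' is wrong, and the gap is exactly where the hard work of \cite{RV} lives.

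You claim that what must be shown is $\LCD_{\alpha,\gamma}(v)\ge c\sqrt n$ with probability $1-c^n$. Two problems. First, this bound is \emph{deterministic} for incompressible unit vectors and requires no randomness at all: incompressibility forces $\gtrsim n$ coordinates of $v$ to have magnitude $\Theta(1/\sqrt n)$, so for $\theta < c\sqrt n$ the nearest integers to those $\theta v_i$ are $0$, giving $\dist(\theta v,\Z^n)\gtrsim \|\theta v\|_2$ and hence no admissible $\theta$ below $c\sqrt n$. Second, and more importantly, $\LCD \ge c\sqrt n$ is far too weak to produce the conclusion. Theorem~\ref{theorem:RV} only yields $\rho_{1,\beta,\xi}(A)\lesssim \beta$ for $\beta\ge 1/\LCD$; with $\LCD\asymp \sqrt n$ the smallest admissible $\beta$ is $\asymp n^{-1/2}$, so for $t\ll n^{-1/2}$ (and in particular for $t$ exponentially small, where the target bound degenerates to the singularity probability $c^n$) you get at best $\P(\dist(X_k,H_k)\le C't)\lesssim n^{-1/2}$, nowhere near $C''t + c^n$. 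What is actually needed, and what constitutes the heart of the proof in \cite{RV}, is that $\LCD_{\alpha,\gamma}(v)\ge e^{cn}$ with probability $1-e^{-cn}$ over the randomness of $H_k$. Your net estimate $|\CN|\le\exp(o(n))$ is also off for the same reason: the level set $\{v:\LCD(v)\approx D\}$ admits a net of size roughly $(CD/\sqrt n)^n$, which for $D=e^{cn}$ is exponentially large; the argument only closes because the probability that a \emph{fixed} vector $u$ from the net is an approximate null vector of $M_n$ scales like $(C'/D)^n$, and one must carry out a careful multi-scale (dyadic in $D$) union bound balancing these two exponentials across the whole range $c\sqrt n\le D\le e^{cn}$. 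Without this, the exponential additive term $c^n$ in the conclusion is not reachable for small $t$.
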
 

This bound is sharp, up to the constant $C$. It also gives a new proof of Kahn-Koml\'os-Szemer\'edi bound on the singularity 
probability of a random Bernoulli matrix (see Section \ref{section:singularity2}).  Both theorems hold in more general setting.

In practice, one often works with random matrices of the type $A+M_n$ where $A$ is deterministic and $M_n$ has iid entries. 
(For instance, in their works on smoothed analysis, Spielman and Teng used this to model a large data matrix  perturbed by random noise.) They proved in \cite{ST} 

\begin{theorem} \label{theorem:STcondition} Let $A$ be an arbitrary $n$ by $n$ matrix.
 Then for any $ t >0$,
$$\P( \sigma_n (A+M_n^{\Gau})  \le t n^{-1/2}   ) = O(t) . $$
\end{theorem}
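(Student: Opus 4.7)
The plan is to combine the distance-to-hyperplane identity for singular values with a one-dimensional Gaussian anti-concentration estimate, and then exploit the rotational invariance of the Gaussian to remove an $n$-factor loss.

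First, I would use the negative second moment identity already recalled in the excerpt, $\sigma_1(M)^{-2} + \cdots + \sigma_n(M)^{-2} = d_1(M)^{-2} + \cdots + d_n(M)^{-2}$, where $d_i(M)$ denotes the distance from the $i$-th row of $M$ to the span of the other $n-1$ rows. This immediately yields $\sigma_n(M) \ge \min_i d_i(M)/\sqrt{n}$. Hence, with $M = A + M_n^{\Gau}$, it suffices to bound $\P(\min_i d_i(M) \le t)$.

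Second, fix an index $i$ and expose all rows of $M$ except the $i$-th. Their span has (almost surely) a unique unit normal vector $v_i$, which is measurable with respect to the conditioning. The $i$-th row is $a_i + g_i$, where $a_i$ is the $i$-th row of $A$ and $g_i \sim N(0, I_n)$ is independent of the exposed data. Therefore
\[
d_i(M) = |\langle v_i, a_i \rangle + \langle v_i, g_i \rangle|,
\]
and since $v_i$ is a unit vector independent of $g_i$, the inner product $\langle v_i, g_i\rangle$ is standard $N(0,1)$. So conditionally $d_i(M)$ is distributed as $|N(c_i, 1)|$ for some shift $c_i$ depending on $A$ and on the exposed rows. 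The density of $|N(c,1)|$ is uniformly bounded by $\sqrt{2/\pi}$ in $c$, giving the one-dimensional Gaussian anti-concentration bound $\P(d_i(M) \le t) \le \sqrt{2/\pi}\, t$, independent of $A$.

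Third, a naive union bound combined with the first step gives only $\P(\sigma_n(M) \le t/\sqrt{n}) \le n \sqrt{2/\pi}\, t = O(tn)$, which is off by a factor of $n$ from the claimed $O(t)$. To recover this, one uses the rotational invariance of the Gaussian distribution: writing $A = U\Sigma V^T$ as its SVD, one has $U^T(A + M_n^{\Gau})V = \Sigma + U^T M_n^{\Gau} V$, and since $U^T M_n^{\Gau} V \stackrel{d}{=} M_n^{\Gau}$, the singular values of $A + M_n^{\Gau}$ have the same distribution as those of $\Sigma + M_n^{\Gau}$. One may thus assume $A = \Sigma$ is diagonal. In this case, using the SVD change of variables on $\Sigma + M_n^{\Gau}$ (with Jacobian proportional to $\prod_{i<j}|\sigma_i^2 - \sigma_j^2|$), one writes down the joint density of the singular values explicitly and shows that the marginal density of $\sigma_n$ at the origin is bounded by a constant independent of $\Sigma$. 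Integrating this marginal density from $0$ to $t/\sqrt{n}$ then yields the stated $O(t)$ bound.

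The main obstacle is this final density estimate. The loss of the factor $n$ in the union bound reflects a real weakness of treating the distances $d_i$ independently, and extracting the sharp constant requires exploiting the joint Gaussian structure. In the SVD-based approach, the delicate point is that the Gaussian weight $\exp(-\|\Sigma + G - \Sigma\|_F^2/2)$ combined with the parametrization produces cross-terms $\exp(\sigma_j \langle u_j, \Sigma v_j\rangle)$ that a priori grow with $\|\Sigma\|$; one must check that these terms are absorbed into the normalization of the joint density so that the resulting marginal bound on $\sigma_n$ near zero stays independent of $\|A\|$.
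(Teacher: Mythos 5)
The paper does not actually supply a proof of this theorem; it simply cites Spielman and Teng \cite{ST}, so your attempt has to be compared against that reference rather than against anything in the text. Your steps 1--3 are correct as far as they go, but the route through $\sigma_n \ge \min_i d_i/\sqrt{n}$ together with a union bound over $i$ is inherently lossy by a full factor of $n$: for $A=0$ the distances $d_i$ are essentially i.i.d.\ with density bounded below near zero, so $\P(\min_i d_i\le t)=\Theta(nt)$ while the target is $O(t)$. No sharpening of the one-dimensional estimate can rescue this chain of inclusions; the loss is in the inclusion $\{\sigma_n\le tn^{-1/2}\}\subset\{\min_i d_i\le t\}$, because a single small $d_i$ forces $\sigma_n$ to be small only if the corresponding row is well aligned with the bottom singular direction. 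You correctly diagnose this, but step 4 (writing the joint density of singular values of $\Sigma+G$ via an SVD change of variables and bounding the marginal of $\sigma_n$ at the origin) is left as an unproved sketch and is in fact a heavy route: the cross term $\exp(\tr(\Sigma V D U^T))$ leads to a real Harish-Chandra--Itzykson--Zuber type integral, and showing that the resulting marginal is uniformly bounded in $\Sigma$ is not a routine computation. As written the proposal is therefore incomplete.

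For contrast, the Spielman--Teng argument avoids the union bound over rows altogether and is much lighter. The core estimate is the one you already have in disguise: for any \emph{fixed} unit vector $v$ one has $\P\big(\|(A+G)^{-1}v\|\ge x\big)\le \sqrt{2/\pi}\,x^{-1}$, proved by rotating $v$ to a coordinate vector (which replaces $A$ by $RA$ but leaves the Gaussian invariant) and observing that $\|(A+G)^{-1}e_1\|^{-1}$ is a one-dimensional shifted Gaussian distance. One then introduces an \emph{auxiliary} random unit vector $v$, independent of $G$, and uses the elementary inequality $\|(A+G)^{-1}v\|\ge |\langle v, v_n\rangle|\cdot\|(A+G)^{-1}\|$, where $v_n$ is the right singular vector for $\sigma_n$. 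Conditioned on $G$, the random overlap satisfies $\P_v(|\langle v,v_n\rangle|\ge c/\sqrt{n})\ge 1/2$, so Fubini gives $\tfrac12\P(\|(A+G)^{-1}\|\ge x)\le \P_{v,G}\big(\|(A+G)^{-1}v\|\ge cx/\sqrt{n}\big)\le C\sqrt{n}/x$; setting $x=\sqrt{n}/t$ yields exactly $\P(\sigma_n\le tn^{-1/2})=O(t)$. So the place to spend rotational invariance is in proving the single-vector estimate and in replacing a union bound over $n$ rows by an average over one random direction, not in an explicit density computation.
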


One may ask whether there is an analogue of Theorem \ref{theorem:RVsingbound} for this model. The answer is, somewhat surprisingly, negative. 
An analogue of the weaker Theorem \ref{theorem:TVsingbound} is, however, available, assuming that 
$\|A\|$ is bounded polynomially in $n$. For  more discussion on this model, we refer to \cite{TVsmooth}. 
For applications in Random Matrix Theory (such as the establishment of the Circular Law) and many related results, we 
refer to  \cite{TVbull,TVK,TVcir,GT, Pajor-log,Chafai, Versur} and the references therein.

\section{Application: Strong bounds on the singularity problem--the non-symmetric case}  \label{section:singularity2} 

We continue to discuss the singularity problem from  Section \ref{section:singularity1}.  The first exponential bound on $p_n$ was proved by Kahn, Koml\'os and Szemer\'edi 
\cite{KKSz}, who showed that $p_n \le .999^n$. In \cite{TVdet}, Tao and the second author simplified the proof and got a slightly improved bound $.952^n$. A more notable improvement 
which pushed the bound to $(3/4+o(1))^n$ was obtained in a subsequent paper \cite{TVsing}, which combined Kahn et. al. approach with an inverse theorem.  The best current bound is 
$(1/\sqrt 2 +o(1))^n$ by Bourgain, Vu and Wood \cite{BVW}. The proof of this bound still relied heavily on the approach from \cite{TVsing} (in particular it used the same inverse theorem), but 
added a new twist which made the first part of the argument more effective. 

In the following, we tried to present the 
approach from \cite{KKSz} and \cite{TVsing}. 
Similar to the proof in Appendix \ref{section:discrete:proof}, we  first embed the problem in a finite field $\F =\F_p$, where $p$ is a very large prime. 
Let   $\{-1,1\}^n \subset \F^n$ be the discrete unit cube in
$\F^n$.  We let $X$ be the random variable taking values in
$\{-1,1\}^n$ which is distributed uniformly on this cube (thus
each element of $\{-1,1\}^n$ is attained with probability
$2^{-n}$).  Let $X_1, \ldots, X_n \in \{-1,1\}$ be $n$ independent
samples of $X$.  Then
$$ p_n := \P( X_1, \ldots, X_n \hbox{ linearly dependent} ).$$

For each linear subspace $V$ of $\F^n$, let $A_V$ denote the event that
$X_1,\ldots,X_n$ span $V$.
Let us call a space $V$ \emph{non-trivial} if it is spanned by the set $V \cap \{-1,1\}^n$.
Note that $\P(A_V) \neq 0$ if and only if $V$ is non-trivial.
Since every collection of $n$ linearly dependent
vectors in $\F^n$ will span exactly one proper subspace $V$ of
$\F^n$, we have
\begin{equation}\label{pn-spanned}
 p_n = \sum_{V \hbox{ a proper non-trivial subspace of } \F^n} \P( A_V ).
\end{equation}

It is not hard to show that  the dominant contribution to this sum came from the hyperplanes:
$$ p_n = 2^{o(n)} \sum_{V \hbox{ a non-trivial hyperplane in } F^n} \P( A_V ).$$
Thus, if one wants to show $p_n \le (3/4 +o(1)) ^n$, it suffices to show 
$$ \sum_{V \hbox{ a non-trivial hyperplane in } \F^n} \P( A_V ) \leq (3/4 + o(1))^n.$$

The  next step is to partition the non-trivial hyperplanes $V$ into a number of classes, depending on
the number of $(-1,1)$ vectors in $V$. 

\begin{definition}[Combinatorial dimension]  Let $D := \{ d_\pm \in \Z/n:
1  \leq d_\pm \leq n\}$.
 For any $d_\pm \in D$, we define the \emph{combinatorial Grassmannian}
$\Gr(d_\pm)$ to be the set of all non-trivial
hyperplanes $V$ in $F^n$ with
\begin{equation}\label{discrete-dim}
2^{d_\pm - 1/n} < |V \cap \{-1,1\}^n| \leq 2^{d_\pm}.
\end{equation}
We will
refer to $d_\pm$ as the \emph{combinatorial dimension} of
$V$.
\end{definition}

It thus suffices to show that
\begin{equation}\label{pn-spanned-2}
 \sum_{d_\pm \in D} \sum_{V \in \Gr(d_\pm)} \P( A_V ) \leq (\frac{3}{4} + o(1))^n.
\end{equation}
It is therefore of interest to understand the size of the combinatorial Grassmannians $\Gr(d_\pm)$
and of the probability of the events $A_V$ for hyperplanes $V$ in those Grassmannians.

There are two easy cases, one when $d_\pm$ is fairly small and one where $d_\pm$ is fairly large.

\begin{lemma}[Small combinatorial dimension estimate]\label{kks-small} Let $0 < \alpha < 1$ be arbitrary.  Then
$$
\sum_{d_\pm \in D: 2^{d_\pm-n} \leq \alpha^n} \sum_{V \in \Gr(d_\pm)} \P( A_V )
\leq n\alpha^n.$$
\end{lemma}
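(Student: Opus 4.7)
The plan is to combine the disjointness of the events $\{A_V\}_V$ with a conditioning argument that treats $X_n$ as a ``pivot'' row, and then sum the resulting per-class bound geometrically over $d_\pm$.

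The starting point is that $X_1,\ldots,X_n$ span a unique subspace, so the events $\{A_V\}_V$ are pairwise disjoint and
\[
\sum_{V \in \Gr(d_\pm)} \P(A_V) \;=\; \P\bigl(\mathrm{span}(X_1,\ldots,X_n) \in \Gr(d_\pm)\bigr).
\]
I would condition on $X_1,\ldots,X_{n-1}$ and let $W$ denote their span. Since $A_V$ forces $X_n \in V$ and $W \subseteq V$, the typical situation is $W = V$; in that case the independence of $X_n$ from the first $n-1$ rows together with the bound $|V \cap \{-1,1\}^n|/2^n \le 2^{d_\pm - n}$ yields
\[
\P(A_V, W = V) \;=\; \P(W = V) \cdot \P(X_n \in V) \;\le\; 2^{d_\pm - n}\cdot \P(W = V).
\]
Summing over $V \in \Gr(d_\pm)$, the events $\{W = V\}$ are themselves disjoint, so the total contribution from the main case is bounded by $2^{d_\pm - n}$.

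The second step is to sum this per-class bound over the admissible range of $d_\pm$. Since $d_\pm$ is discretized in steps of $1/n$, the series
\[
\sum_{d_\pm \in D:\, 2^{d_\pm - n} \le \alpha^n} 2^{d_\pm - n}
\]
is geometric with largest term $\alpha^n$ and common ratio $2^{1/n}$, so it totals $\alpha^n/(1 - 2^{-1/n}) = O(n\alpha^n)$, matching the stated bound up to the constant that can be absorbed by adjusting the factor $n$.

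The main obstacle is the degenerate case where $\dim W = n-2$ and $X_n$ completes $W$ to the hyperplane $V$. In that case $X_1,\ldots,X_{n-1}$ are linearly dependent in $\F^n$, an event of exponentially small probability by the Kahn--Koml\'os--Szemer\'edi singularity bound (at most $n\cdot p_{n-1} = O(c^n)$ for some $c<1$). One must show that this contribution, summed over the $O(n^2)$ admissible values of $d_\pm$, is negligible relative to $n\alpha^n$. This is automatic for $\alpha > c$; for $\alpha$ closer to the lower boundary $1/2$ of the admissible range (forced by $d_\pm \ge 1/n$ combined with $2^{d_\pm - n} \le \alpha^n$), one uses the extra constraint that the completing hyperplane $V$ must itself lie in $\Gr(d_\pm)$, which contributes an additional factor of $|V\cap\{-1,1\}^n|/2^n \le 2^{d_\pm - n}$ via the $X_n$ step and keeps the degenerate contribution below $n\alpha^n$.
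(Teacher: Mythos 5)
Your conditioning approach genuinely diverges from the paper's argument, and the divergence introduces a gap that the paper's method avoids entirely. The paper's proof is a one-step symmetry/union bound: since $X_1,\ldots,X_n$ spanning the $(n-1)$-dimensional $V$ means some subset of $n-1$ of them already spans $V$, one has
\[
\P(A_V) \le n\,\P(X_1,\ldots,X_{n-1}\text{ span }V)\,\P(X\in V) \le n\alpha^n\,\P(X_1,\ldots,X_{n-1}\text{ span }V),
\]
and summing over \emph{all} $V$ (the events on the right are disjoint across all $V$, not merely within one $\Gr(d_\pm)$) gives $n\alpha^n$. The factor $n$ comes from the union bound over which row is dropped, and the sum over $d_\pm$ is handled implicitly --- no geometric series is needed. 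In fact your geometric-series step is doubly off: it yields $\alpha^n/(1-2^{-1/n}) \approx 1.44\,n\alpha^n$, exceeding the claimed bound, and it is unnecessary because the disjointness of $\{W=V\}$ holds across all hyperplanes simultaneously, giving the main case a bound of $\alpha^n$ directly.

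The real gap is your treatment of the degenerate case. Bounding $\P(X_1,\ldots,X_{n-1}\ \text{lin.\ dep.})$ by the Kahn--Koml\'os--Szemer\'edi result gives $O(.999^n)$, but the paper applies this lemma with $\alpha=3/4+2\eps_0$, for which $n\alpha^n \ll .999^n$; so the degenerate contribution is not automatically negligible. Your proposed fix --- extracting an extra factor $2^{d_\pm-n}\le\alpha^n$ from the constraint $X_n\in V$ --- does not go through: when summing $\P(A_V,\ W\subsetneq V)$ over $V$, you can \emph{either} use the disjointness of the $A_V$ to get $\le\P(\text{dep})=O(.999^n)$, \emph{or} use $\P(X_n\in V)\le\alpha^n$ per hyperplane and then sum over the (potentially many) hyperplanes $V$ containing $W$, but you cannot multiply the two bounds; the argument for one consumes the slack needed for the other. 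The clean way to handle the degenerate situation is exactly the paper's symmetry: if $X_1,\ldots,X_{n-1}$ do not span $V$, some other $(n-1)$-subset (necessarily including $X_n$) does, and the union over the $n$ choices of dropped index gives the factor $n$ and reduces to the main case. In short, the symmetry argument should replace the KKS invocation, at which point your decomposition collapses back into the paper's three-line proof.
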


\begin{proof} (of Lemma \ref{kks-small}) Observe that if $X_1, \dots, X_n$ span
$V$, then there are $n-1$ vectors among the $X_i$ which already
span $V$. By symmetry, we thus have

\begin{equation}\label{V-span}
 \P( A_V ) = \P( X_1,\ldots,X_n \hbox{ span } V)
 \leq n \P( X_1,\ldots,X_{n-1} \hbox{ span } V) \P( X \in V
 ).
 \end{equation}
On the other hand, if $V \in \Gr(d_\pm)$ and $2^{d_\pm-n} \leq \alpha^n$,
then $\P (X \in V) \le \alpha^n$ thanks to \eqref{discrete-dim}.  Thus we have
$$  \P( A_V )
 \leq n \alpha^n \P( X_1,\ldots,X_{n-1} \hbox{ span } V).$$
Since $X_1,\ldots,X_{n-1}$ can span at most one space $V$, the claim follows.
\end{proof}

\begin{lemma}[Large combinatorial dimension estimate]\label{kks-large} We have
$$
\sum_{d_\pm \in D: 2^{d_\pm-n} \geq 100 /\sqrt{n} } \,\,\,\,
\sum_{V \in \Gr(d_\pm)} \P( A_V ) \leq (1 + o(1))n^2
2^{-n}.$$
\end{lemma}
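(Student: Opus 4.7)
The plan is to exploit the structural rigidity of non-trivial hyperplanes $V$ with $|V \cap \{-1,1\}^n|$ close to $2^n$. If $V$ has $|V \cap \{-1,1\}^n| \ge 100 \cdot 2^n / \sqrt{n}$ and $a$ is a (fixed) integer normal vector of $V$, I first claim the support size $k$ of $a$ is small. Indeed, restricting the random sum to the nonzero coordinates of $a$ and applying Theorem~\ref{theorem:Erdos} gives
\[
\frac{100}{\sqrt n} \le \P(X\in V) \le \frac{\binom{k}{\lfloor k/2\rfloor}}{2^k} = O\!\left(\frac{1}{\sqrt k}\right),
\]
forcing $k \leq n/C$ for some absolute constant $C>0$; also $k \ge 2$, since $a_i \xi_i = 0$ has no $\pm 1$ solution.

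The starting inequality is the trivial bound $\P(A_V) \le \P(X_1, \ldots, X_n \in V) = \P(X\in V)^n$, valid because spanning $V$ forces every $X_i \in V$ and the $X_i$ are independent and identically distributed. I then stratify the double sum by the support size $k$ of the normal.

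For $k=2$, I directly enumerate: the only non-trivial $2$-support hyperplanes are the $n(n-1)$ ``trivial'' ones $\{x_i = x_j\}$ and $\{x_i=-x_j\}$ ($i \ne j$), all lying in $\Gr(n-1)$ with $\P(X\in V)=1/2$. This contributes at most $n(n-1)\cdot 2^{-n}$, which is exactly the main term $(1+o(1)) n^2 \cdot 2^{-n}$. For $k\ge 3$, Erd\H{o}s' theorem gives $\P(X\in V) \le \binom{k}{\lfloor k/2\rfloor}/2^k \le 3/8$ (and $\le \sqrt{2/(\pi k)}$ for large $k$), so $\P(A_V)\le (3/8)^n$ with extra decay for large $k$. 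A non-trivial hyperplane of support size $k$ is determined by its support ($\binom{n}{k}$ choices) and the induced non-trivial hyperplane in $\F^k$. For bounded $k$ this contributes a polynomial in $n$ times $(3/8)^n$, negligible against $2^{-n}$ thanks to the factor $(3/4)^n$; for $k$ growing with $n$ up to $n/C$, the sharper decay $(2/(\pi k))^{n/2}$ dominates any reasonable combinatorial count of hyperplanes. Summing everything then yields the claimed $(1+o(1)) n^2 \cdot 2^{-n}$.

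I expect the main obstacle to lie in the counting step for \emph{intermediate} values of $k$, roughly $k \in [\log n, n/C]$. A naive count of non-trivial hyperplanes per support (e.g.\ at most $2^{k(k-1)}$ from enumerating bases of $k-1$ linearly independent $\pm 1$ vectors in $\{-1,1\}^k$) is sufficient for $k = o(\sqrt{n}/\log n)$ but appears too crude for larger $k$. To handle the full range, I would invoke a stability version of Theorem~\ref{theorem:Erdos} (in the spirit of Frankl--F\"uredi) asserting that hyperplanes with near-extremal combinatorial dimension are essentially rigid, allowing only polynomially many normals per support, which would then close the argument.
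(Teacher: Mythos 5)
Your main-term computation (the $n(n-1)$ hyperplanes $\{x_i=\pm x_j\}$, each with $\P(A_V)\le q(V)^n = 2^{-n}$) is correct, and so is the observation that $q(V)\le 3/8$ once the normal has support $k\ge 3$, via Theorem~\ref{theorem:Erdos1}. Both ingredients appear in the intended argument (the paper says the lemma uses Theorem~\ref{theorem:Erdos1} and defers to \cite{KKSz,TVsing}).

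However, the proof does not close, and the gap is exactly where you flag it. Your plan rests on $\P(A_V)\le q(V)^n$ together with a count of hyperplanes of each support size $k$, and for $k$ in the range $\Theta(\sqrt n)\le k\le n/C$ the numbers do not work. The claim that ``the sharper decay $(2/(\pi k))^{n/2}$ dominates any reasonable combinatorial count of hyperplanes'' is in fact false with the count you have in hand: the naive bound of $\binom{n}{k}\cdot 2^{k(k-1)}$ non-trivial hyperplanes of support size $k$ grows like $2^{\Theta(k^2)}$, which for $k\sim n/C$ is $2^{\Theta(n^2)}$, while $(2/\pi k)^{n/2}$ only decays like $2^{-O(n\log n)}$; their product diverges. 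Nor does it help to simply bound the probability of $k$ linearly dependent columns by $2^{k-n}$ and union-bound over supports, since $\sum_{3\le k\le n/C}\binom{n}{k}2^{k-n}\ge 2^{-n(1-\epsilon(C))}$ with $\epsilon(C)>0$ for any fixed $C$, which already exceeds $n^2 2^{-n}$ by an exponential factor. So using only the constraint ``support $\le n/C$'' and a generic count of hyperplanes cannot give $o(n^2 2^{-n})$; the lower bound $q(V)\ge 100/\sqrt n$ must enter the counting itself, not merely to cap the support size.

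Your proposed remedy — a ``stability version'' of Theorem~\ref{theorem:Erdos} in the spirit of Frankl--F\"uredi bounding the number of normals with $q(V)\ge 100/\sqrt n$ per support — is the right intuition but is not a result the paper establishes or cites, and you do not formulate or prove it, so at present it is a gap rather than a step. Note also that the paper's adjacent Lemma~\ref{kks-small} proceeds via the sharper inequality $\P(A_V)\le n\,\P(X_1,\dots,X_{n-1}\text{ span }V)\,q(V)$ and the fact that $\sum_V\P(X_1,\dots,X_{n-1}\text{ span }V)\le 1$; this is considerably less lossy than $q(V)^n$ when summed over $V$, and any faithful reconstruction of the KKSz argument is likely to exploit it (together with a finer count of the exceptional normals). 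As it stands, your proof handles $k$ bounded (and more generally $k=o(\sqrt n/\log n)$) rigorously, but the range $k\in[\sqrt n/\log n, n/C]$ is not dispatched, and the assertion that it is taken care of by the decay in $q(V)$ is incorrect.
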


This proof uses  Theorem \ref{theorem:Erdos1} and  is left as an exercise; consult \cite{KKSz, TVsing} for details. 
The heart of the matter is  the following, somewhat more difficult, result.

\begin{proposition}[Medium combinatorial dimension estimate]\label{medium-dim}  Let $0 < \ep_0 \ll 1$,
and let $d_\pm \in D$ be such that $(\frac{3}{4} + 2\ep_0)^n < 2^{d_\pm-n} < \frac{100}{\sqrt{n}}$. Then we have
$$\sum_{V \in \Gr(d_\pm)} \P( A_V ) \leq o(1)^n,$$
where the rate of decay in the $o(1)$ quantity depends on $\ep_0$ (but not on $d_\pm$).
\end{proposition}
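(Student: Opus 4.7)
The plan is to combine the swap/exposure reduction already used in \eqref{V-span} with the counting form of the inverse Littlewood--Offord theorem (Theorem \ref{counting}). Writing $p := 2^{d_\pm - n}$, I first observe that for each $V \in \Gr(d_\pm)$, independence of the $X_i$ and $\P(X \in V) \le p$ give $\P(X_1, \dots, X_{n-1} \in V) \le p^{n-1}$, so that \eqref{V-span} yields $\P(A_V) \le n\, p^n$. Summing over the class $\Gr(d_\pm)$,
$$\sum_{V \in \Gr(d_\pm)} \P(A_V) \;\le\; n\, p^n \cdot |\Gr(d_\pm)|,$$
so everything reduces to a counting estimate for $|\Gr(d_\pm)|$.

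For the counting step I would parametrize each $V \in \Gr(d_\pm)$ by its normal vector $\mathbf{a} = (a_1, \dots, a_n) \in \F^n$, using the embedding of the problem into a sufficiently large prime field $\F$ already present in the set-up. The defining property $|V \cap \{-1,1\}^n| \in (2^{d_\pm - 1/n}, 2^{d_\pm}]$ is precisely $\rho(\mathbf{a}) = \sup_x \P(\sum_i a_i \xi_i = x) \asymp p$, so $|\Gr(d_\pm)|$ is at most the number of multisets $\mathbf{a}$ whose concentration probability is of order $p$. Lifting the $a_i$ to integers of polynomial size via the Freiman-isomorphism trick (Remark \ref{remark:Freiman}, Appendix \ref{section:discrete:proof}) and applying Theorem \ref{counting} with $\rho = p$ gives
$$|\Gr(d_\pm)| \;\le\; (O_{\epsilon}(1))^n \cdot n^{\epsilon n} \cdot (p\sqrt{n})^{-n},$$
whence
$$\sum_{V \in \Gr(d_\pm)} \P(A_V) \;\le\; n \cdot (O_{\epsilon}(1))^n \cdot n^{\epsilon n - n/2} \;=\; o(1)^n$$
after choosing $\epsilon < 1/2$, which is the desired conclusion.

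The main obstacle is that Theorem \ref{counting}, and the underlying Theorem \ref{theorem:TV1}, is formulated only for polynomially small $\rho \ge n^{-C}$, while the medium regime extends to $p$ as small as $(3/4 + 2\epsilon_0)^n$, which is exponentially small. Consequently the key technical work is to prove a counting estimate of the same shape that remains effective for exponentially small $\rho$; this is essentially the stronger inverse Littlewood--Offord theorem tailored to this application in \cite{TVsing}, whose proof refines the GAP produced from the hypothesis $\rho(\mathbf{a}) \ge \rho$ so that the volume bound $|Q| \le O(\rho^{-1} n^{-r/2})$ survives down to $\rho = e^{-\Theta(n)}$. A secondary point is the choice of a canonical integer representative of the normal vector in $\F^n$, which must be carried out carefully so that the inverse machinery can be applied coordinatewise. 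Once these ingredients are in hand, the three steps above combine to deliver the $o(1)^n$ bound with a rate depending only on $\epsilon_0$.
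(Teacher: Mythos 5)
Your reduction bypasses the Kahn--Koml\'os--Szemer\'edi machinery that the paper actually uses for this proposition, and while the first two steps are sound, the counting step has a larger gap than you acknowledge.

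First, the part that works. The crude bound $\P(A_V) \le n\,p^n$ (in fact $\P(A_V)\le\P(X\in V)^n\le p^n$ suffices, since every $X_i$ must land in $V$) is correct, and if one knew that $|\Gr(d_\pm)|\le (O(1))^n\, n^{\eps n}(p\sqrt{n})^{-n}$ with $\eps<1/2$, then the multiplication $n\,p^n\cdot|\Gr(d_\pm)|=o(1)^n$ would close the argument. So the whole weight is on the counting estimate.

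Here is where the proposal diverges from what is actually available. You correctly note that Theorem~\ref{counting} is stated only for $\rho\ge n^{-C}$ while $p$ here can be as small as $(3/4+2\ep_0)^n$, and you propose to substitute the exponential-range inverse theorem of \cite{TVsing} (the paper's Theorem~\ref{structure}). But Theorem~\ref{structure} does not apply to \emph{all} of $\Gr(d_\pm)$: its constant is $C=C(\ep_0,\ep_1)$, and it is proved only for \emph{exceptional} hyperplanes, i.e.\ those satisfying $\ep_1\P(Y\in V)\le\P(X\in V)\le\P(Y\in V)$, where $Y$ is the lazy variable taking $0$ with probability $1-\mu$ and $\pm1$ with probability $\mu/2$ and $\mu=1/4-\ep_0/100$. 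For the unexceptional hyperplanes, where $\P(X\in V)<\ep_1\P(Y\in V)$, the paper does \emph{not} prove any structure or counting theorem at all; instead it runs the $X\to Y$ replacement along the lines of \eqref{c-compare}, \eqref{c-compare-iter} and \eqref{gammab} to get a $\delta^n$ bound on their total contribution, using the tautology $\sum_V\P(Y_1,\dots,Y_n\text{ span }V)\le1$. This is precisely the step that your plan omits, and nothing in the paper's toolkit lets you count the unexceptional class by its normal vectors. So the proposal does not merely ``offload'' the hard work onto the known Theorem~\ref{structure}: it tacitly requires a strictly stronger counting statement, valid for every $V\in\Gr(d_\pm)$, that is not established anywhere in the paper.

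There is also a secondary but genuine issue you raise and do not resolve: the normal vector of $V$ lives in $\F_p^n$ for a huge auxiliary prime $p$, and to invoke any integer-side counting theorem one needs a canonical small-height integer (or bounded-height rational) representative. The paper obtains exactly this from the rational-commensurability clause \eqref{vr-rank} of Theorem~\ref{structure}, but that clause is again only guaranteed for exceptional $V$. The Freiman-isomorphism trick runs in the opposite direction (embedding integers into $\F_p$, not lifting arbitrary $\F_p$-vectors to small integers), so it does not fill this hole. In short, the reduction $\P(A_V)\le n\,p^n$ plus counting is a clean idea, but as written the proof has a missing lemma --- a counting bound for all (not just exceptional) medium-dimensional hyperplanes --- and that is not something the paper supplies.
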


Note that $D$ has cardinality $|D| = O(n^2)$.  Thus if we combine
this proposition with Lemma \ref{kks-small} (with $\alpha :=
\frac{3}{4} + 2\ep_0$) and Lemma \ref{kks-large}, we see that we can bound the left-hand side of \eqref{pn-spanned-2}
by
$$ n (\frac{3}{4} + 2\ep_0)^n + n^2 o(1)^n + (1 + o(1))n^2 2^{-n} = (\frac{3}{4} + 2\ep_0 + o(1))^n.$$
Since $\ep_0$ is arbitrary, the upper bound $(3/4 +o(1))^n$ follows.

We now informally discuss the proof of Proposition \ref{medium-dim}.
We start with the trivial bound
\begin{equation}\label{trivial-bound}
 \sum_{V \in \Gr(d_\pm)} \P( A_V ) \le 1
\end{equation}
that arises simply because any vectors $X_1,\ldots,X_n$ can span at most one space $V$. To improve upon this trivial bound,
the key innovation in \cite{KKSz} is to replace $X$ by another
random variable $Y$ which tends to be more concentrated on
subspaces $V$ than $X$ is.  Roughly speaking, one seeks the
property
\begin{equation}\label{c-compare}
 \P( X \in V ) \leq c \P( Y \in V)
\end{equation}
for some absolute constant $0 < c < 1$ and for all (or almost all)
subspaces $V \in \Gr(d_\pm)$.  From this property, one expects (heuristically, at least)
\begin{equation}\label{c-compare-iter}
 \P(A_V) = \P( X_1,\ldots,X_n \hbox{ span } V ) \leq c^n \P( Y_1,\ldots,Y_n \hbox{ span } V ),
\end{equation}
where $Y_1,\ldots,Y_n$ are iid samples of $Y$, and then by
applying the trivial bound \eqref{trivial-bound} with $Y$ instead
of $X$, we would then obtain a bound of the form $\sum_{V \in \Gr(d_\pm)} \P( A_V ) \leq c^n$,
at least in principle.  Clearly, it will be desirable to make $c$ as small as possible; if we can make $c$
arbitrarily small, we will have established Proposition \ref{medium-dim}.

The random variable $Y$ can be described as follows.  Let $0 \leq
\mu \leq 1$ be a small absolute constant (in \cite{KKSz} the value
$\mu = \frac{1}{108} e^{-1/108}$ was chosen), and let
$\eta^{(\mu)}$ be a random variable taking values in $\{-1,0,1\}
\subset F$ which equals 0 with probability $1-\mu$ and equals $+1$
or $-1$ with probability $\mu/2$ each.  Then let $Y :=
(\eta^{(\mu)}_1,\ldots,\eta^{(\mu)}_n) \in F^n$, where
$\eta^{(\mu)}_1,\ldots,\eta^{(\mu)}_n$ are iid samples of
$\eta^{(\mu)}$.  By using a Fourier-analytic argument of
Hal\'asz \cite{H}, a bound of the form
$$ \P( X \in V) \leq C \sqrt{\mu} \P(Y \in V)$$

was shown in \cite{KKSz}, where $C$ was an absolute constant (independent of
$\mu$), and $V$ was a hyperplane which was \emph{non-degenerate}
in the sense that its combinatorial dimension was not too close to $n$.  For $\mu$ sufficiently small,
one then obtains \eqref{c-compare} for some $0 < c < 1$, although one cannot make $c$ arbitrarily small without
shrinking $\mu$ also.

There are however some technical difficulties with this approach,
arising when one tries to pass from \eqref{c-compare} to
\eqref{c-compare-iter}.  The first problem is that the random
variable $Y$, when conditioned on the event $Y \in V$, may
concentrate on a lower dimensional subspace on $V$, making it
unlikely that $Y_1,\ldots,Y_n$ will span $V$.  In particular, $Y$
has a probability of $(1-\mu)^n$ of being the zero vector, which
basically means that one cannot hope to exploit \eqref{c-compare}
in any non-trivial way once $\P(X \in V) \leq (1-\mu)^n$. However,
in this case $V$ has very low combinatorial dimension and
Lemma \ref{kks-small} already gives an exponential gain.

Even when $(1-\mu)^n < \P(X \in V) \leq 1$, it turns out that it is still
not particularly easy to obtain \eqref{c-compare-iter}, but one
can obtain an acceptable substitute for this estimate by only
replacing some of the $X_j$ by $Y_j$. Specifically,  one can try
to obtain an estimate roughly of the form
\begin{equation}\label{gammab} \P( X_1,\ldots,X_n \hbox{ span } V ) \leq c^{m}
\P( Y_1,\ldots,Y_m, X_1, \ldots, X_{n-m} \hbox{
span } V )
\end{equation}
where $m$ is equal to a suitably small multiple of $n$ (we will eventually take $m \approx n/100$).  Strictly speaking,
 we will also have to absorb an additional ``entropy'' loss of ${n \choose m}$ for technical reasons, though as we will be taking $c$
 arbitrarily small, this loss will ultimately be irrelevant.

The above approach (with some minor modifications) was carried out
rigorously in \cite{KKSz} to give the bound $p_n = O(.999^n)$ which
has been  improved slightly to $O(.952^n)$ in \cite{TVdet}, thanks to some simplifications. There
are two main reasons why the final gain in the base was relatively
small. Firstly, the chosen value of $\mu$ was small (so the
$n(1-\mu)^n$ error was sizeable), and secondly the value of $c$
obtained was relatively large (so the gain of $c^n$ or
$c^{(1-\gamma)n}$ was relatively weak).  Unfortunately, increasing
$\mu$ also causes $c$ to increase, and so even after optimizing
$\mu$ and $c$ one falls well short of the conjectured bound.

The more significant improvement to $(3/4 +o(1)) ^n$ relies on an inverse theorem. 
 To reduce all the other losses to $(\frac{3}{4} + 2\ep_0)^n$ for some small $\eps_0$,
we increase $\mu$ up to $1/4 - \ep_0/100$, at which point the arguments of Hal\'asz
and \cite{KKSz, TVdet}  give \eqref{c-compare} with $c = 1$. The value
$1/4$ for $\mu$ is optimal as it is the largest number satisfying
the pointwise inequality
$$ |\cos(x)| \leq  (1-\mu) + \mu \cos(2x) \hbox{ for all } x \in \R,$$
which is the Fourier-analytic analogue of \eqref{c-compare} (with $c=1$).
At first glance, the fact that $c=1$ seems to remove any utility to \eqref{c-compare}, as the above argument relied
on obtaining gains of the form $c^n$ or $c^{(1-\gamma)n}$. However, we can proceed further by subdividing the
collection of hyperplanes $\Gr(d_\pm)$ into two classes, namely the
\emph{unexceptional} spaces $V$ for which
$$ \P( X \in V) < \eps_1 \P(Y \in V)$$
for some small constant $0 < \eps_1 \ll 1$ to be chosen later (it will be much smaller than $\eps_0$), and the
\emph{exceptional} spaces for which
\begin{equation}\label{V-exceptional}
\eps_1 \P(Y \in V) \leq \P(X \in V) \leq \P(Y \in V).
\end{equation}
The contribution of the unexceptional spaces can be dealt with by
the preceding arguments to obtain a very small contribution (at
most $\delta^n$ for any fixed $\delta>0$ given that we set $\eps_1
=\eps_1(\gamma,\delta)$ suitably small), so it remains to consider
the exceptional spaces $V$.  

The key technical step is to show that there are very few exceptional hyperplannes (and thus their contribution is 
negligible). This can be done using the following inverse theorem (the way  the counting Theorem  \ref{counting} was proved using 
the inverse Theorem \ref{theorem:ILO:optimal}). 

Let $V \in \Gr(d_\pm)$ be an exceptional space, with a
representation of the form
\begin{equation}\label{V-rep}
 V = \{ (x_1,\ldots,x_n) \in F^n: x_1 a_1 + \ldots + x_n a_n = 0 \}
 \end{equation}
for some elements $a_1, \ldots, a_n \in F$. We shall refer to
$a_1,\ldots,a_n$ as the \emph{defining co-ordinates} for $V$.

\begin{theorem}\label{structure}  There is a constant $C=C(\ep_0, \ep_1)$
 such that the following holds. Let $V$ be a hyperplane in $
 \Gr(d_\pm)$ and $a_1,\ldots,a_n$ be its defining co-ordinates.
Then  there exist integers

\begin{equation}\label{rank-bound}
1 \leq r \leq C
\end{equation}
 and $M_1,\ldots,M_r \geq 1$ with the volume bound
\begin{equation}\label{m-max}
 M_1 \ldots M_r \leq C 2^{n - d_\pm}
\end{equation}
and non-zero elements $v_1, \ldots, v_r \in F$ such that the
following holds.

\begin{itemize}

\item (Defining coordinates lie in a progression) The
symmetric generalized arithmetic progression

$$ P := \{ m_1 v_1 + \ldots + m_r v_r: -M_j/2 < m_j < M_j/2 \hbox{ for all } 1 \leq j \leq r \}$$
is proper and contains all the $a_i$.

\item  (Bounded norm)  The $a_i$ have small $P$-norm:
\begin{equation}\label{p-mush}
 \sum_{j=1}^n \| a_j \|_P^2 \leq C
\end{equation}

\item  (Rational commensurability) The  set $\{
v_1,\ldots,v_r \} \cup \{a_1,\ldots,a_n\}$ is contained in the set
\begin{equation}\label{vr-rank}
\{ \frac{p}{q} v_1 : p, q \in \Z; q \neq 0; |p|, |q| \leq n^{o(n)}
\}.
\end{equation}

\end{itemize}

\end{theorem}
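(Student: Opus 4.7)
The plan is to follow a Hal\'asz-style Fourier analytic approach in $\F = \F_p$ (for $p$ a very large prime) combined with an inverse Freiman-type theorem to extract the GAP structure on the defining coordinates. By the character-sum formula,
\begin{align*}
p \cdot \P(X \in V) &= \sum_{t \in \F_p} \prod_{i=1}^n \cos(2\pi t a_i / p), \\
p \cdot \P(Y \in V) &= \sum_{t \in \F_p} \prod_{i=1}^n \bigl((1-\mu) + \mu\cos(2\pi t a_i / p)\bigr).
\end{align*}
The pointwise bound $|\cos x| \le (1-\mu) + \mu\cos(2x)$, valid precisely for $\mu \le 1/4$ with equality only at $x \in \pi\Z$, together with the substitution $t \mapsto 2t$ (a bijection on $\F_p$ for $p$ odd), yields the unconditional inequality $\P(X \in V) \le \P(Y \in V)$. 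This recovers the upper half of \eqref{V-exceptional}; the exceptional hypothesis then asserts that the reverse bound is also nearly tight, a rigidity statement forcing the dominant frequencies $t$ in the $Y$-sum to satisfy $2ta_i/p \in \Z$ for most indices $i$.

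Concretely, for a dyadic threshold $\tau$, set
$$T_\tau := \Bigl\{ t \in \F_p : \prod_{i=1}^n \bigl((1-\mu) + \mu\cos(2\pi t a_i / p)\bigr) \ge \tau \Bigr\}.$$
Using the combinatorial dimension bound $\P(X \in V) \ge 2^{d_\pm - 1/n - n}$ and dyadic pigeonholing, I would locate a single scale at which $|T_\tau| \gtrsim 2^{n - d_\pm}$ and $T_\tau$ carries a constant fraction of $p \P(Y \in V)$. A Cauchy-Schwarz manipulation on the cosine product (using $\cos x \cos y = (\cos(x+y) + \cos(x-y))/2$ and the non-negativity of $(1-\mu) + \mu\cos y$ for $\mu \le 1/4$) produces a doubling estimate $|T_\tau + T_\tau| \le K |T_\tau|$ with $K = K(\ep_0, \ep_1)$. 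Applying Freiman's theorem (Theorem \ref{theorem:Freiman}) to $T_\tau$ yields a proper GAP $\widetilde Q \subset \F_p$ of rank $r = O_{\ep_0,\ep_1}(1)$ and size $O(|T_\tau|)$ containing $T_\tau$, which establishes \eqref{rank-bound}.

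Dualizing this Bohr-like structure --- the annihilator of a GAP under the pairing $(t,a) \mapsto e_p(ta)$ is again a GAP with reciprocal volume --- produces the desired GAP $P = \{\sum m_j v_j : |m_j| < M_j/2\}$ on the coordinate side, with
$$M_1 \cdots M_r \lesssim \frac{2^n}{|T_\tau|} \le C \cdot 2^{n - d_\pm},$$
which is \eqref{m-max}. The norm control \eqref{p-mush} falls out of the same analysis: the near-saturation of the pointwise cosine inequality forces $\sum_i \|ta_i/p\|_{\R/\Z}^2 = O(1)$ uniformly for $t$ running through a basis of $T_\tau$, and expanding each $a_i$ in the dual basis $v_1, \dots, v_r$ transfers this into the stated bound on $\sum_j \|a_j\|_P^2$.

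The main obstacle is quantitative sharpness: one must propagate the doubling constants carefully enough that Freiman's theorem delivers the \emph{exact} volume bound $M_1 \cdots M_r \le C \cdot 2^{n - d_\pm}$ rather than one weaker by an exponential factor. This requires iterating the Cauchy-Schwarz step across several dyadic levels, in the spirit of Hal\'asz's original argument and of the sharp counting philosophy behind Theorem \ref{theorem:ILO:optimal} and Theorem \ref{counting}. The rational commensurability \eqref{vr-rank} is then handled separately by taking $p$ larger than any quantity appearing in the argument, so that the finite-field GAP $\widetilde Q$ admits a unique rational lift with numerator and denominator of size $n^{o(n)}$, via the Freiman isomorphism trick mentioned in Remark \ref{remark:Freiman}.
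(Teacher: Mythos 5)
Your strategy is in the right spirit---Hal\'asz-style Fourier analysis in $\F_p$ followed by a Freiman-type inverse theorem---but the two load-bearing steps are asserted rather than established, and as stated they would not go through. The survey itself does not prove Theorem \ref{structure} (it is quoted from \cite{TVsing}), but the Appendix \ref{section:discrete:proof} proof of the closely related Theorem \ref{theorem:ILO:optimal} is instructive for comparison: there the argument stays entirely on the \emph{coordinate} side, bounding the $k$-fold sumset $|kA''|$ via the dual set $S_m^*$ and then invoking the long-range inverse theorem (Theorem \ref{theorem:longrange}). You instead try to run Freiman's theorem on the \emph{frequency} side and then dualize, and both of those moves have gaps.

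The first gap is the doubling claim $|T_\tau + T_\tau| \le K(\ep_0,\ep_1)|T_\tau|$. The Cauchy--Schwarz / cosine-product manipulation you sketch gives at best a closure property of the form $T_\tau - T_\tau \subset T_{\tau'}$ with $\tau'$ strictly smaller than $\tau$ (the product degrades under addition of frequencies), which is not a doubling bound with constant independent of the dyadic level. Hal\'asz level sets are not known to have bounded doubling; what they satisfy is controlled growth of \emph{iterated} sumsets in a sense that scales with the number of summands, and that is exactly why Appendix \ref{section:discrete:proof} needs Theorem \ref{theorem:longrange} rather than plain Freiman. The second gap is the dualization. Even granted a GAP $\widetilde Q$ of rank $r$ containing $T_\tau$, the claim that "the annihilator of a GAP is a GAP with reciprocal volume" is a Bogolyubov--Ruzsa / geometry-of-numbers statement, not an identity: the natural annihilator is a Bohr set, and extracting a GAP of the same rank with controlled volume requires Minkowski's second theorem. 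More seriously, the containment $T_\tau \subset \widetilde Q$ does not by itself put the $a_i$ in the dual GAP $P$; what does so is the smallness of $\sum_{t\in T_\tau}\|a_i t/p\|^2$ for almost every $i$ (the double-counting step in Appendix \ref{section:discrete:proof}), which your proposal never establishes. The norm bound \eqref{p-mush} is similarly asserted to "fall out" of the analysis without a derivation, and it actually carries the key quantitative information (it is what rules out the $a_i$ being at the extreme corners of $P$). Until the doubling and dualization steps are replaced by arguments that actually close, the proof does not stand.
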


\section{Application:  Strong bounds on the singularity problem-the symmetric case } \label{section:singularity3} 

Similar to Conjecture \ref{notorious},  we raise

\begin{conjecture}  \label{notorious2}
$$p_n^{\sym} = (1/2 +o(1))^n.$$
\end{conjecture}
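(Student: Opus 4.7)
The plan is to lift the Kahn--Koml\'os--Szemer\'edi / Tao--Vu strategy of Section \ref{section:singularity2} to the symmetric model, substituting the quadratic Littlewood--Offord machinery of Section \ref{section:quadratic1} and the inverse theorem of Section \ref{section:quadratic2} for their linear counterparts. Following the Costello--Tao--Vu setup behind Theorem \ref{theorem:psym}, I would expose the principal minors $M_1 \subset M_2 \subset \dots \subset M_n$ one at a time and track the corank $X_i := \mathrm{corank}(M_i)$. When we pass from $M_i$ (assumed of full rank) to $M_{i+1}$ by adjoining a random row and its transpose, $\det M_{i+1}$ is a quadratic form $\sum_{j,k} c_{jk} \xi_j \xi_k$ in the new Bernoulli entries, with coefficients $c_{jk}$ equal to the cofactors of $M_i$; hence $\P(M_{i+1}\text{ singular} \mid M_i) \le \rho_q((c_{jk}))$.

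To push from the qualitative $o(1)$ of Theorem \ref{theorem:psym} down to $(1/2+o(1))^n$, one must globalize the argument in the manner of \eqref{pn-spanned}: decompose $p_n^{\sym}$ as a sum over the ``type'' of the symmetric null-hyperplane produced by the first $n-1$ symmetric rows, stratified by a combinatorial dimension analogous to Definition \ref{discrete-dim}. Within each stratum $\Gr(d_\pm)$, I would apply a Hal\'asz-style replacement $X \mapsto Y$ in which the last Bernoulli row is swapped for a sparser vector with $\P(Y_j=0)=1-\mu$, $\P(Y_j=\pm 1)=\mu/2$. The Fourier input is the pointwise inequality $|\cos x| \le (1-\mu) + \mu\cos(2x)$, saturated at $\mu = 1/4$ in the linear case; the symmetric analogue (where the quadratic character $\prod \cos(t_j s_k)$ is Hal\'asz-averaged over both row and column Fourier variables) should give a genuinely better compression constant, and it is this quadratic saving that must translate into the target base $1/2$ rather than $1/\sqrt 2$.

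The remaining task is to control the \emph{exceptional} symmetric hyperplanes, those on which $\P(X\in V)$ and $\P(Y\in V)$ are within a factor of $\eps_1$ of each other. Here I would invoke Theorem \ref{theorem:ILO:quadratic}: any symmetric coefficient matrix $(a_{ij})$ producing a large value of $\rho_q$ admits, after removing $O(n^{\eps})$ rows and columns, a decomposition $a_{ij} = a'_{ij} + \sum_{i_0 \in I_0} (k_{ii_0}a_{i_0 j} + k_{j i_0}a_{i_0 i})$ with $a'_{ij}$ lying in a small common rational structure $Q$. Feeding the parameters $(r, |Q|, |I_0|)$ into a counting argument analogous to Theorem \ref{counting} would bound the number of such exceptional symmetric hyperplanes, and hence their total contribution.

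The main obstacle is precisely this counting step. The common structure $Q$ in Theorem \ref{theorem:ILO:quadratic} has rational, not integer, generators, and the low-rank correction $a''_{ij}$ is only loosely constrained by the multi-dimensional small-ball event on $(\sum_i k_{i 1}\xi_i, \dots, \sum_i k_{i r}\xi_i)$; turning this into a count sharp enough to beat $(1/2+2\eps_0)^n$ appears to require either an optimal inverse quadratic theorem with genuine GAP structure (in the spirit of \cite[Conjecture 1]{C}) or, alternatively, a quadratic analogue of the Rudelson--Vershynin $\LCD$ bound of Section \ref{section:cont2} tailored to symmetric coefficient arrays. A secondary difficulty is ensuring that the cofactor matrix of $M_i$ is itself ``generic enough'' to fall outside the exceptional class with overwhelming probability; this demands a self-bootstrapping argument in which the conclusion at step $i$ feeds the hypothesis at step $i+1$, comparable in spirit to the smallest singular value analysis of Section 13 but carried out for symmetric matrices throughout.
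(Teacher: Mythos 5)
The statement you are trying to prove is Conjecture~\ref{notorious2}, which the paper explicitly leaves open: immediately after stating it, the authors write ``We are very far from this conjecture. Currently, no exponential upper bound is known.'' The best results recorded in the paper are Theorem~\ref{theorem:sym:Ng} ($p_n^{\sym}\le n^{-C}$ for any fixed $C$) and Theorem~\ref{theorem:sym:V} ($p_n^{\sym}=O(\exp(-n^c))$), neither of which is exponential in $n$, let alone with base $1/2$. So there is no ``paper's own proof'' to compare against; you are being asked to referee a claimed proof of an open problem.

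Your outline is a sensible attack plan and tracks the logic behind the known partial results closely (corank walk via \eqref{eqn:sym:formula}; stratification by combinatorial dimension as in Section~\ref{section:singularity2}; Hal\'asz-type $X\mapsto Y$ replacement; exceptional hyperplanes handled by an inverse theorem plus counting). But it is not a proof, and you have honestly flagged the exact place it breaks: the counting step. Theorem~\ref{theorem:ILO:quadratic} only delivers a decomposition $a_{ij}=a_{ij}'+a_{ij}''$ where (i) $Q$ has rational, not integral, generators and is not a genuine proper GAP, (ii) the ``low-rank'' part $a_{ij}''$ is parameterized by integers $k_{ii_0}$ of size $n^{O_{C,\eps}(1)}$ and arbitrary reals $a_{i_0 j}$, which leaves far too many degrees of freedom for a $(\rho^{-1}n^{-1/2})^n$-type bound as in Theorem~\ref{counting}, and (iii) the hypothesis is only $\rho_q(A)\ge n^{-C}$, whereas the medium-combinatorial-dimension regime in \eqref{pn-spanned-2} requires controlling $\rho_q$ down to nearly $(3/4)^n$ or $(1/2)^n$. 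No inverse quadratic theorem of the needed strength — optimal parameters, genuine GAP structure, and validity down to exponentially small $\rho_q$ — is available; this is precisely \cite[Conjecture~1]{C} that you cite. There is also a second unaddressed issue: the Hal\'asz replacement for quadratic forms does not simply factorize through $|\cos x|\le(1-\mu)+\mu\cos(2x)$, because the characteristic function of $\sum_{j,k} c_{jk}\xi_j\xi_k$ is not a product over coordinates, so the passage from \eqref{c-compare} to \eqref{c-compare-iter} has no known quadratic analogue. Until these two gaps are filled, the argument does not close, and the conjecture remains open.
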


We are very far from this conjecture. Currently, no exponential upper  bound is known. 
The first superpolynomial bound   was obtained by the first author \cite{Ng} very recently. 

\begin{theorem}\cite{Ng}\label{theorem:sym:Ng} For any $C>0$ and $n$ sufficiently large
$$p_n^{\sym} \le cn^{-C}.$$ 
\end{theorem}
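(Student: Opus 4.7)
The plan is to follow the Komlós-style row exposure used by Costello-Tao-Vu to prove Theorem \ref{theorem:psym}, but upgrade the quadratic Littlewood-Offord input from Theorem \ref{theorem:LO:CTV} to its inverse counterpart Theorem \ref{theorem:ILO:quadratic}. Let $M_i$ denote the top-left $i\times i$ principal submatrix of $M_n$, so that $M_{i+1}$ is obtained from $M_i$ by appending a row of iid Bernoulli entries $\xi_1,\dots,\xi_i$ together with a new diagonal entry, and its transpose. Expanding along the new row, and conditioning on $M_i$, the quantity $\det(M_{i+1})$ is a quadratic form $Q_i(\xi)=\sum_{j,k} a_{jk}^{(i)}\xi_j\xi_k$, where $(a_{jk}^{(i)})$ is (essentially) the cofactor matrix of $M_i$. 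Since $p_n^{\sym}$ is bounded by the probability that some $M_{i+1}$ drops rank given that $M_i$ has full rank, it suffices to prove that with probability $1-O(n^{-C-2})$ over $M_i$, the quadratic concentration $\rho_q(a^{(i)})$ is at most $n^{-C-2}$, and then sum over $i$.

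The key step, therefore, is to transfer the \emph{probabilistic} statement $\P(\rho_q(a^{(i)})\ge n^{-C'})\le n^{-C-2}$ to a \emph{deterministic/counting} statement. Suppose for contradiction that $\rho_q(a^{(i)})\ge n^{-C'}$ for some large $C'=C'(C)$. Then Theorem \ref{theorem:ILO:quadratic} forces the cofactor matrix to admit, on a large principal submatrix indexed by $I$, a decomposition
\[
a_{jk}^{(i)} \;=\; a_{jk}'\;-\;\Bigl(\sum_{i_0\in I_0} k_{ji_0}\,a_{i_0 k}^{(i)} + \sum_{i_0\in I_0} k_{ki_0}\,a_{i_0 j}^{(i)}\Bigr),
\]
with $|I_0|=O_{C'}(1)$, bounded integer coefficients $k_{ji_0}$, and with all but $O(n^\eps)$ of the "structured parts" $a_{jk}'$ lying in a common set $Q$ of rank $O(1)$ and size $n^{O(1)}$. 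The strategy is then to show that the collection of symmetric matrices $M_i$ whose cofactor matrix satisfies such a rigid structural identity is vanishingly small; the superpolynomial gain of Theorem \ref{theorem:ILO:quadratic} over Theorem \ref{theorem:LO:CTV} is precisely what lets us beat the inverse-polynomial loss from averaging.

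To implement the counting, I would enumerate over the structural data: the small index set $I_0$, the integer coefficients $k_{ji_0}$, the parameters (generators and box sides) of $Q$, and the exceptional set. For each fixed choice of structural data, the low-rank identity expresses each row of the cofactor matrix outside $I_0$ as an explicit linear combination of the $|I_0|$ rows indexed by $I_0$ plus an element of $Q$. Because cofactors are polynomial (in fact $(i-1)\times(i-1)$ determinantal) functions of the entries of $M_i$, these identities impose a large number of algebraic constraints on $M_i$. The final step is a standard union-bound argument in the spirit of the counting Theorem \ref{counting}: the number of structural tuples is at most $n^{O_C(1)\cdot n^\eps}$, and for each tuple the constraints are rigid enough that only an exponentially small fraction of symmetric Bernoulli matrices $M_i$ fit. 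The product is still $o(n^{-C-2})$, finishing the proof.

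The main obstacle I anticipate is the last step. The cofactors of $M_i$ are far from independent — they are determinantal polynomials in the entries — so one cannot simply treat them as random and apply the counting argument directly. One has to \emph{freeze} a small fraction of the entries of $M_i$ to break the dependence, pick a subset of rows/columns along which the cofactor identity becomes an effective linear identity on the \emph{entries} of $M_i$ (using Laplace expansion and the rigidity in the $I_0$-row structure), and then show that the surviving randomness is enough to make each structural tuple fit with exponentially small probability. Managing the interplay between the symmetric constraint, the low-rank part in Theorem \ref{theorem:ILO:quadratic}, and the need to produce a genuine linear Littlewood-Offord constraint on the remaining entries is the delicate technical heart of the argument.
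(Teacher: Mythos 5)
Your high-level outline (expose $M_n$ via principal minors, apply the inverse quadratic Littlewood--Offord theorem to the cofactor form, then count) is the correct skeleton, and you have correctly diagnosed where the difficulty lies: the cofactors of $M_{n-1}$ are determinantal polynomials in the entries, so one cannot count matrices whose cofactors satisfy a given structural identity by naive union-bound over entries. But your proposed escape route --- freezing a fraction of entries so that the low-rank cofactor identity ``becomes an effective linear identity on the entries'' --- is left entirely heuristic, and this is precisely the step the paper resolves by a different and much cleaner idea that you have missed.

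The key move in \cite{Ng} is to \emph{transfer the structure from the cofactors back to a normal vector}. By the adjugate identity $\mathrm{adj}(M_{n-1})\cdot M_{n-1}=\det(M_{n-1})\,I$, each row of the cofactor matrix $(a_{ij})$ is automatically orthogonal to $n-2$ of the rows of $M_{n-1}$. Theorem~\ref{theorem:ILO:quadratic} says that, after a low-rank correction by the $O(1)$ rows indexed by $I_0$, the entries of a typical row of $(a_{ij})$ lie (up to a small exceptional set) in a common GAP of rank $O(1)$, size $n^{O(1)}$, with rational entries of controlled height. Extracting one such corrected row produces a nonzero vector $\Bu$ that (a) is orthogonal to $n-O(n^\eps)$ rows of $M_{n-1}$, and (b) has the structural properties listed in Lemma~\ref{lemma:normalvector:quadratic}. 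This converts the quadratic counting problem into a \emph{linear} one: one only needs to bound the probability that a random symmetric Bernoulli matrix admits a structured vector orthogonal to almost all of its rows. That is exactly the kind of event the counting machinery of Theorem~\ref{counting} is designed for (with care about the symmetry of $M_{n-1}$, handled via compressible/incompressible dyadic classes as in Lemma~\ref{lemma:total}). Your proposal never makes this reduction; instead it stays on the quadratic side and attempts to enumerate cofactor constraints directly, which is the route the paper explicitly avoids because it does not obviously close. Without the pivot to the orthogonal structured vector $\Bu$, the ``delicate technical heart'' you flag remains an unfilled gap rather than a worked-out argument.

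One smaller point: your reduction to ``$\rho_q(a^{(i)})\le n^{-C-2}$ with probability $1-O(n^{-C-2})$'' glosses over the need to control rank drops of more than one (the Koml\'os random-walk bookkeeping on co-rank from the proof of Theorem~\ref{theorem:psym}) and the conditioning hypothesis $\rank(M_{n-1})\ge n-2$ that appears in Lemma~\ref{lemma:normalvector:quadratic}. These are manageable, but they should be stated; the paper's exposition builds them into the framework.
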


Shortly after, Vershinyn  \cite{Ver}  proved the following  better bound

\begin{theorem}\label{theorem:sym:V} There exists a positive constant $c$ such that 
$$p_n^{\sym}  =O(\exp(-n^c)).$$
\end{theorem}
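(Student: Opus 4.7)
The plan is to follow the general invertibility paradigm used in the non-symmetric case (Section~\ref{section:singularity2}), but adapted to the fundamental difficulty that in a symmetric matrix the rows are not independent: removing a row also removes the corresponding column. The first step is to reduce the singularity bound to a quantitative lower bound on the least singular value: it suffices to show that $\Pr(\sigma_n(M_n^{\sym}) \le e^{-n^{c}}/\sqrt{n}) \le \exp(-n^{c'})$ for some $c,c' > 0$. Using the variational characterization $\sigma_n(M_n^{\sym}) = \inf_{v \in S^{n-1}} \|M_n^{\sym} v\|_2$, I would partition the unit sphere $S^{n-1}$ into \emph{compressible} vectors (those within distance $\rho$ of the set of $\delta n$-sparse vectors) and \emph{incompressible} vectors, following the Rudelson--Vershynin framework.

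The compressible regime is handled by a standard $\epsilon$-net argument. One shows that $\|M_n^{\sym} v\|_2 \ge c \sqrt{n}$ with probability $1 - e^{-cn}$ uniformly over all compressible $v$, by covering the compressible set with a net of cardinality $\exp(O(\delta n \log(1/\delta)))$ and using a simple anticoncentration bound on each fixed $\langle M_n^{\sym} v, e_i \rangle$. The symmetry constraint is essentially irrelevant here because the bound is entropic.

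The incompressible regime is where the real work is. For incompressible $v$ one uses the invertibility-via-distance lemma: $\inf_{v \text{ incomp}} \|M_n^{\sym} v\|_2 \gtrsim n^{-1/2} \min_{i} \dist(X_i, H_i)$, where $X_i$ is the $i$-th column and $H_i$ is the span of the others. Unlike the non-symmetric case, $X_i$ and $H_i$ are correlated. To decouple, I would block the matrix as
\[
M_n^{\sym} = \begin{pmatrix} A & B \\ B^{T} & D \end{pmatrix},
\]
where $A$ is an $m \times m$ principal block with $m = n - \lfloor n/2 \rfloor$, and observe that conditional on $A$ and $B$, the entries of $D$ are independent of everything determining $H_i$ for rows $i$ in the second block. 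Expanding the determinant (or more precisely the distance from a column to a hyperplane) via Schur complements, one sees that the relevant quantity becomes, up to a quadratic form, $\sum a_{ij}\xi_i\xi_j + \sum b_j\xi_j + c$, where the coefficients $a_{ij}$ are minors of the conditioned part.

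The crucial anticoncentration step is then Theorem~\ref{theorem:LO:C} (quadratic Littlewood--Offord), which gives $n^{-1/2+o(1)}$ per column provided enough of the coefficients $a_{ij}$ are non-zero and, more importantly, provided the coefficient matrix has no strong additive/algebraic structure. To upgrade the polynomial bound $n^{-c}$ to a superpolynomial (indeed stretched-exponential) bound, one invokes the inverse quadratic Littlewood--Offord theorem (Theorem~\ref{theorem:ILO:quadratic}): if the small ball probability were larger than $\exp(-n^{c})$, the coefficient matrix $(a_{ij})$ would admit a low-rank-plus-GAP decomposition. One then runs a counting argument (the quadratic analogue of Theorem~\ref{counting}) to show that the random cofactor matrix generated by the symmetric Bernoulli model almost never lies in such a structured class. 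Combining this with the compressible estimate and a union bound over the $n$ columns yields the stated $\exp(-n^{c})$ bound.

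The main obstacle, and the reason the bound is merely $\exp(-n^{c})$ rather than $(1/2+o(1))^n$, is the loss of independence caused by symmetry. Each decoupling (Schur complement or partitioning into blocks) forces us to lose a constant factor in the effective dimension and, more seriously, to apply inverse theorems for \emph{quadratic} rather than linear forms. The inverse quadratic theorem gives weaker structural control (the GAP is only ``rational'' as in Theorem~\ref{theorem:ILO:quadratic}, and there is an unavoidable low-rank correction), and the counting estimate derived from it therefore only saves a subexponential factor. Pushing this to a true exponential bound, matching Conjecture~\ref{notorious2}, would require an optimal inverse theorem for quadratic forms together with a symmetric analogue of the Kahn--Koml\'os--Szemer\'edi replacement trick, which currently is out of reach.
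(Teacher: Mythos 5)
Your structural framework — compressible/incompressible split, the distance-to-hyperplane reduction, and the Schur-complement/block decoupling to handle the symmetry constraint — is the right skeleton and is indeed the one Vershynin uses. The gap is in the anticoncentration engine you plug into it. You upgrade from a polynomial bound to the stretched-exponential bound by invoking Theorem~\ref{theorem:ILO:quadratic}, but that inverse quadratic Littlewood--Offord theorem is stated (and proved) only under the hypothesis $\rho_q(A)\ge n^{-C}$ for a fixed constant $C$. It says nothing when one merely assumes $\rho_q(A)\ge \exp(-n^c)$, so the associated counting step can only rule out structured coefficient matrices down to the $n^{-C}$ scale. This is precisely why the approach via inverse quadratic LO yields Theorem~\ref{theorem:sym:Ng}, i.e.\ $p_n^{\sym}=O(n^{-C})$, and nothing stronger; the paper itself remarks that ``it is in the Inverse Step that we obtain the final bound $n^{-C}$.''

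Vershynin's proof of Theorem~\ref{theorem:sym:V} avoids this ceiling by \emph{not} using the quadratic inverse theorem at all. Instead, after decoupling he repeatedly applies a variant of Theorem~\ref{theorem:RV} (the LCD-based bound, together with a ``regularized LCD'' notion), which remains effective for small ball probabilities as small as $(1-\eps)^n$, far below the $n^{-C}$ threshold where GAP-type inverse theorems become vacuous. The ``mild inverse-type'' structural information extracted from LCD is weaker than a GAP decomposition, but it is available at exponentially small probability scales, which is exactly what is needed to close the union bound at level $\exp(-n^c)$. So your proposal as written would prove Theorem~\ref{theorem:sym:Ng}, not Theorem~\ref{theorem:sym:V}; to get the stronger bound you would need to replace the Theorem~\ref{theorem:LO:C}/\ref{theorem:ILO:quadratic} step with a Diophantine/LCD argument in the spirit of Theorem~\ref{theorem:RV} applied to the decoupled linear forms.
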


Both proofs made essential use of inverse theorems. The first author used  the inverse quadratic Theorem \ref{theorem:ILO:quadratic} and Vershynin's proof used
Theorem \ref{theorem:RV} several times. 

In the following, we  sketched the  main ideas behind Theorem \ref{theorem:sym:Ng}. 
Let $\row=(\x_1,\dots,\x_n)$ be the first row of $M_{n}$, and $a_{ij}, 2\le i,j\le n$, be the cofactors of $M_{n-1}$ obtained by removing $\row$ and $\row^T$ from $M_n$. We have 

\begin{equation}\label{eqn:sym:formula}
\det(M_n)= \x_1^2 \det(M_{n-1}) + \sum_{2\le i,j \le n} a_{ij}\x_i\x_j.
\end{equation}

Recalling the proof of Theorem \ref{theorem:psym}
(see Section \ref{section:singularity1}). One first need to show that with high probability (with respect to $M_{n-1}$) 
a good fraction of the co-factors  $a_{ij}$ are nonzero.  Theorem \ref{theorem:LO:CTV} then yields that 

$$ \P_\row(\det(M_n)=0) \le n^{-1/8+o(1)} =o(1). $$  

To prove  Theorem \ref{theorem:sym:Ng}, we adapt the reversed approach, which, similar to the previous proofs, consists of an inverse statement and a counting step. 

\begin{enumerate}

\item (Inverse step). If $\P_\row(\det(M_n)=0|M_{n-1}) \ge n^{-O(1)}$, then there is a strong additive structure among the cofactors $a_{ij}$.

\vskip .1in 
  
\item (Counting step). With respect to $M_{n-1}$, a strong additive structure among the $a_{ij}$ occurs with negligible probability.   
\end{enumerate}

By \eqref{eqn:sym:formula}, one notices that the first step concentrates on the study of inverse Littlewood-Offord problem for quadratic forms $\sum_{ij}a_{ij}\x_i\x_j$. Roughly speaking, Theorem \ref{theorem:ILO:quadratic} implies that most of the $a_{ij}$ belong to a common structure. Thus, by extracting  the structure on one row of the array $A=(a_{ij})$, we obtain a vector which is orthogonal to the remaining $n-2$ rows of the matrix $M_{n-1}$. Executing the argument more carefully, we obtain the following lemma.

\begin{lemma}[Inverse Step]\label{lemma:normalvector:quadratic} Let $\ep<1$ and $C$ be positive constants.  Assume that $M_{n-1}$ has rank at least $n-2$ and that

$$\P_\row(\sum_{i,j}a_{ij}\x_i\x_j=0|M_{n-1})\ge n^{-C}.$$ 

Then there exists a nonzero vector $\Bu=(u_1,\dots,u_{n-1})$ with the following properties.

\begin{itemize}
\item All but $n^\ep$ elements of $u_i$ belong to a proper symmetric generalized arithmetic progression of rank $O_{C,\ep}(1)$ and size $n^{O_{C,\ep}(1)}$. 

\vskip .1in

\item $u_i \in \{p/q: p,q\in \Z, |p|,|q|=n^{O_{C,\ep}(n^\ep)}\}$ for all $i$.

\vskip .1in

\item $\Bu$ is orthogonal to $n-O_{C,\ep}(n^\ep)$ rows of $M_{n-1}$.
\end{itemize}

\end{lemma}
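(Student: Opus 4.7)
The plan is to apply the inverse quadratic Littlewood-Offord theorem (Theorem \ref{theorem:ILO:quadratic}) directly to the array $A = (a_{ij})$ and then read off $\mathbf{u}$ from a single row of the cofactor matrix, corrected by a short linear combination of the $I_0$ rows. The central algebraic tool is the adjugate identity: writing $\mathbf{c}_i := (a_{i,2}, \dots, a_{i,n})$ for the $i$-th row of $A$, one has $M_{n-1} A^T = \det(M_{n-1}) I$, so each $\mathbf{c}_i$ is automatically perpendicular to every row of $M_{n-1}$ except the $i$-th. This furnishes conclusion (iii) essentially for free, and also explains why we must combine only a bounded number of cofactor rows in order not to lose too many orthogonalities.

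Applying Theorem \ref{theorem:ILO:quadratic} produces index sets $I_0, I$, integers $k_{ii_0}$ of size at most $n^{O_{C,\epsilon}(1)}$, the structured set $Q$, and the decomposition
$$a_{ij} = a'_{ij} - \sum_{i_0 \in I_0}\bigl(k_{ii_0} a_{i_0 j} + k_{j i_0} a_{i_0 i}\bigr) \quad (i,j \in I),$$
with all but $O_{C,\epsilon}(n^\epsilon)$ of the $a'_{ij}$ lying in $Q$. A pigeonhole over $i \in I$ selects an index $i^* \in I$ whose row has at most $O_{C,\epsilon}(n^\epsilon)$ ``bad'' entries $a'_{i^* j} \notin Q$. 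Define
$$\mathbf{u} := \mathbf{c}_{i^*} + \sum_{i_0 \in I_0} k_{i^* i_0}\, \mathbf{c}_{i_0}.$$
Since each of $\mathbf{c}_{i^*}$ and $\mathbf{c}_{i_0}$ is perpendicular to every row of $M_{n-1}$ outside a single index, $\mathbf{u}$ is orthogonal to every row whose index lies outside the set $\{i^*\} \cup I_0$, giving $(n-1) - 1 - |I_0| \ge n - O_{C,\epsilon}(n^\epsilon)$ rows.

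To verify conclusion (i), substitute the decomposition to compute, for $j \in I$,
$$u_j = a_{i^* j} + \sum_{i_0} k_{i^* i_0} a_{i_0 j} = a'_{i^* j} - \sum_{i_0 \in I_0} k_{j i_0}\, a_{i_0 i^*}.$$
The first term lies in $Q$ for all but $O_{C,\epsilon}(n^\epsilon)$ indices $j$. The second, as $j$ varies, runs over an integer combination with coefficients of size $n^{O_{C,\epsilon}(1)}$ of the $|I_0| = O_{C,\epsilon}(1)$ fixed scalars $a_{i_0 i^*}$; this is itself a GAP of rank $O_{C,\epsilon}(1)$ and size $n^{O_{C,\epsilon}(1)}$. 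The Minkowski sum with $Q$ is contained in a symmetric GAP of the same order, and a routine cleanup (pruning dependent generators, clearing denominators) produces a proper symmetric GAP of rank $O_{C,\epsilon}(1)$ and size $n^{O_{C,\epsilon}(1)}$. Conclusion (ii) is automatic because the $a_{ij}$ are integers (minors of a $\{\pm 1\}$ matrix) and the $k_{i^* i_0}$ are bounded integers, so every $u_i$ is an integer; after rescaling $\mathbf{u}$ to represent its coordinates in terms of the GAP generators, the denominators arising are products of the $q_h$'s from $Q$, giving the required $|p|, |q| = n^{O_{C,\epsilon}(n^\epsilon)}$ bound.

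The main obstacle is ensuring $\mathbf{u} \ne 0$, especially in the borderline case $\rank(M_{n-1}) = n-2$. When $M_{n-1}$ has full rank, the cofactor matrix is nonsingular so the $\mathbf{c}_i$ are independent; since $i^* \notin I_0$ the coefficient of $\mathbf{c}_{i^*}$ in $\mathbf{u}$ is $1$, so $\mathbf{u} \ne 0$ automatically. When $\rank(M_{n-1}) = n-2$, every $\mathbf{c}_j$ is a scalar multiple of a single null vector $\mathbf{v}$, and one shows that at least one $\mathbf{c}_j$ is nonzero (else the quadratic form vanishes identically, contradicting the $n^{-C}$ hypothesis for large $n$), then chooses $i^*$ so the overall scalar does not cancel, producing $\mathbf{u}$ as a nonzero scalar multiple of $\mathbf{v}$. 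Matching the rational bound in (ii) is bookkeeping once the explicit form of $Q$ from Theorem \ref{theorem:ILO:quadratic} is tracked through the construction.
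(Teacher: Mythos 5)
Your approach matches the paper's sketch: apply Theorem \ref{theorem:ILO:quadratic}, pigeonhole a good row $i^*$, form $\Bu$ as the $k$-weighted combination of cofactor rows, and use the adjugate identity $M_{n-1}A = \det(M_{n-1})I$ for orthogonality. The substitution $u_j = a'_{i^*j} - \sum_{i_0}k_{ji_0}a_{i_0 i^*}$ is the right identity, and the treatment of the rank-$(n-2)$ degenerate case (where the quadratic form collapses to a multiple of $(\sum \lambda_i\xi_i)^2$) is a sensible way to handle $\Bu\neq 0$.

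However, your explanation of conclusion (ii) does not work as stated. You claim the $n^{O_{C,\epsilon}(n^\epsilon)}$ denominator bound comes from ``products of the $q_h$'s from $Q$'' — but there are only $O_{C,\epsilon}(1)$ generators, each with $|q_h| = n^{O_{C,\epsilon}(1)}$, so their product is $n^{O_{C,\epsilon}(1)}$, polynomial, not $n^{O(n^\epsilon)}$. And writing $u_i$ as an integer (setting $q=1$) fails even worse, since the cofactors and hence the $u_i$ can be as large as $n^{\Theta(n)}$ by Hadamard, far exceeding $n^{O(n^\epsilon)}$. The superpolynomial exponent $n^\epsilon$ is the fingerprint of a different mechanism: after rescaling $\Bu$ so that the $\approx n - n^\epsilon$ structured coordinates become $n^{O(1)}$-bounded rationals, the remaining $O(n^\epsilon)$ exceptional coordinates must be recovered from the $\approx n$ orthogonality relations $\Bu\cdot\Bx_k = 0$ (which have $\pm 1$ coefficients). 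Solving this linear system via Cramer's rule on a submatrix of size $O(n^\epsilon)$ produces rationals whose denominators are bounded by a Hadamard-type determinant estimate, i.e.\ $n^{O_{C,\epsilon}(n^\epsilon)}$. This Cramer/Hadamard step is what conclusion (ii) is actually encoding, and it is genuinely missing from your proposal; ``bookkeeping'' undersells it. You should also correct the claim that the $k_{i^*i_0}$ are ``bounded integers'' — they are only $n^{O_{C,\epsilon}(1)}$-bounded, which matters when tracking denominators.
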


Let $\mathcal{P}$ denote the collection of all $\Bu$ satisfying the properties above. For each $\Bu\in \mathcal{P}$, let $\P_{\Bu}$ be the probability, with respect to $M_{n-1}$, that $u$ is orthogonal to $n-O_{C,\ep}(n^\ep)$ rows of $M_{n-1}$. The following lemma takes care of our second step.

\begin{lemma}[Counting Step]\label{lemma:total} We have
$$\sum_{\Bu\in \mathcal{P}}\P_{\Bu} = O_{C,\ep}((1/2)^{(1-o(1))n}).$$
\end{lemma}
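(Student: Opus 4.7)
The plan is to bound $\sum_{\Bu \in \mathcal P}\P_{\Bu}$ by combining a row-by-row exposure of $M_{n-1}$ that gives a small ball bound on $\P_{\Bu}$ with a stratified counting bound on $|\mathcal P|$. The exposure argument exploits the fact that, although $M_{n-1}$ is symmetric, its upper-triangular entries are independent Bernoulli variables.

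For the probability bound, I would expose the entries of $M_{n-1}$ in the order row $1$, row $2$, $\ldots$, row $n-1$. After rows $1,\dots,i$ are fully revealed, the orthogonality condition $\sum_j M_{i+1,j}u_j=0$, conditioned on the already-exposed entries, reduces to a single affine equation in the fresh Bernoulli variables $M_{i+1,i+1},\dots,M_{i+1,n-1}$ with coefficients $u_{i+1},\dots,u_{n-1}$. Theorem \ref{theorem:Erdos1} then bounds its conditional probability by $C/\sqrt{s_{i+1}}$, where $s_i$ denotes the number of non-zero coordinates $u_j$ with $j\ge i$. Union-bounding over the $\binom{n-1}{O_{C,\ep}(n^\ep)}$ choices of which $n-O_{C,\ep}(n^\ep)$ rows achieve orthogonality yields
\[
\P_{\Bu}\le 2^{o(n)}\prod_{i\in I}\frac{C}{\sqrt{s_i}}
\]
for a suitable subset $I$ of row indices. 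One may also permute coordinates of $\Bu$ to keep $s_i$ as large as possible throughout the exposure.

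Next, I would split $\mathcal P$ into a sparse part $\mathcal P_{\text{sp}}=\{\Bu:s(\Bu)\le\log^2 n\}$ and a dense part $\mathcal P_{\text{den}}$. In the sparse regime, all but $O(n^\ep)$ non-exceptional coordinates must vanish, the remaining $O(\log^2 n)$ non-zero positions can be chosen in $\binom{n-1}{\log^2 n}$ ways, and each exceptional coordinate has $n^{O(n^\ep)}$ possible rational values, giving $|\mathcal P_{\text{sp}}|\le 2^{o(n)}$; the per-vector bound $\P_{\Bu}\le (1/2)^{n-O(n^\ep)}$ follows either from the row exposure or from noting that each row's orthogonality constraint depends on at most $O(\log^2 n)$ fresh Bernoulli entries and has probability at most $1/2$. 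The sparse contribution is thus $(1/2)^{(1-o(1))n}$, matching the target. For the dense regime, $s_i\ge \log^2 n$ for at least $(1-o(1))n$ indices $i$, so $\P_{\Bu}\le (C/\log n)^{(1-o(1))n/2}$, which is far smaller than $(1/2)^n$; combined with a dyadic stratification of $\mathcal P_{\text{den}}$ by $\rho(\Bu)\in [2^{-k-1},2^{-k}]$ and the counting Theorem \ref{counting} bounding each stratum by $(2^k/\sqrt n)^n 2^{o(n)}$, the dense contribution is geometric and negligible.

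The main obstacle I anticipate is the interface between the two regimes, i.e.\ vectors $\Bu$ with $s(\Bu)$ of intermediate size where neither the direct sparse count nor the Erd\H os bound on $1/\sqrt{s_i}$ is sharp; here one must marry the dyadic stratification by $\rho(\Bu)$ coming from Theorem \ref{counting} with the row-exposure bound under the same reordering of coordinates, so that the count $(\rho^{-1}/\sqrt n)^n 2^{o(n)}$ and the probability $\rho^{(1-o(1))n}$ cancel to yield a stratum contribution of order $n^{-n/2}2^{o(n)}$. A secondary technical point is to check that the choice of row order used in the exposure is consistent with (and does not inflate) the union bound over which rows are orthogonal to $\Bu$.
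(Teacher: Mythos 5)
Your high-level plan — row-by-row exposure of the symmetric matrix, a sparse/dense dichotomy, and a stratified counting argument — is essentially the shape of the paper's sketch, and your treatment of the sparse (compressible) regime is sound. The genuine gap is exactly where you flag it: in the dense regime you cannot make the single-parameter stratification by $\rho(\Bu)$ cancel against the row-exposure bound.

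Concretely, your exposure gives $\P_{\Bu}\le 2^{o(n)}\prod_{i\in I}\rho\bigl(\{u_j:j\ge i\}\bigr)$, which you then weaken to $\prod_i C/\sqrt{s_i}$ via the Erd\H os estimate. On the other hand, the dyadic stratum count from Theorem~\ref{counting} is of order $\bigl(\rho(\Bu)^{-1}n^{-1/2+o(1)}\bigr)^n$. For these to cancel you would need $\P_{\Bu}\lesssim\rho(\Bu)^{(1-o(1))n}$, but the tail concentration probabilities $\rho(\{u_j:j\ge i\})$ are all \emph{at least} $\rho(\Bu)$ (dropping coordinates only increases concentration), so the exposure yields an upper bound on $\P_{\Bu}$ by a quantity that is itself \emph{bounded below} by $\rho(\Bu)^{n-1}$. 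Nothing forces the two sides to match, and one can check that for vectors in $\mathcal P$ (whose coordinates lie in a GAP of polynomial size, so $\rho(\Bu)\ge n^{-O(1)}$) the product $\bigl(\rho^{-1}/\sqrt n\bigr)^n\cdot\prod_i C/\sqrt{s_i}$ is not $(1/2)^{(1-o(1))n}$ in general. The paper avoids this mismatch by dyadically classifying the incompressible vectors by the \emph{whole sequence} $(\rho_i)_{i}$ of tail Littlewood--Offord probabilities, rather than by the single number $\rho(\Bu)$, and then invoking a sequence-aware variant of the counting theorem that bounds $|\mathcal C_{\rho_1,\dots,\rho_{n-1}}|$ by $\prod_i O\bigl(\rho_i^{-1}n^{-1/2+o(1)}\bigr)$. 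Paired with $\P_{\Bu}\le\prod_i O(\rho_i)$ coming from the same exposure, each class contributes $n^{-(1-o(1))n/2}$ and the number of classes is $n^{o(n)}$, giving the claimed bound. Without this finer stratification and the corresponding refinement of the counting theorem, your argument does not close; the interface you identify as an ``obstacle'' is precisely the missing idea.
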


The main contribution in the sum in Lemma \ref{lemma:total} comes from those $\Bu$ which have just a few non-zero components (i.e. compressible vectors). For incompressible vectors, we classify it into dyadic classes $\mathcal{C}_{\rho_1,\dots,\rho_{n-1}}$, where $\rho_i$ is at most twice and at least half the probability $\P(\x_1u_1+\dots+\x_uu_i=0)$. Assume that $\Bu\in \mathcal{C}_{\rho_1,\dots,\rho_{n-1}}$. Then by definition, as $M_{n-1}$ is symmetric, the probability $\P_{\Bu}$ is bounded by $\prod O(\rho_i)$. On the other hand, by taking into account the structure of generalized arithmetic progressions, a variant of Theorem \ref{counting} shows that the size of each $\mathcal{C}_{\rho_1,\dots,\rho_{n-1}}$ is bounded by $\prod_i O(\rho_i) n^{-1/2+o(1)}$. Summing $P_{\Bu}$ over all classes $\mathcal{C}$, notice that the number of these classes are negligible, one obtains an upper bound of order $n^{-(1-o(1))n/2}$ for the compressible vectors.

We remark that it is in the Inverse Step that we obtain the final bound $n^{-C}$ on the singular probability. In \cite{Ver}, Vershynin worked with a more general setting where one can assume a better bound. In this regime, he has been able to apply a variant of Theorem \ref{theorem:RV} to prove a very mild inverse-type result which is easy to be adapted for the Counting Step. As the details are complex, we invite the reader to consult \cite{Ver}.

\section{Application: Common roots of random polynomials} 

Let $d$ be fixed. With $\vec{j}_d=(j_1,\dots,j_d), j_i\in \Z^+$ and $|\vec{j}_d|=\sum j_i$, let $\xi_{\vec{j}_d}$ be iid copies of a  random variable $\xi$. Set $x^{\vec{j}_d}= \prod x_i^{j_i}$. 
Consider the random polynomial 

$$P(x_1,\dots,x_d)=\sum_{\vec{j}_d, |\vec{j}_d|\le n} \xi_{\vec{j}_d} x^{\vec{j}_d}$$

\noindent of degree $n$  in  $d$ variables. (Here $d$ is fixed and $n \rightarrow \infty$.) Random polynomials is a classical subject in analysis and probability and we refer to \cite{Samba} for a survey. 

In this section, we consider the following natural question. 
Let  $P_1,\dots,P_{d+1}$ be $d+1$ independent random polynomials, each have  $d$ variables and degree $n$. 

\begin{question}\label{question:common}
What is  the probability that $P_1,\dots,P_{d+1}$ have a common root ?
\end{question}

For short, let us denote the probability under consideration by $p(n,d)$

$$p(n,d):=\P(\exists x \in \C^d: P_i(x)=0, i=1,\dots,d+1).$$

When $\xi$ has continuous distribution, it is obvious that $p(n,d)=0$. However, the situation is less clear when $\xi$ has discrete distribution, even in the case $d=1$. Indeed, when $n$ is even and $P_1(x),P_2(x)$ are two independent random Bernoulli polynomials of one variable, then one has $\P(P_1(1)=P_2(1)=0)=\Theta(1/n)$ and $\P(P_1(-1)=P_2(-1)=0)=\Theta(1/n)$. Thus in this case  $p(n,1)= \Omega(1/n)$.

In a recent paper, Kozma and Zeitouni \cite{KZ} proved $p(n,d) =O(1/n)$, answering Question \ref{question:common} in the asymptotic sense. 

\begin{theorem}\label{theorem:KZ} For any fixed $d$  there exists a constant $c(d)$ such that  the following holds. 
Let $P_1\dots,P_{d+1}$ be $d+1$ independent random Bernoulli polynomials in $d$ variables and degree $n$. 

$$p(n,d) \le c(d)/n.$$
\end{theorem}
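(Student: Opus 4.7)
The plan is to split on the candidate location of the common root $x \in \C^d$, treating the ``special'' points $\epsilon \in \{-1,+1\}^d$ separately from the rest. The per-point analysis views $P_i(x) = \sum_{|\vec j|\leq n} \xi_{\vec j} x^{\vec j}$ as a Bernoulli sum whose coefficient multi-set $\{x^{\vec j} : |\vec j|\leq n\}$ has $\binom{n+d}{d} = \Theta(n^d)$ elements.

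For each $\epsilon \in \{-1,+1\}^d$, every monomial $\epsilon^{\vec j}$ equals $\pm 1$, so Theorem~\ref{theorem:Erdos} gives $\P(P_i(\epsilon) = 0) = \Theta(n^{-d/2})$, and by independence of the $d+1$ polynomials,
\[
\P\bigl(P_1(\epsilon) = \cdots = P_{d+1}(\epsilon) = 0\bigr) = O\bigl(n^{-(d+1)/2}\bigr).
\]
Summing over the $2^d$ sign patterns contributes $O(n^{-(d+1)/2}) = O(1/n)$ for $d \geq 1$, matching the folklore lower bound coming from $x = (1,\ldots,1)$.

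For all other candidate roots I would use a sharper small-ball estimate. Writing $P_i(x) \in \C \simeq \R^2$, the event $P_i(x) = 0$ is a two-dimensional small-ball event for the vectors $(\Re x^{\vec j}, \Im x^{\vec j})$. When the coordinates of $x$ are generic (i.e.\ neither real with modulus $1$, nor roots of unity), this multi-set satisfies the general-position and separation hypotheses of Hal\'asz's Theorem~\ref{theorem:Hal}, giving $\P(P_i(x) = 0) = O(n^{-3d})$. Meanwhile, B\'ezout's theorem ensures that for any realization of $P_1,\ldots,P_d$ their common zero locus in $\C^d$ has at most $n^d$ points. A union bound over these B\'ezout candidates with the per-point estimate on $P_{d+1}$ then gives $O(n^d \cdot n^{-3d}) = O(n^{-2d})$, well inside the $O(1/n)$ budget.

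The main obstacle is the intermediate regime: candidate roots that are close to, but not equal to, a special point $\epsilon \in \{-1,+1\}^d$ (or a low-order root of unity), where Hal\'asz's separation hypothesis degrades continuously into the weaker Erd\H{o}s bound. To handle this I would stratify a neighborhood of $\{-1,+1\}^d$ into dyadic shells $\delta_k \leq \mathrm{dist}(x,\epsilon) < 2\delta_k$ and establish an interpolated small-ball bound on each shell by exploiting the explicit Taylor expansion of $x^{\vec j}$ around $\epsilon$. In parallel, I would bound the expected number of B\'ezout candidates falling into each shell by a Jacobian argument: the local count is controlled by $|\det(\partial P_i/\partial x_j)|_\epsilon|^{-1}$, which is itself a quadratic form in the Bernoulli coefficients, so the quadratic Littlewood--Offord bound of Theorem~\ref{theorem:LO:C} keeps it bounded away from zero with overwhelming probability. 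Summing the resulting geometric series over $k$ closes the remaining gap, combining with the special and generic contributions to yield $p(n,d) = O(1/n)$.
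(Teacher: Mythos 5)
Your high-level skeleton (condition on $P_1,\ldots,P_d$, union bound over the at most $n^d$ B\'ezout candidates, and control the per-point small-ball probability for $P_{d+1}$) is the right starting point, and it matches the first move of the Kozma--Zeitouni proof that the paper sketches for $d=1$. But the execution has a genuine gap: the dichotomy you set up (points near $\{-1,+1\}^d$ versus ``generic'' points handled by Hal\'asz) is not the correct one, and Hal\'asz's theorem does not apply where you invoke it.

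The problem is that Hal\'asz's Theorem~\ref{theorem:Hal} requires both a general-position hypothesis (a linear fraction of the coefficient vectors must have inner product $\ge 1$ with every direction) and a separation hypothesis for signed $d$-fold sums. For the multi-set $\{(\Re x^{\vec j}, \Im x^{\vec j}) : |\vec j|\le n\}$ these hypotheses fail badly unless $|x_i|=1$ for all coordinates: if some $|x_i|\ne 1$, the magnitudes $|x^{\vec j}|$ range over roughly $[2^{-n},2^n]$, so most of the vectors are either microscopic (violating general position) or astronomically large (violating separation without rescaling, and after rescaling the small ones vanish). More fundamentally, the points at which $\P(P_{d+1}(x)=0)$ is large are not characterized by proximity to $\{\pm 1\}^d$: for instance $x=i$, or any low-order root of unity, creates massive additive structure among the powers $x^j$ and gives a polynomially large concentration probability, yet is nowhere near $\pm 1$. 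The paper's route handles exactly this: since the $a_{\vec j}=x^{\vec j}$ form a geometric progression, the inverse theorem (Theorem~\ref{theorem:ILO:optimal}, applied as in Lemma~\ref{lemma:ILO:poly}) shows that $\rho\ge n^{-C}$ forces $x$ to be algebraic of degree $O(C)$; combined with the two facts that roots of $\pm1$ polynomials have modulus in $(1/2,2)$ and are algebraic integers, one deduces (Fact~\ref{fact:KZ:algebra}) that only \emph{finitely many} problematic $x$ exist for each degree bound. This finiteness is the engine of the argument and is entirely absent from your proposal.

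The ``intermediate regime'' patch you propose does not repair this. Bounding the local count of B\'ezout roots near $\epsilon$ via a Jacobian inverse is not what you want: the B\'ezout bound $n^d$ is already deterministic and uniform, so the count is not the issue — the issue is that you have no per-point estimate at all away from $\{\pm1\}^d$ that degrades gracefully. Moreover, the Jacobian $\det(\partial P_i/\partial x_j)|_\epsilon$ is a degree-$d$ multilinear form in the $\xi$'s, not a quadratic form, so Theorem~\ref{theorem:LO:C} does not apply as stated, and even the degree-$k$ bounds from Section~\ref{section:boolean} would only tell you the Jacobian is nonzero with high probability, which says nothing about how many roots land in a given shell. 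To fix the argument you would need, at minimum, (i) to replace Hal\'asz by the inverse Littlewood--Offord theorem applied to the geometric-progression structure of $\{x^{\vec j}\}$, and (ii) to bring in the algebraic finiteness fact for bounded-degree roots of $\pm 1$ polynomials — at which point you would essentially be reconstructing the Kozma--Zeitouni proof.
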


In the sequel, we will focus on the case $d=1$. This first case  already captures  some of the main ideas, especially the use of inverse theorems. 
 The reader is invited to consult  \cite{KZ} for further details.   

\begin{theorem}\label{theorem:KZ:1}
Let $P_1,P_2$ be two independent Bernoulli random polynomials in one variable of degree $n$. Then 
\[
p(n,1)=\begin{cases}
O(n^{-1}) & n \mbox{ even}\\
O(n^{-3/2}) & n \mbox{ odd}.
\end{cases}
\]

\end{theorem}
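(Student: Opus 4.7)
My plan is to split the event by the nature of the common root, exploiting the integrality of $P_1, P_2$. Any common root $\alpha \in \C$ has an integer minimal polynomial $Q_\alpha \in \Z[x]$ dividing both $P_1$ and $P_2$ (by Gauss's lemma); by the rational root theorem applied to $\pm 1$-coefficient polynomials, $\alpha \in \Q$ forces $\alpha \in \{\pm 1\}$. Thus
\[
\{P_1, P_2 \text{ share a root}\} \subset \bigcup_{\epsilon \in \{\pm 1\}}\{P_1(\epsilon) = P_2(\epsilon) = 0\} \ \cup\ \bigcup_{Q} \{Q | P_1 \text{ and } Q | P_2\},
\]
where $Q$ ranges over irreducible $Q \in \Z[x]$ of degree $\ge 2$ with $Q \ne x \pm 1$.

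The first event is immediate: $P_i(\epsilon) = \sum_{j=0}^n \epsilon^j \xi_j$ is a sum of $n+1$ iid $\pm 1$ variables, whose vanishing probability is $\Theta(n^{-1/2})$ (by Theorem \ref{theorem:Erdos}) in the parity for which such a sum can be zero, and exactly $0$ otherwise. By independence of $P_1, P_2$, this piece contributes $\Theta(n^{-1})$ in the permitting parity and zero in the other, matching the $O(n^{-1})$ bound in the corresponding case of the theorem. For the second event, I would union-bound using independence and identical distribution:
\[
\P(\exists Q: Q|P_1,\ Q|P_2) \le \sum_Q \P(Q|P_1)^2.
\]
Two inputs control this sum. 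First, for $Q$ of degree $k$ the divisibility $Q|P_1$ is equivalent to $P_1(\zeta) = 0$ in $\Z[\zeta]$ for a root $\zeta$ of $Q$; grouping the $\xi_j$ by residue class modulo the period of $\zeta^j$, this imposes $k$ linear conditions on the partial Bernoulli sums $S_r = \sum_{j \equiv r} \xi_j$, and the multi-dimensional Littlewood--Offord inequality (derived from Theorem \ref{theorem:Erdos1}) gives $\P(Q|P_1) = O(n^{-k/2})$. Second, a Mahler-measure estimate shows that any integer factor of a $\pm 1$-coefficient polynomial has coefficients bounded polynomially in $n$, producing at most $n^{O(k)}$ candidates of degree $k$. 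The sum is then dominated by small-degree terms, in particular the degree-$2$ cyclotomic polynomials $\Phi_3, \Phi_4, \Phi_6$, each contributing at most $O(n^{-2})$, comfortably within the $O(n^{-3/2})$ target.

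The main technical obstacle is establishing the bound $\P(Q|P_1) = O(n^{-k/2})$ uniformly for all relevant $Q$. For cyclotomic $Q$, whose evaluation vectors $(1, \zeta, \ldots, \zeta^n)$ are strongly structured (periodic), the generic multi-dimensional Littlewood--Offord argument must be replaced by a direct calculation exploiting the periodicity together with parity conditions on $n$ modulo the order of $\zeta$, which determine whether $\Phi_m$ can divide $P_1$ at all. A secondary challenge is handling non-cyclotomic $Q$ having a root off the unit circle: here $\P(Q|P_1)$ is in fact exponentially small, by Berry--Ess\'een-type decay from the geometric growth/decay of $|\zeta^j|$, or equivalently by the inverse theorem (Theorem \ref{theorem:ILO:optimal}) applied to the well-spread evaluation vectors, and contributes negligibly to the sum.
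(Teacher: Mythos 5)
Your decomposition by the minimal polynomial $Q_\alpha$ of the common root, and the separate treatment of the rational case $\alpha \in \{\pm 1\}$, is in the same spirit as the paper's case split; the difference is in how the irrational case is closed, and there your plan has a real gap.

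The union bound $\P(\exists Q: Q \mid P_1,\ Q \mid P_2) \le \sum_Q \P(Q \mid P_1)^2$ runs over \emph{all} candidate irreducible $Q$, not just those dividing the realized $P_1$. You bound the number of candidates of degree $k$ by $n^{O(k)}$. For $k$ ranging up to $n$ this count is $n^{O(n)} = \exp(\Theta(n\log n))$, and your only tool against it is the asserted exponential smallness of $\P(Q\mid P_1)$ for non-cyclotomic $Q$. That assertion is itself not established: the inverse theorem (Theorem \ref{theorem:ILO:optimal}) yields only polynomial bounds of the form $n^{-C}$ (indeed, it says precisely that $\rho(\{1,x,\dots,x^n\}) \ge n^{-C}$ forces $x$ to be algebraic of degree $O(C)$), and a Berry--Ess\'een-style lacunarity argument is delicate when $|\zeta|$ is close to $1$, which is exactly the regime allowed for roots of $\pm 1$-polynomials. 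Even granting exponential decay, $\exp(-cn) \cdot n^{O(n)}$ diverges. So as written the sum over high-degree non-cyclotomic $Q$ is not controlled.

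The paper sidesteps both problems at once by conditioning on $P_1$. A realized $P_1$ has at most $n$ roots, so one unions over those $n$ complex numbers, not over all eligible $Q$. The key input is the single Lemma \ref{lemma:ILO:poly}: if $\rho(\{1,x,\dots,x^n\}) \ge n^{-C}$ then $x$ is algebraic of degree at most $2C$. Taking the contrapositive, any root $x$ of $P_1$ of algebraic degree $\ge 6$ satisfies $\P(P_2(x)=0) < n^{-5/2}$, so these contribute at most $n \cdot n^{-5/2} = n^{-3/2}$. Roots of algebraic degree $2$ through $5$ number $O(1)$ (the paper's Fact \ref{fact:KZ:algebra}, which you essentially reinvent via Mahler measure) and each contributes $\P(P_1(x)=0)\P(P_2(x)=0) \le (n^{-3/4})^2 = n^{-3/2}$. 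This entirely avoids the cyclotomic/non-cyclotomic distinction, the exponential decay claim, and the candidate-counting issue. Your outline could be repaired by replacing $\sum_Q \P(Q\mid P_1)^2$ with the factored bound $\bigl(\max_{Q} \P(Q\mid P_2)\bigr)\cdot \E_{P_1}\bigl[\#\{Q: Q\mid P_1\}\bigr]$, which is precisely the paper's conditioning in disguise; but in its present form the argument does not close.
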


Notice that the bounds in both cases are sharp. To start the proof, first observe that,   because the coefficients of $P_1$ are $\pm 1$, all roots $x$ of $P_1$ have magnitude $1/2 < |x| <2$. Furthermore, $x$ must be an algebraic integer. 
 We will try to classify the common roots by their unique irreducible polynomial, relying on the following easy algebraic fact \cite{KZ}: 

\begin{fact}\label{fact:KZ:algebra}
For every $k$ there are only finitely many numbers whose irreducible polynomial has degree $k$ that can be roots of a polynomial of arbitrary degree with coefficients $\pm 1$.
\end{fact}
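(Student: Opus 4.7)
The plan is to combine the standard bound on the magnitude of roots of $\pm 1$ polynomials with elementary algebraic number theory to pin down the minimal polynomials in question to a finite set.

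First I would show that any root $\alpha$ of a $\pm 1$ polynomial $P(x)=\sum_{i=0}^n \epsilon_i x^i$ is (i) an algebraic integer and (ii) satisfies $|\alpha|<2$. For (i), note that $\epsilon_n P(x)$ is monic with integer coefficients, so $\alpha$ satisfies a monic integer polynomial equation. For (ii), the triangle inequality gives
\[
|P(\alpha)| \ge |\alpha|^n - \sum_{i=0}^{n-1} |\alpha|^i = |\alpha|^n - \frac{|\alpha|^n-1}{|\alpha|-1},
\]
and a direct check shows that if $|\alpha|\ge 2$, the right-hand side is strictly positive, contradicting $P(\alpha)=0$.

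Next, let $m_\alpha(x)\in \mathbb{Q}[x]$ be the minimal polynomial of $\alpha$, which is monic of degree $k$. Since $\alpha$ is an algebraic integer, $m_\alpha\in \mathbb{Z}[x]$. By Gauss's lemma, $m_\alpha$ divides $P$ in $\mathbb{Z}[x]$ (both are primitive, $m_\alpha$ being monic and $P$ having unit coefficients). Consequently every Galois conjugate $\alpha=\alpha_1,\dots,\alpha_k$ of $\alpha$ is itself a root of $P$, and therefore each $|\alpha_j|<2$ by the bound just established.

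Now write $m_\alpha(x)=x^k+c_{k-1}x^{k-1}+\cdots+c_0$. Each coefficient $c_j$ is (up to sign) an elementary symmetric function in $\alpha_1,\dots,\alpha_k$, so
\[
|c_{k-j}| \le \binom{k}{j} \max_i |\alpha_i|^{\,j} \le \binom{k}{j} 2^{j},
\]
a bound depending only on $k$. Since the $c_j$ are integers constrained to a bounded interval, there are only finitely many possible polynomials $m_\alpha$, and each has at most $k$ roots in $\mathbb{C}$. Thus the set of $\alpha$ of degree $k$ that arise as roots of some $\pm 1$ polynomial is finite, proving the fact. There is no real obstacle here; the only thing to be mindful of is invoking Gauss's lemma correctly so that the minimal polynomial divides $P$ in $\mathbb{Z}[x]$ and the conjugates of $\alpha$ inherit the magnitude bound.
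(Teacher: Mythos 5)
Your proof is correct, and it is the standard argument behind this fact: the paper itself does not prove it (it is quoted from \cite{KZ}), but your route --- roots of $\pm 1$ polynomials are algebraic integers of modulus less than $2$, all Galois conjugates are again roots and hence obey the same bound, so the integer coefficients of the degree-$k$ minimal polynomial are bounded in terms of $k$ only, leaving finitely many possibilities --- is exactly the reasoning the paper's surrounding discussion (``all roots $x$ of $P_1$ have magnitude $1/2<|x|<2$ and $x$ must be an algebraic integer'') presupposes. No gaps; the Gauss's lemma step is even slightly more than needed, since divisibility of $P$ by the minimal polynomial in $\Q[x]$ already forces every conjugate to be a root of $P$.
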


Now we look at the event of having common roots. Assume that $P_1$ is fixed (i.e. condition on $P_1$) and let $x_1,\dots,x_n$ be its $n$ complex roots. For each $x_i$, we consider the probability that $x_i$ is a root of $P_2(x)$. If $\P(P_2(x_i)=0)\le n^{-5/2}$ for all $i$, then $\P(\exists x\in \C: P_1(x)=P_2(x))=O(n^{-3/2})$, and there is nothing to prove. We now consider the case $\P(P_2(x_i)=0)\ge n^{-5/2}$ for some root $x_i$ of $P_1(x)$. Notice that 

$$\P(P_2(x_i)=0) = \P_{\xi_0,\dots,\xi_n}(\sum_{j=0}^n \xi_{j} x_i^j=0)=\rho(X),$$ 

where $X$ is the geometric progression $X=\{1,x_i,\dots,x_i^n\}$.

Now Theorem \ref{theorem:ILO:optimal} comes into play. As $\rho(X)\ge n^{-5/2}$, most of the terms of $X$ are additively correlated. On the other hand, as $X$ is a geometric progression, this is the case only if $x_i$ is a root of a bounded degree polynomial with well-controlled rational coefficients.

\begin{lemma}\label{lemma:ILO:poly}
For any $C >0$, there exists $n_0$ such that if $n>n_0$, and if 

$$\rho(X)\ge n^{-C},$$ 

where $X=\{1,x,\dots,x^n\}$. Then $x$ is an algebraic number of degree at most $2C$. 
\end{lemma}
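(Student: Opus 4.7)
The plan is to apply the optimal inverse Littlewood-Offord theorem (Theorem \ref{theorem:ILO:optimal}) directly to the multi-set $X = \{1, x, x^2, \dots, x^n\}$ and then extract an algebraic relation from the resulting GAP structure. Note that although Theorem \ref{theorem:ILO:optimal} is stated for real multi-sets, $x$ is a priori complex, so we first use the Freiman isomorphism trick from Remark \ref{remark:Freiman} to transfer the inverse theorem to the torsion-free abelian group $\mathbf{C}$. Apply it with parameter $\varepsilon := 1/(4C+2)$: under the hypothesis $\rho(X) \ge n^{-C}$, we obtain a proper symmetric GAP $Q \subset \mathbf{C}$ of rank $r = O_{C,\varepsilon}(1)$ such that all but at most $\varepsilon n$ of the powers $x^0, x^1, \dots, x^n$ lie in $Q$, and such that
$$|Q| = O_{C,\varepsilon}\!\left(\rho(X)^{-1} n^{-r/2}\right).$$

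Next, I would sharpen the bound on the rank. Since $Q$ is nonempty (it contains at least $(1-\varepsilon)(n+1)$ of the powers with multiplicity), $|Q| \ge 1$, which forces
$$1 \le O_{C,\varepsilon}\!\left(\rho(X)^{-1} n^{-r/2}\right) \le O_{C,\varepsilon}\!\left(n^{C - r/2}\right).$$
For $n$ larger than some $n_0 = n_0(C)$, the integer $r$ must therefore satisfy $r \le 2C$. Since the $\mathbf{Z}$-span of the generators $g_1,\dots,g_r$ of $Q$ is a finitely generated abelian subgroup of $\mathbf{C}$ of $\mathbf{Z}$-rank at most $r \le 2C$, any $2C+1$ elements of $Q$ must be $\mathbf{Z}$-linearly dependent.

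Finally, I would use pigeonholing to produce $2C+1$ \emph{consecutive} good powers. Partitioning $\{0,1,\dots,n\}$ into roughly $(n+1)/(2C+1)$ consecutive blocks of length $2C+1$, the number of bad blocks is at most $\varepsilon n < (n+1)/(2C+1)$ by our choice of $\varepsilon$ (for $n$ large). Hence some block $\{k, k+1, \dots, k+2C\}$ consists entirely of good indices, so $x^k, x^{k+1}, \dots, x^{k+2C}$ all lie in $Q$. By the rank bound there exist integers $c_0, \dots, c_{2C}$, not all zero, with
$$\sum_{j=0}^{2C} c_j\, x^{k+j} = 0.$$
If $x = 0$ we are trivially done; otherwise dividing by $x^k$ yields a nontrivial polynomial relation of degree at most $2C$ with integer coefficients satisfied by $x$, so $x$ is algebraic of degree $\le 2C$. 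The only nontrivial step is justifying the extension of Theorem \ref{theorem:ILO:optimal} to $\mathbf{C}$, but this is a standard Freiman-isomorphism argument; the rest of the proof is essentially accounting.
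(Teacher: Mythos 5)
Your proof is correct and follows essentially the same route as the paper's: apply Theorem \ref{theorem:ILO:optimal} to $X$, observe $|Q|\ge 1$ forces $r\le 2C$, pigeonhole to find $2C+1$ consecutive powers inside $Q$, and extract a $\Z$-linear relation from the rank bound. The only differences are cosmetic (the paper takes $\ep=1/(2C+2)$ rather than $1/(4C+2)$, and you spell out the Freiman-isomorphism transfer to $\C$ and the $x=0$ edge case, which the paper leaves implicit).
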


\begin{proof}(of Lemma \ref{lemma:ILO:poly}) Set $\epsilon=1/(2C+2)$. Theorem \ref{theorem:ILO:optimal}, applied to the set $X$, implies that there exists a GAP $Q$ of rank $r$ and size $|Q|=O_C(n^{C-r/2})$ which contains at least $(2C+1)/(2C+2)$-portion of the elements of $X$. By pigeon-hole principle, there exists $2C+1$ consecutive terms of $X$, say $x^{i_0},\dots,x^{i_0+2C}$, all of which belong to $Q$. 

As  $|Q|\ge 1$, the rank $r$ of $Q$ must be at most $2C$. Thus there exist integral coefficients $m_1,\dots,m_{2C+1}$, all of which are bounded by $n^{O_C(1)}$, such that the linear combination $\sum_{i=0}^{2C}m_i x^{i_0+i}$ vanishes. In particular, it follows that $x$ is an algebraic number of degree at most $2C$. 
\end{proof}

We now prove Theorem \ref{theorem:KZ:1}. Write 

\begin{align*}
p(n,1)&= \P(\exists x\in \C: P_1(x)=P_2(x)=0)\\
&\le \P(P_1(1)=P_2(1)=0)+ \P(P_1(-1)=P_2(-1)=0)\\
&+ \P(\exists x \mbox{ of algebraic degree } 2,3,4,5: P_1(x)=P_2(x)=0)\\
&+ \P(\exists x \mbox{ of algebraic degree } \ge 6: P_1(x)=P_2(x)=0)\\
&=S_1+S_2+S_3.
\end{align*}

For the first term, it is clear that $S_1=\Theta(n^{-1})$ if $n$ is even, and $S_1=0$ otherwise. For the second term $S_2$, by Lemma \ref{fact:KZ:algebra}, the number of possible common roots $x$ of algebraic degree at most 5 is $O(1)$, so it suffices to show that $\P(P_1(x)=P_2(x))=n^{-3/2}$ for each such $x$. On the other hand, by Lemma \ref{lemma:ILO:poly} we must have $\P(P_i(x)=0)\le n^{-3/4}$ because $x$ cannot be a rational number (i.e. algebraic number of degree one). Thus we have

$$\P(P_1(x)=P_2(x)=0)=\P(P_1(x)=0)\P(P_2(x)=0)\le n^{-3/2}.$$   

Lastly, in order to bound $S_3$ we first fix $P_1(x)$. It has at most $n$ roots $x$ of algebraic degree at least 6. For each of these roots, by Lemma \ref{lemma:ILO:poly}, $\P(P_2(x)=0)=O(n^{-5/2})$. Thus the probability that $P_2$ has at least a common root with $P_1$ which is an algebraic number of degree at least 6 is bounded by $n \times O(n^{-5/2})=O(n^{-3/2})$. As a result, $S_3=O(n^{-3/2})$.

\section{Application: Littlewood-Offord type bound for  multilinear forms and Boolean circuits} \label{section:boolean} 

Let $k$ be a fixed positive integer, and $p(\x_1,\dots,\x_n)=\sum_{S\in [n]^{\le k}} c_S\x_S$ be a random multi-linear polynomial of degree 
at most $k$, where $\xi_i$ are iid Bernoulli variables (taking values $\{0,1\}$ with equal probability) and $\x_S=\prod_{i\in S} \x_i$. As mentioned in Section \ref{section:quadratic1},  by generalizing the proof of Theorem   \ref{theorem:LO:CTV}, Costelo, Tao and the second author proved the following

\begin{theorem}\label{theorem:mul:CTV} 
Let $K$ denote the number of non-zero coefficients $c_S$, and set $m:=K/n^{k-1}$. Then for any real number $x$ we have

$$\P(p=x) = O\Big(m^{-\frac{1}{2^{(k^2+k)/2}}}\Big).$$
\end{theorem}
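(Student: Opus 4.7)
The plan is a proof by induction on the degree $k$. The base case $k=1$ is precisely Theorem \ref{theorem:Erdos1}: a linear form in $n$ Bernoulli variables with $K = m \cdot n^0 = m$ nonzero coefficients has concentration $O(m^{-1/2})$, matching $1/2^{(1+1)/2}=1/2$.

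For the induction step from $k-1$ to $k$, I would follow the strategy used in the quadratic case in \cite{CTV}. Fix a balanced partition $[n] = U_1 \cup U_2$ with $|U_1|=|U_2|=n/2$, set $Y = (\xi_i)_{i \in U_1}$, $Z = (\xi_j)_{j \in U_2}$, and apply Lemma \ref{lemma:decoupling} with independent copies $Y', Z'$ to the event $\{p(Y,Z)=x\}$:
\[
\P(p=x)^4 \le \P\bigl(p(Y,Z)=p(Y',Z)=p(Y,Z')=p(Y',Z')=x\bigr) \le \P(R=0),
\]
where $R := p(Y,Z)-p(Y',Z)-p(Y,Z')+p(Y',Z')$. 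A direct expansion shows that every monomial of $f$ whose support lies entirely in $U_1$ or entirely in $U_2$ cancels, leaving
\[
R = \sum_{\substack{S:\, S\cap U_1 \ne \emptyset,\\ S\cap U_2 \ne \emptyset}} c_S \prod_{i \in S} w_i, \qquad w_i := \xi_i - \xi'_i \in \{-1,0,1\}.
\]
Conditioning on $(w_j)_{j \in U_2}$, one views $R$ as a random multilinear polynomial in $(w_i)_{i \in U_1}$ of degree at most $k-1$, whose coefficient at $\prod_{i \in T} w_i$ (for $\emptyset \ne T \subset U_1$, $|T| \le k-1$) is
\[
\tilde{c}_T = \sum_{\emptyset \ne U \subset U_2,\, |T|+|U| \le k} c_{T \cup U} \prod_{j \in U} w_j.
\]

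The crux is a combinatorial estimate: with probability $\Omega(1)$ over the values $(w_j)_{j\in U_2}$, the number of $T \subset U_1$ with $\tilde{c}_T \ne 0$ is $\Omega(K)$. A balanced partition places a constant fraction of the $K$ nonzero $c_S$ into split subsets, and an anti-concentration estimate applied to each $\tilde{c}_T$ individually (itself a low-degree polynomial in independent Bernoulli differences) rules out systematic cancellation. Once this is in place, $R$ is a degree-$(k-1)$ polynomial in $|U_1|=n/2$ variables with effective parameter $m' = \Omega(K/(n/2)^{k-2}) = \Omega(m \cdot n \cdot 2^{k-2})$. Applying the induction hypothesis produces $\P(R=0 \mid w_{U_2}) \le C(m')^{-1/2^{((k-1)^2+(k-1))/2}}$, and taking a fourth root gives the bound for $\P(p=x)$. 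A short calculation using $m \le n$, $m' \gtrsim m\cdot n$, and the identity $(k^2+k)/2 = (k^2-k)/2 + k$ then verifies $(m')^{-\alpha_{k-1}/4} = O(m^{-\alpha_k})$ with $\alpha_k = 1/2^{(k^2+k)/2}$; the $2^k$-factor loss per induction step is exactly compensated by the $m' \gtrsim m^{1+\Theta(1)}$ gain.

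The main obstacle is the counting step: ensuring a constant fraction of the original coefficients $c_S$ give rise to nonzero reduced coefficients $\tilde{c}_T$. The subtlety is that $\tilde{c}_T$ is itself random in $w_{U_2}$ and can vanish at particular values of $w_{U_2}$ even when the inner family $\{c_{T \cup U}\}_{U \subset U_2}$ is nontrivial. Handling this requires a uniform anti-concentration bound over $T$, which is morally another (smaller-scale) Littlewood-Offord estimate and can be established by a second-moment argument together with a careful dichotomy between those $T$ whose inner family has a ``rich'' nonzero pattern and those that do not. Once this counting lemma is in hand, the rest of the induction is algebraic bookkeeping of the exponents.
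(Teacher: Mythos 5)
Your overall strategy --- induction on the degree $k$ via Lemma~\ref{lemma:decoupling}, generalizing the quadratic argument --- is the one the paper indicates: it states that Theorem~\ref{theorem:mul:CTV} follows ``by generalizing the proof of Theorem~\ref{theorem:LO:CTV}'' via repeated decoupling. The base case $k=1$ is right, the decoupled polynomial $R$ and the formula for $\tilde c_T$ are right, and you correctly identify the counting lemma as the crux. However, there are two concrete errors in the way you set up that crux and in the exponent bookkeeping that depends on it.

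First, the claim that $\Omega(K)$ of the reduced coefficients $\tilde c_T$ are nonzero cannot hold in general: $T$ ranges over nonempty subsets of $U_1$ of size at most $k-1$, so the number of nonzero $\tilde c_T$ is trivially $O\big((n/2)^{k-1}\big)$, whereas $K$ can be as large as $\Theta(n^k)$. Consequently $m' \gtrsim m\cdot n\cdot 2^{k-2}$ is false: in the maximal regime $m = \Theta(n)$ it would give $m' \gtrsim n^2$, but $m'$ is at most $O(n)$.

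Second, the exponent accounting is misdiagnosed. You describe the $2^k$-factor in $\alpha_k = \alpha_{k-1}/2^k$ as a ``loss per induction step'' compensated by a ``gain'' $m' \gtrsim m^{1+\Theta(1)}$. In fact the decoupling step alone loses only a factor of $4$ in the exponent (from the fourth root); the remaining factor $2^{k-2}$ in the recursion is slack built in precisely to absorb a \emph{degradation} in the parameter, not a gain. Working backward from $(m')^{-\alpha_{k-1}/4} \le m^{-\alpha_k}$ shows the actual requirement is $m' \gtrsim m^{4/2^k}$ --- i.e.\ $m' \gtrsim m$ for $k=2$, $m' \gtrsim \sqrt{m}$ for $k=3$, and so on. This is much weaker than $m' \gtrsim m\cdot n$, which is fortunate since the latter is unattainable, but even the weaker bound is not delivered by the second-moment dichotomy you sketch: when many split $S$'s concentrate on few $T$'s (so $K'$ is small), the only saving grace is that each such $\tilde c_T$ is then a \emph{rich} random polynomial in $w_{U_2}$ and hence highly anti-concentrated, which calls for an argument exploiting that richness directly (or a role-swap between $U_1$ and $U_2$), not merely a count of nonzero $\tilde c_T$. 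As presented, the counting lemma is both overstated (the target bound is wrong) and unproved, so the induction does not yet close.
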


Using a finer analysis,  Razborov and Viola \cite{RaVi} improved the exponent $\frac{1}{2^{(k^2+k)/2}}$  to  $\frac{1}{2k 2^k}$.

\begin{theorem}\label{theorem:multi:disjoint}
Let $p(\x_1,\dots,\x_n)=\sum_{S\in [n]^{\le k}} c_S\x_S$ be a multi-linear polynomial of degree $k$, and assume that there exist $r$ terms $\x_{S_1},\dots, \x_{S_r}$ of degree $k$ each where the $S_i$ are mutually disjoint and 
$c_{S_i} \neq 0$. Then for any real number $x$ we have 

$$\P(p=x) = O(r^{-b_k}),$$

where $b_k=(2k2^k)^{-1}.$ 
\end{theorem}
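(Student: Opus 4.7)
The plan is to prove the theorem by induction on the degree $k$, combining the decoupling lemma (Lemma \ref{lemma:decoupling}) with a random-restriction averaging argument that exploits the disjoint-support hypothesis. The base case $k=1$ is immediate from Erdős's Theorem \ref{theorem:Erdos1}, which already yields the much stronger bound $O(r^{-1/2})$ for a linear form with $r$ non-zero coefficients.

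For the inductive step, assume the result in degree $k-1$. By first conditioning on all variables outside $\bigcup_l S_l$ (which only shifts the target value $x$), it suffices to treat the case $n = kr$ where $S_1,\ldots,S_r$ partition $[n]$ into blocks of size $k$. For each block choose a pivot $i_l \in S_l$, set $T=\{i_l\}_{l=1}^r$, and let $Y=(\xi_i)_{i\in T}$, $Z=(\xi_j)_{j\in T^c}$. Applying Lemma \ref{lemma:decoupling} with independent copies $Y',Z'$ gives
$$\P(p(Y,Z)=x)^4 \le \P(R=0), \qquad R := p(Y,Z)-p(Y',Z)-p(Y,Z')+p(Y',Z').$$
By multilinearity, $R$ only retains monomials that use at least one variable in $T$ and at least one in $T^c$; in particular each distinguished top-degree monomial $c_{S_l}\xi_{S_l}$ contributes
$$c_{S_l}\,(\xi_{i_l}-\xi_{i_l}') \prod_{j\in S_l\setminus\{i_l\}}(\xi_j-\xi_j'),$$
and because the $S_l$ are pairwise disjoint, these $r$ contributions involve pairwise disjoint coordinate sets in $T^c$.

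Next, condition on $(Y,Y',Z'_{T^c})$ and view $R$ as a polynomial in $Z=(\xi_j)_{j\in T^c}$ of degree at most $k-1$. For each $l$, let $\eta_l=\xi_{i_l}-\xi_{i_l}'\in\{-1,0,1\}$; on the event $\eta_l\ne 0$ the distinguished contribution above survives as a non-zero degree-$(k-1)$ monomial supported on $S_l\setminus\{i_l\}$, and these monomials are pairwise disjoint. A Chernoff estimate shows that $|L|:=|\{l:\eta_l\ne 0\}|\ge cr$ except with probability $e^{-\Omega(r)}$. On this event the induction hypothesis applied to the residual degree-$(k-1)$ polynomial yields $\P(R=0\mid\text{conditioning})=O(r^{-b_{k-1}})$; averaging out the conditioning and taking a fourth root gives a bound of the form $O(r^{-b_{k-1}/\alpha})$ for some exponent $\alpha$. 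For the recursion to close with $b_k=(2k\cdot 2^k)^{-1}$ one needs $\alpha=b_{k-1}/b_k=2k/(k-1)$, i.e.\ the recursion $b_k=\tfrac{k-1}{2k}\,b_{k-1}$, which unwinds from $b_1=1/4$ to the desired value.

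The main obstacle lies precisely in realizing the exponent $\alpha=2k/(k-1)$ rather than the naive $\alpha=4$ supplied by a single fourth-power application of Lemma \ref{lemma:decoupling}. Achieving the sharper $\alpha$ requires interleaving the decoupling step with additional rounds of random restriction (or, equivalently, a H\"older-type averaging that spreads the fourth-power loss across several induction steps), so that each induction step incurs only a $2k/(k-1)$-power worth of loss. Controlling the joint behaviour of the residual polynomial under these combined restrictions — in particular, guaranteeing $\Omega(r)$ surviving disjoint monomials with sufficient probability to run the induction — is the technical heart of the argument and is exactly where Razborov and Viola's refined analysis improves on the iterated-decoupling proof of Theorem \ref{theorem:mul:CTV}.
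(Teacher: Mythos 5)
The paper states Theorem \ref{theorem:multi:disjoint} without proof, citing \cite{RaVi}, so there is no in-paper argument to compare against and your proposal must stand on its own. The outline correctly identifies the CTV framework --- decouple via Lemma \ref{lemma:decoupling}, condition/restrict, and induct on degree --- and correctly computes the recursion $b_k=\tfrac{k-1}{2k}\,b_{k-1}$ that the claimed exponent $b_k=(2k2^k)^{-1}$ requires. But the critical step is missing. A single application of Lemma \ref{lemma:decoupling} costs a fourth power and can give at best $b_k=b_{k-1}/4$; unwinding this from the Erd\H{o}s base $b_1=1/2$ gives $b_k=2^{1-2k}$, which falls short of $(2k2^k)^{-1}$ once $k\ge 5$. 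You recognize this --- you observe that the per-step loss must be $2k/(k-1)<4$ --- but then offer only descriptions (``interleaving decoupling with additional rounds of random restriction,'' ``H\"older-type averaging that spreads the fourth-power loss across several induction steps'') and explicitly defer the actual mechanism to ``Razborov and Viola's refined analysis.'' As written, the recursion does not close: this is an accurate diagnosis of where the difficulty lies, not a proof of the theorem.

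Two secondary points. First, the contribution of $c_{S_l}\xi_{S_l}$ to $R$ is $c_{S_l}(\xi_{i_l}-\xi_{i_l}')\bigl(\prod_{j\in S_l\setminus\{i_l\}}\xi_j-\prod_{j\in S_l\setminus\{i_l\}}\xi_j'\bigr)$, not the tensorized product $c_{S_l}(\xi_{i_l}-\xi_{i_l}')\prod_{j\in S_l\setminus\{i_l\}}(\xi_j-\xi_j')$ you wrote: Lemma \ref{lemma:decoupling} decouples $T$ from $T^c$ but does not decouple the coordinates within $T^c$ from one another. After conditioning on $Z'$ this still yields the desired degree-$(k-1)$ monomial, so the conclusion you draw survives, but the intermediate formula is wrong. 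Second, and more substantively, after conditioning the coefficient of $\xi_{S_l\setminus\{i_l\}}$ in the residual polynomial is $c_{S_l}\eta_l$ \emph{plus} contributions from every other monomial $S$ with $S\cap T^c=S_l\setminus\{i_l\}$ and $S\cap T\neq\emptyset$; the hypotheses do not forbid such terms, so on the event $\{\eta_l\neq 0\}$ the total coefficient can still vanish. This is patchable (for fixed values of everything else, $c_{S_l}\eta_l$ is the only $\eta_l$-dependent contribution, so at most one of the two signs of $\eta_l$ can produce cancellation), but the sketch simply asserts survival where an argument is needed.
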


One observes that $r=\Omega (m/k)$, where $m$ was defined in Theorem \ref{theorem:mul:CTV}. Indeed, assume that the collection $\{ S_1,\dots, S_r\}$ is maximal (with respect to disjointness). Then every set $S$ with $c_S \neq 0$, either  $\x_S$ has degree less than $k$ or  $S$ intersects one of the  $S_i$. Thus $K=O(rk n^{k-1})$, and so $r=\Omega (m/k)$.  
 
 It is a very interesting question (in its own right and for  applications) to improve the exponent further. 
 In the rest of this section, we are going to discuss Razborov and Viola's  main application of Theorem \ref{theorem:multi:disjoint}. 
 
 For two functions $f,g: \{0,1\}^n \rightarrow \R$,  one defines their correlation as 

$$\Cor_n(f,g):= \P(f(\xi_1,\dots,\xi_n) = g(\xi_1,\dots,\xi_n))- 1/2, $$ 

where $\xi_i$ are iid Bernoulli variables taking values $\{0,1\}$ with equal probability.

Most of the research in Complexity Theory has so far concentrated on the case in which both $f$ and $g$ are
Boolean functions (that is $f(x),g(x)\in \{0,1\}$). To incorporate into this framework arbitrary multivariate polynomials, one converts them to Boolean functions. There are two popular ways of doing this. For a polynomial $p$ with integer coefficients, define a Boolean function $b(x) = 1$ if $m|p(x)$, where $m$ is a given integer, and 0 otherwise. These functions $b$ are called {\it modular polynomials}. For arbitrary $p$, one can set $b(x) = 1$ if $p(x) > t$ for some given threshold $t$, and 0 otherwise. We refer to these functions $b$ as {\it threshold polynomials}. For further discussion on these polynomials, we refer the reader to \cite{MTT61,MU71}.
 
 It is an open problem to exhibit an explicit Boolean function $f:\{0,1\}^n \rightarrow \{0,1\}$ such that $\Cor_n(b,f)= o(1/\sqrt{n})$ for any modular polynomial $b$ 
 whose underlying polynomial $p$  has degree $\log_2 n$ (see \cite{Vio09}). The same problem is also open  for threshold polynomials.

In \cite{RaVi}, Razborov and Viola initiated a similar study for the correlation of multi-variable polynomials where any output  outside of $\{0,1\}$ is counted as an error. They highlighted the following problem.

\begin{problem}\label{prob:logn}
Exhibit an explicit Boolean function $f$ such that $\Cor_n(p,f) =
o(1/\sqrt{n})$ for any real polynomial $p: \{0,1\}^n \rightarrow \R$ of degree $\log_2 n$.
\end{problem}

It is well-known that analogies between polynomial approximations and matrix approximations are important and influential in theory and other areas like Machine Learning (see for instance \cite{She08}). Viewed under this angle, Razborov and Viola's model
is a straightforward analogy of matrix rigidity \cite{Val77} that still remains one of the main unresolved problems in the modern Complexity Theory. For further discussion and motivation, we refer  to \cite{RaVi} and the references therein. It is noted that
solving Problem \ref{prob:logn} is a pre-requisite for solving the corresponding open problem for threshold polynomials. Similarly, the special case of Problem \ref{prob:logn} when the polynomials have integer coefficients is a pre-requisite for solving the corresponding open problem for modular polynomials. As a quick application of Theorem \ref{theorem:multi:disjoint}, we demonstrate here a result addressing the question for lower degree polynomials.

\begin{theorem}\cite[Theorem 1.2]{RaVi}\label{theorem:multi:negative}
We have $\Cor_n(p, parity)\le 0$ for every sufficiently large $n$ and every
real polynomial $p:\{0,1\}^n \rightarrow \R$ of degree at most $\log_2  \log_2 n/2$.
\end{theorem}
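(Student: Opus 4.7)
The plan is to induct on the degree $k$ and establish the stronger quantitative statement: there exists $N(k) < \infty$ such that $\P\bigl(p(\xi) = \text{parity}_n(\xi)\bigr) \le 1/2$ for every $n \ge N(k)$ and every real polynomial $p$ of degree at most $k$ on $\{0,1\}^n$. The base case $k = 0$ is immediate, since a constant polynomial equals parity on either $0$ or $2^{n-1}$ of the points. The inductive step is driven by Theorem \ref{theorem:multi:disjoint}, which supplies a convenient dichotomy.

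Fix $p$ of degree exactly $k \ge 1$ (if its degree is smaller, apply the hypothesis directly) and let $r$ be the largest number of mutually disjoint degree-$k$ monomials of $p$ with nonzero coefficients. Theorem \ref{theorem:multi:disjoint} gives a constant $C_k$ such that $\P(p = x) \le C_k r^{-b_k}$ for all $x \in \R$; pick a threshold $r_0(k)$ with $2 C_k r_0(k)^{-b_k} \le 1/2$. If $r \ge r_0(k)$, then already $\P(p = \text{parity}_n) \le \P(p = 0) + \P(p = 1) \le 1/2$, and we are done. Otherwise, let $\{S_1, \ldots, S_{r'}\}$ be a maximum family of disjoint degree-$k$ monomials of $p$, with $r' < r_0(k)$, and set $V := S_1 \cup \cdots \cup S_{r'}$, so $|V| \le k r_0(k)$; maximality forces every degree-$k$ monomial of $p$ (with nonzero coefficient) to meet $V$. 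Consequently, for every $\eta_V \in \{0,1\}^V$, the restricted polynomial $p_{\eta_V}(x_{\bar V}) := p(\eta_V, x_{\bar V})$ has degree at most $k - 1$ on $m := n - |V|$ variables.

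Using $\text{parity}_n(\eta_V, x_{\bar V}) = \text{parity}_{|V|}(\eta_V) \oplus \text{parity}_m(x_{\bar V})$ and conditioning on $\eta_V$,
\[
\P\bigl(p = \text{parity}_n\bigr) = \E_{\eta_V}\,\P\bigl(p_{\eta_V}(x_{\bar V}) = \text{parity}_m(x_{\bar V}) \oplus \text{parity}_{|V|}(\eta_V)\bigr).
\]
When $\text{parity}_{|V|}(\eta_V) = 0$, the inner probability is bounded by $1/2$ by the inductive hypothesis applied to $p_{\eta_V}$ on $m$ variables. When $\text{parity}_{|V|}(\eta_V) = 1$, the single-coordinate affine change $x_{j_0} \mapsto 1 - x_{j_0}$ (for any fixed $j_0 \in \bar V$) preserves the uniform distribution on $\{0,1\}^{\bar V}$, swaps $\text{parity}_m$ with $1 - \text{parity}_m$, and produces a polynomial $\tilde p_{\eta_V}$ of the same degree $\le k - 1$; this reduces to the previous case. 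Both contributions are at most $1/2$ provided $m \ge N(k-1)$, and hence so is their average.

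This yields the recursion $N(k) \le N(k-1) + k r_0(k)$. Since $b_k = (2k 2^k)^{-1}$, one can take $r_0(k) = 2^{O(k 2^k)}$, whence $N(k) = 2^{O(k 2^k)}$. For the target $k \le (\log_2 \log_2 n)/2$, we have $k 2^k \le (\log_2 \log_2 n)(\log_2 n)^{1/2}/2 = o(\log_2 n)$, so $N(k) = n^{o(1)} \ll n$ for all sufficiently large $n$, and the induction reaches the target degree. The only delicate points in turning this into a full proof are the parity-flip step in the conditioning (handled by the coordinate swap above) and verifying that the $k$-dependence of the constant in Theorem \ref{theorem:multi:disjoint} is mild enough to keep the recursion inside the allowed window; the rest is the standard fact that a maximum matching of size $r$ in a $k$-uniform hypergraph gives a vertex cover of size at most $kr$.
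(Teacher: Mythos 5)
Your proof is correct and follows essentially the same strategy as the paper's: dichotomize on the number of pairwise disjoint top-degree monomials, invoking Theorem~\ref{theorem:multi:disjoint} when that count is large enough to force $\P\bigl(p\in\{0,1\}\bigr)\le 1/2$ directly, and otherwise covering the top-degree terms by few variables, freezing them, and iterating down the degree to a trivial base case. The differences are matters of bookkeeping rather than approach: you use a degree-dependent threshold $r_0(k)$ where the paper uses the cruder $\sqrt{n}$, you make the induction quantitative via $N(k)$ and spell out the parity-flip under restriction that the paper treats tersely, and you correctly flag the $k$-dependence of the implied constant in Theorem~\ref{theorem:multi:disjoint} as the one delicate point — a concern the paper's short proof implicitly assumes away.
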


\begin{proof}(of Theorem \ref{theorem:multi:negative})
First we suppose that the hypothesis of Theorem \ref{theorem:multi:disjoint} is satisfied with $r= \sqrt{n}$. Then the probability that the polynomial outputs a Boolean value is bounded by 

$$2\times O((1/\sqrt{n})^{\frac{1}{2k2^k}})\le 1/2,$$

where $k\le \frac{1}{2}\log_2 \log_2 n$.

Otherwise, we can cover all the terms of degree $k$ by $k\sqrt{n}$ variables. Freeze these variables and iterate. After at most $k$ iterations, either the hypothesis of Theorem \ref{theorem:multi:disjoint} is satisfied with $r=\sqrt{n}$ (and with smaller degree), in which case we would be done, or else we end up with a degree-one polynomial with $n-O(k^2)\sqrt{n}\ge 1$ variables, in which case the statement  is true by comparison with the parity function.  
\end{proof}

\section{Application: Solving Frankl and F\"uredi's  conjecture}\label{section:FF} 

In this section, we return to the discussion in Section \ref{section:HD} and give a  proof of  Conjecture \ref{conj:FF}  and a new proof 
for Theorem \ref{FF1}.   Both proofs are based on the following inverse theorem. 

\begin{theorem} \label{TV1} For any fixed $d$ there is a constant $C$ such that the following holds. 
Let $ A=\{ a_1, \dots, a_n\}$ be a multi-set of  vectors in $\R^d$ such that $p_{d,1,\Ber} (A) \ge C k^{-d/2} $. 
Then $A$ is ''almost'' flat. Namely, there is a  hyperplane $H$ such that  $\dist(a_i,H) \geq 1$ for at most  $k$ values of $i=1,\ldots,n$. \end{theorem}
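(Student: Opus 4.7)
The plan is to prove the contrapositive by a Fourier-analytic (Hal\'asz/Esseen-style) argument. Assume for contradiction that every hyperplane $H \subset \R^d$ satisfies $|\{i:\dist(a_i,H)\geq 1\}|>k$; I aim to show this forces $p_{d,1,\Ber}(A)\leq C_d\,k^{-d/2}$ for some dimensional constant $C_d$, contradicting the hypothesis of Theorem~\ref{TV1} once $C$ exceeds $C_d$. One may also assume $k\leq n/2$, since otherwise $Ck^{-d/2}$ can be made $\geq 1$ by choosing $C$ large, making the conclusion vacuous.

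First I would translate the hypothesis into a lower bound on the second-moment matrix $M:=\sum_i a_ia_i^T$. Parametrizing hyperplanes as $\{x:\langle\theta,x\rangle=c\}$ with $\theta\in S^{d-1}$ and $c\in\R$, the contrapositive says: for every unit $\theta$ and every $c$, at least $n-k$ indices $i$ satisfy $|\langle\theta,a_i\rangle-c|\geq 1$. Specializing to $c=0$ gives $|\langle\theta,a_i\rangle|\geq 1$ for at least $n-k$ indices, and hence $\sum_i\langle\theta,a_i\rangle^2\geq n-k$ for every unit $\theta$; equivalently, $M\succeq (n-k)\,I_d$. Since $k\leq n/2$ gives $n-k\geq k$, it suffices to prove the sharper bound $p_{d,1,\Ber}(A)\leq C_d(n-k)^{-d/2}$.

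Next I would apply Esseen's inequality (Lemma~\ref{lemma:Esseen}):
\[
p_{d,1,\Ber}(A) \;\leq\; C_d \int_{\|t\|_2\leq 1} \prod_{i=1}^n |\cos\langle t,a_i\rangle|\,dt.
\]
Using the pointwise estimates $|\cos x|\leq\exp(-\tfrac{1}{2}\sin^2 x)$ and $\sin^2 x\geq (2/\pi)^2 x^2$ on the ``Gaussian region'' $\Omega_0:=\{t\in\BB(0,1) : |\langle t,a_i\rangle|\leq\pi/2 \text{ for all } i\}$, the integrand there is bounded by $\exp(-\tfrac{2}{\pi^2}\,t^T Mt)\leq \exp(-\tfrac{2(n-k)}{\pi^2}\|t\|^2)$, whose integral over $\R^d$ is at most $C'_d(n-k)^{-d/2}$, as desired.

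The main obstacle is bounding the contribution of $\BB(0,1)\setminus\Omega_0$, on which some $|\langle t,a_i\rangle|$ exceeds $\pi/2$ so the cosines may oscillate back up to unit magnitude; the Gaussian pointwise bound fails there. I would handle this by a \emph{resonance analysis}: the product $\prod_i|\cos\langle t,a_i\rangle|$ is close to $1$ only near points $t^*$ at which $\langle t^*,a_i\rangle\in\pi\Z$ for all $i$, and near each such $t^*$ the identity $|\cos(k\pi+y)|=|\cos y|$ reduces the local analysis to the one at $t=0$, contributing $O((n-k)^{-d/2})$. The delicate step is to show that $\BB(0,1)$ contains only $O_d(1)$ such resonance points, for which one exploits the full hypothesis (for arbitrary $c\ne 0$, not only $c=0$) to rule out large families of resonances. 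Assembling these pieces yields $p_{d,1,\Ber}(A)\leq C_d(n-k)^{-d/2}\leq C_d k^{-d/2}$, the required contradiction.
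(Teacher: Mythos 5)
Your high-level strategy---Esseen's inequality, a Gaussian estimate near the origin in frequency space, and a geometric argument to control the rest of $\BB(0,1)$---is the right family of ideas, and it matches the paper's own one-line description (``combines Esseen's bound together with some geometric arguments,'' with the details deferred to the reference \cite{TV:FF}). However, there is a small slip and one substantive gap.

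The slip: the negation of the conclusion is that for \emph{every} hyperplane $H$, \emph{more than $k$} of the $a_i$ lie at distance $\ge 1$ from $H$. Specializing to hyperplanes through the origin therefore gives $\sum_i\langle\theta,a_i\rangle^2 > k$ for every unit $\theta$, i.e.\ $M\succeq k\,I_d$; your claim $M\succeq (n-k)I_d$ is not what the hypothesis provides (it bounds the number of far indices from below, not the number of near indices). This happens to be harmless for the stated goal, since $M\succeq k\,I_d$ already makes the Gaussian region contribute $O_d(k^{-d/2})$, but the intermediate statement as written is wrong and the detour through $n-k$ should be removed.

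The substantive gap is the ``resonance analysis,'' which is asserted rather than proved and is exactly where the difficulty lies. Two unsupported claims are made. (a) That $\prod_i|\cos\langle t,a_i\rangle|$ is negligible away from points $t^*$ with $\langle t^*,a_i\rangle\in\pi\Z$ for all $i$. This is not correct as stated: the product can be moderately large (e.g.\ of size $n^{-O(1)}$) on sets of substantial measure with no resonance point nearby, which is precisely why Hal\'asz-type arguments work with level sets of $\sum_i\|\langle t,a_i\rangle/\pi\|^2_{\R/\Z}$ rather than with isolated resonances. (b) That $\BB(0,1)$ contains only $O_d(1)$ resonance points, ``exploiting the full hypothesis for $c\neq 0$.'' No mechanism is given for how the affine hyperplanes enter, and there is also a structural mismatch that needs to be addressed: a resonance $t^*\neq 0$ corresponds to the $a_i$'s clustering near the \emph{family} of parallel hyperplanes $\{\langle t^*,x\rangle=m\pi\}_{m\in\Z}$, whereas the theorem's conclusion concerns a \emph{single} affine hyperplane. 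Without an argument that bridges this (for instance, showing that a significant contribution from some $t_0\neq 0$ forces the integers $m_i$ to be essentially constant, or else deriving a contradiction from the quantifier over $c$), the sketch does not close. So the Esseen-plus-Gaussian half of the argument is sound, but the geometric half---the part the paper itself flags as the real content---is missing.
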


The proof of this theorem combines Esse\'en's bound (Lemma \ref{lemma:Esseen}) together with some geometric arguments. For details, see \cite{TV:FF}; 
$\dist(a, H_i)$, of course, means the distance from $a$ to $H_i$. 

%In Section \ref{ftsec} we use this theorem to give a new proof of Theorem \ref{FF1}.

We  first prove Theorem \ref{FF1} by induction on the dimension $d$.  The case $d=1$ follows from Theorem \ref{Erd1}, so we assume that $d \geq 2$ and that the claim has already been proven for smaller values of $d$.  
It  suffices to prove the upper bound

$$p(d, R, Ber, n)  \leq (1+o(1)) 2^{-n} S(n,s).$$

Fix $R$, and let $\eps > 0$ be a small parameter to be chosen later.  Suppose the claim failed, then there exists $R > 0$ such that for arbitrarily large $n$,
 there exist a multi-set $A = \{a_1,\ldots,a_n\}$ of vectors in $\R^d$ of length at least $1$ and a  ball $B$ of radius $R$ such that
\begin{equation}\label{xv}
\P( S_A \in B ) \geq (1+\eps) 2^{-n} S(n,s).
\end{equation}

In particular, from Stirling's approximation one has
$$ \P(S_A \in B) \gg n^{-1/2}.$$

 Applying the pigeonhole principle, we can find a ball $B_0$ of radius $\frac{1}{\log n}$ such that
$$ \P(S_A \in B_0) \gg n^{-1/2} \log^{-d} n.$$

Set $k := n^{2/3}$.  Since $d \ge 2$ and $n$ is large, we have
$$ \P(S_A \in B_0) \geq C k^{-d/2}$$
for some  fixed constant $C$.  Applying Theorem \ref{TV1} (rescaling by $\log n$), we conclude that there exists a hyperplane $H$ such that $\dist(v_i,H) \leq 1/\log n$ for at least $n-k$ values of $i=1,\ldots,n$.

Let $V'$ denote the orthogonal projection to $H$ of the vectors $v_i$ with $\dist(v_i,H) \leq 1/\log n$.  By conditioning on the signs of all the $\xi_i$ with $\dist(v_i,H) > 1/\log n$, and then projecting the sum $X_V$ onto $H$, we conclude from \eqref{xv} the existence of a $d-1$-dimensional ball $B'$ in $H$ of radius $R$ such that
$$ \P( X_{V'} \in B' ) \geq (1+\eps) 2^{-n} S(n,s).$$
On the other hand, the vectors in $V'$ have magnitude at least $1-1/\log n$.  If  $n$ is sufficiently large depending on $d,\eps$ this contradicts the induction hypothesis
 (after rescaling the $V'$ by $1/(1-1/\log n)$ and identifying $H$ with $\R^{n-1}$ in some fashion; notice that the scaling changes $R$ slightly but does not change $s$, and also that the function 
 $2^{-n} S(n,s)$ is decreasing with $n$).  This concludes the proof of \eqref{panda}.

Now we turn to the proof of Conjecture \ref{conj:FF}. We can assume $s \ge 3$, as the remaining cases have already been treated (see Section \ref{section:HD}). If the conjecture failed, then there exist arbitrarily large $n$ for which there exist a multi-set $A = \{a_1,\ldots,a_n\}$ of vectors in $\R^d$ of length at least $1$ and a  ball $B$ of radius $R$  such that
\begin{equation}\label{xv-2}
\P( S_A \in B ) > 2^{-n} S(n,s).
\end{equation}

 By iterating the argument used to prove \eqref{panda}, we may find a one-dimensional subspace $L$ of $\R^d$ such that $\dist(v_i,L) \ll 1/\log n$ for at least $n-O(n^{2/3})$ values of $i=1,\ldots,n$.  
 By reordering, we may assume that $\dist(v_i,L) \ll 1/\log n$ for all $1 \leq i \leq n-k$, where $k = O(n^{2/3})$.

Let $\pi: \R^d \to L$ be the orthogonal projection onto $L$.  We divide into two cases.  
The first case is when  $|\pi(v_i)| > \frac{R}{s}$ for all $1 \leq i \leq n$.  We then use the trivial bound
$$\P( S_A \in B ) \leq \P( S_{\pi(V)} \in \pi(B) ).$$

If we rescale Theorem \ref{Erd1} by a factor slightly less than $s/R$, we see that
$$ \P( S_{\pi(V)} \in \pi(B )) \leq 2^{-n} S(n,s) $$
which contradicts \eqref{xv-2}.

In the second case, we assume $|\pi(v_n)| \leq R/s$.  We let $A'$ be the multi-set  $\{a_1,\ldots,a _{n-k} \}$, then by conditioning on the $\xi_{n-k+1},\ldots,\xi_{n-1}$ we conclude the existence of a unit ball $B'$ such that
$$ \P(S_{A'} + \xi_n a_n  \in B') \geq \P( S_A \in B ).$$

Let $x_{B'}$ be the center of $B'$.  Observe  that 
if $S_{V'} + \xi_n a_n \in B'$ (for any value of $\xi_n$) then  $|S_{\pi(V')} - \pi(x_{B'})| \leq R+ \frac{R}{s}$.  Furthermore, if  $|S_{\pi(V')} - \pi(x_{B'})| > \sqrt{R^2-1}$, then the parallelogram law shows that $S_{V'} + a_n$ and $S_{V'}-_n$ cannot both lie in $B'$, and so conditioned on  $|S_{\pi(V')} - \pi(x_{B'})| > \sqrt{R^2-1}$, the probability that $S_{V'} + \xi_n a_n \in B'$ is at most $1/2$.

 We conclude that
\begin{align*}
& \P(S_{A'} + \xi_n a_n  \in B') \\ &\leq  \P( |A_{\pi(A')} - \pi(x_{B'})| \leq \sqrt{R^2-1} )
+ \frac{1}{2}  \P( \sqrt {R^2-1} < |S_{\pi(V')} - \pi(x_{B'})| \leq R+\frac{R}{s} ) \\
&=\frac{1}{2} \Big(\P( |A_{\pi(A')} - \pi(x_{B'})| \leq \sqrt{R^2-1} )+ \P(|S_{\pi(A')} - \pi(x_{B'})| \leq R+\frac{R}{s} ) \Big).
\end{align*}

However, note that all the elements of $\pi(A')$ have magnitude at least $1-1/\log n$.  
Assume, for a moment,  that $R$ satisfies 
\begin{equation}\label{deltas}
\sqrt{R^2-1} < s-1 \le R <   R +\frac{R}{s}  < s.
\end{equation}
  From Theorem \ref{Erd1} (rescaled by $(1-1/\log n)^{-1}$), we conclude  that
$$
\P( |S_{\pi(A')} - \pi(x_{B'})| \leq \sqrt{R^2-1} ) \leq 2^{-(n-k)} S(n-k,s-1)$$
and
$$
\P( |\pi(S_{A'}) - \pi(x_{B'})| \leq R+\frac{R}{s} ) \leq 2^{-(n-k)} S(n-k,s).$$

On the other hand, by Stirling's formula (if $n$ is sufficiently large) we have
$$ \frac{1}{2}  (2^{-(n-k)} S(n-k,s-1)) + \frac{1}{2} 2^{-(n-k)} S(n-k,s) = \sqrt {\frac{2}{\pi}} \frac{s-1/2+o(1)}{n^{1/2}}$$
while
$$ 2^{-n} S(n,s) = \sqrt {\frac{2}{\pi}} \frac{s+o(1)}{n^{1/2}}$$
and so we contradict \eqref{xv-2}. 

\vskip2mm

  An inspection of the above argument shows that all we need on $R$ are the conditions \eqref{deltas}. To satisfy the first inequality in \eqref{deltas}, we need
  $R < \sqrt {(s-1)^2 +1}$.  Moreover, once $s-1 \le R < \sqrt{(s-1)^2+1}$, 
  one can easily check that  $R+\frac{R}{s} < s$
holds automatically for any $s \ge 3$,  concluding  the  proof.

%{\bf To anh Van: the original list of references consisted of more than 100 items, not all of them were used. I added \%  in front of the unused refs.}

%\section{Open questions}

\appendix

\section{Proof of Theorem  \ref{theorem:ILO:optimal}}\label{section:discrete:proof}

In this section, we sketch the proof of Theorem \ref{theorem:ILO:optimal}. 

{\it Embedding.} The first step is to  embed the problem into a finite field $\F_p$ for some prime $p$. In the case when the $a_i$ are integers, we simply take $p$ to be a large prime
(for instance $p \ge 2^n (\sum_{i=1}^n |a_i| +1)$ suffices). 

If $A$ is a subset of a general torsion-free group $G$, we rely on the concept of Freiman isomorphism. Two sets $A,A'$ of additive groups $G,G'$ (not necessarily
torsion-free) are {\it Freiman-isomorphism of order $k$} (in generalized form) if there is an
bijective  map $f$ from $A$ to $A'$ such that $f(a_1)+\dots +f(a_k) =
f(a_1')+\dots +f(a_k')$ in $G'$ if and only if $a_1+\dots+a_k =
a_1'+\dots +a_k'$ in $G$, for any subsets $\{a_1, \dots, a_k \} \subset A; \{a_1', \dots, a_k' \} \subset A'$. 

The following theorem allows us to pass from an arbitrary torsion-free group to $\Z$ or cyclic groups of prime order (see \cite[Lemma 5.25]{TVbook}).

\begin{theorem}\label{theorem:Freimaniso} Let $A$ be a finite subset of a torsion-free additive group $G$. Then for any integer $k$ the following holds.

\begin{itemize}

\item there is a Freiman isomorphism $\phi$ : $A\rightarrow \phi(A)$ of order $k$ to some finite subset $\phi(A)$ of the integers $\Z$; 
\vskip .1in
\item more generally, there is a map $\phi$ : $A\rightarrow \phi(A)$ to some finite subset $\phi(A)$ of the integers $\Z$ such that

$$a_1+\dots +a_i = a_1'+\dots + a_j' \Leftrightarrow \phi(a_1)+\dots +\phi(a_i) = \phi(a_1')+\dots \phi(a_j')$$

for all $i,j\le k$. 
\end{itemize}

The same is true if we replace $\Z$ by $\F_p$, if $p$ is sufficiently large depending on $A$.
\end{theorem}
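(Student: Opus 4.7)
The plan is to prove the theorem in three stages: first reduce an arbitrary torsion-free $G$ to $\Z^d$, then embed $\Z^d$ into $\Z$ by a base-$N$ map for a suitably large $N$, and finally reduce modulo a large prime to obtain the $\F_p$ version.

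First I would invoke the structure theorem for finitely generated abelian groups. Since $A$ is finite and $G$ is torsion-free abelian, the subgroup $\langle A \rangle \subset G$ is a finitely generated torsion-free abelian group, hence isomorphic to $\Z^d$ for some $d \le |A|$. A group isomorphism preserves every additive relation tautologically, so it preserves the generalized Freiman property of any order. I may therefore replace $A$ by its image in $\Z^d$ and set $M := \max_{a \in A, \, 1 \le m \le d} |a_m|$.

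Next I would fix any integer $N > 2kM$ and define the base-$N$ map $\phi : \Z^d \to \Z$ by $\phi(x_1, \dots, x_d) := x_1 + N x_2 + \dots + N^{d-1} x_d$. The ``$\Rightarrow$'' direction in both bullets is immediate because $\phi$ is a group homomorphism. For the converse, suppose $\phi(a_1) + \dots + \phi(a_i) = \phi(a_1') + \dots + \phi(a_j')$ with $i, j \le k$; writing coordinate by coordinate, this becomes $\sum_{m=1}^d S_m N^{m-1} = 0$ where $S_m := \sum_{l=1}^i a_{l,m} - \sum_{l=1}^j a'_{l,m}$. Each ``digit'' satisfies $|S_m| \le (i+j) M \le 2kM < N$, so uniqueness of the base-$N$ representation of $0$ forces every $S_m$ to vanish, which is exactly $a_1 + \dots + a_i = a_1' + \dots + a_j'$ in $\Z^d$. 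The first bullet is then the special case $i = j = k$ of the second. For the $\F_p$ statement I would compose with mod-$p$ reduction: setting $T := k \, \max_{a \in A} |\phi(a)|$ and choosing any prime $p > 2T$ guarantees that every quantity $S_m$ appearing above has absolute value strictly below $p$, so it vanishes modulo $p$ iff it vanishes in $\Z$, and $\phi_p := \phi \bmod p$ inherits the generalized Freiman property.

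The main obstacle is genuinely minimal: the entire argument rests on the ``no-carry'' bound $|S_m| < N$ in the base-$N$ expansion. The only point that requires actual care is the generalized setting where $i$ and $j$ may differ, since the natural estimate is then $(i+j) M \le 2kM$ rather than the $kM$ that would appear for an order-$k$ Freiman isomorphism in the usual sense; missing the factor of $2$ would break the uniqueness of digits. Once this is accounted for, everything else is straightforward bookkeeping.
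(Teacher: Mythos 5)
Your proof is correct. The paper itself does not prove Theorem~\ref{theorem:Freimaniso} but only cites \cite[Lemma 5.25]{TVbook}, so there is no in-text argument to compare against; your three-stage route (structure theorem to reduce $\langle A\rangle$ to $\Z^d$, a base-$N$ linear embedding $\Z^d\to\Z$ with $N>2kM$, then reduction modulo a prime $p$ exceeding twice the range of $\phi$ scaled by $k$) is the standard proof of that lemma, and the no-carry estimate $|S_m|\le (i+j)M\le 2kM<N$ is exactly the point that makes the generalized statement with $i\neq j$ go through. The only cosmetic slip is in the $\F_p$ step, where the relevant quantity is the single integer $S:=\sum_{l\le i}\phi(a_l)-\sum_{l\le j}\phi(a'_l)$ rather than the coordinate sums $S_m$, but the bound $|S|\le 2T<p$ is right and the conclusion follows.
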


Thus instead of working with a subset $A$ of a torsion-free group, it is sufficient to work with subset of $\F_p$, where $p$ is large enough.  From now on, we can assume that $a_i$ are elements of $\F_p$ for some large prime $p$. We view elements of $\F_p$ as integers between $0$ and $p-1$. We use the short hand $\rho$ to denote $\rho (A)$. The next few steps are motivated 
by Hal\'asz' analysis in \cite{H}.

{\it Fourier Analysis.} The main advantage of working in $\F_p$ is that one can make use of discrete   Fourier analysis. Assume that $$\rho= \rho(A)=\P( S=a), $$ for some $a  \in \F_p$.
Using the standard notation $e_p(x)$ for $\exp(2\pi \sqrt{-1} x/p )$, we have

\begin{equation}\label{eqn:fourier1} \rho= \P(S=a)= \E \frac{1}{p} \sum_{t\in \F_p} e_p (t (S-a)) = \E \frac{1}{p} \sum_{t\in \F_p} e_p (t S) e_p(-t a) .\end{equation}

By independence

\begin{equation} \label{eqn:fourier2}  \E e_p(t S) = \prod_{i=1}^n e_p(t \x_i a_i)= \prod_{i=1}^n \cos \frac{2\pi t a_i}{p}.  \end{equation}

It follows that

\begin{equation} \label{eqn:fourier3} \rho  \le \frac{1}{p} \sum_{t \in \F_p} \prod_i |\cos \frac{2 \pi a_i t}{p}  |  = \frac{1}{p} \sum_{t \in \F_p} \prod_i |\frac{\cos  \pi a_i t}{p}  | , \end{equation}

where we made the change of variable $t \rightarrow t/2$ (in $\F_p$) to obtain the last identity. 

 By convexity, we have that   $|\sin  \pi z | \ge 2 \|z\|$ for any $z\in \R$, where $\|z\|:=\|z\|_{\R/\Z}$ is the distance of $z$ to the nearest integer. Thus,

\begin{equation} \label{eqn:fourier3-1}| \cos \frac{\pi x}{p}|  \le  1- \frac{1}{2} \sin^2 \frac{\pi x}{p}  \le 1 -2 \|\frac{x}{p} \|^2  \le \exp( - 2\|  \frac{x}{p} \| ^2 ) ,\end{equation}
where in the last inequality we used that fact that $1-y \le \exp(-y)$ for any $0 \le y \le 1$.

Consequently, we obtain a key inequality

\begin{equation} \label{eqn:fourier4}
\rho \le \frac{1}{p} \sum_{t \in \F_p} \prod_{i}|\cos \frac{ \pi a_i t}{p}  | \le  \frac{1}{p} \sum_{t \in F_p} \exp( - 2\sum_{i=1}^n  \| \frac{a_i t}{p}  \| ^2).
\end{equation}

{\it Large level sets.}  Now we consider the level sets $S_m:=\{t| \sum_{i=1}^n  \| a_i t/p \| ^2 \le m  \} $.  We have

$$n^{-C} \le \rho  \le  \frac{1}{p} \sum_{t \in \F_p} \exp( -2 \sum_{i=1}^n  \| \frac{a_i t }{p} \| ^2) \le \frac{1}{p} + \frac{1}{p} \sum_{m \ge 1} \exp(-2(m-1)) |S_m| .$$

Since $\sum_{m\ge 1} \exp(-m) < 1$, there must be  is a large level set $S_m$ such that

\begin{equation} \label{eqn:level1} |S_m| \exp(-m+2) \ge  \rho  p. \end{equation}

In fact, since $\rho \ge n^{-C}$, we can assume that $m=O(\log n)$.

{\it Double counting and the triangle inequality.} By  double counting we have

$$ \sum_{i=1}^n \sum_{t \in S_m} \|\frac{a_i t}{p} \| ^2 =   \sum_{t \in S_m} \sum_{i=1}^n  \|\frac{a_i t}{p} \| ^2 \le m |S_m |.$$

So, for most $a_i$

\begin{equation} \label{eqn:double1} \sum_{t \in S_m} \|\frac{a_i t}{p}\|^2  \le \frac{C_0 m }{n} |S_m| \end{equation}  for some large constant $C_0$.

Set $C_0 = \eps^{-1}$. By averaging, the set of $a_i$ satisfying \eqref{eqn:double1}
 has size at least $(1-\eps)n$.  We call this set $A'$. The set $A\backslash A'$ has size at most $\eps n$ and this is the
exceptional set that appears in Theorem \ref{theorem:ILO:optimal}. In the rest of the proof, we are going to show that $A'$ is a dense subset of a proper GAP.

Since $\|\cdot \|$ is a norm, by the triangle inequality, we have  for any $a  \in k A' $

\begin{equation} \label{eqn:double2} \sum_{t \in S_m} \|\frac{a t}{p}\|^2  \le  k^2 \frac{C_0 m}{n} |S_m| . \end{equation}

More generally, for any $l  \le k $ and $a \in lA'$

\begin{equation} \label{eqn:double3} \sum_{t \in S_m} \|\frac{a t}{p}\|^2  \le  k^2 \frac{C_0 m}{n} |S_m| . \end{equation}

{\it Dual sets.}  Define  $S_m^{\ast} :=\{ a | \sum_{t \in S_m}  \|\frac{a t}{p}\|^2 \le \frac{1}{200} |S_m |\}$ (the constant $200$ is adhoc and any sufficiently large constant would do).
$S_m^{\ast} $ can be viewed as some sort of a {\it dual} set of $S_m$. In fact, one can show as far as cardinality is concerned, it does behave like a dual

\begin{equation} \label{eqn:dual1} |S_m^{\ast} |  \le \frac{8p}{|S_m |} . \end{equation}

To see this, define $T_a :=\sum_{t \in S_m} \cos \frac{2\pi a t}{p}$. Using the fact that $\cos 2\pi z \ge 1 -100 \|z\|^2 $ for any $z \in \R$, we have, for any $a \in S_m^{\ast}$

$$T_a \ge  \sum_{t \in S_m} (1- 100 \| \frac{at}{p} \|^2 ) \ge \frac{1}{2} |S_m |. $$

One the other hand, using the basic identity $\sum_{a \in \F_p} \cos \frac{2\pi  ax}{p} = p\I_{x=0} $, we have

$$\sum_{a \in \F_p} T_a^2 \le 2p |S_m| . $$

\eqref{eqn:dual1} follows from the last two estimates and averaging.

Set $k := c_1 \sqrt{\frac{n}{m}}$, for a properly chosen  constant $c_1= c_1(C_0) $. By \eqref{eqn:double3}
we have $\cup_{l=1}^k  l A'  \subset  S_m^{\ast} $. Set $A^{''} = A' \cup \{0\}$; we have
$k A^{''} \subset S_m^{\ast} \cup \{0\} $. This results in the critical bound

\begin{equation}  \label{eqn:dual2} |k A^{''} |  = O( \frac{p}{|S_m|}) = O(\rho^{-1} \exp(-m+2)) .  \end{equation}

The role of $\F_p$ is now no longer important, so we can view the $a_i$ as integers.
Notice that \eqref{eqn:dual2} leads us to a situation similar to that of Freiman's inverse result (Therem \ref{theorem:Freiman}). In that theorem, we have a bound on $|2A|$ and conclude 
that $A$ has a strong additive structure. In the current situation, $2$ is replaced by $k$, which can depend on $|A|$. 
We can, however, finish the job by applying the following variant of Freiman's inverse theorem.

\begin{theorem}[Long range inverse theorem, \cite{NgV}]\label{theorem:longrange}
Let $\gamma>0$ be constant. Assume that $X$ is a subset of a torsion-free group such that $0 \in X$ and $ |kX| \le k^\gamma|X|$ for some integer $k \ge 2$ that may depend on $|X|$. Then there is proper symmetric GAP $Q$ of rank $r=O(\gamma)$ and cardinality $O_{\gamma}( k^{-r} |kX| )$ such that $X \subset Q$.
\end{theorem}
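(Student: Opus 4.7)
The plan is to reduce Theorem \ref{theorem:longrange} to the classical Freiman theorem (Theorem \ref{theorem:Freiman}) while exploiting the stronger hypothesis $|kX|\le k^\gamma|X|$ to bring the rank down to $O(\gamma)$ rather than the larger rank that Freiman alone would provide.

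First I would apply a Freiman isomorphism of order $k$ (Theorem \ref{theorem:Freimaniso}) to assume $X\subset\Z$; this preserves $|jX|$ for every $j\le k$. Since $0\in X$, the sumsets are nested, so in particular $|X+X|\le|kX|\le k^\gamma|X|$. Applying the classical Freiman theorem gives a proper symmetric GAP $P_0\supset X$ of some rank $r_0$ and volume $O_{k,\gamma}(|X|)$. The rank $r_0$ produced this way depends on $\log(k^\gamma)$, hence grows with $k$, so this step is only preparatory and the real work lies in reducing $r_0$ to $O(\gamma)$.

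Next, identify $P_0$ with a box $B\subset\Z^{r_0}$ via a Freiman isomorphism of sufficient order, and transfer $X$ to a set $Y\subset B$ with $|kY|\le k^\gamma|Y|$. The key structural observation is that a rank-$r$ proper symmetric GAP $Q$ containing $X$ must satisfy $|kQ|\asymp_r k^{r}|Q|$, so if every coordinate direction of $B$ were ``active'' for $Y$, in the sense that the projection onto that direction has $k$-fold sumset growing by a factor $\gtrsim k$, we would obtain $|kY|\gtrsim k^{r_0}|Y|$, forcing $r_0\le\gamma+O(1)$. Otherwise some direction is inactive and can be eliminated by projecting $Y$, which lowers the ambient rank while preserving (up to constants) the ratio $|kY|/|Y|$. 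Iterating this ``peeling'' procedure should produce a proper symmetric GAP $Q$ of rank $r=O(\gamma)$ still containing $X$.

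Finally, the volume estimate follows because $kX\subset kQ$ and, for a proper rank-$r$ GAP $Q$, one has $|kQ|=O_r(k^{r}|Q|)$. Combining these inclusions yields $|Q|\ge c_r k^{-r}|kX|$, and by trimming the generator dimensions of $Q$ so that $X$ is dense along each generator axis, one obtains $|Q|=O_\gamma(k^{-r}|kX|)$, as required. I expect the main obstacle to be the rank-reduction step: turning the ``active direction'' heuristic into a rigorous induction, since the active directions inside $B$ may be correlated with each other, so projecting out an apparently inactive coordinate can distort both $|kY|$ and $|Y|$; controlling these simultaneously, and in particular ensuring the constants accumulated over $r_0-r$ projections remain of the form $O_\gamma(1)$ rather than depending on $k$, is the delicate technical core of the argument.
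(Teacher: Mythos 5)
The paper does not actually give a proof of this theorem; it only cites \cite{NgV}. So there is no ``paper proof'' to compare against, but your proposal can still be evaluated on its own terms, and it has a genuine gap in precisely the place you flag as the ``delicate technical core.''

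The ``active directions'' heuristic is not merely delicate, it is false as stated. You claim that if $Y\subset B\subset\Z^{r_0}$ has the property that every coordinate projection $\pi_i(Y)$ satisfies $|k\,\pi_i(Y)|\gtrsim k\,|\pi_i(Y)|$, then $|kY|\gtrsim k^{r_0}|Y|$. Consider $Y=\{(i,i):0\le i\le N\}\subset[0,N]^2\subset\Z^2$. Both projections equal $\{0,\dots,N\}$ and grow by a factor $\approx k$ under the $k$-fold sumset, so both directions are ``active'' in your sense, yet $kY=\{(j,j):0\le j\le kN\}$ has $|kY|\approx k|Y|$, not $k^2|Y|$. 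Projections only give upper bounds on $|kY|$, never lower bounds, precisely because of the correlations you worry about; so the inequality that is supposed to force $r_0\le\gamma+O(1)$ does not hold, and consequently there is no basis for terminating the peeling procedure. The final ``trimming so that $X$ is dense along each generator axis'' has the same defect: density along each axis does not preclude $X$ from living on a low-dimensional diagonal slice of $Q$, in which case $|Q|$ can be far larger than $k^{-r}|kX|$.

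There is also a structural inefficiency in your plan: applying Freiman directly to $X$ with the doubling constant $k^\gamma$ produces a GAP whose rank grows with $k$ (at least logarithmically, even with the best Freiman-type bounds), and then the entire burden falls on a rank-reduction step that would have to discard almost all of those directions. The route taken in \cite{NgV} avoids this: since $0\in X$, the iterated sumsets are nested, $X\subset 2X\subset\cdots\subset kX$, and a pigeonhole over dyadic scales shows that there is some $l=2^j\le k$ with $|2lX|\le 2^{O(\gamma)}|lX|$. One then applies Freiman's theorem once, to $lX$ rather than $X$, with a doubling constant that is $O_\gamma(1)$ independent of $k$, immediately producing a GAP of rank $O(\gamma)$ containing $lX\supseteq X$. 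The remaining work is to massage this GAP (choosing generators adapted to $X$ and estimating the iterated sumset of a proper symmetric GAP) to obtain the volume bound $O_\gamma(k^{-r}|kX|)$; this requires care but does not require any rank reduction at all. I would recommend re-organizing the argument along these lines rather than trying to repair the peeling step.
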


One can prove Theorem \ref{theorem:longrange} by combining  Freiman theorem with some extra combinatorial ideas and several facts about GAPs. For full details we refer to 
\cite{NgV}. 

The proof of the continuous version, Theorem  \ref{theorem:ILO:continuous},  is similar. 
Given a real number $w$ and a variable $\xi$, we define the $\xi$-norm of $w$ by $\|w\|_{\xi} := (\E\|w(\xi_1-\xi_2)\|^2)^{1/2},$ where $\xi_1,\xi_2$ are two iid copies of $\xi$. We have the following variant of Lemma \ref{lemma:Esseen}.

\begin{equation}\label{lemma:upperboundforsmallball}
\rho_{r,\xi}(A)\le \exp(\pi r^2)\int_{\R^d}\exp(-\sum_{i=1}^n \|\langle a_i,z \rangle\|_{\xi}^2/2 - \pi \|z\|_2^2) \\dz.
\end{equation}

This will play the role of \eqref{eqn:fourier3} in the previous proof.  The next steps are similar and  we refer the reader to  \cite{NgV} for more details.

\section{Proof of Theorem \ref{SBP}} \label{section:proofcont2} 

We provide here a proof from \cite{RV-rec} (see also \cite{FS}).  This proof is also influenced by Hal\'asz' analysis from \cite{H}. 
The starting point is again Esse\'en's bound. Applying Lemma 
\ref{lemma:Esseen}, we obtain

\begin{equation}  \label{esseen}
  \rho_{d,\beta \sqrt{d},\xi}(A)
  \le C^d \int_{B(0, \sqrt{d})} \prod_{k=1}^n
  |\phi(\langle \theta, a_k\rangle/\beta)| \, d \theta,
\end{equation}

where $\phi$ is the characteristic function. 

Let $\xi'$ be an independent copy of $\xi$ and denote by $\bar{\xi}$ the symmetric random variable $\xi-\xi'$. Then we easily have $|\phi(t)| \le \exp( -\frac{1}{2} ( 1- \E \cos (2 \pi  t \bar{\xi}))).$

Conditioning on $\xi'$, the assumption $\sup_a\P(\xi\in B(a,1))
\le 1-b$ implies that $\P(|\bar{\xi}| \ge 1) \ge b$. Thus,

\begin{align*}
  1- \E \cos (2 \pi  t \bar{\xi})
  &\ge \P(|\bar{\xi}| \ge 1) \cdot
    \E \Big (1- \cos (2 \pi  t \bar{\xi}) \mid |\bar{\xi}| \ge 1 \Big )
  \\
  &\ge b \cdot \frac{4}{\pi^2}
    \E \Big (\min_{q \in \Z} | 2 \pi  t \bar{\xi} - 2 \pi q|^2 \mid |\bar{\xi}| \ge 1 \Big )
  \\
  &= 16 b \cdot
    \E \Big (\min_{q \in \Z} |  t \bar{\xi} -  q|^2
        \mid |\bar{\xi}| \ge 1 \Big ).
\end{align*}

Substituting of this into \eqref{esseen} and using Jensen's inequality, we get

\begin{align*}
  \rho_{d,\beta \sqrt{d},\xi}(A) &\le C^d \int_{B(0, \sqrt{d})}
    \exp \Big( -8b \E \Big (
      \sum_{k=1}^n \min_{q \in \Z} |\bar{\xi} \langle \theta, a_k\rangle /\beta  -  q|^2
      \; \Big| \; |\bar{\xi}| \ge 1 \Big ) \Big) \, d \theta
  \\
  &\le C^d \E \Big( \int_{B(0, \sqrt{d})}
    \exp \Big( -8b \min_{p \in \Z^n} \Big\| \frac{\bar{\xi}}{\beta} \, \theta \cdot a
- p \Big\|_2 \Big)
       \, d \theta
       \; \Big | \; |\bar{\xi}| \ge 1 \Big)
  \\
  &\le C^d \sup_{z \ge 1} \int_{B(0,\sqrt{d})} \exp(- 8 b f^2(\theta)) \; d\theta,
\end{align*}

where $f(\theta) = \min_{p \in \Z^n} \Big\| \frac{z}{\beta} \, \theta \cdot a
- p \Big\|_2$.

The crucial step is to bound the size of the {\em recurrence set}

$$
I(t) := \Big\{ \theta \in B(0,\sqrt{d}) : \; f(\theta) \le t \Big\}.
$$

\begin{lemma}           \label{lemma:size recurrence}
  We have
  $$
  \mu(I(t)) \le \Big( \frac{Ct\beta}{\gamma \sqrt{d}} \Big)^d, \qquad t < \alpha/2.
  $$
\end{lemma}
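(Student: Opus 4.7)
The plan is to use the definition of $\LCD_{\alpha,\gamma}(\Ba)$ together with the super-isotropy condition \eqref{super-isotropy} to establish a sharp dichotomy for pairs of points in $I(t)$, and then extract the volume bound by a packing argument.

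Write $L(\theta) := (z/\beta)\,\theta \cdot \Ba$, so that $f(\theta) = \dist(L(\theta), \Z^n)$. For any $\theta_1, \theta_2 \in I(t)$ the triangle inequality gives $\dist(L(\theta_1 - \theta_2), \Z^n) \le 2t < \alpha$. Setting $\tilde\theta := (z/\beta)(\theta_1 - \theta_2)$, so that $L(\theta_1 - \theta_2) = \tilde\theta \cdot \Ba$, the definition of $\LCD$ forces a dichotomy: either $\|\tilde\theta\|_2 \ge \LCD_{\alpha,\gamma}(\Ba)$, or else $\tilde\theta$ fails the infimum-defining inequality. In the second branch, since $2t < \alpha$ already rules out the $\alpha$-arm of the $\min$, the surviving constraint is $\gamma\|\tilde\theta \cdot \Ba\|_2$, yielding $\|\tilde\theta \cdot \Ba\|_2 \le 2t/\gamma$. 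Applying super-isotropy ($\|\tilde\theta \cdot \Ba\|_2 \ge \|\tilde\theta\|_2$) in that branch and the hypothesis $\LCD_{\alpha,\gamma}(\Ba) \ge \sqrt{d}/\beta$ in the first, I obtain the clean dichotomy
$$\|\theta_1 - \theta_2\|_2 \;\le\; \frac{2t\beta}{\gamma z} \quad\text{or}\quad \|\theta_1 - \theta_2\|_2 \;\ge\; \frac{\sqrt{d}}{z}.$$

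Outside the regime $t\beta/\gamma < \sqrt{d}/4$, the target bound $(Ct\beta/(\gamma\sqrt{d}))^d$ already exceeds the total volume of $B(0,\sqrt{d})$ by Stirling's estimate $V_d \le (C/\sqrt{d})^d$ for the volume of the Euclidean unit ball in $\R^d$, so the lemma is trivial there. Inside this regime the short-distance relation becomes transitive via the triangle inequality, so $I(t)$ decomposes as a disjoint union of clumps, each of diameter at most $2t\beta/(\gamma z)$ and with pairwise separation at least $\sqrt{d}/z$. Each clump therefore has $d$-dimensional volume at most $V_d(2t\beta/(\gamma z))^d \le (Ct\beta/(\gamma z\sqrt{d}))^d$. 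A standard packing argument — disjoint balls of radius $\sqrt{d}/(2z)$ around a representative of each clump fit inside $B(0, 2\sqrt{d})$ — bounds the number of clumps by $(Cz)^d$. Multiplying yields
$$\mu(I(t)) \;\le\; (Cz)^d \cdot \Bigl(\frac{C't\beta}{\gamma z\sqrt{d}}\Bigr)^d \;=\; \Bigl(\frac{C''t\beta}{\gamma \sqrt{d}}\Bigr)^d,$$
with the $z$-factor cancelling neatly — which is exactly why the estimate is uniform in $z \ge 1$.

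The main obstacle is the dichotomy step. Two bookkeeping points deserve care: confirming that $t < \alpha/2$ really eliminates the $\alpha$-branch of the $\min$ in the $\LCD$ definition, and recognizing that the super-isotropy inequality \eqref{super-isotropy} is the precise tool that turns an $\ell^2$-bound on $\tilde\theta\cdot\Ba$ into an $\ell^2$-bound on $\theta_1-\theta_2$. Everything else — the transitivity argument, the packing estimate, and Stirling — is routine geometric counting.
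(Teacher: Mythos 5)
Your proposal is correct and follows essentially the same route as the paper's proof: the same dichotomy derived from the definition of $\LCD_{\alpha,\gamma}$ together with the super-isotropy condition, followed by the same packing/covering estimate (number of pieces times volume of a piece), with the factor $z$ cancelling. The only stylistic differences are that you organize the covering step via a transitivity argument (decomposing $I(t)$ into equivalence-class ``clumps'') and explicitly dispatch the trivial regime $t\beta/\gamma \gtrsim \sqrt{d}$, whereas the paper takes a maximal $R$-separated net and leaves that degenerate regime implicit.
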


\begin{proof}(of Lemma \ref{lemma:size recurrence})
Fix $t < \alpha/2$. Consider two points $\theta', \theta'' \in I(t)$.
There exist $p', p'' \in \Z^n$ such that

$$
\Big\| \frac{z}{\beta} \, \theta' \cdot a - p' \Big\|_2 \le t, \quad
\Big\| \frac{z}{\beta} \, \theta'' \cdot a - p'' \Big\|_2 \le t.
$$

Let

$$
\tau := \frac{z}{\beta} \, (\theta' - \theta''), \quad p := p' - p''.
$$

Then, by the triangle inequality,

\begin{equation}                            \label{<2t}
  \|\tau \cdot a - p\|_2 \le 2t.
\end{equation}

Recall that by the assumption of the theorem, $\LCD_{\alpha,\gamma}(a) \ge \frac{\sqrt{d}}{\beta}$. Thus, by the definition of the least common denominator, either $\|\tau\|_2 \ge \frac{\sqrt{d}}{\beta}$ or 

\begin{equation}                            \label{>min}
  \|\tau \cdot a - p\|_2 \ge \min(\gamma\|\tau \cdot a\|_2,\alpha).
\end{equation}

In the latter case, since $2t <\alpha$, \eqref{<2t} and \eqref{>min} imply

$$
2t \ge \gamma\|\tau \cdot a\|_2 \ge \gamma \|\tau\|_2,
$$

where the last inequality follows from \eqref{super-isotropy}.

Thus we have proved that every pair of
points $\theta', \theta'' \in I(t)$ satisfies:

$$ 
\text{either} \quad \|\theta' - \theta''\|_2 \ge \frac{\sqrt{d}}{z} =: R
\quad \text{or} \quad \|\theta' - \theta''\|_2 \le \frac{2t\beta}{\gamma z} =: r.
$$

It follows that $I(t)$ can be covered by Euclidean balls of radii
$r$,  whose centers are $R$-separated in the Euclidean distance.
Since $I(t) \subset B(0,\sqrt{d})$, the number of such balls is at
most 

$$\frac{\mu(B(0, \sqrt{d} + R/2))}{\mu(B(0,  R/2))} =\Big( \frac{2\sqrt{m}}{R} + 1 \Big)^d \le \Big( \frac{3\sqrt{d}}{R} \Big)^d.$$

Summing these volumes, we obtain $\mu(I(t)) \le ( \frac{3Cr}{R})^m$.
\end{proof}

\begin{proof}(of Theorem \ref{SBP}) First, by the definition
of $I(t)$ and as $\mu(B(0,\sqrt{d})\le C^d$, we have

\begin{align}                           \label{outside recurrence}
\int_{B(0,\sqrt{m}) \setminus I(\alpha/2)} \exp(- 8b f^2(\theta)) \; d\theta
  &\le \int_{B(0,\sqrt{d})} \exp(-2b\alpha^2) \; d\theta \nonumber \\
  &\le C^d \exp(-2b\alpha^2).
\end{align}

Second, by using Lemma~\ref{lemma:size recurrence}, we have
\begin{align}                           \label{on recurrence}
\int_{I(\alpha/2)} \exp(- 8 b f^2(\theta)) \; d\theta
  &= \int_0^{\alpha/2} 16 b t \exp(-8 b t^2) \mu(I(t)) \; dt \nonumber \\
&\le 16 b \Big( \frac{C\beta}{\gamma \sqrt{d}} \Big)^d
    \int_0^\infty t^{d+1} \exp(-8 b t^2) \; dt \nonumber \\
  &\le \Big( \frac{C'\beta}{\gamma \sqrt{b}} \Big)^d \sqrt{d}
  \le \Big( \frac{C''\beta}{\gamma \sqrt{b}} \Big)^d.
\end{align}

Combining \eqref{outside recurrence} and \eqref{on recurrence}
completes the proof of Theorem~\ref{SBP}.
\end{proof}

\end{document}